\numberwithin{equation}{subsection}
\theoremstyle{plain}
\newtheorem{theorem}[equation]{Theorem}
\newtheorem{proposition}[equation]{Proposition}
\newtheorem{lemma}[equation]{Lemma}
\newtheorem{corollary}[equation]{Corollary}
\newtheorem*{rep@theorem}{\rep@title}
\newcommand{\newreptheorem}[2]{%
\newenvironment{rep#1}[1]{%
 \def\rep@title{#2 \ref{##1}}%
 \begin{rep@theorem}}%
 {\end{rep@theorem}}}
\theoremstyle{definition}
\newtheorem{definition}[equation]{Definition}
\newtheorem{example}[equation]{Example}
\newtheorem{remark}[equation]{Remark}
\newtheorem{construction}[equation]{Construction}
\newtheorem{warning}[equation]{Warning}
\def\th@definition{%
	\thm@notefont{\mathversion{normal}\normalfont}
	\normalfont 
}
\def\th@plain{%
	\thm@notefont{\mathversion{normal}\normalfont}
	\itshape 
}
\let\scr=\mathcal
\let\bb=\mathbf
\let\phi=\varphi
\let\into=\hookrightarrow
\let\onto=\twoheadrightarrow
\def\AA{\scr A}
\def\BB{\scr B}
\def\CC{\scr C}
\def\DD{\scr D}
\def\EE{\scr E}
\def\GG{\scr G}
\def\II{\scr I}
\def\KK{\scr K}
\def\LL{\scr L}
\def\PP{\scr P}
\def\QQ{\scr Q}
\def\RR{\scr R}
\def\SS{\scr S}
\def\UU{\scr U}
\def\XX{\scr X}
\def\BBB{\widehat{\BB}}
\def\SSS{\widehat{\SS}}
\def\SSSS{\hathat{\SS}}
\def\BBBB{\hathat{\BB}}
\def\bU{\bb{U}}
\def\bV{\bb{V}}
\def\bW{\bb{W}}
\DeclareMathOperator{\id}{id}
\DeclareMathOperator{\ev}{ev}
\DeclareMathOperator{\PSh}{PSh}
\DeclareMathOperator{\Cat}{Cat}
\DeclareMathOperator{\Shv}{Sh}
\DeclareMathOperator{\Cart}{Cart}
\DeclareMathOperator{\RFib}{RFib}
\DeclareMathOperator{\LFib}{LFib}
\DeclareMathOperator{\ILFib}{\mathsf{LFib}}
\DeclareMathOperator{\IRFib}{\mathsf{RFib}}
\DeclareMathOperator{\Sml}{\mathsf{Small}}
\DeclareMathOperator{\ICat}{\mathsf{Cat}}
\DeclareMathOperator{\ILConst}{\mathsf{LConst}}
\DeclareMathOperator{\RPr}{Pr^R}
\DeclareMathOperator{\LPr}{Pr^L}
\DeclareMathOperator{\Sub}{Sub}
\DeclareMathOperator{\SubObj}{SubObj}
\DeclareMathOperator{\Tw}{Tw}
\DeclareMathOperator{\Fun}{Fun}
\DeclareMathOperator{\Map}{map}
\DeclareMathOperator{\Grpd}{Grpd}
\DeclareMathOperator{\Pyk}{Pyk}
\DeclareMathOperator{\const}{const}
\DeclareMathOperator{\diag}{diag}
\DeclareMathOperator{\MH}{H}
\DeclareMathOperator{\Sch}{Sch}
\DeclareMathOperator{\pr}{pr}
\DeclareMathOperator{\cosk}{cosk}
\DeclareMathOperator{\sk}{sk}
\DeclareMathOperator{\ISm}{\mathsf{Sm}}
\DeclareMathOperator{\IPrp}{\mathsf{Prp}}
\DeclareMathOperator{\ISh}{\underline{Sh}}
\newcommand{\ord}[1]{\langle{#1}\rangle}
\newcommand{\map}[1]{\Map_{#1}}
\newcommand{\Over}[2]{#1_{\hspace{-1pt}/#2}}
\newcommand{\Under}[2]{#1_{\hspace{-1pt}#2/}}
\newcommand{\I}[1]{\mathsf{#1}}
\renewcommand{\smallint}{\textstyle\int}
\newcommand{\iFun}[1][\BB]{\underline{\mathsf{Fun}}_{#1}}
\newcommand{\IPSh}[1][\BB]{\underline{\mathsf{PSh}}_{#1}}
\newcommand{\Comma}[3]{{#1}\downarrow_{#2}{#3}}
\newcommand{\gpdcl}[1]{\overline{#1}}
\newcommand{\Simp}[1]{#1_{\Delta}}
\newcommand{\op}{\mathrm{op}}
\newcommand{\core}{\simeq}
\newcommand{\gp}{\mathrm{gpd}}
\newcommand{\CatS}{\Cat_\infty}
\newcommand{\CatSS}{\widehat{\Cat}_\infty}
\newcommand{\cocont}[1]{{#1\text{\normalfont{-cc}}}}
\newcommand{\cc}{\text{\normalfont{cc}}}
\newcommand{\Hom}{\underline{\operatorname{hom}}}
\NewDocumentCommand{\Gen}{m o}{%
	\IfNoValueTF{#2}{%
		\langle #1\rangle%
	}{%
		\langle #1\rangle^{#2}%
	}%
}
\NewDocumentCommand{\Univ}{o}{%
	\IfNoValueTF{#1}{%
		\I{\Omega}%
	}{%
		\I{\Omega}_{#1}%
	}%
}
\NewDocumentCommand{\UnivHat}{o}{%
	\IfNoValueTF{#1}{%
		\widehat{\I{\Omega}}%
	}{%
		\widehat{\I{\Omega}}_{#1}%
	}%
}
\let\lim=\relax
\DeclareMathOperator*{\lim}{lim}
\DeclareMathOperator*{\colim}{colim}
\g@addto@macro\bfseries{\boldmath}
\newcommand{\hathatInternal}[2]{%
	\begingroup%
	\let\macc@kerna\z@%
	\let\macc@kernb\z@%
	\let\macc@nucleus\@empty%
	\widehat{\raisebox{#2}{\vphantom{\ensuremath{#1}}}\smash{\widehat{#1}}}%
	\endgroup%
}
\newcommand{\hathat}[1]{\mathchoice
	{\hathatInternal{#1}{.2ex}}
	{\hathatInternal{#1}{.2ex}}
	{\hathatInternal{#1}{-1.5pt}}
	{\hathatInternal{#1}{1pt}}
}
\title{Colimits and Cocompletions in internal higher category theory}
\author{Louis Martini}
\address{Norwegian University of Science and Technology (NTNU)\\
Alfred Getz' vei 1\\
7034 Trondheim\\
Norway}
\email{\href{mailto:louis.o.martini@ntnu.no}{louis.o.martini@ntnu.no}}
\author{Sebastian Wolf}
\address{University of Regensburg\\
Universit\"atsstra{\ss}e 31 \\
93047 Regensburg\\
Germany}
\email{\href{mailto:sebastian1.wolf@mathematik.uni-regensburg.de}{sebastian1.wolf@mathematik.uni-regensburg.de}}
\date{\today}
\begin{document}
\maketitle

\begin{abstract}
    We develop a number of basic concepts in the theory of categories internal to an $\infty$-topos. We discuss adjunctions, limits and colimits as well as Kan extensions for internal categories, and we use these results to establish the universal property of internal presheaf categories. We furthermore construct the free cocompletion of an internal category by colimits that are indexed by an arbitrary class of diagram shapes.
\end{abstract}

\tableofcontents

\section{Introduction}

\subsection*{Motivation}
A main advantage of formalising and proving mathematical results using the language of category theory is that this helps to separate the formal aspects of a mathematical theory from its non-formal core.
Also in homotopical contexts one would like to have the advantage of such a systematic treatment of the formal parts of a theory, and for this reason higher categories have been invented.
This machinery, developed by Joyal, Lurie and many others, gives rise to a language of homotopy-coherent mathematics that allows one to deal with non-trivial coherence issues in an elegant and efficient way.

This paper is the second in a series in which we intend to add to this language by developing categorical tools in the context of higher categories internal to an $\infty$-topos $\BB$.
As a first step towards this goal, Yoneda's lemma for internal higher categories was proven in \cite{martini2021} by the first-named author.
In this article we continue this work with an extensive discussion of adjunctions, (co)limits, Kan extensions and free (co)completions in the internal setting.

A higher category $\I{C}$ internal to an $\infty$-topos $\BB$, which we usually refer to as a \emph{$\BB$-category}, is a simplicial object in $\BB$ satisfying the Segal conditions and univalence (see \cite[Definition 3.2.4]{martini2021} for a precise definition).
The study of $\BB$-categories is equivalent to the study of sheaves of $\infty$-categories on $\BB$. These sheaves arise in many different contexts.
In fact most higher-categorical invariants in modern algebraic geometry and topology are constructed in a functorial way and satisfy suitable descent conditions. As a concrete example from motivic homotopy theory, the assignment that carries a scheme $S$ to the unstable motivic homotopy $\infty$-category $\MH(S)$ defines a sheaf for the Nisnevich topology on the category $\Sch$ of schemes.
Such invariants may be studied using the methods developed in this article. One benefit of approaching such objects from an internal point of view is that one can apply essentially the same line of reasoning as for their unparametrised counterpart. In this way, the added complexity level that arises from considering invariants that are parametrised by some base $\infty$-topos $\BB$ can be hidden by working internally in $\BB$, so that in effect both the parametrised and unparametrised theory can be treated on an equal footing.

On a related note, we took great care to formulate our results in a model-independent way, in the sense that we do not rely on the choice of a particular model for $\infty$-groupoids and thereby work exclusively within a fully homotopy-coherent setup from the very start. In this regard, our approach is similar in spirit to the synthetic theory of higher categories~\cite{Riehl2017b,Buchholtz2023}. 

Conceptually, there are two major areas in which the theory of internal higher categories finds applications: the first is comprised of those branches of homotopy theory in which one needs to replace bare $\infty$-groupoids by objects that admit more structure. For example, in \emph{condensed} or \emph{pyknotic} mathematics~\cite{barwick2019, scholze2019}, the basic objects of interest are no longer bare $\infty$-groupoids but \emph{pyknotic} $\infty$-groupoids, i.e.\ the $\infty$-topos $\SS$ is replaced by the $\infty$-topos $\Pyk(\SS)$ of pyknotic $\infty$-groupoids. Our results therefore make it possible to handle certain aspects in condensed/pyknotic mathematics in essentially the same way as one would handle analogous problems in ordinary homotopy theory. In that way, internalisation may lead to a significant reduction of complexity. As a concrete example, the second-named author has shown that internal to $\Pyk(\SS)$, the pro\'etale $\infty$-topos of a coherent scheme $S$ is simply given by the category of internal copresheaves on the Galois category of $S$~\cite{Wolf2022}. In this case, the process of internalisation has therefore transformed a seemingly more complicated $\infty$-topos into a structurally very simple internal topos. 

The second area in which ideas from internal higher category theory find applications is comprised of those branches of homotopy theory in which the study of categorified invariants plays a prominent role. We have already pointed out above that many of these already form examples of internal higher categories and can therefore be studied using the results that are developed within our framework. As a prominent example, \emph{six functor formalisms}, which are of great interest in areas such as motivic homotopy theory, fit into this picture~\cite{cisinski2019b,Drew2022}. For example, parts of the structure in a six functor formalism corresponds to the condition that the associated internal category admits internal colimits that are indexed by a certain collection of internal groupoids. Therefore, the toolset that we provide in this article for studying internal colimits and internal cocompletions by arbitrary collections of diagram shapes can already be used for studying such structures. See for Example~\cite{bachmann2021} where certain internal colimits in motivic homotopy theory have already been put to use.

The goal of this article is to provide a systematic framework for this area of ideas in order to open the way for future application.
Therefore we put special emphasis on presenting and formulating our results in a way that makes them as easy to use in practice as possible.

\subsection*{Main results}
Let us from now on fix an $\infty$-topos $\BB$.
In \cite{martini2021} the first-named author proved Yoneda's lemma for internal higher categories in $\BB$.
More precisely, it was shown that for any $\BB$-category $\I{C}$ there is a fully faithful functor of $\BB$-categories $h \colon \I{C} \rightarrow \iFun(\I{C}^\op,\Univ)$ such that the functor
\[
	\map {\iFun(\I{C}^\op,\Univ)}(h(-),-) \colon \I{C} \times \iFun(\I{C}^\op,\Univ) \rightarrow \Univ
\]
is equivalent to the evaluation functor.
Here $\Univ$ denotes the \emph{universe} in $\BB$,  which is the internal analogue of the $\infty$-category $\SS$ of $\infty$-groupoids and which is explicitly defined as the $\BB$-category corresponding to the sheaf $\BB_{/-} \colon \BB^\op \rightarrow \CatSS$. Furthermore, $\iFun(-,-)$ denotes the internal hom of $\BB$-categories, so that $\iFun(\I{C}^\op,\Univ)$ is the $\BB$-category of internal presheaves on $\I{C}$.
We will therefore write $\IPSh(\I{C}) = \iFun(\I{C}^\op,\Univ)$ hereafter.
One of the main goals of the present article is to show that the embedding $h\colon \I{C}\into\IPSh(\I{C})$ satisfies the expected universal property:

\begin{reptheorem}{thm:universalPropertyPSh}
	For any small $\BB$-category $\I{C}$ and any cocomplete large $\BB$-category $\I{E}$, the functor of left Kan extension $h_!$ along the Yoneda embedding $h\colon \I{C}\into\IPSh(\BB)$ induces an equivalence
	\begin{equation*}
	h_!\colon\iFun^\cc(\I{C},\I{E})\simeq \iFun(\IPSh(\I{C}),\I{E}),
	\end{equation*}
	where $\iFun^\cc(\IPSh(\I{C}),\I{E})$ denotes the full subcategory of $\iFun(\IPSh(\I{C}),\I{E})$ that is spanned by the cocontinuous functors.
	In other words, the Yoneda embedding $h\colon \I{C}\into\IPSh(\I{C})$ exhibits the $\BB$-category of presheaves on $\I{C}$ as the free cocompletion of $\I{C}$.
\end{reptheorem}

In order to make the above result precise we first have to introduce a good deal of categorical tools in the internal setting.
After discussing a few necessary preliminaries, we start out by studying adjunctions of $\BB$-categories.
The main interesting observation here is that, while one gets a good theory of internal adjunctions which is completely parallel to the case of adjunctions between $\infty$-categories, one can also directly relate these with section-wise adjunctions of the corresponding sheaves on $\BB$ that are compatible with étale base change in a suitable sense.
See Proposition~\ref{prop:existenceAdjointsBeckChevalley} for a precise formulation of this statement.

We then move on to develop the theory of (co)limits in internal higher category theory.
Most of the story is quite analogous to the corresponding theory for $\infty$-categories, but again one can provide explicit section-wise criteria for the existence or preservation of certain internal (co)limits. 
For example, we show that a large $\BB$-category $\I{C}$ is internally cocomplete if and only if the associated functor $\I{C} \colon \BB^\op \rightarrow \CatSS$
factors through the subcategory of cocomplete $\infty$-categories and cocontinuous functors such that the transition functors admit left adjoints that are compatible in a suitable sense. In this way, our theory of internally cocomplete $\BB$-categories connects to the already established theory of \emph{Beck-Chevalley fibrations} of $\infty$-categories, cf.~\cite{hopkins2013}.

As a next step, we discuss Kan extensions in the world of $\BB$-categories. 
Building on the work of the earlier chapters we obtain the expected existence theorem:

\begin{reptheorem}{thm:existenceKanExtension}
	Let $\I{E}$ be a cocomplete (large) $\BB$-category and let $f \colon \I{C} \rightarrow \I{D}$ be a functor of small $\BB$-categories.
	Then the functor $f^\ast\colon \iFun(\I{D},\I{E})\to\iFun(\I{C},\I{E})$ has a left adjoint $f_!$ which is fully faithful whenever $f$ is fully faithful.
\end{reptheorem}

In fact one only needs that colimits of certain comma $\BB$-categories exist in $\I{E}$ (see \S~\ref{sec:KanExtensions} for a refined formulation).
As explained above, one can explicitly check whether a $\BB$-category is cocomplete, therefore it will be easy to resort to Theorem~\ref{thm:existenceKanExtension} in applications.

We have now built enough machinery to be able to prove our main result.
With future applications in mind we generalise our main results and also construct the free $\I{U}$-cocompletion of a small $\BB$-category for any so called \emph{internal class} $\I{U}$ (see Definition~\ref{def:internalClass}) of indexing $\BB$-categories.

\subsection*{Parametrisation and internalisation}
In a joint project, Barwick, Dotto, Glasman, Nardin and Shah developed what they call \emph{parametrised higher category theory and higher algebra} with the aim of laying new foundations for a systematic study of equivariant homotopy theory (see \cite{barwick2016}). Parametrised higher category theory is the study of $\infty$-categories that are parametrised by a fixed base $\infty$-category $\CC$. Therefore, they exactly correspond to $\PSh(\CC)$-categories in our terminology.
As part of this joint project, Shah already developed a great deal of theory for parametrised $\infty$-categories, such as parametrised limits and colimits, Kan extensions and free cocompletions~\cite{Shah2023,Shah2021}.
As a consequence, whenever the base $\infty$-topos $\BB$ is a presheaf $\infty$-topos, most results in the present article are already contained in~\cite{Shah2023} or~\cite{Shah2021}. But even for a general $\infty$-topos $\BB$, the fact that $\BB$ is always a left exact localisation of a presheaf $\infty$-topos $\PSh(\CC)$ and the observation that many results in internal higher category theory in $\BB$ only depend on the underlying presheaf $\infty$-topos $\PSh(\CC)$ imply that these results can also be deduced from their analogue in parametrised higher category theory.
Nevertheless, we see compelling reasons for developing our own framework to study internal higher categories:

First, our results are a priori more general as they are valid internal to any $\infty$-topos $\BB$ and not only to presheaf $\infty$-topoi. 
In some of the applications we have in mind, this added level of generality will be crucial.
As a concrete example, an arbitrary $\infty$-topos $\BB$ need not be compactly generated and the final object $1\in\BB$ need not be compact.
Working internal to $\BB$ offers a way to fix this behaviour: 
the internal mapping $\BB$-groupoid functor $\map {\Univ[\BB]}(1_{\Univ},-) \colon \Univ_{\BB} \rightarrow \Univ_{\BB}$ is equivalent to the identity and therefore commutes with any kind of colimit.
By building on this, one can for example show that for a ring $R$, every dualisable object in the $\infty$-category $\bb{D}(\BB,R) = \BB \otimes \bb{D}(R)$ is \emph{internally} compact (when viewing $\bb{D}(\BB,R)$ as a $\BB$-category in a suitable way).
It would not be possible to make sense of these statements by working internal to presheaf $\infty$-topoi, since there need not exist a presentation $L\colon\PSh(\mathcal{C}) \rightarrow \BB$ such that the inclusion $\BB \into \PSh(\mathcal{C})$ preserves filtered colimits. 
In future work we intend to classify dualisable objects in $\bb{D}(\BB,R)$ using the above observations and are therfore forced to work internal to $\BB$ itself.

Second, our framework is inherently functorial in the base $\infty$-topos, which would be difficult to realise with the parametrised approach as the choice of a presentation for an $\infty$-topos is not functorial. With regard to future applications of our framework to relative higher topos theory, such functoriality is however indispensible for us.

Third, from a more conceptual point of view, our approach differs from the approach taken in parametrised higher category theory in that we put special emphasis on employing the \emph{internal logic} of the base $\infty$-topos for the development of our theory. That is, instead of proving statements for $\BB$-categories by way of reducing them to the analogous well-known statements for $\infty$-categories, our general strategy is to find proofs of the latter that only make use of the abstract properties of the $\infty$-topos of spaces and that can therefore be \emph{interpreted} internally in $\BB$.
In particular, we consistently avoid to choose a strict point-set model for $\infty$-categories and $\infty$-groupoids. As a consequence, our proofs tend to be very different from the proofs in~\cite{Shah2023} and~\cite{Shah2021}.

\subsection*{Other related work}
The study of categories internal to a $1$-topos (or more generally a $1$-category with finite limits) was initated in the second half of the 20th century and has been studied thereafter by numerous mathematicians. Therefore, most results that are presented in this paper are well-known in the $1$-categorical context. An excellent account of this theory can be found in~\cite{johnstone2002}.

As mentioned above the theory of synthetic higher categories developed by Shulman and Riehl in \cite{Riehl2017b} is closely related to our theory of $\BB$-categories. Indeed as a consequence of Shulman's $\infty$-topos semantics \cite{shulman2019}, synthetic higher category theory can be interpreted in simplicial objects in any $\infty$-topos. Many concepts of internal higher category theory have been developed from this point of view by Buchholtz-Weinberger \cite{Buchholtz2023} and Weinberger in \cite{Weinberger2022}, \cite{Weinberger2022a}. In particular colimits indexed by internal groupoids have been studied in the latter.

In the more general setup of higher categories internal to any $\infty$-category with finite limits, parts of this theory, in particular adjunctions and limits, have been developed by Nima Rasekh in \cite{Rasekh2022}.

\subsection*{Acknowledgements}
We would like to thank our respective advisors Rune Haugseng and Denis-Charles Cisinksi for their advice and support while writing this article.
We would like to thank Bastiaan Cnossen for many helpful discussions and his very valuable feedback on earlier drafts of this paper.
We would also like to thank the anonymous referee for many helpful suggestions to improve the exposition of this paper.
 S.W.\ was supported by the SFB 1085 ‘Higher
Invariants’ in Regensburg, funded by the DFG.

\section{Background on $\BB$-categories}
In this section we recall the basic framework of higher category theory internal to an $\infty$-topos from~\cite{martini2021}. We refer the reader to~\cite{martini2021} for proofs and a more detailed discussion.

\subsection{General conventions and notation}
\label{sec:conventions}
We generally follow the conventions and notation from~\cite{martini2021}. For the convenience of the reader, we will briefly recall the main setup. 

Throughout this paper we freely make use of the language of higher category theory. We will generally follow a model-independent approach to higher categories. This means that as a general rule, all statements and constructions that are considered herein will be invariant under equivalences in the ambient $\infty$-category, and we will always be working within such an ambient $\infty$-category.

We denote by $\Delta$ the simplex category, i.e.\ the category of non-empty totally ordered finite sets with order-preserving maps. Every natural number $n\in\mathbb N$ can be considered as an object in $\Delta$ by identifying $n$ with the totally ordered set $\ord{n}=\{0,\dots n\}$. For $i=0,\dots,n$ we denote by $\delta^i\colon \ord{n-1}\to \ord{n}$ the unique injective map in $\Delta$ whose image does not contain $i$. Dually, for $i=0,\dots n$ we denote by $\sigma^i\colon \ord{n+1}\to \ord{n}$ the unique surjective map in $\Delta$ such that the preimage of $i$ contains two elements. Furthermore, if $S\subset n$ is an arbitrary subset of $k$ elements, we denote by $\delta^S\colon \ord{k}\to \ord{n}$ the unique injective map in $\Delta$ whose image is precisely $S$. In the case that $S$ is an interval, we will denote by $\sigma^S\colon \ord{n}\to \ord{n-k}$ the unique surjective map that sends $S$ to a single object. If $\CC$ is an $\infty$-category, we refer to a functor $C\colon\Delta^{\op}\to\CC$ as a simplicial object in $\CC$. We write $C_n$ for the image of $n\in\Delta$ under this functor, and we write $d_i$, $s_i$, $d_S$ and $s_S$ for the image of the maps $\delta^i$, $\sigma^i$, $\delta^S$ and $\sigma^S$ under this functor. Dually, a functor $C^{\bullet}\colon \Delta\to\CC$ is referred to as a cosimplicial object in $\CC$. In this case we denote the image of $\delta^i$, $\sigma^i$, $\delta^S$ and $\sigma^S$ by $d^i$, $s^i$, $d^S$ and $\sigma^S$.

The $1$-category $\Delta$ embeds fully faithfully into the $\infty$-category of $\infty$-categories by means of identifying posets with $0$-categories and order-preserving maps between posets with functors between such $0$-categories. We denote by $\Delta^n$ the image of $\ord{n} \in\Delta$ under this embedding.

\subsection{Set-theoretical foundations}
\label{sec:setTheory}
Once and for all we will fix three Grothendieck universes $\bU\in\bV\in\bW$ that contain the first infinite ordinal $\omega$. A set is \emph{small} if it is contained in $\bU$, \emph{large} if it is contained in $\bV$ and \emph{very large} if it is contained in $\bW$. An analogous naming convention will be adopted for $\infty$-categories and $\infty$-groupoids. The large $\infty$-category of small $\infty$-groupoids is denoted by $\SS$, and the very large $\infty$-category of large $\infty$-groupoids by $\SSS$. The (even larger) $\infty$-category of very large $\infty$-groupoids will be denoted by $\SSSS$. Similarly, we denote the large $\infty$-category of small $\infty$-categories by $\CatS$ and the very large $\infty$-category of large $\infty$-categories by $\CatSS$. We shall not need the $\infty$-category of very large $\infty$-categories in this article.

\subsection{$\infty$-topoi}
\label{sec:inftyTopoi}
For $\infty$-topoi $\AA$ and $\BB$, a \emph{geometric morphism} is a functor $f_\ast\colon \BB\to \AA$ that admits a left exact left adjoint, and an \emph{algebraic morphism} is a left exact functor $f^\ast\colon \AA\to \BB$ that admits a right adjoint. The \emph{global sections} functor is the unique geometric morphism $\Gamma_{\BB}\colon \BB\to \SS$ into the $\infty$-topos of $\infty$-groupoids $\SS$. Dually, the unique algebraic morphism originating from $\SS$ is denoted by $\const_{\BB}\colon \SS\to \BB$ and referred to as the \emph{constant sheaf} functor. We will often omit the subscripts if they can be inferred from the context.
For an object $A \in \BB$, we denote the induced étale geometric morphism by $(\pi_A)_\ast \colon \BB_{/A} \rightarrow \BB$.

\subsection{Universe enlargement}
\label{sec:universeEnlargement}
If $\BB$ is an $\infty$-topos, we define its \emph{universe enlargement} $\BBB=\Shv_{\SSS}(\BB)$, where the right-hand side denotes the $\infty$-category of presheaves $\BB^{\op}\to\SSS$ which preserve small limits; this is an $\infty$-topos relative to the larger universe $\bV$~\cite[Remark~6.3.5.17]{htt}. Moreover, the Yoneda embedding gives rise to an inclusion $\BB\into\BBB$ that commutes with small limits and colimits and with the internal hom~\cite[Proposition~2.4.4]{martini2021}. The operation of enlarging universes is transitive: when defining the $\infty$-topos $\BBBB$ relative to $\bW$ as the universe enlargement of $\BBB$ with respect to the inclusion $\bV\in\bW$, the $\infty$-category $\BBBB$ is equivalent to the universe enlargement of $\BB$ with respect to $\bU\in\bW$~\cite[Remark~2.4.1]{martini2021}.

\subsection{Factorisation systems} 
\label{sec:factorisationSystems}
If $\CC$ is a presentable $\infty$-category and if $S$ is a small set of maps in $\CC$, there is a unique factorisation system $(\LL,\RR)$ in which a map is contained in $\RR$ if and only if it is \emph{right orthogonal} to the maps in $S$, and where $\LL$ is dually defined as the set of maps that are left orthogonal to the maps in $\RR$. We refer to $\LL$ as the \emph{saturation} of $S$; this is the smallest set of maps containing $S$ that is stable under pushouts, contains all equivalences and is stable under small colimits in $\Fun(\Delta^1,\CC)$. An object $c\in\CC$ is said to be \emph{$S$-local} if the unique morphism $c\to 1$ is contained in $\RR$. 
    
If $\CC$ is cartesian closed, one can analogously construct a factorisation system $(\LL^\prime,\RR^\prime)$ in which $\RR^\prime$ is the set of maps in $\BB$ that are \emph{internally} right orthogonal to the maps in $S$. Explicitly, a map is contained in $\RR^\prime$ if and only if it is right orthogonal to maps of the form $s\times \id_c$ for any $s\in S$ and any $c\in \CC$. The left complement $\LL^\prime$ is comprised of the maps in $\CC$ that are left orthogonal to the maps in $\RR^\prime$ and is referred to as the \emph{internal} saturation of $S$. Equivalently, $\LL^\prime$ is the saturation of the set of maps $s\times\id_c$ for $s\in S$ and $c\in\CC$. An object $c\in\CC$ is said to be \emph{internally $S$-local} if the unique morphism $c\to 1$ is contained in $\RR^\prime$. 
    
Given any factorisation system $(\LL,\RR)$ in $\CC$ in which $\LL$ is the saturation of a small set of maps in $\CC$, the inclusion $\RR\into\Fun(\Delta^1,\CC)$ admits a left adjoint that carries a map $f\in\Fun(\Delta^1,\CC)$ to the map $r\in\RR$ that arises from the unique factorisation $f\simeq rl$ into maps $l\in \LL$ and $r\in \RR$. By taking fibres over an object $c\in\CC$, one furthermore obtains a Bousfield localisation $\Over{\CC}{c}\leftrightarrows\Over{\RR}{c}$ such that if $f\colon d\to c$ is an object in $\Over{\CC}{c}$ and if $f\simeq rl$ is its unique factorisation into maps $l\in \LL$ and $r\in \RR$, the adjunction unit is given by $l$.

\subsection{Simplicial objects, $\BB$-categories and $\BB$-groupoids}
\label{sec:BCategories}
If $\BB$ is an arbitrary $\infty$-topos, we denote by $\Simp{\BB}=\Fun(\Delta^{\op},\BB)$ the $\infty$-topos of simplicial objects in $\BB$. Note that the adjunction $(\const\dashv \Gamma)\colon\SS\leftrightarrows\BB$ yields via postcomposition an induced adjunction $(\const\dashv\Gamma)\colon \Simp{\SS}\leftrightarrows \Simp\BB$ on the level of simplicial objects. We will often implicitly identify a simplicial $\infty$-groupoid $K$ with its image in $\Simp\BB$ along $\const_{\BB}$.

For every $n\geq 1$, we denote by $I^n=\Delta^1\sqcup_{\Delta^0}\cdots\sqcup_{\Delta^0}\Delta^1\into\Delta^n$ the $n$-spine, viewed as a simplicial $\infty$-groupoid. Furthermore, we denote by $E^1=(\Delta^0\sqcup\Delta^0)\sqcup_{(\Delta^1\sqcup\Delta^1)}\Delta^3$ the walking equivalence.
\begin{definition}[{\cite[Definitions~3.1.5 and~3.2.1]{martini2021}}]
	\label{def:BCategories}
	A \emph{$\BB$-category} is a simplicial object $\I{C}\in\Simp\BB$ that is internally local with respect to $I^2\into\Delta^2$ (Segal conditions) and $E^1\to 1$ (univalence). We denote by $\Cat(\BB)\into\Simp\BB$ the full subcategory spanned by the $\BB$-categories. A \emph{$\BB$-groupoid} is a simplicial object $G\in\Simp\BB$ which is internally local with respect to $\Delta^1\to\Delta^0$. We denote by $\Grpd(\BB)\into\Simp\BB$ the full subcategory spanned by the $\BB$-groupoids.
\end{definition}

\begin{remark}[{\cite[Proposition~3.2.7]{martini2021}}]
	\label{rem:BCategoriesExplicitly}
	More explicitly, a simplicial object $C$ is a $\BB$-category if and only if for all $n\geq 2$ the maps $C_n\to C_1\times_{C_0} \cdots\times_{C_0} C_1$ as well as the map $C_0\to (C_0\times C_0)\times_{C_1\times C_1} C_3$ are equivalences.
\end{remark}
\begin{remark}
	\label{rem:walkingEquivalence}
	There are several non-equivalent definitions of the walking equivalence. For example, Charles Rezk~\cite[\S~6]{rezk2001} defines the walking equivalence as the simplicial set $J$ that arises as the nerve of the category with two objects and a unique isomorphism between them. Our model $E^1$ (that we adopted from~\cite[Notation~1.1.12]{lurie2009b}), on the other hand, is comprised of a map together with \emph{separate} left and right inverses. Nevertheless, either choice gives rise to the same notion of $\BB$-categories: there is a natural map $E^1\to J$ which is contained in the internal saturation of $I^2\into\Delta^2$, i.e.\ which becomes an equivalence when imposing the Segal conditions. This can be extracted from the discussion in~\cite[\S~6]{rezk2001}, see also~\cite[\S~2.4]{Rasekh2018b}.
\end{remark}

\begin{proposition}[{\cite[Proposition~3.2.9, Remark~3.2.10 and Proposition~3.2.11]{martini2021}}]
	\label{prop:CatBPresentable}
	The inclusion $\Cat(\BB)\into\Simp\BB$ preserves filtered colimits and admits a left adjoint which preserves finite products. Therefore, $\Cat(\BB)$ is presentable and an exponential ideal in $\Simp\BB$, so in particular cartesian closed.
\end{proposition}
We will denote by $\iFun(-,-)$ the internal hom in $\Cat(\BB)$ and refer to it as the \emph{functor $\BB$-category} bifunctor.

\begin{proposition}[{\cite[after Corollary~3.2.12]{martini2021}}]
	\label{prop:GroupoidificationCore}
	A simplicial object in $\BB$ is a $\BB$-groupoid if and only if it is constant (i.e.\ contained in the essential image of the diagonal embedding $\iota\colon\BB\into\Simp\BB$), and every $\BB$-groupoid is a $\BB$-category. Moreover, the resulting embedding $\BB\simeq \Grpd(\BB)\into\Cat(\BB)$ admits both a left adjoint $(-)^{\gp}$ (the \emph{groupoidification functor}) and a right adjoint $(-)^\core$ (the \emph{core $\BB$-groupoid functor}). Explicitly, if $\I{C}$ is a $\BB$-category, one has $\I{C}^\gp \simeq \colim_{\Delta^{\op}}\I{C}$ and $\I{C}^{\simeq}\simeq \I{C}_0$.
\end{proposition}

\begin{definition}
	\label{def:oppositeBCategory}
	If $\I{C}$ is a $\BB$-category, we denote by $\I{C}^\op$ the simplicial object that is obtained by precomposing $\I{C}\colon\Delta^\op\to\BB$ with the involution $(-)^\op\colon\Delta\simeq\Delta$ that carries $\ord{n}$ (viewed as a $0$-category) to its opposite $\ord{n}^\op$. The simplicial object $\I{C}^\op$ is again a $\BB$-category that we refer to as the \emph{opposite $\BB$-category} of $\I{C}$.
\end{definition}

\begin{remark}
	The equivalence $(-)^\op\colon\Cat(\BB)\simeq\Cat(\BB)$ from Definition~\ref{def:oppositeBCategory} restricts to the identity on $\Grpd(\BB)$. In fact, this follows immediately from the observation that $\BB$-groupoids are constant simplicial objects (see Proposition~\ref{prop:GroupoidificationCore}).
\end{remark}

\begin{remark}[{\cite[\S~3.3]{martini2021}}]
	\label{rem:functorialityBCategories}
	If $f_\ast\colon \BB\to\AA$ is a geometric morphism and if $f^\ast$ is the associated algebraic morphism, postcomposition induces an adjunction $f^\ast\dashv f_\ast\colon \Cat(\AA)\leftrightarrows\Cat(\BB)$. In particular, one obtains an adjunction $\const_{\BB}\dashv\Gamma_{\BB}\colon \CatS\leftrightarrows\Cat(\BB)$. We will often implicitly identify an $\infty$-category $\CC$ with the associated \emph{constant $\BB$-category} $\const_{\BB}(\CC)\in\Cat(\BB)$. Furthermore, if the geometric morphism $f_\ast$ is \emph{\'etale}, the further left adjoint $f_!$ of $f^\ast$ also induces a functor $f_!\colon \Cat(\BB)\to\Cat(\AA)$ that identifies $\Cat(\BB)$ with $\Over{\Cat(\AA)}{f_! 1}$. 
\end{remark}

By making use of the adjunction $\const_{\BB}\dashv\Gamma_{\BB}\colon \CatS\leftrightarrows\Cat(\BB)$ and the internal hom $\iFun(-,-)$ as well as the product $-\times -$ in $\Cat(\BB)$, one can define bifunctors
\begin{align}
	\Fun_{\BB}(-,-)=\Gamma_{\BB}\circ\iFun(-,-)&\colon \Cat(\BB)^\op\times\Cat(\BB)\to\CatS \tag{Functor $\infty$-category} \\
	(-)^{(-)}=\iFun(\const_{\BB}(-),-)&\colon \CatS^\op\times\Cat(\BB)\to\Cat(\BB) \tag{Powering}\\
	-\otimes - = \const_{\BB}(-)\times -&\colon\CatS\times\Cat(\BB)\to\Cat(\BB) \tag{Tensoring}
\end{align}
which fit into equivalences
\begin{equation*}
	\map{\Cat(\BB)}(-\otimes -, -)\simeq \map{\CatS}(-,\Fun_{\BB}(-,-))\simeq\map{\Cat(\BB)}(-, (-)^{(-)})
\end{equation*}
(see~\cite[\S~3.4]{martini2021}). In particular, we have $\Fun_{\BB}(-,-)^\core\simeq\map{\Cat(\BB)}(-,-)$, so that $\Fun_{\BB}(-,-)$ gives rise to a $\CatS$-enrichement of $\Cat(\BB)$ and therefore an $(\infty,2)$-categorical enhancement of $\Cat(\BB)$~\cite[Remark~3.4.3]{martini2021}.

\begin{remark}[{\cite[Proposition~3.1.2]{martini2021}}]
	\label{rem:identificationObjectOfNMorphisms}
	There is an equivalence of functors $\id_{\Cat(\BB)}\simeq ((-)^{\Delta^\bullet})^\simeq$. In other words, for any $\BB$-category $\I{C}$ and any integer $n\geq 0$ one may canonically identify $\I{C}_n\simeq (\I{C}^{\Delta^n})_0$.
\end{remark}

We conclude this section with a remark on \emph{large} $\BB$-categories: observe that postcomposition with the universe enlargement $\BB\into\BBB$ from \S~\ref{sec:universeEnlargement} determines an inclusion $\Cat(\BB)\into\Cat(\BBB)$ that is natural in $\BB$ both with respect to geometric and algebraic morphisms of $\infty$-topoi~\cite[\S~3.3]{martini2021}. Furthermore, the inclusion commutes with small limits and the internal hom~\cite[Proposition~3.4.1]{martini2021} and therefore also the tensoring, powering and functor $\infty$-category bifunctors~\cite[Corollary~3.4.2]{martini2021}. We refer to the objects in $\Cat(\BBB)$ as \emph{large} $\BB$-categories (or as $\BBB$-categories) and to the objects in $\Cat(\BB)$ as \emph{small} $\BB$-categories. If not specified otherwise, every $\BB$-category is small. Note, however, that by replacing the universe $\bU$ with the larger universe $\bV$ (i.e.\ by working internally to $\BBB$), every statement about $\BB$-categories carries over to one about large $\BB$-categories as well. Also, we will often omit specifying the relative size of a $\BB$-category if it is evident from the context, and we will continue writing $\iFun(\I{C},\I{D})$ for the internal hom even if $\I{C}$ and $\I{D}$ are large.

\subsection{$\BB$-categories as sheaves of $\infty$-categories}
\label{sec:parametrisedCategories}
One may equivalently regard a $\BB$-category as a \emph{sheaf} of $\infty$-categories on $\BB$, by which we mean a functor $\BB^{\op}\to\CatS$ that preserves small limits:
\begin{proposition}[{\cite[Proposition~3.5.1 and Remark~3.5.6]{martini2021}}]
	\label{prop:equivalenceBCategoriesSheaves}
	There is a natural equivalence of $\infty$-categories $\Cat(\BB)\simeq\Shv_{\CatS}(\BB)$ that sends $\I{C}\in\Cat(\BB)$ to the sheaf $\Fun_{\BB}(\iota(-),\I{C})$ (where $\iota\colon \BB\into\Cat(\BB)$ is the diagonal embedding) and that restricts along the diagonal embedding $\iota\colon \BB\into\Cat(\BB)$ to the equivalence $\BB\simeq \Shv_{\SS}(\BB)$.
\end{proposition}
Hereafter, we will often implicitly identify a $\BB$-category $\I{C}$ with the associated sheaf $\Fun_{\BB}(\iota(-),\I{C})$. That is, we usually write $\I{C}(A)=\Fun_{\BB}(\iota(A),\I{C})$ for the $\infty$-category of \emph{local sections} over $A\in \BB$, and we write $s^\ast\colon \I{C}(B)\to\I{C}(A)$ for the restriction functor along a map $s\colon B\to A$ in $\BB$.

\begin{remark}[{cf.~\cite[Remark~3.1.1]{martini2021}}]
	\label{rem:sheafFromBCategoryExplicitly}
	More explicitly, the $\infty$-category $\I{C}(A)=\Fun_{\BB}(\iota(A),\I{C})$ is given by the complete Segal space whose space of $n$-morphisms is given by the $\infty$-groupoid $\map{\BB}(A, \I{C}_n)$. In particular, the equivalence $\Cat(\BB)\simeq\Shv_{\CatS}(\BB)$ from Proposition~\ref{prop:equivalenceBCategoriesSheaves} commutes both with taking core $\BB$-groupoids and opposite $\BB$-categories, in the sense that we have equivalences of sheaves $\I{C}^\core(-)\simeq\I{C}(-)^\core$ and $\I{C}^\op(-)\simeq\I{C}(-)^\op$.
\end{remark}

\begin{remark}
	\label{rem:internalVsParametrised}
	One may interpret Proposition~\ref{prop:equivalenceBCategoriesSheaves} as a correspondence between \emph{internal} and \emph{parametrised} higher category theory. Both approaches have their specific advantages: the upshot of the internal approach is that one can often use a statement about $\infty$-categories and simply interpret it internally in $\BB$ in order to obtain the corresponding statement for $\BB$-categories. On the other hand, it is usually easier to construct a particular $\BB$-category via its associated sheaf of $\infty$-categories. In fact, most examples that are of practical interest arise in this way.
\end{remark}

\begin{remark}[{\cite[\S~3.5]{martini2021}}]
	\label{rem:BCategoriesSheavesFunctoriality}
	The equivalence $\Cat(\BB)\simeq\Shv_{\CatS}(\BB)$ is natural in $\BB$: if $f_\ast\colon \BB\to\AA$ is a geometric morphism and $f^\ast$ denotes its left adjoint, one obtains commutative squares
	\begin{equation*}
		\begin{tikzcd}
			\Cat(\BB)\arrow[d, "f_\ast"]\arrow[r, "\simeq"] & \Shv_{\CatS}(\BB)\arrow[d, "f_\ast"] & & \Cat(\BB)\arrow[from=d, "f^\ast"]\arrow[r, "\simeq"] & \Shv_{\CatS}(\BB)\arrow[from=d, "f^\ast"]\\
			\Cat(\AA)\arrow[r, "\simeq"] & \Shv_{\CatS}(\AA) && \Cat(\AA)\arrow[r, "\simeq"] & \Shv_{\CatS}(\AA).
		\end{tikzcd}
	\end{equation*}
	Explicitly, $f_\ast\colon \Shv_{\CatS}(\BB)\to\Shv_{\CatS}(\AA)$ is given by restriction along $f^\ast\colon \AA\to\BB$. In particular, we may identify $\I{C}(1)\simeq\Gamma_{\BB}(\I{C})$ for every $\BB$-category $\I{C}$. Furthermore,  $f^\ast\colon\Shv_{\CatS}(\AA)\to\Shv_{\CatS}(\BB)$ is given by left Kan extension along $f^\ast\colon\AA\to\BB$. Thus, if the latter functor admits an additional left adjoint $f_!$, then $f^\ast\colon\Shv_{\CatS}(\AA)\to\Shv_{\CatS}(\BB)$ is simply given by precomposition with $f_!$.
\end{remark}

\begin{remark}[{\cite[Proposition~3.5.1]{martini2021}}]
	\label{rem:BCategoriesSheavesSize}
	The equivalence between $\BB$-categories and sheaves of $\infty$-categories respects universe enlargement in the following sense: there is a commutative square
	\begin{equation*}
		\begin{tikzcd}
			\Cat(\BB)\arrow[r, "\simeq"]\arrow[d, hookrightarrow] & \Shv_{\CatS}(\BB)\arrow[d, hookrightarrow]\\
			\Cat(\BBB)\arrow[r, "\simeq"] & \Shv_{\CatSS}(\BB)
		\end{tikzcd}
	\end{equation*}
	in which the lower horizontal equivalence is obtained by sending a large $\BB$-category $\I{C}$ to $\Fun_{\BBB}(\iota(-), \I{C})$, where $\iota\colon \BB\into\BBB\into \Cat(\BBB)$ is the inclusion.
\end{remark}

We conclude this section by noting that the sheaf-theoretic perspective on $\BB$-categories also gives rise to a \emph{fibrational} point of view: on account of the inclusion $\Shv_{\CatSS}(\BB)\into\PSh_{\CatSS}(\BB)$ and by making use of the straightening/unstraightening equivalence $\PSh_{\CatSS}(\BB)\simeq\Cart(\BB)$ between $\CatSS$-valued presheaves on $\BB$ and \emph{cartesian fibrations} over $\BB$ (see~\cite[\S~3.2]{htt}), we obtain a full embedding $\Cat(\BBB)\into\Cart(\BB)$ which sends a (large) $\BB$-category $\I{C}$ to its underlying cartesian fibration $\int\I{C}\to\BB$.

\subsection{Objects and morphisms in $\BB$-categories}
\label{sec:objectsMorphisms}
Observe that by combining Proposition~\ref{prop:equivalenceBCategoriesSheaves} with the two-variable adjunctions between the bifunctors $\Fun_{\BB}(-,-)$, $-\otimes -$ and $(-)^{(-)}$, one obtains equivalences
\begin{equation*}
	\I{C}^{\Delta^n}(A)^\core\simeq\map{\Cat(\BB)}(A, \I{C}^{\Delta^n})\simeq \map{\Cat(\BB)}(\Delta^n\otimes A, \I{C})\simeq \map{\CatS}(\Delta^n, \I{C}(A))
\end{equation*}
for every $A\in\BB$, every $\I{C}\in\Cat(\BB)$ and each $n\in\mathbb N$
(where we leave the diagonal embedding $\BB\into\Cat(\BB)$ implicit). Moreover, by combining Proposition~\ref{prop:GroupoidificationCore} with Remark~\ref{rem:identificationObjectOfNMorphisms}, we may furthermore compute
\begin{equation*}
	\map{\Cat(\BB)}(A,\I{C}^{\Delta^n})\simeq\map{\BB}(A, \I{C}_n).
\end{equation*}
In other words, the datum of a map $A\to \I{C}^{\Delta^n}$ in $\Cat(\BB)$ is equivalent to that of a map $\Delta^n\otimes A\to \I{C}$ in $\Cat(\BB)$, a map $A\to\I{C}_n$ in $\BB$ as well as a functor $\Delta^n\to\I{C}(A)$ of $\infty$-categories.

\begin{definition}
	\label{def:nMorphisms}
	Let $\I{C}$ be a $\BB$-category and let $A\in\BB$ be an object. For a given integer $n\geq 0$, an \emph{$n$-morphism in $\I{C}$ in context $A$} is a map $A\to \I{C}^{\Delta^n}$ in $\Cat(\BB)$. If $n=0$, we simply speak of an \emph{object} in $\I{C}$ in context $A$, and for $n=1$ we refer to such a map as a \emph{morphism} in $\I{C}$ in context $A$. Given objects $c,d\colon A\rightrightarrows \I{C}$, one defines the \emph{mapping $\Over{\BB}{A}$-groupoid} $\map{\I{C}}(c,d)$ as the pullback
	\begin{equation*}
		\begin{tikzcd}
			\map{\I{C}}(c,d)\arrow[r]\arrow[d] & \I{C}_1\arrow[d, "{(d_1,d_0)}"]\\
			A\arrow[r, "{(c,d)}"]& \I{C}_0\times\I{C}_0.
		\end{tikzcd}
	\end{equation*}
	We denote a section $f\colon A\to \map{\I{C}}(c,d)$ by $f\colon c\to d$. 
\end{definition}

\begin{remark}[{\cite[\S~3.6]{martini2021}}]
	Equivalently, the mapping $\Over{\BB}{A}$-groupoid $\map{\I{C}}(c,d)$ can be defined as the pullback of $(d_1,d_0)\colon\I{C}^{\Delta^1}\to\I{C}\times\I{C}$ along $(c,d)\colon A\to\I{C}\times\I{C}$.
\end{remark}

\begin{remark}
	 Viewed as an $\SS$-valued sheaf on $\Over{\BB}{A}$, the object $\map{\I{C}}(c,d)$ from Definition~\ref{def:nMorphisms} is given by the assignment
	\begin{equation*}
		\Over{\BB}{A}\ni(s\colon B\to A)\mapsto \map{\I{C}(B)}(s^\ast c,s^\ast d)
	\end{equation*}
	where $s^\ast c= c s$ and likewise for $d$.
\end{remark}

More generally, if $c_0,\dots,c_n$ are objects in context $A$ in $\I{C}$, one writes $\map{\I{C}}(c_0,\dots,c_n)$ for the pullback of $(d_n,\dots,d_0)\colon \I{C}_n\to \I{C}_0^{n+1}$ along the map $(c_0,\dots,c_n)\colon A\to \I{C}_0^{n+1}$. Using the Segal conditions, one obtains an equivalence
\begin{equation*}
	\map{\I{C}}(c_0,\dots,c_n)\simeq\map{\I{C}}(c_0,c_1)\times_A\cdots\times_A\map{\I{C}}(c_{n-1},c_n).
\end{equation*}
By combining this identification with the map $\map{\I{C}}(c_0,\dots,c_n)\to\map{\I{C}}(c_0,c_n)$ that is induced by the map $d_{\{0,n\}}\colon \I{C}_n\to\I{C}_1$, one obtains a composition map
\begin{equation*}
	\map{\I{C}}(c_0,c_1)\times_A\cdots\times_A\map{\I{C}}(c_{n-1},c_n)\to\map{\I{C}}(c_0,c_n).
\end{equation*}
Given maps $f_i\colon c_{i-1}\to c_i$ in $\I{C}$ for $i=1,\dots, n$, we write $f_1\cdots f_n$ for their composition. By making use of the simplicial identities, it is straightforward to verify that composition is associative and unital, i.e.\ that the relations $f(gh)\simeq (fg)h$ and $f\id\simeq f\simeq \id f$ as well as their higher analogues hold whenever they make sense, see~\cite[Proposition~5.4]{rezk2001} for a proof.

\begin{remark}
	\label{rem:contexts}
	As a $\BB$-category $\I{C}$ is determined by the associated sheaf of $\infty$-categories on $\BB$ but not just by the underlying $\infty$-category $\Gamma_{\BB}(\I{C})$ of global sections, it is crucial that we allow objects and morphisms in $\I{C}$ to have arbitrary context $A\in\BB$. In other words, we need to allow objects and morphisms to be only \emph{locally} defined, where by the term \emph{local} we allude to the point of view that the base $\infty$-topos $\BB$ can be thought of as a spatial object. Alternatively, this phenomenon can be viewed as a shadow of the notion of contexts in type theory (hence the name), where they are needed to keep track of the types of the variables that occur in a formula. More precisely, when regarding the theory of $\BB$-categories as a model of simplicial homotopy type theory~\cite{Riehl2017b}, the type-theoretic notion of contexts exactly translates into our notion of contexts.
\end{remark}

\begin{remark}
	\label{rem:localityPrincipleObjectsMorphisms}
	At first, the fact that objects and morphisms of a $\BB$-category $\I{C}$ have non-global context $A$ might appear to complicate things, but in practice this is usually not the case: in fact, by making use of the adjunction $(\pi_A)_!\dashv\pi_A^\ast\colon \Over{\BB}{A}\leftrightarrows\BB$ and by the observations made in Remark~\ref{rem:functorialityBCategories}, the datum of an object $c\colon A\to \I{C}$ precisely corresponds to that of an object $\bar c\colon1_{\Over{\BB}{A}}\to \pi_A^\ast\I{C}$, where $\pi_A^\ast\I{C}\in\Cat(\Over{\BB}{A})$ is the image of $\I{C}$ along the base change functor $\pi_A^\ast\colon \Cat(\BB)\to\Cat(\Over{\BB}{A})\simeq\Over{\Cat(\BB)}{A}$. In other words, upon replacing $\BB$ with $\Over{\BB}{A}$ and $\I{C}$ with $\pi_A^\ast\I{C}$, object in context $A$ are turned into objects in global context. Very often, we will make use of this correspondence in order to be able to restrict our attention to objects and morphisms in global context (see \S~\ref{sec:localityPrinciple} below for more details on this strategy).
\end{remark}

\begin{remark}
	\label{rem:tautologicalObject}
	Observe that for every $\BB$-category $\I{C}$ there is a distinguished object $\tau\colon \I{C}_0\to \I{C}$ that is determined by the counit of the adjunction $\iota\dashv (-)_0\colon \Cat(\BB)\leftrightarrows\BB$ from Proposition~\ref{prop:GroupoidificationCore}. We refer to $\tau$ as the \emph{tautological} object of $\I{C}$. By definition, \emph{every} object $c\colon A\to\I{C}$ arises as a pullback of $\tau$, in the sense that we have $c\simeq c^\ast \tau$ (where $c^\ast\colon \I{C}(\I{C}_0)\to\I{C}(A)$ is the restriction functor). In that way, many questions about an arbitrary object in a $\BB$-category can be reduced to questions about the tautological object.
\end{remark}

We conclude this section with a discussion of \emph{equivalences} in $\BB$-categories. To that end, given any object $c\colon A\to\I{C}$ in a $\BB$-category $\I{C}$, let us denote by $\id_c\colon c\to c$ the morphism that is determined by the lift $s_0 c\colon A\to \I{C}_0\to\I{C}_1$ of $(c,c)\colon A\to \I{C}_0\times\I{C}_0$.
\begin{definition}
	\label{def:equivalencesBCategories}
	A morphism $f\colon c\to d$ in $\I{C}$ is an \emph{equivalence} if there are maps $g\colon c\to d$ and $h\colon c\to d$ (all in context $A$) such that $gf\simeq \id_c$ and $fh\simeq \id_d$.
\end{definition}

As a consequence of univalence, one finds:
\begin{proposition}[{\cite[Corollary~3.6.3]{martini2021}}]
	\label{prop:characterisationEquivalences}
	A map $f\colon A\to \I{C}^{\Delta^1}$ in a $\BB$-category $\I{C}$ is an \emph{equivalence} if it factors through $s_0\colon \I{C}\into\I{C}^{\Delta^1}$.
\end{proposition}
In other words, every equivalence $f\colon A\to\I{C}_1$ is equivalent (in the $\infty$-groupoid $\I{C}_1(A)$) to an identity.

\subsection{Fully faithful functors and full subcategories}
\label{sec:fullyFaithfulFunctors}
A functor $f\colon \I{C}\to\I{D}$ between $\BB$-categories is said to be \emph{fully faithful} if it is internally right orthogonal to the map $\Delta^0\sqcup\Delta^0\to \Delta^1$. Dually, a functor is \emph{essentially surjective} if is (internally) left orthogonal to the class of fully faithful functors. Therefore, it formally follows that fully faithful functors are stable under small limits in $\Fun(\Delta^1,\Cat(\BB))$ and are preserved by the endofunctor $\iFun(\I{C},-)$ for every $\BB$-category $\I{C}$~\cite[Proposition~3.8.4]{martini2021}. Moreover, a functor of $\BB$-categories is an equivalence if and only if it is fully faithful and essentially surjective~\cite[Proposition~3.8.3]{martini2021}, and every functor can be uniquely factored into an essentially surjective and a fully faithful functor. In other words, the \emph{essential image} of a functor between $\BB$-categories is well-defined.

Fully faithful and essentially surjective functors can be characterised as follows:
\begin{proposition}[{\cite[Proposition~3.8.6 and~3.8.7]{martini2021}}]
	\label{prop:classificationFullSubcategories}
	For a functor $f\colon \I{C}\to\I{D}$ of $\BB$-categories, the following are equivalent:
	\begin{enumerate}
	\item The functor $ f $ is fully faithful;
	\item the square
	\begin{equation*}
		\begin{tikzcd}
			\I{C}_1\arrow[r, "f_1"]\arrow[d] & \I{D}_1\arrow[d]\\
			\I{C}_0\times \I{C}_0\arrow[r, "f_0\times f_0"] & \I{D}_0\times \I{D}_0
		\end{tikzcd}
	\end{equation*}
	is a pullback;
	\item for every $A\in\BB$ and any two objects $c_0,c_1\colon A\to \I{C}$ in context $A$, the morphism
	\begin{equation*}
		\map{\I{C}}(c_0,c_1)\to\map{\I{D}}(f(c_0), f(c_1))
	\end{equation*}
	that is induced by $ f $ is an equivalence in $\Over{\BB}{A}$;
	\item for every $A\in \BB$ the functor $f(A)\colon \I{C}(A)\to\I{D}(A)$ of $\infty$-categories is fully faithful.
\end{enumerate}
\end{proposition}
\begin{proposition}[{\cite[Corollary~3.8.12]{martini2021}}]
	\label{prop:characterisationEssentiallySurjective}
	A functor $f\colon \I{C}\to\I{D}$ is essentially surjective if and only if $f_0\colon\I{C}_0\to\I{D}_0$ is a cover (i.e.\ an effective epimorphism) in $\BB$.
\end{proposition}

Fully faithful functors are in particular monomorphisms, hence the full subcategory $\Sub^{\mathrm{full}}(\I{D})\into\Over{\Cat(\BB)}{\I{D}}$ that is spanned by the fully faithful functors into $\I{D}$ is a poset whose objects we call \emph{full subcategories} of $\I{D}$.
\begin{proposition}[{\cite[Proposition~3.9.3]{martini2021}}]
	\label{prop:fullSubcategoriesParametrisation}
	Taking core $\BB$-groupoids determines an equivalence of posets $\Sub^{\mathrm{full}}(\I{D})\simeq \Sub(\I{D}_0)$ between the poset of full subcategories of $\I{D}$ and the poset of subobjects of $\I{D}_0\in\BB$. 
\end{proposition}
In particular, Proposition~\ref{prop:fullSubcategoriesParametrisation} implies that specifying a full subcategory of $\I{D}$ is equivalent to specifying a subobject of $\I{D}_0$. Therefore, if $(d_i\colon A_i\to \I{D})_{i\in I}$ is a family of objects in $\I{D}$, we may define the full subcategory of $\I{D}$ that is \emph{spanned} by these objects as the unique full subcategory of $\I{D}$ whose core $\BB$-groupoid is given by the image of the induced morphism $(d_i)\colon\bigsqcup_i A_i\to \I{D}$ in $\BB$~\cite[Definition~3.9.7]{martini2021}. Note that this is possible even if the family is large~\cite[Remark~3.9.8]{martini2021}.

\subsection{The universe for $\BB$-groupoids}
\label{sec:universe}
By straightening the codomain fibration $\Fun(\Delta^1,\BB)\to\BB$, one obtains a functor $\Over{\BB}{-}\colon \BB^{\op}\to\CatSS$ that preserves small limits since $\BB$ is an $\infty$-topos~\cite[Proposition~6.1.3.9]{htt}. In other words, $\Over{\BB}{-}$ is a sheaf of (large) $\infty$-categories and therefore (by Remark~\ref{rem:BCategoriesSheavesSize}) determined by a large $\BB$-category $\Univ[\BB]$ that we refer to as the \emph{universe for $\BB$-groupoids}~\cite[\S~3.7]{martini2021}. We will often omit the subscript if it is clear from the context. By definition, we have equivalences $\Univ(A)\simeq\Over{\BB}{A}\simeq\Grpd(\Over{\BB}{A})$. In other words, the objects in $\Univ$ in context $A$ are precisely given by the $\Over{\BB}{A}$-groupoids, an observation which justifies its name. Moreover, we have:
\begin{proposition}[{\cite[Proposition~3.7.3]{martini2021}}]
	\label{prop:mappingObjectsInternalUniverse}
	For any two objects $g,h$ in $\Univ$ in context $A\in\BB$ that correspond to $\Over{\BB}{A}$-groupoids $\I{G},\I{H}\in\Grpd(\Over{\BB}{A})\simeq\Over{\BB}{A}$, there is an equivalence
	\begin{equation*}
		\map{\Univ}(g,h)\simeq \Hom_{\Over{\BB}{A}}(\I{G},\I{H})
	\end{equation*}
	in $\Over{\BB}{A}$, where $\Hom_{\Over{\BB}{A}}(-,-)$ denotes the internal  hom in $\Over{\BB}{A}$.
\end{proposition}

\begin{remark}
	\label{rem:universeUMP}
	The universe $\Univ$ is to be regarded as the $\BB$-categorical analogue of the $\infty$-category $\SS$ of $\infty$-groupoids. In fact, the main result of this paper (Theorem~\ref{thm:universalPropertyPSh}) implies in particular that $\Univ$ is characterised among $\BB$-categories by the same universal property that characterises $\SS$ among $\infty$-categories (namely as the free cocompletion of the point).
\end{remark}

We refer to a full subcategory of $\Univ$ as a \emph{subuniverse}. It follows from item~(4) of Proposition~\ref{prop:classificationFullSubcategories} and the definition of $\Univ$ that every such subuniverse corresponds precisely to \emph{local class} of morphisms in $\BB$, i.e.\ a class $S$ that satisfies the condition that a morphism $p\colon P\to A$ in $\BB$ is contained in $S$ if and only if it is \emph{locally} contained in $S$, i.e.\ if and only if for every cover $(s_i)\colon\sqcup_i A_i\onto A$ in $\BB$, the maps $s_i^\ast(p)\colon A_i\times_A P\to A_i$ are contained in $S$ (see~\cite[\S~6.1.3 and Proposition~6.2.3.14]{htt}). In other words. we have:
\begin{proposition}[{\cite[Proposition~3.9.12]{martini2021}}]
	\label{prop:classificationSubuniverses}
	There is an equivalence between the partially ordered set of local classes in $\BB$ and $\Sub^{\mathrm{full}}(\Univ)$.
\end{proposition}
For a given local class $S$, we denote the associated subuniverse by $\Univ[S]$.

\begin{example}[{see the discussion towards the end of~\cite[\S~4.5]{martini2021}}]
	\label{ex:smallLargeUniverse}
	Let us say that a map $p\colon P\to A$ in $\BBB$ is \emph{small} if for every map $A^\prime\to A$ in which $A^\prime \in \BB$, the pullback $A^\prime\times_A P$ is contained in $\BB$ as well. This determines a local class of morphisms in $\BBB$ and therefore by Proposition~\ref{prop:classificationSubuniverses} a subuniverse of $\Univ[\BBB]\in\Cat(\BBBB)$ which can be identified with $\Univ[\BB]\in\Cat(\BBB)\into\Cat(\BBBB)$. This exhibits $\Univ[\BB]$ as a full subcategory of $\Univ[\BBB]$.
\end{example}

\subsection{Left fibrations and the Grothendieck construction}
\label{sec:leftFibrations}
A functor $p\colon \I{P}\to\I{C}$ between $\BB$-categories is called a \emph{left fibration} if it is internally right orthogonal to the map $d^1\colon \Delta^0\into\Delta^1$. A functor that is contained in the internal saturation of this map is said to be \emph{initial}. In that way, one obtains a factorisation system between initial maps and left fibrations.
\begin{proposition}[{\cite[Proposition~4.1.3]{martini2021}}]
	\label{prop:leftFibrationsExplicitly}
	A functor $p\colon\I{P}\to\I{C}$ of $\BB$-categories is a left fibration if and only if for every $n\geq 1$ the commutative square
	\begin{equation*}
		\begin{tikzcd}
			\I{P}_n\arrow[r, "p_n"]\arrow[d, "d_{\{0\}}"] & \I{C}_n\arrow[d, "d_{\{0\}}"]\\
			\I{P}_0\arrow[r, "p_0"] & \I{C}_0
		\end{tikzcd}
	\end{equation*}
	is a pullback.
\end{proposition}
The restriction of the codomain fibration $d_0\colon\Fun(\Delta^1,\Cat(\BB))\to\Cat(\BB)$ to the full subcategory of $\Fun(\Delta^1,\Cat(\BB))$ that is spanned by the left fibrations is a cartesian fibration (as left fibrations are stable under pullback) and therefore determines via straightening a functor $\LFib\colon \Cat(\BB)^\op\to\CatSS$. By precomposing this functor with the product bifunctor $-\times -\colon \BB\times\Cat(\BB)\to\Cat(\BB)$ (where we leave the diagonal embedding $\BB\into\Cat(\BB)$ implicit), we therefore end up with a functor
\begin{equation*}
	\LFib(-\times -)\colon \Cat(\BB)^\op\to\PSh_{\CatSS}(\BB),\quad \I{C}\mapsto \ILFib_{\I{C}}=\LFib(-\times\I{C}).
\end{equation*}

\begin{theorem}[{\cite[Theorem~4.5.1]{martini2021}}]
	\label{thm:straightening}
	For every $\BB$-category $\I{C}$, the presheaf $\ILFib_{\I{C}}$ is a sheaf and therefore defines a large $\BB$-category. Furthermore, there is an equivalence
	\begin{equation*}
		\ILFib_{\I{C}}\simeq\iFun(\I{C},\Univ)
	\end{equation*}
	of large $\BB$-categories that is natural in $\I{C}\in\Cat(\BB)$.
\end{theorem}
\begin{remark}
	Theorem~\ref{thm:straightening} is the $\BB$-categorical analogue of straightening/unstraightening for left fibrations~\cite[Theorem~2.2.1.2]{htt}.
\end{remark}

\begin{remark}
	\label{rem:LFibExplicitly}
	By means of the projection $\pr_0\colon A\times\I{C}\to A$, every functor $p\colon\I{P}\to A\times\I{C}$ can be regarded as a map in $\Over{\Cat(\BB)}{A}\simeq\Cat(\Over{\BB}{A})$ (cf.~Remark~\ref{rem:functorialityBCategories}). Now since the forgetful functor $(\pi_A)_!\colon \Over{\BB}{A}\to\BB$ creates pullbacks, it follows (using Proposition~\ref{prop:leftFibrationsExplicitly}) that $p$ is a left fibration of $\Over{\BB}{A}$-categories if and only if it is a left fibration of $\BB$-categories. Consequently, the functor $(\pi_A)_!$ induces an equivalence
	\begin{equation*}
		\LFib_{\Over{\BB}{A}}(\pi_A^\ast\I{C})\simeq\LFib_{\BB}(A\times\I{C})
	\end{equation*}
	(where the subscript indicates internal to which $\infty$-topos we are taking left fibrations). In other words, the objects of $\ILFib_{\I{C}}$ in context $A$ are precisely given by the left fibrations (internal to $\Over{\BB}{A}$) over $\pi_A^\ast\I{C}$.
\end{remark}

\begin{remark}
	\label{rem:rightFibrations}
	Dually, a functor $p\colon \I{P}\to\I{C}$ of $\BB$-categories is a \emph{right fibration} if it is internally right orthogonal to $d^0\colon\Delta^0\into\Delta^1$, and a functor that is contained in the internal saturation of the latter map is said to be \emph{final}. Equivalently, $p$ is a right fibration precisely if $p^\op$ (see Definition~\ref{def:oppositeBCategory}) is a left fibration, and a functor $j$ is final if and only if $j^\op$ is initial. Again, one obtains a factorisation system between final maps and right fibrations, and by the same construction as for left fibrations (or by simply dualising this construction in the appropriate way) one ends up with a functor
	\begin{equation*}
		\RFib(-\times -)\colon \Cat(\BB)^\op\to\PSh_{\CatSS}(\BB),\quad \I{C}\mapsto \IRFib_{\I{C}}=\RFib(\I{C}\times -).
	\end{equation*}
	For every $\BB$-category $\I{C}$, we have $\IRFib_{\I{C}}\simeq \ILFib_{\I{C}^\op}$, hence $\IRFib_{\I{C}}$ defines a large $\BB$-category as well, and one furthermore obtains a natural straightening/unstraightening equivalence
	\begin{equation*}
		\IRFib_{\I{C}}\simeq\IPSh(\I{C}),
	\end{equation*}
	where $\IPSh(\I{C})=\iFun(\I{C}^\op,\Univ)$ is the large $\BB$-category of \emph{presheaves} on $\I{C}$.
\end{remark}

\subsection{Slice $\BB$-categories and initial objects}
\label{sec:sliceBCategories}
We now turn to the most important example of a left fibration:
\begin{definition}
	\label{def:sliceBCategories}
	For any $\BB$-category $\I{C}$ and any object $c\colon A\to \I{C}$, one defines the \emph{slice $\BB$-category} $\Under{\I{C}}{c}$ via the pullback
	\begin{equation*}
		\begin{tikzcd}
			\Under{\I{C}}{c}\arrow[d, "(\pi_c)_!"]\arrow[r] & \I{C}^{\Delta^1}\arrow[d, "{(d^1,d^0)}"]\\
			A\times\I{C}\arrow[r, "{c\times\id}"] & \I{C}\times\I{C}.
		\end{tikzcd}
	\end{equation*}
\end{definition}

\begin{remark}[{\cite[Remark~4.2.2]{martini2021}}]
	\label{rem:baseChangeSlice}
	In the situation of Definition~\ref{def:sliceBCategories}, Remark~\ref{rem:localityPrincipleObjectsMorphisms} allows us to transpose $c\colon A\to\I{C}$ to an object $\bar{c}\colon 1_{\Over{\BB}{A}}\to \pi_A^\ast\I{C}$. Thus, we can also define the slice $\Over{\BB}{A}$-category $\Under{(\pi_A^\ast\I{C})}{\bar c}$, which also comes with a projection $(\pi_{\bar c})_!\colon \Under{(\pi_A^\ast\I{C})}{\bar c}\to\pi_A^\ast\I{C}$. This turns out to produce the same result, in the sense that when applying the forgetful functor $(\pi_A)_!\colon\Cat(\Over{\BB}{A})\to\Cat(\BB)$ to the map $(\pi_{\bar c})_!\colon \Under{(\pi_A^\ast\I{C})}{\bar c}\to\pi_A^\ast\I{C}$, we recover the map $(\pi_c)_!\colon \Under{\I{C}}{c}\to A\times\I{C}$ from Definition~\ref{def:sliceBCategories}. Thus, when regarded as  a $\Over{\BB}{A}$-category, we may identify $\Under{\I{C}}{c}$ with $\Under{(\pi_A^\ast\I{C})}{\bar c}$.
\end{remark}

\begin{remark}
	\label{rem:dualSliceBCategory}
	Dually, by performing the pullback of $(d^1,d^0)$ along $\id\times c\colon \I{C}\times A\to\I{C}\times\I{C}$, one defines the slice $\BB$-category $\Over{\I{C}}{c}$ together with its projection $(\pi_c)_!\colon \Over{\I{C}}{c}\to\I{C}\times A$. Alternatively, this $\BB$-category can be defined via the identity $\Over{\I{C}}{c}\simeq(\Under{\I{C}^\op}{c})^\op$.
\end{remark}

\begin{proposition}[{\cite[Proposition~4.2.7]{martini2021}}]
	\label{prop:sliceProjectionLeftFibration}
	For every object $c\colon A\to\I{C}$ in a $\BB$-category $\I{C}$, the functor $(\pi_c)_!\colon\Under{\I{C}}{c}\to A\times\I{C}$ is a left fibration of $\BB$-categories.
\end{proposition}

\begin{remark}
	By Remark~\ref{rem:baseChangeSlice}, the functor $(\pi_c)_!$ in Proposition~\ref{prop:sliceProjectionLeftFibration} can be regarded as a map in $\Cat(\Over{\BB}{A})$ and is as such a left fibration as well (by either applying Proposition~\ref{prop:sliceProjectionLeftFibration} to the transposed object $\bar c\colon 1_{\Over{\BB}{A}}\to\pi_A^\ast\I{C}$ or by using Remark~\ref{rem:LFibExplicitly}).
\end{remark}

\begin{definition}
	\label{def:initialObject}
	Let $\I{C}$ be a $\BB$-category. An object $c\colon A\to \I{C}$ is said to be \emph{initial} if the transpose map $1\to \pi_A^\ast\I{C}$ defines an initial functor in $\Cat(\Over{\BB}{A})$. 
\end{definition}

\begin{remark}
	In the situation of Definition~\ref{def:initialObject}, one dually says that $c$ is \emph{final} if the transpose map $1\to \pi_A^\ast \I{C}$ defines a final functor in $\Cat(\Over{\BB}{A})$.
\end{remark}

\begin{remark}[{\cite[Remark~4.3.7]{martini2021}}]
	\label{rem:clarificationInitialObject}
	For every object $A\in\BB$, the forgetful functor $(\pi_A)_!\colon\Cat(\Over{\BB}{A})\simeq\Over{\Cat(\BB)}{A}\to\Cat(\BB)$ creates initial maps. Therefore, if $\I{C}$ is a $\BB$-category, an object $c\colon A\to\I{C}$ is initial if and only if the map $(c,\id)\colon A\to\I{C}\times A$ is an initial functor in $\Cat(\BB)$.
\end{remark}

Observe that if $c\colon A\to\I{C}$ is an object in a $\BB$-category $\I{C}$, the identity $\id_c\colon A\to \I{C}^{\Delta^1}$ takes values in $\Under{\I{C}}{c}$. We therefore obtain a section $\id_c\colon A\to \Under{\I{C}}{c}$ of the structure map $\Under{\I{C}}{c}\to A$ (which coincides with the image of $\id_{\bar c}\colon 1_{\Over{\BB}{A}}\to \Under{(\pi_A^\ast\I{C})}{\bar c}$ along the forgetful functor $(\pi_A)_!$, see Remark~\ref{rem:baseChangeSlice}).

\begin{proposition}[{\cite[Proposition~4.3.9 and Remark~4.3.10]{martini2021}}]
	\label{prop:initialityCanonicalSection}
	For any $\BB$-category and any object $c\colon A\to\I{C}$, the section $\id_c\colon A\to \Under{\I{C}}{c}$ is initial as a map in $\Cat(\Over{\BB}{A})$ and therefore defines an initial object of $\Under{\I{C}}{c}$.
\end{proposition}

\begin{corollary}[{\cite[Corollary~4.3.19]{martini2021}}]
	\label{cor:factorisationInternalObject}
	Let $\I{C}$ be a $\BB$-category and let $c\colon A\to\I{C}$ be an object in $\I{C}$. The factorisation of $c$ into an initial map and a left fibration is given by the composition $\pr_1(\pi_c)_!\id_c\colon A\to \Under{(\I{C})}{c}\to \I{C}$ where $\pr_1\colon A\times\I{C}\to\I{C}$ is the projection.
\end{corollary}

\begin{proposition}[{\cite[Proposition~4.3.20]{martini2021}}]
	\label{prop:characterisationinitialObject}
	Let $\I{C}$ be a $\BB$-category. For any object $c\colon A\to \I{C}$, the following are equivalent:
	\begin{enumerate}
		\item  $c$ is an initial object;
		\item the projection $(\pi_c)_!\colon\Under{\I{C}}{c}\to A\times \I{C}$ is an equivalence;
		\item for any object $d\colon B\to \I{C}$ the map $\map{\I{C}}(\pr_0^\ast c,\pr_1^\ast d)\to A\times B$ is an equivalence in $\BB$.
	\end{enumerate}
\end{proposition}

\begin{corollary}[{\cite[Corollary~4.3.21]{martini2021}}]
	\label{cor:universalPropertyInitialObject}
	Let $\I{C}$ be a $\BB$-category and let $c$ and $d$ be objects in $\I{C}$ in context $A\in\BB$ such that $c$ is initial. Then there is a unique map $c\to d$ in $\I{C}$ in context $A$ that is an equivalence if and only if $d$ is initial as well.
\end{corollary}

\subsection{Yoneda's lemma}
\label{sec:YonedaLemma}
The theory of left fibrations can be used to derive a version of Yoneda's lemma for $\BB$-categories. First, we need a functorial version of the mapping $\BB$-groupoid construction. To that end, let us denote by $-\star -\colon\Delta\times\Delta\to\Delta$ the ordinal sum bifunctor. We may now define:

\begin{definition}[{\cite[Definition~4.2.4]{martini2021}}]
	\label{def:twistedArrow}
	Let $\epsilon\colon \Delta\to\Delta$ denote the functor $\ord{n}\mapsto \ord{n}^{\op}\star \ord{n}$. For any $\BB$-category $\I{C}$, we define the \emph{twisted arrow $\BB$-category} $\Tw(\I{C})$ to be the simplicial object given by the composition
	\begin{equation*}
		\Delta^{\op}\xrightarrow{\epsilon^{\op}} \Delta^{\op}\xrightarrow{\I{C}} \BB.
	\end{equation*}
	This defines a functor $\Tw\colon \Cat(\BB)\to\Simp\BB$.
\end{definition}

Note that the functor $\epsilon$ in Definition~\ref{def:twistedArrow} comes along with two canonical natural transformations
\begin{equation*}
	(-)^{\op}\to \epsilon \leftarrow \id_{\Delta}
\end{equation*}
which induces a map of simplicial objects
\begin{equation*}
	\Tw(\I{C})\to \I{C}^{\op}\times\I{C}
\end{equation*}
that is natural in $\I{C}$.
\begin{proposition}[{\cite[Proposition~4.2.5]{martini2021}}]
	For every $\BB$-category $\I{C}$, the simplicial object $\Tw(\I{C})$ is a $\BB$-category, and the map $\Tw(\I{C})\to\I{C}^\op\times\I{C}$ is a left fibration.
\end{proposition}

By applying the straightening/unstraightening equivalence from Theorem~\ref{thm:straightening} to the left fibration $\Tw(\I{C})\to\I{C}^\op\times\I{C}$, one now ends up with a bifunctor
\begin{equation*}
	\map{\I{C}}\colon\I{C}^\op\times\I{C}\to\Univ
\end{equation*}
that sends a pair of objects $(c,d)\colon A\to\I{C}^{\op}\times\I{C}$ to the object $\map{\I{C}}(c,d)\in\Over{\BB}{A}$ from Definition~\ref{def:nMorphisms}.
Upon transposing this bifunctor across the adjunction $\I{C}^\op\times -\dashv \iFun(\I{C}^\op,-)$, one obtains the \emph{Yoneda embedding}
\begin{equation*}
	h_{\I{C}}\colon\I{C}\to\IPSh(\I{C}).
\end{equation*}

\begin{theorem}[{\cite[Theorem~4.7.8]{martini2021}}]
	\label{thm:YonedaLemma}
	For any $\BB$-category $\I{C}$, there is a commutative diagram
	\begin{equation*}
	\begin{tikzcd}
		\I{C}^{\op}\times \IPSh(\I{C})\arrow[dr, "\ev"'] 
		\arrow[r, "h\times \id"] & {\IPSh(\I{C})^{\op}}\times\IPSh(\I{C})\arrow[d, "{\map{\IPSh(\I{C})}(-,-)}"]  \\
		& \Univ
	\end{tikzcd}
	\end{equation*}
	in $\Cat(\BBB)$ (where $\ev$ is the evaluation map).
\end{theorem}

\begin{corollary}[{\cite[Corollary~4.7.16]{martini2021}}]
	\label{cor:YonedaEmbedding}
	For every $\BB$-category $\I{C}$, the Yoneda embedding $h_{\I{C}}$ is fully faithful.
\end{corollary}

\begin{remark}[{\cite[Proposition~4.7.20]{martini2021}}]
	\label{rem:representableFunctorsFibrationalCriterion}
	Explicitly, an object $A\to \IPSh(\I{C})$ is contained in $\I{C}$ if and only if the associated right fibration $p\colon \I{P}\to \I{C}\times A$ admits a final section $A\to \I{P}$ over $A$ (i.e.\ if $\I{P}$ has a final object in global context when viewed as a $\Over{\BB}{A}$-category). If this is the case, one obtains an equivalence $\Over{\I{C}}{c}\simeq \I{P}$ over $\I{C}\times A$ where $c$ is the image of the final section $A\to \I{P}$ along the functor $\I{P}\to \I{C}$.
\end{remark}

\subsection{Context reduction techniques}
\label{sec:localityPrinciple}
As a general rule, every construction and every statement that we make in $\BB$-category theory has to be \emph{local} in $\BB$ and has to be \emph{invariant under \'etale transposition}, in the following sense:
\begin{description}
	\item[locality] For every $A\in\BB$, the base change functor $\pi_{A}^\ast\colon \Cat(\BB)\to\Cat(\Over{\BB}{A})$ preserves all of the structure that we use when reasoning about $\BB$- (resp.\ $\Over{\BB}{A}$-)categories. Furthermore, for every cover (i.e.\ effective epimorphism) $(s_i)\colon\bigsqcup_i A_i\onto A$ in $\BB$ and every object $c\colon A\to\I{C}$ in a $\BB$-category $\I{C}$, a proposition is true for $c$ if and only if it is true for each of the pullbacks $s_i^\ast(p)\colon A_i\to \I{C}$.
	\item[\'etale transposition invariance] For every object $c\colon A\to\I{C}$ in a $\BB$-category $\I{C}$, a proposition holds for $c$ if and only if the same proposition, interpreted internally in $\Over{\BB}{A}$, is true for the transposed object $\bar c\colon 1_{\Over{\BB}{A}}\to\pi_A^\ast\I{C}$ (see Remark~\ref{rem:localityPrincipleObjectsMorphisms}).
\end{description}

\begin{remark}
	\label{rem:localityPrinciplePreservationStructure}
	More concretely, the locality rule asserts that
	\begin{enumerate}
		\item $\pi_A^\ast$ preserves limits and colimits;
		\item there is an equivalence $\pi_A^\ast\const_{\BB}\simeq\const_{\Over{\BB}{A}}$;
		\item $\pi_A^\ast$ commutes with the internal hom $\iFun(-,-)$~\cite[Lemma~4.2.3]{martini2021};
		\item $\pi_A^\ast$ carries the universe $\Univ[\BB]$ to the universe $\Univ[\Over{\BB}{A}]$~\cite[Remark~3.7.2]{martini2021}.
	\end{enumerate}
	From these preservation properties, one can now infer that virtually all constructions that we carry out in $\Cat(\BB)$ are preserved by $\pi_A^\ast$, see Example~\ref{ex:representabilityLocalCondition} below for a few specific  instances.
\end{remark}

\begin{remark}
	\label{rem:everyCoverISSmall}
	In the locality rule, we need not assume that a cover $(s_i)\colon\bigsqcup_i A_i\onto A$ is \emph{small}. In fact, since $\BB$ is presentable and therefore admits a small full subcategory $\GG\subset \BB$ that is \emph{dense} in $\BB$ (i.e.\ which has the property that every $A\in\BB$ is the colimit of the diagram $\Over{\GG}{A}\to \BB$), every large cover can be refined by a small one.
\end{remark}

\begin{remark}
	\label{rem:etaleInvarianceImposed}
	Very often, we simply impose invariance under \'etale transposition by \emph{defining} a property of $c\colon A\to\I{C}$ as a property of its transpose $\bar c\colon 1_{\Over{\BB}{A}}\to\pi_A^\ast\I{C}$ (see for example Definition~\ref{def:initialObject}).
\end{remark}

\begin{remark}
	\label{rem:etaleInvarianceReductionGlobalContext}
	Locality and invariance under \'etale transposition imply that the context of an object is largely irrelevant: if we wish to study the properties of an object $c\colon A\to\I{C}$ in an $\BB$-category $\I{C}$, we may simply pass to the slice $\infty$-topos $\Over{\BB}{A}$, replace $\I{C}$ by $\pi_A^\ast\I{C}$ and $c$ by its transpose $\bar c\colon 1_{\Over{\BB}{A}}\to\pi_A^\ast\I{C}$ and can thus assume that $c$ has had global context to begin with. Note that by locality, $\pi_A^\ast\I{C}$ arises from the very same constructions (internally in $\Over{\BB}{A}$) that are used to define $\I{C}$ (internally in $\BB$), hence every statement about the objects of $\I{C}$  also makes sense as a statement about the objects of $\pi_A^\ast\I{C}$.
\end{remark}

\begin{remark}
	\label{rem:localityPrinciplePropositions}
	If $\I{C}$ is a $\BB$-category and if $P(c)$ is a proposition about an object $c\colon A\to\I{C}$ in context $A\in\BB$, then locality implies that there is a full subcategory $\I{P}\into\I{C}$ that classifies $P$, in the sense that an object $c\colon A\to\I{C}$ is contained in $\I{P}$ if and only if $P(c)$ is true. In fact, we may define $\I{P}$ as the full subcategory that is spanned by the objects $c\colon A\to\I{C}$ in arbitrary context $A$ for which $P(c)$ holds. Explicitly, $\I{P}$ is the unique full subcategory of $\I{C}$ for which $\I{P}_0\into\I{C}_0$ is the image of the map
	\begin{equation*}
		\bigsqcup_{\substack{c\colon A\to\I{C}\\ P(c)~\textrm{holds}}} A\to \I{C}_0
	\end{equation*}
	(cf.~Proposition~\ref{prop:fullSubcategoriesParametrisation}). This means that for the tautological object $\tau\colon \I{P}_0\to\I{P}$ (see Remark~\ref{rem:tautologicalObject}) there is a cover $(s_i)\colon\bigsqcup_i A_i\onto \I{P}_0$ such that $P(s_i^\ast\tau)$ holds for each $i$. Since every object of $\I{P}$ is a pullback of $\tau$ and since covers are stable under base change in $\BB$, this implies that for every object $c\colon A\to\I{P}$ there is a cover $(s_i)\colon \bigsqcup_i A_i\onto A$ such that $P(s_i^\ast c)$ holds. Using the locality rule, we thus deduce that $P(c)$ must be true. Consequently, an object $c\colon A\to\I{C}$ is contained in $\I{P}$ if and only if $P(c)$ holds, as claimed.
\end{remark}

\begin{remark}
	\label{rem:localityPrincipleBaseChangeProposition}
	By combining Remarks~\ref{rem:etaleInvarianceReductionGlobalContext} and~\ref{rem:localityPrinciplePropositions}, if $P(c)$ is a proposition about an object $c\colon A\to\I{C}$ in a $\BB$-category $\I{C}$ and if $\I{P}\into\I{C}$ is the associated classifying full subcategory, then $\pi_A^\ast\I{P}\into\pi_A^\ast\I{C}$ classifies the proposition $P$ interpreted internally in $\Over{\BB}{A}$. In fact, $\pi_A^\ast\I{P}$ is the full subcategory of $\pi_A^\ast\I{C}$ that is spanned by those objects $\bar c\colon B\to\pi_A^\ast\I{C}$ in context $B\in\Over{\BB}{A}$ for which the transpose $c\colon B\to\I{C}$ satisfies $P(c)$ (interpreted internally in $\BB$), which by invariance under \'etale transposition is equivalent to $\bar c$ satisfying $P(\bar c)$ (interpreted internally in $\Over{\BB}{A}$).
\end{remark}

\begin{example}
	\label{ex:representabilityLocalCondition}
	Suppose that $\I{C}$ is a $\BB$-category. Then locality asserts that for every $A\in\BB$, one obtains an equivalence $\pi_A^\ast\IPSh(\I{C})\simeq \IPSh[\Over{\BB}{A}](\pi_A^\ast\I{C})$ (cf.\ the list in Remark~\ref{rem:localityPrinciplePreservationStructure}). In light of this equivalence, one can furthermore identify $\pi_A^\ast(h_{\I{C}})$ with $h_{\pi_A^\ast\I{C}}$~\cite[Lemma~4.7.14]{martini2021} (where $h_{\I{C}}$ is the Yoneda embedding). Hence, an object $F\colon A\to\IPSh(\I{C})$ is representable if and only if its transpose $\bar F\colon1_{\Over{\BB}{A}}\to\IPSh(\pi_A^\ast\I{C})$ is representable, so that this property is indeed invariant under \'etale transposition. It also satisfies the second part of the locality principle, which can be seen as follows: given a cover $(s_i)\colon\bigsqcup_i A_i\onto A$ in $\BB$, the presheaf $F$ being representable precisely means that the map $F\colon A\to \IPSh(\I{C})$ factors through the Yoneda embedding $h\colon \I{C}\into\IPSh(\I{C})$, so clearly $F$ being representable implies that $s_i^\ast(F)=Fs_i$ is representable. Conversely, if each $s_i^\ast(F)$ is representable, one can form the lifting problem
	\begin{equation*}
		\begin{tikzcd}
			\bigsqcup_i A_i\arrow[d, twoheadrightarrow, "(s_i)"]\arrow[r] & \I{C}\arrow[d, "h", hookrightarrow]\\
			A\arrow[r, "F"] \arrow[ur, dashed]& \IPSh(\I{C})
		\end{tikzcd}
	\end{equation*}
	which admits a unique solution (since covers and monomorphisms form a factorisation system in $\BBB$), hence the result follows.
\end{example}

\section{Adjunctions}
\label{chap:Adjunctions}
In this section we will study \emph{adjunctions} between $\BB$-categories. We begin in \S~\ref{sec:adjunctionsDefinitions} by defining such adjunctions as ordinary adjunctions in the underlying bicategory of $\Cat(\BB)$. In \S~\ref{sec:relativeAdjunctions} we compare our definition with \emph{relative} adjunctions and prove a convenient section-wise criterion for when a functor admits a left or right adjoint. In \S~\ref{sec:adjunctionsMappingGroupoids} we discuss an alternative approach to adjunctions based on an equivalence of mapping $\BB$-groupoids. Finally, we discuss the special case of \emph{reflective subcategories} in \S~\ref{sec:reflectiveSubcategories}. 

\subsection{Definitions and basic properties}
\label{sec:adjunctionsDefinitions}
Let $\I{C}$ and $\I{D}$ be $\BB$-categories, let $f,g\colon \I{C}\rightrightarrows\I{D}$ be two functors and let $\alpha\colon f\to g$ be a morphism of functors, i.e.\ a map in $\Fun_{\BB}(\I{C},\I{D})$.  If $h\colon \I{E}\to\I{C}$ is any other functor, we denote by $\alpha h\colon fh\to gh$ the map $h^\ast(\alpha)$ in $\Fun_{\BB}(\I{E},\I{D})$. Dually, if $k\colon \I{D}\to\I{E}$ is an arbitrary functor, we denote by $k\alpha\colon kf\to kg$ the map $k_\ast(\alpha)$ in $\Fun_{\BB}(\I{C},\I{E})$. Having established the necessary notational conventions, we may now define:
\begin{definition}
	\label{def:internalAdjunction}
	Let $\I{C}$ and $ \I{D}$ be $\BB$-categories. An \emph{ adjunction} between $\I{C}$ and $\I{D}$ is a tuple $(l,r,\eta,\epsilon)$, where $l\colon \I{C}\to\I{D}$ and $r\colon\I{D}\to\I{C}$ are functors and where $\eta\colon \id_{ \I{D}}\to rl$ and $\epsilon \colon lr\to \id_{ \I{C}}$ are maps such that there are commutative triangles
	\begin{equation*}
		\begin{tikzcd}
			l\arrow[r, "l\eta"]\arrow[dr, "\id"'] & lrl \arrow[d, "\epsilon l"] & & rlr \arrow[from=r, "\eta r"']\arrow[d, "r\epsilon"'] & r \arrow[dl, "\id"]\\
			& l & & r
		\end{tikzcd}
	\end{equation*}
	in $\Fun_{\BB}(\I{C}, \I{D})$ and in $\Fun_{\BB}(\I{D}, \I{C})$, respectively. We denote such an adjunction by $l\dashv r$, and we refer to $\eta$ as the \emph{unit} and to $\epsilon$ as the \emph{counit} of the adjunction. We say that a pair $(l,r)\colon \I{C}\leftrightarrows \I{D}$ \emph{defines an adjunction} if there exist transformations $\eta$ and $\epsilon$ as above such that the tuple $(l,r,\eta,\epsilon)$ is an adjunction.
\end{definition}

Analogous to how adjunctions between $\infty$-categories can be defined (see~\cite[\S 17]{joyal2008notes}), Definition~\ref{def:internalAdjunction} is equivalent to an adjunction in the underlying homotopy bicategory of the $(\infty,2)$-category $\Cat(\BB)$ (see \S~\ref{sec:BCategories}). We may therefore make use of the usual bicategorical arguments to derive results for adjunctions in $\Cat(\BB)$. 
Hereafter, we list a few of these results, we refer the reader to~\cite[\S~I.6]{gray1974} and \cite[\S~2.1]{Riehl2022a} for proofs.
\begin{proposition}
	\label{prop:adjunctionComposition}
	If $(l \dashv r)\colon \I{C}\leftrightarrows \I{D}$ and $(l^\prime\dashv r^\prime)\colon \I{D}\leftrightarrows \I{E}$ are adjunctions between $\BB$-categories, then the composite functors define an adjunction $(ll^\prime\dashv r^\prime r)\colon \I{C}\leftrightarrows \I{E}$.\qed
\end{proposition}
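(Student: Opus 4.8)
The plan is to treat this as the classical fact that adjunctions compose in any bicategory, applied to the homotopy bicategory of $\Cat(\BB)$. As already observed just above the statement, an adjunction in the sense of Definition~\ref{def:internalAdjunction} is precisely an adjunction in the homotopy bicategory $h\Cat(\BB)$ of the $(\infty,2)$-category $\Cat(\BB)$, whose hom-categories are the homotopy categories of the functor $\infty$-categories $\Fun_{\BB}(-,-)$. In particular all the operations we need---vertical composition of $2$-cells, whiskering of a $2$-cell by a functor on either side, and the interchange law relating horizontal and vertical composition---are available. Thus the whole argument is formal, and it suffices to exhibit a unit and counit for the composite pair and to verify the two triangle identities.

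First I would write down the composite data. Given $(l,r,\eta,\epsilon)$ and $(l',r',\eta',\epsilon')$, set $L=ll'\colon\I{C}\to\I{E}$ and $R=r'r\colon\I{E}\to\I{C}$, and define the candidate unit $\eta''\colon\id_{\I{E}}\to r'rll'$ as the vertical composite $(r'\eta l')\circ\eta'$, where $\eta\colon\id_{\I{D}}\to rl$ is whiskered into the middle of $\eta'\colon\id_{\I{E}}\to r'l'$ by $r'$ on the left and $l'$ on the right. Dually, define the candidate counit $\epsilon''\colon ll'r'r\to\id_{\I{C}}$ as $\epsilon\circ(l\epsilon' r)$, first contracting the inner pair $l'r'$ by $\epsilon'$ (whiskered by $l$ and $r$) and then applying $\epsilon$. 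These are exactly the composites dictated by the conventions of Definition~\ref{def:internalAdjunction}, since the composite adjunction between $\I{C}$ and $\I{E}$ has unit $\id_{\I{E}}\to (r'r)(ll')$ and counit $(ll')(r'r)\to\id_{\I{C}}$.

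It then remains to check the two triangle identities, namely that $(\epsilon'' L)\circ(L\eta'')=\id_L$ in $\Fun_{\BB}(\I{C},\I{E})$ together with the dual identity in $\Fun_{\BB}(\I{E},\I{C})$. This is the standard zig-zag computation: after expanding the whiskerings, the composite $2$-cell for $L$ splits, by the interchange law, into the zig-zag for the inner adjunction $l\dashv r$ nested inside the zig-zag for $l'\dashv r'$; applying the triangle identity $(\epsilon l)\circ(l\eta)=\id_l$ of the first adjunction collapses the inner part to an identity, after which the triangle identity of the second adjunction collapses the remainder. The only genuine work is the bookkeeping of repeated whiskerings and applications of interchange, which is routine once one fixes a string-diagram picture; conceptually nothing beyond the classical bicategorical argument is required. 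Accordingly, the main (and mild) obstacle is purely organisational---keeping the horizontal composites straight---rather than mathematical, the substantive point having already been settled by the identification of internal adjunctions with adjunctions in the homotopy bicategory.
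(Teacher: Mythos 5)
Your proposal is correct and takes essentially the same approach as the paper: the paper also reduces the statement to the classical fact that adjunctions compose in any bicategory, applied to the homotopy bicategory of $\Cat(\BB)$, and simply refers to Gray's book for the standard composite unit $(r'\eta l')\circ\eta'$, composite counit $\epsilon\circ(l\epsilon' r)$, and interchange-law verification of the triangle identities that you write out. Nothing further is needed.
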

\begin{proposition}
	\label{prop:uniquenessAdjoints}
	Adjoints are unique if they exist, i.e\ if $(l\dashv r)$ and $(l\dashv r^\prime)$ are adjunctions between $\BB$-categories, then $r\simeq r^\prime$. Dually, if $(l\dashv r)$ and $(l^\prime\dashv r)$ are adjunctions, then $l\simeq l^\prime$.\qed
\end{proposition}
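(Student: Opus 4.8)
The plan is to exploit the fact that Definition~\ref{def:internalAdjunction} refers only to the $2$-categorical structure of $\Cat(\BB)$, namely the functor $\infty$-categories $\Fun_{\BB}(\I{C},\I{D})$ together with composition and whiskering of natural transformations. An adjunction between $\BB$-categories is therefore nothing but an ordinary adjunction in the homotopy bicategory of $\Cat(\BB)$, whose hom-categories are the homotopy categories of the $\Fun_{\BB}(\I{C},\I{D})$. Since uniqueness of adjoints is a purely formal statement valid in any bicategory, it suffices to reproduce the classical argument (see~\cite[\S~I.6]{gray1974}) in this homotopy bicategory.

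Concretely, let $(l\dashv r)$ have unit $\eta$ and counit $\epsilon$, and let $(l\dashv r^\prime)$ have unit $\eta^\prime$ and counit $\epsilon^\prime$. I would compare the two right adjoints by the composites
\[
	\alpha=(r^\prime\epsilon)(\eta^\prime r)\colon r\to r^\prime,\qquad \beta=(r\epsilon^\prime)(\eta r^\prime)\colon r^\prime\to r,
\]
each obtained by whiskering the unit of one adjunction with the counit of the other, in exact analogy with the two triangle diagrams of Definition~\ref{def:internalAdjunction}. The assertion to be proved is that $\alpha$ and $\beta$ are mutually inverse in $\Fun_{\BB}(\I{D},\I{C})$, i.e.\ define an equivalence $r\simeq r^\prime$ of functors.

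The one point that requires care --- and hence the main obstacle --- is the verification that $\beta\alpha\simeq\id_r$ and $\alpha\beta\simeq\id_{r^\prime}$. This is the usual zig-zag computation: one applies naturality of the units together with the interchange law to commute the whiskered counit past the whiskered unit, and then collapses the resulting composite using the triangle identities of the two given adjunctions (such as $(r\epsilon)(\eta r)\simeq\id_r$). As everything takes place in a genuine bicategory, no higher coherences need to be tracked and the argument is entirely formal. The dual statement for left adjoints then follows by applying the case just treated in the opposite $\BB$-categories: the involution $(-)^\op$ on $\Cat(\BB)$ sends an adjunction $l\dashv r$ to an adjunction $r^\op\dashv l^\op$, interchanging the roles of unit and counit, so that two left adjoints $l,l^\prime$ of a common $r$ become two right adjoints $l^\op,l^{\prime\op}$ of $r^\op$; uniqueness of right adjoints gives $l^\op\simeq l^{\prime\op}$ and therefore $l\simeq l^\prime$.
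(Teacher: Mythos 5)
Your proposal is correct and is essentially the paper's own argument: the paper likewise observes that definition~\ref{def:internalAdjunction} amounts to an adjunction in the underlying homotopy bicategory of $\Cat(\BB)$ and then invokes the standard bicategorical uniqueness statement, referring to~\cite[\S~I.6]{gray1974} for the proof rather than writing out the zig-zag computation you spell out. The only cosmetic difference is your treatment of the dual statement via $(-)^{\op}$ (which the paper only establishes later, in proposition~\ref{prop:adjunctionOpposite}, though independently of uniqueness); running the symmetric whiskering argument directly avoids even this dependence.
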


\begin{proposition}
	\label{prop:minimalAdjunctionData}
	In order for a pair $(l,r)\colon \I{C}\leftrightarrows \I{D}$ of functors between $\BB$-categories to define an adjunction, it suffices to provide maps $\eta\colon \id_{ \I{D}}\to rl$ and $\epsilon \colon lr\to \id_{ \I{C}}$ such that the compositions $\epsilon l\circ l\eta$ and $r\epsilon\circ \eta r$ are equivalences.\qed
\end{proposition}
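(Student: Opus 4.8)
The statement is purely formal, and as indicated in the paragraph preceding the proposition, it suffices to prove the corresponding assertion in the homotopy bicategory underlying $\Cat(\BB)$: I must show that a pair $(l,r)$ equipped with $2$-cells $\eta\colon\id_{\I{D}}\to rl$ and $\epsilon\colon lr\to\id_{\I{C}}$ for which $\phi:=\epsilon l\circ l\eta$ and $\psi:=r\epsilon\circ\eta r$ are equivalences can be upgraded to an adjunction. Since adjoints are unique by proposition~\ref{prop:uniquenessAdjoints}, it is enough to exhibit \emph{some} unit and counit satisfying both triangle identities, and the plan is to do so by correcting the given data.

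First I would rectify one of the two triangle identities. Replacing the counit $\epsilon$ by $\epsilon':=\epsilon\circ(l\psi^{-1})$, I claim that the second triangle identity holds strictly, i.e.\ that $r\epsilon'\circ\eta r\simeq\id_r$. This is a direct manipulation: expanding $r\epsilon'\simeq r\epsilon\circ (rl\psi^{-1})$ and using the interchange law (functoriality of horizontal composition of $2$-cells in the bicategory) in the form $rl\psi^{-1}\circ\eta r\simeq\eta r\circ\psi^{-1}$, the composite collapses to $r\epsilon\circ\eta r\circ\psi^{-1}\simeq\psi\circ\psi^{-1}\simeq\id_r$. All of these steps are instances of the naturality of whiskering and require no point-set model, so they remain valid in the homotopy bicategory of $\Cat(\BB)$.

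It then remains to check the \emph{first} triangle identity for the corrected data, namely that $\theta:=\epsilon' l\circ l\eta\colon l\to l$ is an equivalence homotopic to $\id_l$. The key structural input is the standard observation that, once one triangle identity holds strictly, the opposite composite $\theta$ is automatically an \emph{idempotent}, $\theta\circ\theta\simeq\theta$; this is again an interchange-law computation that exploits the now-strict identity $r\epsilon'\circ\eta r\simeq\id_r$ together with the naturality squares of $\eta$ and $\epsilon'$. Since an invertible idempotent in any hom-category is necessarily the identity (if $\theta\circ\theta\simeq\theta$ and $\theta$ is invertible, then $\theta\simeq\theta^{-1}\circ\theta\circ\theta\simeq\id_l$), the proof is completed by verifying that $\theta$ is an equivalence.

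The main obstacle is precisely this last point: establishing that the correction, while fixing the second triangle, preserves invertibility of the first composite, so that the idempotent $\theta$ is in fact an equivalence and hence the identity. I expect to handle it by relating $\theta$ back to the original equivalence $\phi$ through the invertible whiskering $l\psi^{-1}$ and the naturality of $\eta$ and $\epsilon$; this is the heart of the classical rectification of adjunction data. Granting that $\theta$ is an equivalence, one concludes $\theta\simeq\id_l$, so that $(l,r,\eta,\epsilon')$ satisfies both triangle identities and thus constitutes the desired adjunction, whence $(l,r)$ defines an adjunction. As an alternative to carrying out these bicategorical manipulations explicitly, one may simply invoke the corresponding fact for adjunctions in an arbitrary bicategory, cf.~\cite[\S~I.6]{gray1974}.
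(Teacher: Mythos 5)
You are correct, and your approach coincides with the paper's: the paper offers no proof of proposition~\ref{prop:minimalAdjunctionData} at all, but notes just beforehand that adjunctions of $\BB$-categories are precisely adjunctions in the underlying homotopy bicategory of the $(\infty,2)$-category $\Cat(\BB)$ and refers to~\cite[\S~I.6]{gray1974} for the standard bicategorical arguments, which is exactly your closing fallback. Your sketched rectification --- replacing $\epsilon$ by $\epsilon\circ l\psi^{-1}$, verifying one triangle identity strictly, and then disposing of the resulting idempotent $\epsilon' l\circ l\eta$ --- is the standard argument that this citation encapsulates, so the step you flag as the remaining obstacle is precisely what the cited reference (and hence the paper) takes as known.
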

\begin{corollary}
	If $f\colon \I{C}\to \I{D}$ is an equivalence between $\BB$-categories, then the pair $(f, f^{-1})$ defines an adjunction.\qed
\end{corollary}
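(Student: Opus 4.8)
The plan is to reduce the statement to proposition~\ref{prop:minimalAdjunctionData}, which conveniently frees us from having to verify any triangle identities on the nose. First I would record the data witnessing that $f$ is an equivalence: an inverse functor $f^{-1}\colon\I{D}\to\I{C}$ together with natural equivalences $\eta\colon\id_{\I{C}}\simeq f^{-1}f$ and $\epsilon\colon ff^{-1}\simeq\id_{\I{D}}$. Setting $l=f$ and $r=f^{-1}$, these will serve as the candidate unit and counit for the putative adjunction $(f,f^{-1})\colon\I{C}\leftrightarrows\I{D}$.

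Next I would invoke proposition~\ref{prop:minimalAdjunctionData}: it suffices to check that the two composites $\epsilon f\circ f\eta$ and $f^{-1}\epsilon\circ\eta f^{-1}$ are \emph{equivalences}, rather than identities. The key observation is that whiskering a natural equivalence by a functor again produces a natural equivalence. Indeed, the whiskering operations $f\eta$, $\epsilon f$, $\eta f^{-1}$, $f^{-1}\epsilon$ are by definition the images of $\eta$ and $\epsilon$ under the maps induced on functor $\infty$-categories $\Fun_{\BB}(-,-)$ by pre- and postcomposition with $f$ and $f^{-1}$, and since these maps are functors they preserve equivalences. Hence each of the four whiskered transformations is an equivalence in the appropriate functor $\infty$-category.

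Finally, since a composite of equivalences is an equivalence, both $\epsilon f\circ f\eta$ and $f^{-1}\epsilon\circ\eta f^{-1}$ are equivalences, and proposition~\ref{prop:minimalAdjunctionData} then upgrades the pair $(f,f^{-1})$ to a genuine adjunction. I expect essentially no obstacle here: the entire content is that proposition~\ref{prop:minimalAdjunctionData} replaces the triangle identities by the far weaker requirement that the two composites merely be invertible, a requirement that is automatic once every transformation in sight is already invertible. The only point that demands a modicum of care is bookkeeping the sources and targets of the whiskered transformations, so that the two composites $\epsilon f\circ f\eta\colon f\to ff^{-1}f\to f$ and $f^{-1}\epsilon\circ\eta f^{-1}\colon f^{-1}\to f^{-1}ff^{-1}\to f^{-1}$ are formed in the correct functor $\infty$-categories.
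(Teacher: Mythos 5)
Your proposal is correct and is exactly the argument the paper intends: the corollary is stated with a \qed because it follows immediately from proposition~\ref{prop:minimalAdjunctionData} by taking the invertibility data of $f$ as unit and counit, observing that whiskering (being induced by pre-/postcomposition functors on $\Fun_{\BB}(-,-)$) preserves equivalences, and hence that both composites are equivalences. No gaps; the bookkeeping of sources and targets you mention is the only content, and you handle it correctly.
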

\begin{corollary}
	For any adjunction $(l\dashv r)\colon \I{C}\leftrightarrows \I{D}$ between $\BB$-categories and any equivalence $f\colon D\simeq D^\prime$, the induced pair $(lf^{-1}, fr)\colon \I{C}\leftrightarrows \I{D}^{\prime}$ defines an adjunction as well.\qed
\end{corollary}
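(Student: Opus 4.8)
The plan is to avoid constructing a unit and counit by hand, and instead obtain the claim formally from the two preceding results on adjunctions. The key observation is that an equivalence is already one half of an adjunction: applying the preceding corollary (that an equivalence $g$ gives an adjunction $(g\dashv g^{-1})$) to the equivalence $f^{-1}$ produces an adjunction $(f^{-1}\dashv f)\colon\I{D}\leftrightarrows\I{D}^{\prime}$, in which $f^{-1}$ is the left adjoint and $f$ is the right adjoint.

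With this in hand I would simply invoke compositionality of adjunctions. We are given the adjunction $(l\dashv r)\colon\I{C}\leftrightarrows\I{D}$, and we have just produced $(f^{-1}\dashv f)\colon\I{D}\leftrightarrows\I{D}^{\prime}$. Proposition~\ref{prop:adjunctionComposition} then assembles the composite functors into an adjunction, and since that proposition sends the pair of left adjoints $l,f^{-1}$ to $lf^{-1}$ and the pair of right adjoints $f,r$ to $fr$, the resulting adjunction is precisely $(lf^{-1}\dashv fr)\colon\I{C}\leftrightarrows\I{D}^{\prime}$, as claimed.

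The argument is therefore essentially a one-liner, and the only point that demands attention — the sole place an error could slip in — is the variance bookkeeping. One must feed the equivalence into the preceding corollary in the orientation for which $f^{-1}$ lands as the left adjoint and $f$ as the right adjoint of an adjunction $\I{D}\leftrightarrows\I{D}^{\prime}$, so that the left-to-right composition convention of Proposition~\ref{prop:adjunctionComposition} reproduces exactly the functors $lf^{-1}$ and $fr$ appearing in the statement. Once the directions are matched up in this way, there is nothing further to check.

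If one preferred a self-contained verification that does not route through composition, one could instead apply Proposition~\ref{prop:minimalAdjunctionData}: transport the original unit $\eta$ and counit $\epsilon$ by whiskering with $f$ and $f^{-1}$, and observe that the two resulting triangle composites agree with the original ones after inserting the canonical equivalences $f^{-1}f\simeq\id$ and $ff^{-1}\simeq\id$. They are therefore again equivalences, which by Proposition~\ref{prop:minimalAdjunctionData} is enough to conclude. This route is more computational and is unnecessary given the composition argument above.
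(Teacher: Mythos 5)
Your proposal is correct and is essentially the intended argument: the paper states this corollary without proof (it is among the results deferred to standard bicategorical reasoning), and the evident derivation is exactly yours — apply the preceding corollary to the equivalence $f^{-1}$ to obtain an adjunction $(f^{-1}\dashv f)$, then compose with $(l\dashv r)$ using proposition~\ref{prop:adjunctionComposition}. You also handle the one delicate point correctly, namely that it is $f^{-1}$ (not $f$) that must be fed in as the left adjoint so that the composite comes out as $(lf^{-1}\dashv fr)$.
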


If $\AA$ and $\BB$ are $\infty$-topoi and $f\colon \Cat(\BB)\to\Cat(\AA)$ is a functor, we will often need to know whether $f$ carries an adjunction $l\dashv r$ in $\Cat(\BB)$ to an adjunction $f(l)\dashv f(r)$ in $\Cat(\AA)$. This is obviously the case whenever $f$ is a functor of $(\infty,2)$-categories. Since we do not wish to dive too deep into $(\infty,2)$-categorical arguments, we will instead make use of the straightforward observation that $f$ preserves adjunctions whenever there is a bifunctorial map
\begin{equation*}
\Fun_{\BB}(-,-)\to \Fun_{\AA}(f(-),f(-))
\end{equation*}
that recovers the action of $f$ on mapping $\infty$-groupoids upon postcomposition with the core $\infty$-groupoid functor.
\begin{lemma}
	\label{lem:2functors}
	Let $\AA$ and $\BB$ be $\infty$-topoi and let $f\colon\Cat(\BB)\to\Cat(\AA)$ be a functor that preserves finite products. Suppose furthermore that there is a morphism of functors $\const_{{\AA}}\to f\circ\const_{\BB}$, where $\const_{\BB}\colon\CatS\to\Cat(\BB)$ and $\const_{\AA}\colon \CatS\to\Cat(\AA)$ are the constant sheaf functors. Then $f$ induces a bifunctorial map $\Fun_{\BB}(-,-)\to \Fun_{\AA}(f(-),f(-))$ that recovers the action of $f$ on mapping $\infty$-groupoids upon postcomposition with the core $\infty$-groupoid functor. Moreover, if $f$ is fully faithful and if the map $\const_{{\AA}}\to f\circ\const_{\BB}$ restricts to an equivalence on the essential image of $f$, then this map is an equivalence.
\end{lemma}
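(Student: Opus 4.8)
The plan is to present $\Fun_{\BB}(\I{C},\I{D})$ as a complete Segal object, build the comparison map levelwise out of $f$, the product-preservation of $f$ and the transformation $\alpha\colon\const_{\AA}\to f\circ\const_{\BB}$, and then check the equivalence criterion on the lowest two levels. The starting point is the tensoring adjunction $\map{\Cat(\BB)}(J\otimes-,-)\simeq\map{\CatS}(J,\Fun_{\BB}(-,-))$, which exhibits $\Fun_{\BB}(\I{C},\I{D})$ as the $\infty$-category whose space of $n$-simplices is $\map{\Cat(\BB)}(\Delta^n\otimes\I{C},\I{D})$; taking $n=0$ and using $\Delta^0\otimes\I{C}\simeq\I{C}$ identifies its core with $\map{\Cat(\BB)}(\I{C},\I{D})$, and likewise over $\AA$. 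Thus ``the action of $f$ on mapping $\infty$-groupoids'' is literally the level-$0$ part of the functor we must produce.

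For the construction I would define the map on $n$-simplices as the composite
\begin{equation*}
\Phi_n\colon\map{\Cat(\BB)}(\Delta^n\otimes\I{C},\I{D})\xrightarrow{f}\map{\Cat(\AA)}(f(\Delta^n\otimes\I{C}),f\I{D})\xrightarrow{(\alpha_{\Delta^n}\times\id)^{\ast}}\map{\Cat(\AA)}(\Delta^n\otimes f\I{C},f\I{D}),
\end{equation*}
where the middle term uses the product-preservation equivalence $f(\Delta^n\otimes\I{C})\simeq f(\const_{\BB}(\Delta^n))\times f\I{C}$ together with $\Delta^n\otimes f\I{C}=\const_{\AA}(\Delta^n)\times f\I{C}$, and the second arrow is restriction along $\alpha_{\Delta^n}\times\id_{f\I{C}}$. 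Because $\alpha$ is natural and $\Delta^\bullet$ is a cosimplicial object of $\CatS$, the maps $\alpha_{\Delta^\bullet}$ assemble into a map of cosimplicial objects, and $f$ is a functor; hence $\Phi_\bullet$ is natural in $[n]\in\Delta^{\op}$ and defines a functor $\Phi\colon\Fun_{\BB}(\I{C},\I{D})\to\Fun_{\AA}(f\I{C},f\I{D})$, manifestly bifunctorial in $(\I{C},\I{D})$. Conceptually $\Phi$ is the mate, under the tensoring adjunction over $\AA$, of $f(\ev)$ precomposed with $\alpha$, where $\ev$ is the tensoring counit. At level $0$ one has $\const_{\BB}(\Delta^0)\simeq 1$, so $\alpha_{\Delta^0}$ and the product-preservation map degenerate to identities (as $f$ preserves the terminal object) and $\Phi_0=f$; this is exactly the assertion that the core of $\Phi$ recovers $f$ on mapping groupoids.

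For the final claim, the first arrow in the display for $\Phi_n$ is an equivalence because $f$ is fully faithful, and the second is an equivalence as soon as $\alpha_{\Delta^n}\colon\const_{\AA}(\Delta^n)\to f(\const_{\BB}(\Delta^n))$ is one. Since $f(\const_{\BB}(\Delta^n))$ lies in the essential image of $f$, the hypothesis that $\alpha$ restricts to an equivalence there makes each $\alpha_{\Delta^n}$, and hence each $\Phi_n$, an equivalence; in fact the Segal conditions express the $n$-th level of either side as an iterated fibre product of the first level over the zeroth, so it is enough to treat $n=0$ (full faithfulness of $f$) and $n=1$ (the equivalence $\alpha_{\Delta^1}$). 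Thus $\Phi$ is a levelwise equivalence of complete Segal objects, hence an equivalence of $\infty$-categories.

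The step I expect to be the real obstacle is the coherence underlying the second paragraph: upgrading the pointwise recipe for $\Phi_n$ to a genuine natural transformation of bifunctors, and pinning down the identification $\Phi_0\simeq f$ on the nose rather than merely objectwise. The mate-of-evaluation description is precisely what makes this honest, since every ingredient ($\ev$, the product-preservation equivalence, $\alpha$, and the transposition) is already natural, so that the only real content is the degeneration at $\Delta^0$.
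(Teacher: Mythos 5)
Your proof is correct and takes essentially the same route as the paper: the paper uses product-preservation and $\alpha$ to build the bifunctor map $-\otimes f(-)\to f(-\otimes-)$, forms the composite $\map{\Cat(\BB)}(-\otimes-,-)\to\map{\Cat(\AA)}(f(-\otimes-),f(-))\to\map{\Cat(\AA)}(-\otimes f(-),f(-))\simeq\map{\CatS}(-,\Fun_{\AA}(f(-),f(-)))$ and applies Yoneda over $\CatS$ --- which is exactly your "mate of $f(\ev)$ precomposed with $\alpha$" description, your levelwise $\Phi_\bullet$ being just its restriction to the representables $\Delta^\bullet$. Note also that your reading of the "essential image" hypothesis as simply making each $\alpha_{\Delta^n}$ invertible is no coarser than the paper's own treatment, whose proof likewise concludes only under the hypothesis that $\const_{\AA}\to f\circ\const_{\BB}$ is an equivalence.
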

\begin{proof}
	Since $f$ preserves finite products, the map $\const_{\AA}\to f\circ\const_{\BB}$ induces a map
	\begin{equation*}
	-\otimes f(-)\to f(-\otimes -)
	\end{equation*}
	of bifunctors $\CatS\times\Cat(\BB)\to \Cat(\AA)$. This map gives rise to the first arrow in the composition 
	\begin{equation*}
	\map{\Cat(\AA)}(f(-\otimes -), f(-))\to \map{\Cat(\AA)}(-\otimes f(-), f(-))\simeq \map{\CatS}(-, \Fun_{\AA}(f(-), f(-))),
	\end{equation*}
	and by precomposition with the morphism $\map{\Cat(\BB)}(-\otimes-,-)\to\map{\Cat(\AA)}(f(-\otimes -), f(-))$ that is induced by $f$ and Yoneda's lemma, we end up with the desired morphism of functors
	\begin{equation*}
	\Fun_{\BB}(-,-)\to\Fun_{\AA}(f(-), f(-))
	\end{equation*}
	that recovers the morphism $\map{\Cat(\BB)}(-,-)\to\map{\Cat(\AA)}(f(-), f(-))$ upon restriction to core $\infty$-groupoids. By construction, this map is an equivalence whenever $f$ is fully faithful and the map $\const_{{\AA}}\to f\circ\const_{\BB}$ is an equivalence.
\end{proof}
\begin{remark}
	\label{rem:2FunctorialityExplicit}
	In the situation of Lemma~\ref{lem:2functors}, the construction in the proof shows that if $\I{C}$ and $\I{D}$ are $\BB$-categories, the functor
	\begin{equation*}
		\Fun_{\BB}(\I{C},\I{D})\to\Fun_{\AA}(f(\I{C}),f(\I{D}))
	\end{equation*}
	that is induced by $f$ and the morphism of functors $\phi\colon -\otimes f(-)\to f(-\otimes -)$ is given as the transpose of the composition
	\begin{equation*}
			\Fun_{\BB}(\I{C},\I{D})\otimes f(\I{C})\xrightarrow{\phi} f(\Fun_{\BB}(\I{C},\I{D})\otimes \I{C})\xrightarrow{f(\ev)} f(\I{D})
	\end{equation*}
	in which $\ev\colon \Fun_{\BB}(\I{C},\I{D})\otimes\I{C}\to \I{D}$ denotes the counit of the adjunction $-\otimes \I{C}\dashv \Fun_{\BB}(\I{C},-)$.
\end{remark}
Using Lemma~\ref{lem:2functors}, one now finds:
\begin{corollary}
	\label{cor:geometricMorphismAdjunction}
	Let $f_\ast\colon \BB\to \AA$ be a geometric morphism of $\infty$-topoi. If a pair $(l,r)$ of functors in $\Cat(\BB)$ defines an adjunction, then the pair $(f_\ast(l), f_\ast(r))$ defines an adjunction in $\Cat(\AA)$. Moreover, the converse is true whenever $f_\ast$ is fully faithful.
	
	Dually, for any algebraic morphism $f^\ast\colon \AA\to\BB$ of $\infty$-topoi, if a pair $(l,r)$ of functors in $\Cat(A)$ defines an adjunction, then the pair $(f^\ast(l), f^\ast(r))$ defines an adjunction in $\Cat(\BB)$, and the converse is true whenever $f^\ast$ is fully faithful.
\end{corollary}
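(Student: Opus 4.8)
The plan is to recognise both induced functors as instances of the hypotheses of Lemma~\ref{lem:2functors}, and then to invoke the observation from the remark preceding the statement that a finite-product-preserving functor equipped with a bifunctorial map on functor $\infty$-categories of the indicated form automatically preserves adjunctions. I treat the geometric case first; the algebraic case will be formally identical. First I would check that $f_\ast\colon\Cat(\BB)\to\Cat(\AA)$ preserves finite products, which holds since $f_\ast$ is left exact and products in $\Cat(\BB)$ are computed levelwise as limits in $\Simp\BB$. Next I would produce the transformation $\const_{\AA}\to f_\ast\circ\const_{\BB}$ demanded by the lemma: since the algebraic morphism $\SS\to\BB$ is essentially unique one has $f^\ast\circ\const_{\AA}\simeq\const_{\BB}$, so the unit $\id\to f_\ast f^\ast$ of the adjunction $f^\ast\dashv f_\ast$ yields $\const_{\AA}\to f_\ast f^\ast\const_{\AA}\simeq f_\ast\const_{\BB}$. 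Lemma~\ref{lem:2functors} then supplies the bifunctorial map $\Fun_{\BB}(-,-)\to\Fun_{\AA}(f_\ast(-),f_\ast(-))$ recovering the action of $f_\ast$ on mapping $\infty$-groupoids, and preservation of adjunctions follows.

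For the converse, assuming $f_\ast$ fully faithful, I would argue in two stages. First, $f_\ast\colon\Cat(\BB)\to\Cat(\AA)$ is fully faithful as a functor of $\infty$-categories, since the counit $f^\ast f_\ast\to\id$ is levelwise the topos-level counit and is therefore an equivalence. Full faithfulness of the right adjoint $f_\ast$ exhibits $\Cat(\BB)$ as a reflective subcategory of $\Cat(\AA)$ with localisation endofunctor $f_\ast f^\ast$ and localisation unit exactly the transformation constructed above; consequently each component $\const_{\AA}(K)\to f_\ast\const_{\BB}(K)$ is a localisation unit and becomes an equivalence after mapping into any object in the essential image of $f_\ast$. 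This is precisely the extra hypothesis of the second assertion of Lemma~\ref{lem:2functors}, which upgrades the bifunctorial map to an equivalence $\Fun_{\BB}(-,-)\simeq\Fun_{\AA}(f_\ast(-),f_\ast(-))$. From such an equivalence, which is compatible with composition and whiskering, I would transport a given adjunction $(f_\ast l\dashv f_\ast r)$ back along $f_\ast$: its unit and counit lift to morphisms $\eta\colon\id\to rl$ and $\epsilon\colon lr\to\id$, and the triangle identities of Proposition~\ref{prop:minimalAdjunctionData} hold for $(l,r)$ because the equivalence identifies the composites $\epsilon l\circ l\eta$ and $r\epsilon\circ\eta r$ with their images under $f_\ast$, which are equivalences by hypothesis.

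The algebraic case proceeds identically with $f^\ast\colon\Cat(\AA)\to\Cat(\BB)$ in place of $f_\ast$: the transformation demanded by Lemma~\ref{lem:2functors} is now $\const_{\BB}\to f^\ast\const_{\AA}$, which is the canonical equivalence of algebraic morphisms and hence an outright equivalence, so the fully faithful hypothesis on $f^\ast$ alone delivers both conditions of the second assertion. I expect the one genuinely delicate point to be the geometric converse, namely the verification that the unit $\const_{\AA}\to f_\ast\const_{\BB}$ need only be an equivalence \emph{on the essential image} of $f_\ast$ rather than everywhere --- this is what forces the reflective-localisation description of a fully faithful right adjoint --- together with the bookkeeping confirming that the resulting equivalence of functor $\infty$-categories really transports the adjunction data coherently, where Proposition~\ref{prop:minimalAdjunctionData} is the crucial reduction that keeps this manageable.
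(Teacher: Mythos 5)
Your proposal is correct and takes essentially the same route as the paper: the paper's proof consists precisely of applying Lemma~\ref{lem:2functors} to the map $\const_{\AA}\to f_\ast f^\ast\const_{\AA}\simeq f_\ast\const_{\BB}$ induced by the adjunction unit (geometric case), respectively to the equivalence $\const_{\BB}\simeq f^\ast\circ\const_{\AA}$ (algebraic case). The only difference is that you spell out what the paper leaves implicit --- the finite-product preservation, the localisation-unit argument verifying the lemma's ``essential image'' hypothesis when $f_\ast$ is fully faithful, and the transport of unit, counit and triangle identities back along the resulting equivalence via Proposition~\ref{prop:minimalAdjunctionData}.
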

\begin{proof}
	This follows immediately from Lemma~\ref{lem:2functors} on account of the equivalence $\const_{\BB}\simeq f^\ast\circ\const_{{\AA}}$ and the map $\const_{\AA}\to f_\ast \const_{\BB}$ that is induced by the adjunction unit $\id_{\AA}\to f_\ast f^\ast$.
\end{proof}

Recall from Proposition~\ref{prop:GroupoidificationCore} that the inclusion $\BB\simeq\Grpd(\BB)\into\Cat(\BB)$ admits a left adjoint $(-)^\gp$. We now obtain:
\begin{corollary}
    \label{cor:groupoidificationAdjunction}
    The groupoidification functor $(-)^{\gp}\colon \Cat(\BB)\to\Grpd(\BB)$ preserves adjunctions and therefore carries any left or right adjoint functor to an equivalence in $\Grpd(\BB)$.
\end{corollary}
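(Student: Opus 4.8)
The plan is to reduce the statement to Lemma~\ref{lem:2functors}. Let $\iota\colon\Grpd(\BB)\into\Cat(\BB)$ denote the inclusion and consider the composite $f=\iota\circ(-)^{\gp}\colon\Cat(\BB)\to\Cat(\BB)$, which I view as an endofunctor, so that the role of $\AA$ in Lemma~\ref{lem:2functors} is played by $\BB$ itself. If I can check that $f$ satisfies the hypotheses of that lemma, then $f$ preserves adjunctions, and it will remain to translate the resulting adjunction in $\Cat(\BB)$ between $\BB$-groupoids back into an equivalence in $\Grpd(\BB)$.

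First I would verify the two hypotheses. Since $\iota$ is right adjoint to $(-)^{\gp}$, it preserves all limits and in particular finite products, so it suffices to show that $(-)^{\gp}$ preserves finite products. Because products in $\Cat(\BB)$ are computed levelwise in $\Simp\BB$ and $\I{C}^{\gp}\simeq\colim_{\Delta^{\op}}\I{C}$, this reduces to the fact that $\colim_{\Delta^{\op}}\colon\Fun(\Delta^{\op},\BB)\to\BB$ commutes with finite products, which holds since $\Delta^{\op}$ is sifted. For the second hypothesis, the unit of the adjunction $(-)^{\gp}\dashv\iota$ provides a natural transformation $\id_{\Cat(\BB)}\to f$, and precomposing it with $\const_{\BB}$ yields the required morphism $\const_{\BB}\to f\circ\const_{\BB}$. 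Lemma~\ref{lem:2functors} then shows that $f$ carries an adjunction $(l\dashv r)\colon\I{C}\leftrightarrows\I{D}$ in $\Cat(\BB)$ to an adjunction $(\iota l^{\gp}\dashv\iota r^{\gp})$, with some unit $\hat\eta$ and counit $\hat\epsilon$.

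Finally I would promote this adjunction to an adjoint equivalence. The crucial point is that, since $(-)^{\gp}$ preserves finite products, the $\BB$-groupoids form an exponential ideal in $\Cat(\BB)$; hence $\iFun{\iota\I{G}}{\iota\I{H}}$ is a $\BB$-groupoid whenever $\I{H}$ is, and so each function $\infty$-category $\Fun_{\BB}(\iota\I{G},\iota\I{H})$ is an $\infty$-groupoid. The unit $\hat\eta$ and counit $\hat\epsilon$ are morphisms in such $\infty$-groupoids and are therefore equivalences; inverting them exhibits $\iota l^{\gp}$ and $\iota r^{\gp}$ as mutually inverse equivalences. As $\iota$ is fully faithful and hence reflects equivalences, $l^{\gp}$ and $r^{\gp}$ are equivalences in $\Grpd(\BB)$, which proves the claim and in particular shows that $(-)^{\gp}$ preserves adjunctions. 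I expect the main work to lie in the two finite-product computations, namely the siftedness of $\Delta^{\op}$ and the resulting exponential-ideal property, since these power both the application of Lemma~\ref{lem:2functors} and the concluding invertibility argument.
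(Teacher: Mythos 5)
Your proof is correct and follows essentially the same route as the paper's: both reduce the statement to Lemma~\ref{lem:2functors} via the unit transformation $\const_{\BB}\to(-)^{\gp}\circ\const_{\BB}$, and both then conclude that the unit and counit of the induced adjunction are invertible because the functor $\infty$-categories between $\BB$-groupoids are $\infty$-groupoids. The only difference is that you spell out hypotheses the paper leaves implicit (finite-product preservation of $(-)^{\gp}$ via siftedness of $\Delta^{\op}$, and the resulting exponential-ideal property), which is a valid verification rather than a new idea.
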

\begin{proof}
    The first part follows by applying Lemma~\ref{lem:2functors} to the map $\eta\const_{\BB}\colon\const_{\BB}\to (-)^{\gp}\circ\const_{\BB}$ in which $\eta\colon \id_{\Cat(\BB)}\to (-)^{\gp}$ denotes the adjunction unit. As for the second part, it suffices to note that if $(l\dashv r)\colon \I{G}\leftrightarrows\I{H}$ is an adjunction between $\BB$-groupoids, then since both $\Fun_{\BB}(\I{G},\I{G})$ and $\Fun_{\BB}(\I{H},\I{H})$ are $\infty$-groupoids both unit and counit must be an equivalence.
\end{proof}

\begin{corollary}
	\label{cor:internalFunctorCategoryAdjunction}
	For any simplicial object $K\in\Simp\BB$, the endofunctor $\iFun(K,-)$ on $\Cat(\BB)$ preserves adjunctions in $\Cat(\BB)$.
\end{corollary}
\begin{proof}
	By bifunctoriality of $\iFun(-,-)$, precomposition with the terminal map $K\to 1$ in $\Simp\BB$ gives rise to the diagonal functor $\id_{\Cat(\BB)}\to \iFun(K,-)$, and combining this map with the functor $\const_{\BB}$ then defines a map $\const_{\BB}(-)\to \iFun(K, \const_{\BB}(-))$, hence Lemma~\ref{lem:2functors} applies.
\end{proof}
\begin{remark}
	\label{rem:2functorInternalHomExplicit}
	In the situation of Corollary~\ref{cor:internalFunctorCategoryAdjunction}, Remark~\ref{rem:2FunctorialityExplicit} shows that for any two $\BB$-categories $\I{C}$ and $\I{D}$, the induced map
	\begin{equation*}
		\Fun_{\BB}(\I{C},\I{D})\to\Fun_{\BB}(\iFun(K,\I{C}), \iFun(K,\I{D}))
	\end{equation*}
	is the one that is determined by the composition
	\begin{equation*}
		\Fun_{\BB}(\I{C},\I{D})\otimes(\iFun(K,\I{C})\times K)\xrightarrow{\id\otimes \ev_{K}} \Fun_{\BB}(\I{C},\I{D})\otimes \I{C}\xrightarrow{\ev_{\I{C}}} \I{D}
	\end{equation*}
	in light of the two adjunctions $-\times K\dashv \iFun(K,-)$ and $-\otimes \I{C}\dashv \Fun_{\BB}(\I{C},-)$. Here $\ev_{K}$ and $\ev_{\I{C}}$, respectively, denote the counits of these adjunctions.
\end{remark}
Combining Corollary~\ref{cor:geometricMorphismAdjunction} with Corollary~\ref{cor:internalFunctorCategoryAdjunction}, one furthermore obtains:
\begin{corollary}
	\label{cor:2functorFun}
	For any simplicial object $K\in\Simp\BB$, the functor $\Fun_{\BB}(K,-)\colon \Cat(\BB)\to\CatS$ carries adjunctions in $\Cat(\BB)$ to adjunctions in $\CatS$.\qed
\end{corollary}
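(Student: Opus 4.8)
The plan is to observe that $\Fun_\BB(K,-)$ factors as a composite of two functors, each of which has already been shown to preserve adjunctions, and that preservation of adjunctions is manifestly stable under composition.

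First I would recall the defining identity $\Fun_\BB(-,-)=\Gamma_\BB\circ\iFun{-}{-}$ from the discussion of tensoring and powering, so that for a fixed simplicial object $K\in\Simp\BB$ the functor $\Fun_\BB(K,-)\colon\Cat(\BB)\to\CatS$ is the composite
\begin{equation*}
	\Cat(\BB)\xrightarrow{\iFun{K}{-}}\Cat(\BB)\xrightarrow{\Gamma_\BB}\CatS,
\end{equation*}
where $\iFun{K}{-}$ lands in $\Cat(\BB)$ because the internal mapping object into a $\BB$-category is again a $\BB$-category, and where I have used the identification $\Cat(\SS)\simeq\CatS$.

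Next I would invoke corollary~\ref{cor:internalFunctorCategoryAdjunction}, which asserts precisely that the endofunctor $\iFun{K}{-}$ of $\Cat(\BB)$ preserves adjunctions: thus any adjunction $(l\dashv r)$ in $\Cat(\BB)$ is carried to an adjunction $(\iFun{K}{l}\dashv\iFun{K}{r})$ in $\Cat(\BB)$. I would then apply corollary~\ref{cor:geometricMorphismAdjunction} to the global sections geometric morphism $\Gamma_\BB\colon\BB\to\SS$, which shows that the induced functor $\Gamma_\BB\colon\Cat(\BB)\to\CatS$ also preserves adjunctions. Chaining these two facts, the composite $\Fun_\BB(K,-)=\Gamma_\BB\circ\iFun{K}{-}$ sends the adjunction $(l\dashv r)$ to the adjunction $(\Gamma_\BB\iFun{K}{l}\dashv\Gamma_\BB\iFun{K}{r})$ in $\CatS$, as desired. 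There is no genuine obstacle here; the only point requiring a moment's care is the factorisation through $\Gamma_\BB$ together with the observation that $\iFun{K}{-}$ preserves the property of being a $\BB$-category, both of which are immediate from the established framework.
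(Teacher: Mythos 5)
Your proposal is correct and coincides with the paper's own argument: the corollary is stated there as an immediate consequence of combining corollary~\ref{cor:internalFunctorCategoryAdjunction} (the endofunctor $\iFun{K}{-}$ preserves adjunctions) with corollary~\ref{cor:geometricMorphismAdjunction} applied to the global sections geometric morphism $\Gamma_{\BB}\colon\BB\to\SS$, using exactly the factorisation $\Fun_{\BB}(K,-)\simeq\Gamma_{\BB}\circ\iFun{K}{-}$. Nothing to add.
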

Similarly as above, if $\AA$ and $\BB$ are $\infty$-topoi and if $f\colon \Cat(\BB)\to\Cat(\AA)$ is a functor such that there is a bifunctorial map
\begin{equation*}
\Fun_{\BB}(-,-)\to \Fun_{\AA}(f(-),f(-))^{\op}
\end{equation*}
that recovers the action of $f$ on mapping $\infty$-groupoids upon postcomposition with the core $\infty$-groupoid functor, the functor $f$ sends an adjunction $l\dashv r$ in $\Cat(\BB)$ to an adjunction $f(r)\dashv f(l)$ in $\Cat(\AA)$. One therefore finds:
\begin{proposition}
	\label{prop:adjunctionOpposite}
	The equivalence $(-)^{\op}\colon\Cat(\BB)\to\Cat(\BB)$ sends an adjunction $l\dashv r$ to an adjunction $r^{\op}\dashv l^{\op}$.
\end{proposition}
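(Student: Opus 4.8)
The plan is to apply the contravariant criterion recorded immediately above the statement: to prove that $(-)^{\op}$ carries an adjunction $l\dashv r$ to the adjunction $r^{\op}\dashv l^{\op}$, it suffices to exhibit a bifunctorial map
\[
\Fun_{\BB}(\I{C},\I{D})\to\Fun_{\BB}(\I{C}^{\op},\I{D}^{\op})^{\op}
\]
that recovers the action of $(-)^{\op}$ on mapping $\infty$-groupoids after postcomposition with the core $\infty$-groupoid functor. Since $(-)^{\op}\colon\Cat(\BB)\to\Cat(\BB)$ is a self-equivalence, the transformation we build will in fact be an equivalence, and its construction reduces to two compatibilities of $(-)^{\op}$ with the functor-$\infty$-category bifunctor $\Fun_{\BB}(-,-)=\Gamma_{\BB}\circ\iFun{-}{-}$.

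First I would establish a natural equivalence $\iFun{\I{C}^{\op}}{\I{D}^{\op}}\simeq\iFun{\I{C}}{\I{D}}^{\op}$. This follows from the universal property of the internal mapping object together with the fact that $(-)^{\op}$ is a product-preserving involution of $\Cat(\BB)$ (products of $\BB$-categories are computed levelwise and $(-)^{\op}$ is precomposition with $\op\colon\Delta\simeq\Delta$, so $(\I{A}\times\I{C})^{\op}\simeq\I{A}^{\op}\times\I{C}^{\op}$). Indeed, for any $\BB$-category $\I{A}$ one has
\[
\map{\Cat(\BB)}(\I{A},\iFun{\I{C}^{\op}}{\I{D}^{\op}})\simeq\map{\Cat(\BB)}(\I{A}^{\op}\times\I{C},\I{D})\simeq\map{\Cat(\BB)}(\I{A},\iFun{\I{C}}{\I{D}}^{\op}),
\]
where the first equivalence unfolds the internal hom and rewrites the result using that $(-)^{\op}$ is an equivalence, and the second is the internal-hom adjunction followed by another application of the self-equivalence. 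The claim then follows by the Yoneda lemma, and naturality in $\I{C}$ and $\I{D}$ is visible from the construction.

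Next I would observe that the global sections functor commutes with passage to opposites. Viewing a $\BB$-category $\I{E}$ as a $\CatS$-valued sheaf on $\BB$, the operation $(-)^{\op}$ is computed sectionwise, i.e.\ $\I{E}^{\op}(A)\simeq\I{E}(A)^{\op}$ for every $A\in\BB$; evaluating at the terminal object gives $\Gamma_{\BB}(\I{E}^{\op})\simeq\Gamma_{\BB}(\I{E})^{\op}$, naturally in $\I{E}$. Combining this with the previous step yields a natural equivalence
\[
\Fun_{\BB}(\I{C}^{\op},\I{D}^{\op})=\Gamma_{\BB}\iFun{\I{C}^{\op}}{\I{D}^{\op}}\simeq\Gamma_{\BB}(\iFun{\I{C}}{\I{D}}^{\op})\simeq\Fun_{\BB}(\I{C},\I{D})^{\op},
\]
whose inverse, read as a map $\Fun_{\BB}(\I{C},\I{D})\to\Fun_{\BB}(\I{C}^{\op},\I{D}^{\op})^{\op}$, is the desired bifunctorial transformation. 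Taking cores is compatible with this: the core of a groupoid coincides with the core of its opposite, so the induced map on cores identifies $\map{\Cat(\BB)}(\I{C},\I{D})\to\map{\Cat(\BB)}(\I{C}^{\op},\I{D}^{\op})$, and I would check that under this identification an object $g\colon\I{C}\to\I{D}$ is sent to $g^{\op}$, which is precisely the action of $(-)^{\op}$. The criterion then immediately produces the adjunction $r^{\op}\dashv l^{\op}$.

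The only step requiring genuine care is the last verification — that the equivalence $\iFun{\I{C}^{\op}}{\I{D}^{\op}}\simeq\iFun{\I{C}}{\I{D}}^{\op}$ really sends $g$ to $g^{\op}$ rather than to some other functor, so that the constructed transformation recovers the action of $(-)^{\op}$ on mapping $\infty$-groupoids and not merely an abstractly isomorphic map. I expect this to follow formally by tracking the bifunctoriality of the Yoneda argument above, reducing to objects in context $A\in\BB$, where both sides unwind to the sectionwise opposite; but it is the point at which one must be explicit rather than invoke a universal property abstractly.
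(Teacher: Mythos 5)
Your proposal is correct and takes essentially the same route as the paper: both invoke the contravariant criterion recorded just before the proposition and reduce everything to the bifunctorial equivalence $\Fun_{\BB}(-,-)\simeq\Fun_{\BB}((-)^{\op},(-)^{\op})^{\op}$, which the paper declares ``evident'' and which you construct explicitly via the internal-hom Yoneda argument and the compatibility of $\Gamma_{\BB}$ with opposites. The additional detail you supply (including the check that the equivalence really acts by $g\mapsto g^{\op}$, which the paper handles by noting that unit and counit correspond to $\epsilon^{\op}$ and $\eta^{\op}$ under the equivalence) is precisely the content the paper leaves implicit, so there is no substantive difference in approach.
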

\begin{proof}
	This follows from the evident equivalence
	\begin{equation*}
		(-)^{\op}\colon\Fun_{\BB}(-,-)\simeq \Fun_{\BB}((-)^{\op}, (-)^{\op})^{\op}
	\end{equation*}
	of bifunctors $\Cat(\BB)^{\op}\times\Cat(\BB)\to\CatS$, which shows that if $l\dashv r$ is an adjunction with unit $\eta$ and counit $\epsilon$, then the pair $(r^{\op},l^{\op})$ defines an adjunction on account of the maps $\epsilon^{\op}\colon\id\to l^{\op} r^{\op}$ and $\eta^{\op}\colon r^{\op}l^{\op}\to \id$ that correspond to $\epsilon$ and $\eta$ via the above equivalence.
\end{proof}

The contravariant versions of the functors considered in Corollary~\ref{cor:internalFunctorCategoryAdjunction} and Corollary~\ref{cor:2functorFun} preserve adjunctions as well: If $\I{C}$ is an arbitrary $\BB$-category, functoriality of $\iFun(-,\I{C})$ defines a map
\begin{equation*}
	\map{\Cat(\BB)}(\I{E}, \I{D})\to \map{\Cat(\BB)}(\iFun(\I{D},\I{C}), \iFun(\I{E},\I{C}))
\end{equation*}
that is natural in $\I{E}$ and $\I{D}$. The composition
\begin{align*}
	\map{\Cat(\BB)}(-\otimes \I{E},\I{D})&\to \map{\Cat(\BB)}(\iFun(\I{D},\I{C}), \iFun(-\otimes\I{E},\I{C}))\\
	&\simeq \map{\Cat(\BB)}(\iFun(\I{D},\I{C})\times(-\otimes\I{E}), \I{C})\\
	&\simeq \map{\Cat(\BB)}((-\otimes\iFun(\I{D},\I{C}))\times\I{E}, \I{C})\\
	&\simeq\map{\Cat(\BB)}(-\otimes\iFun(\I{D},\I{C}), \iFun(\I{E},\I{C}))
\end{align*}
(in which each step is natural in $\I{D}$ and $\I{E}$) and Yoneda's lemma now give rise to a map
\begin{equation*}
	\Fun_{\BB}(\I{E},\I{D})\to \Fun_{\BB}(\iFun(\I{D},\I{C}), \iFun(\I{E},\I{C}))
\end{equation*}
that defines a morphism of functors $\Cat(\BB)^{\op}\times\Cat(\BB)\to \CatS$ and that recovers the action of $\iFun(-,\I{C})$ on mapping $\infty$-groupoids upon postcomposition with the core $\infty$-groupoid functor. One therefore finds:
\begin{proposition}
	\label{prop:2functorFunContravariant}
	For any $\BB$-category $\I{C}$, the two functors $\iFun(-,\I{C})$ and $\Fun_{\BB}(-, \I{C})$ carry an adjunction $l \dashv r$ in $\Cat(\BB)$ to an adjunction $r^\ast\dashv l^\ast$ in $\Cat(\BB)$ and in $\CatS$, respectively. \qed
\end{proposition}

\subsection{Adjunctions via relative adjunctions of cartesian fibrations}
\label{sec:relativeAdjunctions}
Recall from the discussion in~\ref{sec:parametrisedCategories} that every pair $(l,r)\colon\I{C}\leftrightarrows\I{D}$ of functors between (large) $\BB$-categories give rise to a pair of functors $(\int l,\int r)\colon \int \I{C}\leftrightarrows\int \I{D}$ between the associated cartesian fibrations over $\BB$. In this section, our goal is to characterise those pairs $(\int l,\int r)$ that come from an adjunction $l\dashv r$.

Given any small $\infty$-category $\CC$, there is a bifunctor
\begin{equation*}
	-\otimes -\colon \CatS\times\Cart(\CC)\to\Cart(\CC)
\end{equation*}
that sends a pair $(\XX,\PP\to\CC)$ to the cartesian fibration $\XX\times\PP\to\PP\to\CC$ in which the first arrow is the natural projection. Explicitly, a morphism in $\XX\times \PP$ is cartesian precisely if its projection to $\PP$ is cartesian in $\PP$ and its projection to $\XX$ is an equivalence. For an arbitrary fixed cartesian fibration $\PP\to\CC$, the functor $-\otimes\PP\colon \CatS\to\Cart(\CC)\into\Over{(\CatSS)}{\CC}$ admits a right adjoint $\Fun_{/\CC}(\PP,-)$ that sends a map $\QQ\to \CC$ to the $\infty$-category that is defined by the pullback square
\begin{equation*}
	\begin{tikzcd}
		\Fun_{/\CC}(\PP,\QQ)\arrow[d]\arrow[r] & \Fun(\PP,\QQ)\arrow[d]\\
		1\arrow[r] & \Fun(\PP,\CC)
	\end{tikzcd}
\end{equation*}
in which the vertical map on the right is given by postcomposition with $\QQ\to\CC$ and in which the lower horizontal arrow picks out the cartesian fibration $\PP\to \CC$~\cite[Proposition~5.2.5.1]{htt}.
If $\QQ\to \CC$ is a cartesian fibration, let $\Fun_{/\CC}^{\Cart}(\PP,\QQ)\into\Fun_{/\CC}(\PP,\QQ)$ denote the full subcategory that is spanned by those functors that preserve cartesian edges, and observe that this defines a functor 
\begin{equation*}
	\Fun_{/\CC}^{\Cart}(\PP,-)\colon \Cart(\CC)\to \CatS.
\end{equation*}
As the equivalence $\map{/\CC}(\XX\otimes\PP,\QQ)\simeq \map{\CatSS}(\XX, \Fun_{/\CC}(\PP,\QQ))$ identifies functors $\XX\otimes \PP\to \QQ$ that preserve cartesian arrows with functors $\XX\to\Fun_{/\CC}(\PP,\QQ)$ that take values in $\Fun_{/\CC}^{\Cart}(\PP,\QQ)$, one obtains an adjunction $(-\otimes \PP\dashv \Fun_{/\CC}^{\Cart}(\PP,-))\colon \CatSS\leftrightarrows\Cart(\CC)$. By making use of the bifunctoriality of $-\otimes -$, the assignment $\PP\mapsto \Fun_{/\CC}^{\Cart}(\PP,-)$ gives rise to a bifunctor $\Fun_{/\CC}^{\Cart}(-,-)$ in a unique way such that there is an equivalence
\begin{equation*}
	\map{\Cart(\CC)}(-\otimes -, -)\simeq\map{\CatS}(-, \Fun_{/\CC}^{\Cart}(-,-)).
\end{equation*}
Note that there is an equivalence $\int (-\otimes -)\simeq -\otimes \int (-)$ of bifunctors $\CatS\times \Cat(\PSh_{\SS}(\CC))\to \Cart(\CC)$ in which the tensoring on the left-hand side is given by the canonical tensoring in $\Cat(\PSh_{\SS}(\CC))$ over $\CatS$, i.e.\ by the bifunctor $\const(-)\times -$. By the uniqueness of adjoints, one therefore finds:
\begin{proposition}
	\label{prop:comparisonMappingFunctorsGrothendieckConstruction}
	For any small $\infty$-category $\CC$, there is an equivalence
	\begin{equation*}
		\Fun_{\PSh_{\SS}(\CC)}(-,-)\simeq \Fun_{/\CC}^{\Cart}(\smallint(-),\smallint(-))
	\end{equation*}
	of bifunctors $\Cat(\PSh_{\SS}(\CC))^{\op}\times\Cat(\PSh_{\SS}(\CC))\to\CatS$ that recovers the action of $\int\colon \PSh_{\SS}(\CC)\to\Cart(\CC)$ on mapping $\infty$-groupoids upon postcomposition with the core $\infty$-groupoid functor. \qed
\end{proposition}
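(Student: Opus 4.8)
The plan is to deduce the equivalence purely formally from the two adjunction characterisations of the respective function objects, combined with the compatibility of the tensorings under the Grothendieck construction. Write $\BB=\PSh_{\SS}(\CC)$. The essential structural input is that $\smallint\colon\Cat(\BB)\to\Cart(\CC)$ is the straightening equivalence, so in particular it is fully faithful and induces an equivalence on all mapping $\infty$-groupoids. The tensoring--powering adjunction for $\BB$ supplies a natural equivalence
\[
	\map{\Cat(\BB)}(-\otimes-,-)\simeq\map{\CatS}(-,\Fun_{\BB}(-,-)),
\]
while the adjunction $(-\otimes\PP\dashv\Fun_{/\CC}^{\Cart}(\PP,-))$ established above, assembled into a bifunctor, supplies
\[
	\map{\Cart(\CC)}(-\otimes-,-)\simeq\map{\CatS}(-,\Fun_{/\CC}^{\Cart}(-,-)).
\]

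First I would splice these together using that $\smallint$ is an equivalence and intertwines the two tensorings. For $\XX\in\CatS$ and $\I{C},\I{D}\in\Cat(\BB)$ there is a chain of equivalences, natural in all three arguments,
\begin{align*}
	\map{\Cat(\BB)}(\XX\otimes\I{C},\I{D}) &\simeq\map{\Cart(\CC)}(\smallint(\XX\otimes\I{C}),\smallint\I{D})\\
	&\simeq\map{\Cart(\CC)}(\XX\otimes\smallint\I{C},\smallint\I{D})\\
	&\simeq\map{\CatS}(\XX,\Fun_{/\CC}^{\Cart}(\smallint\I{C},\smallint\I{D})),
\end{align*}
where the first step uses full faithfulness of $\smallint$, the second is the compatibility $\smallint(-\otimes-)\simeq-\otimes\smallint(-)$, and the third is the fibrewise adjunction. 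Comparing this with the tensoring--powering adjunction for $\BB$ exhibits both $\Fun_{\BB}(-,-)$ and $\Fun_{/\CC}^{\Cart}(\smallint(-),\smallint(-))$ as representing the same $\CatS$-valued presheaf in the variable $\XX$, naturally in $(\I{C},\I{D})$. By the Yoneda lemma (equivalently, by uniqueness of adjoints applied to the left adjoints $-\otimes\I{C}$ and $-\otimes\smallint\I{C}$, which are intertwined by $\smallint$) this yields the desired equivalence of bifunctors $\Cat(\BB)^{\op}\times\Cat(\BB)\to\CatS$.

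The clause about cores then follows by specialising the chain to $\XX=1$. Since $1\otimes-\simeq\id$ and $\map{\CatS}(1,-)\simeq(-)^{\core}$ is the core $\infty$-groupoid functor, the chain collapses to
\begin{align*}
	\Fun_{\BB}(\I{C},\I{D})^{\core}\simeq\map{\Cat(\BB)}(\I{C},\I{D})\simeq\map{\Cart(\CC)}(\smallint\I{C},\smallint\I{D})\simeq\Fun_{/\CC}^{\Cart}(\smallint\I{C},\smallint\I{D})^{\core},
\end{align*}
and the middle equivalence is by construction the action of $\smallint$ on mapping $\infty$-groupoids; this is exactly the required compatibility.

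The step I expect to require the most care is not any single calculation but the bookkeeping of naturality: one must verify that the compatibility equivalence $\smallint(-\otimes-)\simeq-\otimes\smallint(-)$ together with each of the adjunction equivalences are simultaneously natural in $\XX$, $\I{C}$ and $\I{D}$, so that the Yoneda lemma produces an equivalence of \emph{bifunctors} rather than a merely objectwise one. Everything else is formal manipulation of the adjunctions already established in this subsection.
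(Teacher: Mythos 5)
Your proof is correct and follows essentially the same route as the paper: the paper likewise notes the compatibility $\smallint(-\otimes-)\simeq-\otimes\smallint(-)$ of the two tensorings over $\CatS$ and then invokes uniqueness of adjoints to identify $\Fun_{\PSh_{\SS}(\CC)}(-,-)$ with $\Fun_{/\CC}^{\Cart}(\smallint(-),\smallint(-))$, which is exactly your Yoneda/representability argument spelled out. Your specialisation to $\XX=1$ to extract the statement about cores is also the intended (implicit) justification of that clause in the paper.
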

Recall the notion of a \emph{relative adjunction} between cartesian fibrations as defined by Lurie in~\cite[\S~7.3]{lurie2017}:
\begin{definition}
	\label{def:relativeAdjunction}
	Let $\CC$ be an $\infty$-category and let $\PP$ and $\QQ$ be cartesian fibrations over $\CC$. A \emph{relative adjunction} between $\PP$ and $\QQ$ is defined to be an adjunction $(l \dashv r)\colon\QQ\leftrightarrows\PP$ between the underlying $\infty$-categories such that both $l$ and $r$ define maps in $\Cart(\CC)$ and such that the structure map $p\colon \PP\to \CC$ sends the adjunction counit $\epsilon$ to the identity transformation on $p$ and the structure map $q\colon\QQ\to \CC$ sends the adjunction unit $\eta$ to the identity transformation on $q$.
\end{definition}
By construction of the bifunctor $\Fun_{\CC}^{\Cart}(-,-)$, it is immediate that a pair $(l,r)\colon \QQ\leftrightarrows \PP$ of maps in $\Cart(\CC)$ defines a relative adjunction if and only if there are maps $\eta \colon \id_{\QQ}\to rl$ and $\epsilon\colon lr\to\id_{\PP}$ in $\Fun_{\CC}^{\Cart}(\QQ,\QQ)$ and $\Fun_{\CC}^{\Cart}(\PP,\PP)$, respectively, that satisfy the triangle identities from Definition~\ref{def:internalAdjunction}. Proposition~\ref{prop:comparisonMappingFunctorsGrothendieckConstruction} therefore implies:
\begin{corollary}
    \label{cor:internalRelativeAdjunctionPSh}
    For any small $\infty$-category $\CC$, a pair $(l,r)\colon\I{C}\leftrightarrows\I{D}$ of functors between $\PSh_{\SS}(\CC)$-categories defines an adjunction if and only if the associated pair $(\int l, \int r)$ defines a relative adjunction in $\Cart(\CC)$.\qed
\end{corollary}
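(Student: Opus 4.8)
The plan is to observe that both sides of the claimed equivalence are, after unwinding the definitions, literally the same package of data — a unit, a counit, and two triangle identities living in suitable functor $\infty$-categories — and that Proposition~\ref{prop:comparisonMappingFunctorsGrothendieckConstruction} transports one package to the other. So the proof is a transport-of-structure argument along an equivalence of bifunctors, with no genuinely new input.

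\textbf{Unwinding both sides.} First I would recall that, by Definition~\ref{def:internalAdjunction} applied to $\BB=\PSh_{\SS}(\CC)$, the pair $(l,r)\colon\I{C}\leftrightarrows\I{D}$ defines an adjunction of $\PSh_{\SS}(\CC)$-categories precisely when there exist a unit $\eta\colon\id_{\I{D}}\to rl$ and a counit $\epsilon\colon lr\to\id_{\I{C}}$, viewed as morphisms in $\Fun_{\BB}(\I{D},\I{D})$ and $\Fun_{\BB}(\I{C},\I{C})$, for which the two triangle identities hold in $\Fun_{\BB}(\I{C},\I{D})$ and $\Fun_{\BB}(\I{D},\I{C})$. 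On the other side, the paragraph preceding the statement already repackages the notion of a relative adjunction (Definition~\ref{def:relativeAdjunction}) into exactly this shape, using the construction of $\Fun^{\Cart}_{/\CC}(-,-)$: the pair $(\int l,\int r)$ of maps in $\Cart(\CC)$ defines a relative adjunction if and only if there are a unit $\eta\colon\id_{\int\I{D}}\to \int r\,\int l$ in $\Fun^{\Cart}_{/\CC}(\int\I{D},\int\I{D})$ and a counit $\epsilon\colon \int l\,\int r\to\id_{\int\I{C}}$ in $\Fun^{\Cart}_{/\CC}(\int\I{C},\int\I{C})$ satisfying the corresponding triangle identities in $\Fun^{\Cart}_{/\CC}(\int\I{C},\int\I{D})$ and $\Fun^{\Cart}_{/\CC}(\int\I{D},\int\I{C})$.

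\textbf{Transport and the one point to check.} With both notions in identical form, I would invoke Proposition~\ref{prop:comparisonMappingFunctorsGrothendieckConstruction}, which furnishes an equivalence of bifunctors $\Fun_{\BB}(-,-)\simeq\Fun^{\Cart}_{/\CC}(\int(-),\int(-))$ recovering the action of $\int$ on mapping $\infty$-groupoids. Evaluating at the four relevant pairs of objects identifies each of $\Fun_{\BB}(\I{D},\I{D})$, $\Fun_{\BB}(\I{C},\I{C})$, $\Fun_{\BB}(\I{C},\I{D})$, $\Fun_{\BB}(\I{D},\I{C})$ with its cartesian counterpart, carrying $l,r$ to $\int l,\int r$; hence a choice of $(\eta,\epsilon)$ on one side corresponds to a choice on the other. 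The only point deserving care — and the step I would flag as the main (though mild) obstacle — is that the triangle identities are phrased through the whiskerings $l\eta$, $\epsilon l$, $\eta r$, $r\epsilon$, i.e.\ through the composition operations of the two-variable functor. I would therefore stress that Proposition~\ref{prop:comparisonMappingFunctorsGrothendieckConstruction} is an equivalence of \emph{bifunctors}, natural in each variable separately; naturality in each slot is exactly compatibility with pre- and post-composition by a fixed functor, which is precisely the whiskering appearing in the triangle identities. Consequently a commuting triangle on one side is sent to a commuting triangle on the other, the two packages of data correspond bijectively, and the asserted equivalence between internal adjunctions and relative adjunctions in $\Cart(\CC)$ follows at once.
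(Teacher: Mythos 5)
Your proposal is correct and follows essentially the same route as the paper: the paper likewise repackages relative adjunctions as unit/counit data with triangle identities in $\Fun^{\Cart}_{/\CC}(-,-)$ and then transports this package across the bifunctor equivalence of Proposition~\ref{prop:comparisonMappingFunctorsGrothendieckConstruction}. Your explicit remark that naturality of the bifunctor equivalence in each variable is what makes the whiskered triangle identities correspond is precisely the content the paper compresses into the phrase ``by construction of the bifunctor ... it is immediate.''
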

Observe that as by~\cite[Lemma~6.3.5.28]{htt} the inclusion $\BBB\into\PSh_{\SSS}(\BB)$ defines a geometric morphism of $\infty$-topoi (relative to the universe $\bV$), Corollary~\ref{cor:geometricMorphismAdjunction} implies that the pair $(l,r)$ defines an adjunction between large $\BB$-categories if and only if it defines an adjunction in $\PSh_{\SSS}(\BB)$. We may therefore conclude:
\begin{corollary}
	\label{cor:adjunctionCartesianFibrations}
	A pair $(l,r)\colon\I{C}\leftrightarrows\I{D}$ of functors between large $\BB$-categories defines an adjunction if and only if the associated pair $(\int l, \int r)$ defines a relative adjunction in $\Cart(\BB)$.\qed
\end{corollary}
The upshot of Corollary~\ref{cor:adjunctionCartesianFibrations} is that we may make use of Lurie's results on relative adjunctions in order to formulate a useful criterion for when a functor between $\BB$-categories admits a right and a left adjoint, respectively. For this we need to recall the \emph{mate} construction:
\begin{definition}
	\label{def:mates}
	For any right lax square in $\Cat(\BB)$ of the form
	\begin{equation*}
		\begin{tikzcd}
			\I{C}_1\arrow[r, "r_1"] \arrow[d, "f"] & \I{D}_1 \arrow[d, "g"] \arrow[dl, Rightarrow, "\phi"', shorten=2mm]\arrow[d]\\
			\I{C}_2\arrow[r, "r_2"]  & \I{D}_2
		\end{tikzcd}    
	\end{equation*}
	such that both $r_1$ and $r_2$ admit left adjoints $l_1$ and $l_2$ exhibited by units $\eta_i\colon \id\to r_i l_i$ and counits $\epsilon_i\colon l_i r_i\to \id$, there is a left lax square
	\begin{equation*}
		\begin{tikzcd}
			\I{C}_1\arrow[from = r, "l_1"'] \arrow[d, "f"]\arrow[from =dr, Rightarrow, "\psi"', shorten=2mm] & \I{D}_1 \arrow[d, "g"] \arrow[d]\\
			\I{C}_2\arrow[from = r, "l_2"']  & \I{D}_2
		\end{tikzcd}    
	\end{equation*}
	in which $\psi$ is defined as the composite map
	\begin{equation*}
			l_2 g\xrightarrow{l_2 g \eta_1} l_2 g r_1 l_1 \xrightarrow{l_2 \phi l_1} l_2 r_2 f l_1\xrightarrow{\epsilon_2 f l_1}  f l_1.
	\end{equation*}
	Conversely, when starting with the latter left lax square, the original right lax square is recovered by means of the composition
	\begin{equation*}
			g r_1\xrightarrow{\eta_2 g r_1} r_2 l_2 g r_1 \xrightarrow{r_2 \psi r_1} r_2 f l_1 r_1\xrightarrow{r_2 f \epsilon_1} r_2 f .  
	\end{equation*}
	The left lax square determined by $\psi$ is referred to as the \emph{mate} of the right lax square determined by $\psi$, and vice versa.
\end{definition}

\begin{remark}
In the 2-categorical context mates have been studied under the name \emph{adjoint squares} by Gray in \cite[\S I.6]{gray1974}, and under the name mate in \cite[\S 2]{kelly2006}.
In the $(\infty,2)$-categorical setting they have been studied by Haugseng, see the discussion following \cite[Remark~4.5]{Haugseng2021a}.
In the case where the starting 2-cell is invertible, which we will mostly use, they are also already considered in \cite[Definition 4.7.4.13]{lurie2017}.
\end{remark}

\begin{remark}
	\label{rem:functorialityMates}
	The mate construction is functorial in the following sense: Consider the composition of right lax squares
	\begin{equation*}
		\begin{tikzcd}
			\I{C}_1\arrow[r, "r_1"] \arrow[d, "f_1"] & \I{D}_1 \arrow[d, "g_1"] \arrow[dl, Rightarrow, "\phi_1"', shorten=2mm]\arrow[d]\\
			\I{C}_2\arrow[r, "r_2"] \arrow[d, "f_2"] & \I{D}_2 \arrow[d, "g_2"] \arrow[dl, Rightarrow, "\phi_2"', shorten=2mm]\arrow[d]\\
			\I{C}_3\arrow[r, "r_3"]  & \I{D}_3, 
		\end{tikzcd}    
	\end{equation*}
	by which we simply mean the composition $(\phi_2 f_1)\circ(g_2\phi_1)$. Then the mate of the composite square is given by the composition of left lax squares
	\begin{equation*}
		\begin{tikzcd}
			\I{C}_1\arrow[from = r, "l_1"'] \arrow[d, "f_1"]\arrow[from =dr, Rightarrow, "\psi_1"', shorten=2mm] & \I{D}_1 \arrow[d, "g_1"] \arrow[d]\\
			\I{C}_2\arrow[from = r, "l_2"'] \arrow[d, "f_2"]\arrow[from =dr, Rightarrow, "\psi_2 "', shorten=2mm] & \I{D}_2 \arrow[d, "g_2"] \arrow[d]\\
			\I{C}_3\arrow[from = r, "l_3"']  & \I{D}_3,
		\end{tikzcd}    
	\end{equation*}
	in which $\psi_1$ denotes the mate of $\phi_1$ and $\psi_2$ denotes the mate of $\phi_2$. This is easily checked using the triangle identities for adjunctions and the interchange law in bicategories.
	
	Similarly, one can show that the mate of the \emph{horizontal} composition of right lax squares
	\begin{equation*}
	\begin{tikzcd}
		\I{C}_1\arrow[r, "r_1"] \arrow[d, "f"] & \I{D}_1 \arrow[d, "g"] \arrow[dl, Rightarrow, "\phi_1"', shorten=2mm]\arrow[d] \arrow[r, "r_1^\prime"] & \I{E}_1\arrow[d, "h"]\arrow[dl, Rightarrow, "\phi_2"', shorten=2mm]\\
		\I{C}_2\arrow[r, "r_2"] & \I{D}_2\arrow[r, "r_2^\prime"] & \I{E}_2\\
	\end{tikzcd}    
	\end{equation*}
	(i.e.\ the composite $r_2^\prime \phi_1\circ\phi_2r_1$) is given by the horizontal composition of the associated mates.
\end{remark}

\begin{lemma}
	\label{lem:relativeAdjointBeckChevalley}
	Let $\CC$ be an $\infty$-category and let $p\colon\PP\to\CC$ and $q\colon\QQ\to\CC$ be cartesian fibrations. A map $r\colon \PP\to\QQ$ in $\Cart(\CC)$ is a relative right adjoint if and only if
	\begin{enumerate}
		\item for all $c\in\CC$ the functor $r\vert_c\colon \PP\vert_c\to\QQ\vert_c$ that is induced by $r$ on the fibres over $c$ admits a left adjoint $l_c\colon \QQ\vert_c\to\PP\vert_c$;
		\item For every morphism $g\colon d\to c$ in $\CC$, the mate of the commutative square
		\begin{equation*}
			\begin{tikzcd}
			\PP\vert_c\arrow[r, "r\vert_c"] \arrow[d, "g^\ast"'] & \QQ\vert_c \arrow[d, 	"g^\ast"] \arrow[dl, Rightarrow, "\simeq"', shorten=2mm]\arrow[d]\\
			\PP\vert_d\arrow[r, "r\vert_d"]  &\QQ\vert_d
			\end{tikzcd}
		\end{equation*}
		commutes.
	\end{enumerate}
	If this is the case, the relative left adjoint $l$ of $r$ recovers the map $l_c$ on the fibres over $c\in\CC$.
		
	Dually, a map $l\colon \QQ\to\PP$ in $\Cart(\CC)$ is a relative left adjoint if and only if
	\begin{enumerate}
		\item for all $c\in\CC$ the functor $l\vert_c\colon \QQ\vert_c\to\PP\vert_c$ that is induced by $l$ on the fibres over $c$ admits a right adjoint $r_c\colon \PP\vert_c\to\QQ\vert_c$;
		\item For every morphism $g\colon d\to c$ in $\CC$, the mate of the commutative square
		\begin{equation*}
			\begin{tikzcd}
				\PP\vert_c\arrow[from=r, "l\vert_c"'] \arrow[d, "g^\ast"'] & \QQ\vert_c \arrow[d, 	"g^\ast"] \arrow[d]\\
				\PP\vert_d\arrow[from=r, "l\vert_d"']  &\QQ\vert_d\arrow[ul, Rightarrow, "\simeq"', shorten=2mm]
			\end{tikzcd}
		\end{equation*}
		commutes.
	\end{enumerate}
	If this is the case, the relative right adjoint $r$ of $l$ recovers the map $r_c$ on the fibres over $c\in\CC$.	
\end{lemma}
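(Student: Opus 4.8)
The plan is to establish the characterisation of relative \emph{right} adjoints; the statement for relative left adjoints is proved by an entirely analogous argument (interchanging the roles of left and right adjoints throughout), so I only treat the first assertion. The essential input is Lurie's theory of relative adjunctions \cite[\S~7.3]{lurie2017}: a map of cartesian fibrations admits a relative left adjoint that again lies in $\Cart(\CC)$ exactly when the fibre functors admit left adjoints and the ensuing Beck--Chevalley transformations are invertible. The bulk of the work is then to match the two numbered conditions with the hypotheses of that result and to identify the fibres of the resulting adjoint.

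For the direction from relative right adjoint to the two conditions, suppose $r$ admits a relative left adjoint $l$ with unit $\eta$ and counit $\epsilon$. By Definition~\ref{def:relativeAdjunction} the structure map $q$ carries $\eta$ to an identity and $p$ carries $\epsilon$ to an identity, so both transformations lie over $\CC$; restricting all of the data along the inclusion of a fibre therefore produces an adjunction $l\vert_c\dashv r\vert_c$, which yields condition~(1) with $l_c\simeq l\vert_c$. For condition~(2), I observe that the invertible square in the statement simply records that $r$ preserves cartesian edges over $g$, and that this square and the left lax square built from $l\vert_c$, $l\vert_d$ are mates of one another in the sense of Definition~\ref{def:mates}. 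The $2$-cell of the latter square is precisely the comparison measuring the failure of $l$ to commute with $g^\ast$; since $l$ lies in $\Cart(\CC)$ and hence preserves cartesian edges, this comparison is an equivalence, so the mate of the given square is invertible, which is condition~(2).

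For the converse, assume~(1) and~(2). Condition~(1) supplies the fibrewise left adjoints $l_c$, and condition~(2) asserts exactly that the Beck--Chevalley transformation attached to each $g\colon d\to c$ is an equivalence. Lurie's criterion then produces a relative left adjoint $l\colon\QQ\to\PP$ that preserves cartesian edges, and the construction identifies $l\vert_c$ with $l_c$, which is the final clause of the statement. The unit and counit of this relative adjunction are assembled from the fibrewise units and counits, so they are sent to identities by the structure maps, confirming that $l\dashv r$ is a relative adjunction in the sense of Definition~\ref{def:relativeAdjunction}.

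The step I expect to require the most care is bookkeeping rather than ideas: one must align the cartesian (as opposed to cocartesian) fibration conventions and the direction of the mate in Definition~\ref{def:mates} with the exact form in which Lurie's relative adjoint functor theorem is phrased, possibly after passing to opposite fibrations. In particular, the crucial point is that we demand the relative adjoint itself to lie in $\Cart(\CC)$; it is precisely here that condition~(2), rather than condition~(1) alone, is needed, since the Beck--Chevalley equivalences are what upgrade a fibrewise family of adjoints into a functor over $\CC$ that preserves cartesian edges.
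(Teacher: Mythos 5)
Your proof has a genuine gap, and it sits exactly at the mathematical core of the lemma. The result you attribute to Lurie --- ``a map of cartesian fibrations admits a relative left adjoint \emph{that again lies in} $\Cart(\CC)$ exactly when the fibre functors admit left adjoints and the Beck--Chevalley transformations are invertible'' --- does not appear in \cite[\S~7.3]{lurie2017}. What Lurie proves (Proposition~7.3.2.6) is weaker in two respects: for a map of cartesian fibrations that preserves cartesian edges, a relative left adjoint in \emph{Lurie's} sense (a functor over $\CC$ with unit and counit lying over identities, but with \emph{no} requirement that it preserve cartesian edges) exists if and only if the fibrewise left adjoints exist; no Beck--Chevalley condition enters, and none is delivered as a conclusion. (Proposition~7.3.2.11, which does involve a mate condition, concerns the existence of relative \emph{right} adjoints and is what the paper invokes for the dual half of the statement --- it cannot be applied to the half you chose to treat.) Since the paper's Definition~\ref{def:relativeAdjunction} demands that \emph{both} functors lie in $\Cart(\CC)$, your converse direction, which reads condition~(2) into ``Lurie's criterion'' and then extracts a cartesian-edge-preserving $l$ from it, is circular: the criterion you cite \emph{is} the lemma.

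What is actually missing, in both directions of your argument, is the proof that the relative left adjoint $l$ furnished by Proposition~7.3.2.6 preserves cartesian arrows if and only if, for every $g\colon d\to c$, the mate $\phi\colon l\vert_d\, g^\ast\to g^\ast\, l\vert_c$ of the square in condition~(2) is an equivalence. In the forward direction you assert that the $2$-cell of the left lax square ``is precisely the comparison measuring the failure of $l$ to commute with $g^\ast$,'' but this identification is not formal: the mate is defined through the unit and counit (Definition~\ref{def:mates}), whereas the comparison coming from the fibration structure is defined through the universal property of cartesian lifts, and matching the two is the real content. The paper's proof does exactly this: it chooses a cartesian lift $\alpha\colon y\to x$ of $g$ in $\QQ$ and a cartesian lift $\beta\colon b\to l(x)$ of $g$ in $\PP$, produces the induced map $\gamma\colon l(y)\to b$ (which is an equivalence precisely when $l(\alpha)$ is cartesian), and then uses the triangle identities together with the universal property of cartesian arrows to show that $\gamma$ agrees with the component of $\phi$ at $x$. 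Without this step (or an equivalent one, e.g.\ a doctrinal-adjunction-type argument carried out in this setting), neither implication of the lemma is established.
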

\begin{proof}
	The second statement is the content of (the dual of) ~\cite[Proposition~7.3.2.11]{lurie2017}. The first statement, on the other hand, is a formal consequence of the second: in fact, in light of the straightening equivalence, there is an equivalence $(-)^{\vee,\op}\colon\Cart(\CC)\simeq\Cart(\CC)$ that is determined by the equivalence $(-)^\op_\ast\colon\PSh_{\CatSS}(\CC)\simeq\PSh_{\CatSS}(\CC)$ given by postcomposition with the involution $(-)^\op\colon\CatSS\simeq\CatSS$. By combining Proposition~\ref{prop:adjunctionOpposite} with Corollary~\ref{cor:internalRelativeAdjunctionPSh}, the equivalence $(-)^{\vee,\op}$ carries a relative left adjoint to a relative right adjoint, and it is evidently true that it translates the two conditions in the second statement to the two conditions in the first one. Since we already know that the second statement is verified, the first one therefore follows as well.
\end{proof}
By combining Corollary~\ref{cor:adjunctionCartesianFibrations} with Lemma~\ref{lem:relativeAdjointBeckChevalley}, we conclude:
\begin{proposition}
	\label{prop:existenceAdjointsBeckChevalley}
	A functor $r\colon\I{C}\to\I{D}$ in $\Cat(\BBB)$ is a right adjoint if and only if the following two conditions hold:
	\begin{enumerate}
		\item For any object $A\in \BB$, the induced functor $r(A)\colon \I{C}(A)\to \I{D}(A)$ is the right adjoint in an adjunction $(l_A,r(A),\eta_A,\epsilon_A)$.
		\item For any morphism $s\colon B\to A$ in $\BB$, the mate of the commutative square
		\begin{equation*}
			\begin{tikzcd}
				\I{C}(A)\arrow[r, "r(A)"] \arrow[d, "s^\ast"'] & \I{D}(A) \arrow[d, "s^\ast"] \arrow[dl, Rightarrow, "\simeq"', shorten=2mm]\arrow[d]\\
				\I{C}(B)\arrow[r, "r(B)"]  & \I{D}(B)
			\end{tikzcd}
		\end{equation*}
		commutes.
	\end{enumerate}
	If this is the case, then the left adjoint $l$ of $r$ is given on objects $A\in \BB$ by $l_A$ and on morphisms $s\colon B\to A$ by the mate of the commutative square defined by $r(s)$.
	
	Dually, a functor $l\colon \I{D}\to\I{C}$ in $\Cat(\BBB)$ is a left adjoint if and only if the following two conditions hold:
	\begin{enumerate}
		\item For any object $A\in \BB$, the induced map $l(A)\colon \I{D}(A)\to \I{C}(A)$ is the left adjoint in an adjunction $(l(A),r_A,\eta_A,\epsilon_A)$.
		\item For any morphism $s\colon B\to A$ in $\BB$, the mate of the commutative square
		\begin{equation*}
			\begin{tikzcd}
				\I{C}(A)\arrow[from=r, "l(A)"'] \arrow[d, "s^\ast"']\arrow[from=dr, Rightarrow, "\simeq"', shorten=2mm] & \I{D}(A) \arrow[d, "s^\ast"] \arrow[d]\\
				\I{C}(B)\arrow[from=r, "l(B)"']  & \I{D}(B)
			\end{tikzcd}
		\end{equation*}
		commutes.
	\end{enumerate}
	If this is the case, then a right adjoint $r$ of $l$ is given on objects $A\in \BB$ by $r_A$ and on morphisms $s\colon B\to A$ by the mate of the commutative square defined by $l(s)$.\qed
\end{proposition}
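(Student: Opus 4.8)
The plan is to obtain the statement as a direct consequence of corollary~\ref{cor:adjunctionCartesianFibrations} together with lemma~\ref{lem:relativeAdjointBeckChevalley}; the only genuine content is to translate the fibrewise language of the cartesian-fibration picture into the section-wise language $\I{C}(A)$ and $s^\ast$ of the associated sheaves on $\BB$. First I would set up the relevant dictionary. Under the equivalence $\PSh_{\CatSS}(\BB)\simeq\Cart(\BB)$, a large $\BB$-category $\I{C}\in\Cat(\BBB)\simeq\Shv_{\CatSS}(\BB)$ corresponds to the cartesian fibration $\int\I{C}\to\BB$ whose fibre over $A\in\BB$ is the $\infty$-category of local sections $\I{C}(A)$ and whose transition functor along a morphism $s\colon B\to A$ is exactly $s^\ast\colon\I{C}(A)\to\I{C}(B)$. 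In particular a functor $r\colon\I{C}\to\I{D}$ corresponds to a map $\int r\colon\int\I{C}\to\int\I{D}$ in $\Cart(\BB)$ that recovers $r(A)$ on the fibre over $A$, and since $\int r$ arises from a natural transformation of $\CatSS$-valued sheaves it automatically preserves cartesian edges, so that the standing hypothesis of lemma~\ref{lem:relativeAdjointBeckChevalley} is met.

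With this dictionary in place, I would apply corollary~\ref{cor:adjunctionCartesianFibrations}, which asserts that $r$ is a right adjoint in $\Cat(\BBB)$ if and only if $\int r$ is a relative right adjoint in $\Cart(\BB)$. Feeding $\CC=\BB$, $\PP=\int\I{C}$ and $\QQ=\int\I{D}$ into lemma~\ref{lem:relativeAdjointBeckChevalley}, its two characterising conditions translate verbatim into those of the proposition: condition~(1) of the lemma, that the fibre functor $r\vert_A=r(A)$ admits a left adjoint $l_A$ for each $A$, is precisely condition~(1) here, while the Beck--Chevalley condition~(2), that the mate of the naturality square determined by each transition functor $s^\ast$ is an equivalence, is precisely condition~(2) here. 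The concluding description of the left adjoint is read off from the lemma as well: the relative left adjoint $\int l$ recovers $l_A$ on the fibre over $A$, and, being a map in $\Cart(\BB)$, it corresponds to a sheaf map $\I{D}\to\I{C}$ whose naturality datum along $s\colon B\to A$ is exactly the mate of the square defined by $r(s)$ — which is an equivalence by condition~(2). Thus $l$ is given on objects by $l_A$ and on morphisms by these mates.

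Finally, the dual statement for left adjoints follows by the same argument applied to the dual half of lemma~\ref{lem:relativeAdjointBeckChevalley}; alternatively one may deduce it from the right-adjoint case by passing through $(-)^{\op}$ via proposition~\ref{prop:adjunctionOpposite}. As the proposition is assembled entirely from results already in hand, I do not expect any real obstacle beyond the bookkeeping of the above correspondence. The single point requiring care is the identification of the fibres and transition functors of $\int\I{C}$ with $\I{C}(A)$ and $s^\ast$; this is exactly the naturality of the Grothendieck construction for $\CatSS$-valued sheaves on $\BB$, guaranteed by the equivalence $\PSh_{\CatSS}(\BB)\simeq\Cart(\BB)$ and the fact that the inclusion $\Cat(\BBB)\into\PSh_{\CatSS}(\BB)$ is the inclusion of sheaves.
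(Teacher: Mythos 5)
Your proof is correct and is exactly the paper's argument: the proposition is obtained by combining corollary~\ref{cor:adjunctionCartesianFibrations} with lemma~\ref{lem:relativeAdjointBeckChevalley}, with the only work being the translation between the fibrewise data of $\int\I{C}\to\BB$ and the section-wise data $\I{C}(A)$, $s^\ast$, which you carry out correctly.
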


\begin{remark}
	\label{rem:existenceAdjointsBeckChevalleyLocalisation}
	In the situation of Proposition~\ref{prop:existenceAdjointsBeckChevalley}, suppose that the functor $r\colon \I{C}\to\I{D}$ is fully faithful and suppose that condition~(1) is satisfied. Since the mate of the commutative square in condition~(2) is given by the composition
	\begin{equation*}
	l_B g^\ast\xrightarrow{l_B g^\ast \eta_A} l_B g^\ast r(A) l_A\xrightarrow{\simeq} l_B r(B) g^\ast l_A\xrightarrow{\epsilon_B g^\ast l_A}g^\ast l_A
	\end{equation*}
	in which the map $\epsilon_B$ is an equivalence, the composition is an equivalence whenever the map $l_B g^\ast\eta_A$ is an equivalence. Since furthermore the map $l_A \eta_A$ is an equivalence as well, we may in this case replace condition~(2) by the a priori weaker condition that there exists an \emph{arbitrary} equivalence $l_B g^\ast\simeq g^\ast l_A$.
\end{remark}

Combining Lemma~\ref{lem:relativeAdjointBeckChevalley} with Corollary~\ref{cor:internalRelativeAdjunctionPSh} furthermore implies:
\begin{corollary}
	\label{cor:restricedBeckChevalleyAdj}
	Let $r \colon \I{C} \rightarrow \I{D}$ be a functor of $\BB$-categories and let $L \colon \PSh(\CC) \rightarrow \BB$ be a left exact localisation where $\CC$ is some small $\infty$-category.
	Then $r$ is a right adjoint if and only if the following two conditions hold:
		\begin{enumerate}
		\item For any object $c \in \CC$, the induced functor $r(Lc)\colon \I{C}(Lc)\to \I{D}(Lc)$ is a right adjoint.
		\item For any morphism $s\colon d \to c$ in $\CC$, the mate of the commutative square
		\begin{equation*}
			\begin{tikzcd}
				\I{C}(Lc)\arrow[r, "r(Lc)"] \arrow[d, "Ls^\ast"'] & \I{D}(Lc) \arrow[d, "Ls^\ast"] \arrow[dl, Rightarrow, "\simeq"', shorten=2mm]\arrow[d]\\
				\I{C}(Ld)\arrow[r, "r(Ld)"]  & \I{D}(Ld)
			\end{tikzcd}
		\end{equation*}
		commutes.\qed
	\end{enumerate}
\end{corollary}

Using the criterion from Proposition~\ref{prop:existenceAdjointsBeckChevalley}, we are now able to provide a large class of examples for adjunctions between $\BB$-categories:
\begin{example}
	\label{ex:RightadjOfPresCatsGivesInternalAdj}
	In Construction~\ref{constr:PresCatsAsInternalCats}, we defined a functor $-\otimes\Univ\colon \RPr\to\Cat(\BB)$ that carries a presentable $\infty$category $\CC$ to the sheaf of $\infty$-categories $\CC\otimes \Over{\BB}{-}$ (where $-\otimes -$ is Lurie's tensor product of presentable $\infty$-categories). Therefore, if $g \colon \CC \rightarrow \DD$ is a right adjoint functor between presentable $\infty$-categories, we get an induced functor
	\[
		g \otimes {\Univ} \colon \CC \otimes {\Univ} \rightarrow \DD \otimes {\Univ}
	\]
	of large $\BB$-categories.
	We note that for any morphism $s \colon B \rightarrow A$ in $\BB$ the mate of the commutative square
	\[
		\begin{tikzcd}
\CC \otimes \BB_{/A} \arrow[d, "\CC \otimes s^*"] \arrow[r, "g \otimes \BB_{/A}"] & \DD \otimes \BB_{/A} \arrow[d, "\DD \otimes s^*"] \\
\CC \otimes \BB_{/B} \arrow[r, "g \otimes \BB_{/B}"]                                     & \DD \otimes \BB_{/B}                                    
\end{tikzcd}
	\]
	may be identified with the square induced by passing to left adjoints in the commutative diagram
	\[
	\begin{tikzcd}
\CC \otimes \BB_{/A} \arrow[r, "g \otimes \BB_{/A}"]                                      & \DD \otimes \BB_{/A}                                      \\
\CC \otimes \BB_{/B} \arrow[r, "g \otimes \BB_{/B}"] \arrow[u, "\CC \otimes s_*"] & \DD \otimes \BB_{/B} \arrow[u, "\DD \otimes s_*"]
\end{tikzcd}
	\]
	Thus it follows from Proposition~\ref{prop:existenceAdjointsBeckChevalley} that $g \otimes {\Univ}$ is a right adjoint.
\end{example}

We conclude this section by applying the above example in two concrete cases.
At first we note that the large $\BB$-category $\Simp\Univ=\IPSh(\Delta)$ (where $\Delta$ is viewed as a constant $\BB$-category) may naturally be identified with the large $\BB$-category $\Simp\SS \otimes {\Univ}$. Therefore, by applying the functor $-\otimes\Univ$ from Construction~\ref{constr:PresCatsAsInternalCats} to the inclusion $\CatS \hookrightarrow \PSh_{\SS}(\Delta)$, one obtains a canonical inclusion of large $\BB$-categories
\[
	\iota \colon \ICat_\BB \into \IPSh(\Delta).
\] 
Now Example~\ref{ex:RightadjOfPresCatsGivesInternalAdj} shows:

\begin{proposition}
	\label{prop:presentationCategoryOfCategories}
	The inclusion $\iota\colon \ICat_{\BB}\into\IPSh(\Delta)$ admits a left adjoint $L\colon \IPSh(\Delta)\to\ICat_{\BB}$.\qed
\end{proposition}

Similarly, the inclusion $\SS \hookrightarrow \CatS$ induces an inclusion $\Univ\into\ICat_{\BB}$, so that Example~\ref{ex:RightadjOfPresCatsGivesInternalAdj} together with Proposition~\ref{prop:GroupoidificationCore} yields:
\begin{proposition}
	\label{prop:internalCoreGroupoidification}
	The inclusion $\Univ\into\ICat_{\BB}$ admits both a right adjoint $(-)^{\core}$ and a left adjoint $(-)^{\gp}$ that recover the core $\BB$-groupoid and the groupoidification functor on local sections.\qed
\end{proposition}

\subsection{Adjunctions in terms of mapping $\BB$-groupoids}
\label{sec:adjunctionsMappingGroupoids}
The notion of an adjunction between $\infty$-categories can be formalised in several ways. One way is the bicategorical approach that we have chosen in Definition~\ref{def:internalAdjunction}, but an equivalent way to define an adjunction is by means of a triple $(l,r, \alpha)$ in which $(l,r)\colon \CC\leftrightarrows\DD$ is a pair of functors and 
\begin{equation*}
\alpha\colon \map{\DD}(-,r(-))\simeq\map{\CC}(l(-),-)
\end{equation*}
is an equivalence (see for Example~\cite[Theorem~6.1.23]{cisinski2019a}). The aim of this section is to obtain an analogous characterisation for adjunctions between $\BB$-categories. To that end, recall from \S~\ref{sec:leftFibrations} that there is a factorisation system in $\Cat(\BB)$ between initial functors and left fibrations. Recall, furthermore, that there is a functor $\Cat(\BB)^\op\to\Cat(\BBB)$ that carries a $\BB$-category $\I{C}$ to the large $\BB$-category $\ILFib_{\I{C}}$ of left fibrations over $\I{C}$ and that carries a functor $f\colon\I{C}\to\I{D}$ to the pullback functor $f^\ast\colon\ILFib_{\I{C}}\to\ILFib_{\I{D}}$ that carries a left fibration $q\colon \I{Q}\to A\times\I{D}$ in context $A\in\BB$ to its pullback along $\id\times f\colon A\times\I{C}\to A\times\I{D}$. Now the key result from which we will derive our desired characterisation of adjunctions is the following statement:

\begin{proposition}
	\label{prop:YonedaExtensionFibrations}
	Let $f\colon \I{C}\to\I{D}$ be a functor between $\BB$-categories. Then the pullback functor
	\begin{equation*}
		f^\ast\colon \ILFib_{\I{D}}\to\ILFib_{\I{C}}
	\end{equation*}
	admits a left adjoint $f_!$ that is fully faithful whenever $f$ is.
	If $p\colon \I{P}\to A\times\I{C}$ is an object in $ \ILFib_{\I{C}}$, the left fibration $f_!(p)$ over $A\times\I{D}$ is the unique functor that fits into a commutative diagram
	\begin{equation*}
		\begin{tikzcd}
		\I{P}\arrow[d, "p"]\arrow[r, "i"] & f_!\I{P}\arrow[d, "f_!(p)"]\\
		A\times\I{C}\arrow[r, "\id\times f"] & A\times\I{D}
		\end{tikzcd}
	\end{equation*}
	such that $i$ is initial.
\end{proposition}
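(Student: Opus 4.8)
The plan is to obtain $f_!$ as a left adjoint by invoking proposition~\ref{prop:existenceAdjointsBeckChevalley}, applied to the functor $f^\ast\colon\ILFib_{\I D}\to\ILFib_{\I C}$ in $\Cat(\BBB)$. Concretely, I would verify its two hypotheses: first that for every $A\in\BB$ the section-wise functor $f^\ast(A)\colon\LFib(A\times\I D)\to\LFib(A\times\I C)$ (pullback along $\id_A\times f$) admits a left adjoint, and second that these section-wise left adjoints satisfy the Beck--Chevalley condition with respect to the transition functors of the sheaf $A\mapsto\LFib(A\times\I C)$. The section-wise left adjoints will be built directly from the internal factorisation system $(\text{initial},\text{left fibration})$, which simultaneously yields the asserted explicit description.

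For the first hypothesis I would factor $f^\ast(A)$ through the inclusion $\LFib(A\times\I C)\hookrightarrow\Over{\Cat(\BB)}{A\times\I C}$, followed by postcomposition with $\id_A\times f$ (the left adjoint of pullback), followed by the reflection $\Over{\Cat(\BB)}{A\times\I D}\to\LFib(A\times\I D)$ coming from the factorisation system (recall that the inclusion of the right class into a slice is reflective). The composite is left adjoint to $f^\ast(A)$, and it sends a left fibration $p\colon\I P\to A\times\I C$ to the left fibration obtained by factoring $(\id_A\times f)\circ p$ as $\I P\xrightarrow{i} f_!\I P\to A\times\I D$ with $i$ initial and $f_!\I P\to A\times\I D$ a left fibration. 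Since this factorisation is unique, it is exactly the commutative square with $i$ initial asserted in the statement, and the unit of the section-wise adjunction is $i$.

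The main obstacle is the Beck--Chevalley condition, and the crucial point is that the transition functor along $s\colon B\to A$ is \emph{not} an arbitrary pullback in $\Cat(\BB)$ but base change in the ``groupoid direction'': under the identification $\LFib(A\times\I C)\simeq\LFib_{\BB_{/A}}(\pi_A^\ast\I C)$ of left fibrations over $A\times\I C$ with left fibrations of $\BB_{/A}$-categories over $\pi_A^\ast\I C$, it is the algebraic base-change functor $s^\ast\colon\Cat(\BB_{/A})\to\Cat(\BB_{/B})$ restricted to left fibrations. Such base-change functors preserve the internal factorisation system: they preserve finite products, colimits and the generating map $\Delta^0\to\Delta^1$, hence carry initial functors to initial functors, while left fibrations are in any case stable under pullback. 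Therefore $s^\ast$ carries the factorisation $\I P\xrightarrow{i} f_!\I P\to A\times\I C$ to the factorisation computing $f_!^B s^\ast\I P$, which is precisely the assertion that the mate $l_B s^\ast\to s^\ast l_A$ is an equivalence. I would emphasise that the restriction to base change of this special form is essential, since pulling an initial map back along an arbitrary functor into $A\times\I D$ need not preserve initiality. Proposition~\ref{prop:existenceAdjointsBeckChevalley} then assembles the $l_A$ into the desired $\BB$-functor $f_!$ left adjoint to $f^\ast$, with the stated section-wise description.

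Finally, for full faithfulness of $f_!$ when $f$ is fully faithful, I would show section-wise that the unit $u\colon\I P\to f^\ast f_!\I P$ is an equivalence. Here $f^\ast f_!\I P=(A\times\I C)\times_{A\times\I D}f_!\I P$ comes with a projection to $A\times\I C$, which is a left fibration, and with a map $\tilde f\colon f^\ast f_!\I P\to f_!\I P$ that is the pullback of the fully faithful functor $\id_A\times f$ and hence fully faithful; by construction $\tilde f\circ u=i$. Since $i$ is initial and $\tilde f$ is fully faithful, lemma~\ref{lem:cancellabilityInitialFF} forces $u$ to be initial. But $u$ is a map between left fibrations over $A\times\I C$, so by the cancellation property of the right class it is itself a left fibration; a functor that is simultaneously initial and a left fibration is an equivalence. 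Thus $u$ is an equivalence and $f_!$ is fully faithful.
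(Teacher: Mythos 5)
Your proposal is correct and follows essentially the same route as the paper: both construct the section-wise left adjoints from the (initial, left fibration) factorisation system as the composite of the slice inclusion, postcomposition with $\id\times f$, and the reflection onto left fibrations; both assemble these into an internal adjunction via proposition~\ref{prop:existenceAdjointsBeckChevalley}; and both prove full faithfulness by applying lemma~\ref{lem:cancellabilityInitialFF} to the unit map and using that a functor which is simultaneously initial and a left fibration is an equivalence. The only divergence is in how the Beck--Chevalley step is justified: the paper shows that $s^\ast i$ remains initial by observing that $s\colon B\to A$ is a right fibration and therefore proper (citing the earlier paper's results on proper functors), whereas you identify the transition functor with étale base change $s^\ast\colon\Cat(\BB_{/A})\to\Cat(\BB_{/B})$ and argue it preserves initial functors because, being a left exact left adjoint carrying the generating map $\Delta^0\to\Delta^1$ and its products with constant factors to generators of the same form, it preserves the internal saturation. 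Both justifications are sound; yours is marginally more self-contained, while the paper's leans on established properness machinery, and your explicit warning that pullback along an arbitrary functor would not preserve initiality is exactly the point that makes this step non-trivial.
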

In order to prove Proposition~\ref{prop:YonedaExtensionFibrations}, we need the following lemma:
\begin{lemma}
	\label{lem:cancellabilityInitialFF}
	If $f\colon \I{C}\to\I{D}$ and $g\colon \I{D}\to\I{E}$ are functors in $\Cat(\BB)$ such that $g$ is fully faithful, then $gf$ is initial if and only if both $f$ and $g$ are initial.
\end{lemma}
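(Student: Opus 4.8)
The plan is to exploit that the \emph{initial} functors and the \emph{left fibrations} are the two halves of an internal orthogonal factorisation system on $\Cat(\BB)$, whose left class is the internal saturation of $d^1\colon\Delta^0\into\Delta^1$. The implication that $gf$ is initial whenever both $f$ and $g$ are is then immediate, since the left class of any factorisation system is closed under composition; note that this direction needs no full faithfulness. For the converse I first record the general \emph{right cancellation} property of the left class $L$ of an orthogonal factorisation system: if $gf\in L$ and $f\in L$, then $g\in L$. This is a short lifting argument — given a left fibration $p$ and a square to be lifted against $g$, one precomposes with $f$ and uses $gf\in L$ to obtain a unique diagonal, then checks with the lifting property of $f\in L$ that this diagonal solves the original square and is the unique such solution.

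With this in hand, the strategy for the hard direction is to concentrate everything into a single left fibration. Factor $f$ as $\I{C}\xrightarrow{l}\I{M}\xrightarrow{r}\I{D}$ with $l$ initial and $r$ a left fibration. Then $gf\simeq (gr)\circ l$, and since $gf$ and $l$ are initial, right cancellation shows $gr$ is initial. Now recall that in any orthogonal factorisation system the intersection of the two classes is exactly the equivalences; hence a left fibration is an equivalence as soon as it is itself initial. Thus it suffices to prove that $r$ is initial: granting this, $r$ is an equivalence, so $f\simeq r\circ l$ is a composite of maps in $L$ and hence initial, and a second application of right cancellation to $gf\simeq g\circ f$ (now with $f$ initial) yields that $g$ is initial.

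The crux is therefore the following claim: \emph{if $r\colon\I{M}\to\I{D}$ is a left fibration, $g\colon\I{D}\to\I{E}$ is fully faithful, and $gr$ is initial, then $r$ is initial.} Here I would use the description of initiality coming from the Grothendieck construction: a functor $p\colon\I{X}\to\I{Y}$ is initial precisely if it is internally left orthogonal to the universal left fibration $\Under{\Univ}{1}\to\Univ$ (every left fibration being a pullback of it), which unwinds to the condition that for every $G\colon\I{Y}\to\Univ$, classifying the left fibration $p_G\colon\I{P}_G\to\I{Y}$, restriction induces an equivalence $\map{/\I{Y}}(\I{Y},\I{P}_G)\simeq\map{/\I{X}}(\I{X},p^\ast\I{P}_G)$ on $\BB$-groupoids of sections. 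Given $G\colon\I{D}\to\Univ$, the idea is to extend it along the fully faithful $g$ to some $H\colon\I{E}\to\Univ$ with $g^\ast H\simeq G$ and with $\map{/\I{E}}(\I{E},\I{P}_H)\simeq\map{/\I{D}}(\I{D},\I{P}_G)$. Feeding $H$ into the hypothesis that $gr$ is initial, and using $r^\ast g^\ast H\simeq r^\ast G$, then chains these equivalences into $\map{/\I{D}}(\I{D},\I{P}_G)\simeq\map{/\I{M}}(\I{M},r^\ast\I{P}_G)$, which is exactly the orthogonality witnessing that $r$ is initial.

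The main obstacle is the construction of this extension $H$ together with the two displayed properties, since the general theory of Kan extensions is not yet available at this stage. I would build $H$ by hand as a pointwise right Kan extension along $g$: its value at $e\colon A\to\I{E}$ is the limit of $G$ restricted to the comma $\BB$-category $\I{D}\times_{\I{E}}\Over{\I{E}}{e}$, which is well defined because $\Univ$ is internally complete. Full faithfulness of $g$ is precisely what forces, for $e=g(d)$, this comma category to be $\Over{\I{D}}{d}$, whose final object $\id_d$ collapses the limit to $G(d)$ and so yields $g^\ast H\simeq G$; and the identity $\pi_{\I{E}}\circ g\simeq\pi_{\I{D}}$ of structure maps gives the agreement of section $\BB$-groupoids. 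Should this explicit construction prove cumbersome, an alternative is to pass to the associated cartesian fibrations over $\BB$, as in \S\ref{sec:relativeAdjunctions}, and deduce the crux from the classical cancellation property of cofinal functors of $\infty$-categories. The remaining bookkeeping — assembling $r$ initial into $f$ and $g$ initial via the two cancellations above — is then routine.
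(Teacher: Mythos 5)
Your skeleton is sound: the easy direction, the right-cancellation property of the left class of a factorisation system, and the reduction via the factorisation $f\simeq r\circ l$ into an initial functor followed by a left fibration are all correct, and they do reduce everything to your crux claim that a left fibration $r\colon\I{M}\to\I{D}$ with $gr$ initial and $g$ fully faithful is itself initial (hence an equivalence). The gap is in the proof of that crux. Your argument needs, for every $G\colon\I{D}\to\Univ$, an extension $H\colon\I{E}\to\Univ$ with $g^\ast H\simeq G$ such that restriction along $g$ identifies the $\BB$-groupoids of sections; such an $H$ is a pointwise right Kan extension of $G$ along $g$, equivalently a pushforward of the left fibration $\I{P}_G$ along $g$. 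Neither is available at this stage, and ``building it by hand'' is exactly what the internal setting forbids: a functor of $\BB$-categories is not determined by prescribing its values on objects $e\colon A\to\I{E}$, and proposition~\ref{prop:limitsUniverse} (which in any case appears later in the paper, although its proof is independent of this lemma) only produces limit functors $\lim\colon\iFun{\I{I}}{\Univ}\to\Univ$ for a fixed diagram shape $\I{I}$; it does not produce the functoriality in $e$ of $e\mapsto\lim\,(G\vert_{\I{D}\times_{\I{E}}\Under{\I{E}}{e}})$ that your $H$ requires. The paper's own construction of such extensions is theorem~\ref{thm:existenceKanExtension}, whose proof depends on the present lemma through proposition~\ref{prop:YonedaExtensionFibrations}, corollary~\ref{cor:existenceYonedaExtension}, proposition~\ref{prop:colimitsUniverse} and proposition~\ref{prop:coYoneda}, so invoking anything in that chain would be circular. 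Your fallback---passing to cartesian fibrations over $\BB$ and quoting cancellation for cofinal functors of $\infty$-categories---is likewise unsupported: initiality of a functor of $\BB$-categories is an internal condition and is not detected by cofinality of the induced functor on total $\infty$-categories, and no criterion of this kind exists in the paper. There is also a variance slip in your sketch: the pointwise \emph{right} Kan extension is a limit over the comma category $\I{D}\times_{\I{E}}\Under{\I{E}}{e}$, and for $e=g(d)$ full faithfulness identifies this with $\Under{\I{D}}{d}$, whose \emph{initial} object $\id_d$ computes the limit; a final object computes colimits, not limits.

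The tool you are missing is Quillen's Theorem A for $\BB$-categories, \cite[Corollary~4.4.8]{martini2021}, which is already available and makes the lemma a two-line argument. After the same right-cancellation reduction you use, it suffices to show that $f$ is initial; for any object $d\colon A\to\I{D}$, full faithfulness of $g$ makes the square comparing $\Over{\I{D}}{d}\to\I{D}\times A$ with $\Over{\I{E}}{g(d)}\to\I{E}\times A$ cartesian, so that $\Over{\I{C}}{d}$ agrees with the comma category of $gf$ over $g(d)$; Theorem~A applied to the initial functor $gf$ and then, in the converse direction, to $f$ finishes the proof. If you insist on your route, you would first have to construct the pushforward of left fibrations along a fully faithful functor independently of the Kan-extension machinery, which is considerably more work than the lemma itself.
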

\begin{proof}
	As initial functors are closed under composition, $gf$ is initial whenever both $f$ and $g$ are, so it suffices to show the converse direction. Since initial functors are the left complement in a factorisation system, they satisfy the left cancellability property, so that it suffices to show that $f$ is initial given that $gf$ is. We will make use of the $\BB$-categorical version of Quillen's theorem A~\cite[Corollary~4.4.8]{martini2021}. Let therefore $d\colon A\to\I{D}$ be an object in context $A\in\BB$. On account of the commutative diagram
	\begin{equation*}
		\begin{tikzcd}
			\Over{\I{C}}{d}\arrow[d]\arrow[r] & \Over{\I{D}}{d}\arrow[d]\arrow[r] & \Over{\I{E}}{g(d)}\arrow[d]\\
			\I{C}\times A\arrow[r] & \I{D}\times A\arrow[r] & \I{E}\times A
		\end{tikzcd}
	\end{equation*}
	in which the left square is a pullback, it suffices to show that the right square is a pullback as well, which follows immediately from $g$ being fully faithful.
\end{proof}

\begin{proof}[{Proof of Proposition~\ref{prop:YonedaExtensionFibrations}}]
	We wish to apply Proposition~\ref{prop:existenceAdjointsBeckChevalley}. Fixing an object $A\in\BB$, first note that the functor
	\begin{equation*}
	f^\ast\colon \LFib(A\times\I{D})\to \LFib(A\times\I{C})
	\end{equation*}
	that is given by pullback along $(\id\times f)\colon A\times \I{C}\to A\times\I{D}$
	has a left adjoint $f_!$. In fact, on account of the commutative square
	\begin{equation*}
	\begin{tikzcd}
	\LFib(A\times\I{D})\arrow[r, "f^\ast"]\arrow[d, hookrightarrow, "i"] & \LFib(A\times\I{C})\arrow[d, hookrightarrow, "i"] \\
	\Over{\Cat(\BB)}{A\times\I{D}}\arrow[r, "f^\ast"] & \Over{\Cat(\BB)}{A\times\I{C}},
	\end{tikzcd}
	\end{equation*}
	one may define the desired left adjoint $f_!$ on the level of left fibrations as the composition $\Over{L}{A\times\I{D}}\circ (\id\times f)_!\circ i$, where $\Over{L}{A\times\I{C}}\colon \Over{\Cat(\BB)}{A\times\I{D}}\to\LFib(A\times\I{D})$ denotes the localisation functor and where $(\id\times f)_!$ denotes the forgetful functor. By construction, this functor sends $p\colon \I{P}\to A\times\I{C}$ to the left fibration $f_!(p)\colon \I{Q}\to A\times\I{D}$ that arises from the factorisation of $(\id\times f)p\colon \I{P}\to A\times\I{D}$ into an initial map and a left fibration. Note that the counit of this adjunction is given by the canonical map $\I{P}\to \I{Q}\times_{\I{C}}\I{D}$. If $f$ is fully faithful, Lemma~\ref{lem:cancellabilityInitialFF} implies that this map is initial and therefore an equivalence since it is already a left fibration. As a consequence $f$ being fully faithful implies that $f_!$ is fully faithful as well.
	Therefore, by using Proposition~\ref{prop:existenceAdjointsBeckChevalley} the proof is complete once we show that for any map $s\colon B\to A$ in $\BB$, the lax square
	\begin{equation*}
		\begin{tikzcd}
				\LFib(A\times\I{D})\arrow[from=r, "f_!"'] \arrow[d, "s^\ast"']\arrow[from=dr, Rightarrow, shorten=2mm, "\phi"'] & \LFib(A\times\I{C}) \arrow[d, "s^\ast"] \arrow[d]\\
				\LFib(B\times\I{D})\arrow[from=r, "f_!"']  & \LFib(B\times\I{C})
		\end{tikzcd}
	\end{equation*}
	commutes. To see this, let $p\colon \I{P}\to A\times\I{C}$ be a left fibration, and consider the commutative diagram
	\begin{equation*}
	\begin{tikzcd}[column sep=small, row sep=small]
	& s^\ast f_!\I{P}\arrow[rr]\arrow[dd, "s^\ast f_!(p)", near start] && f_!\I{P}\arrow[dd, "f_!(p)", near start] \\
	s^\ast\I{P}\arrow[rr, crossing over]\arrow[dd, "s^\ast p", near start]\arrow[ur, "s^\ast i"] && \I{P}\arrow[ur, "i"] &\\
	& B\times\I{D}\arrow[rr, "s\times\id"', near start] && A\times\I{D}\\
	B\times\I{C}\arrow[rr, "s\times\id"']\arrow[ur, "\id\times f"'] && A\times\I{C}\arrow[ur, "\id\times f"'] \arrow[from=uu, "p", near start, crossing over]&
	\end{tikzcd}
	\end{equation*}
	in which $f_!(p) i\colon\I{P}\to f_!\I{P}\to A\times\I{D}$ is the factorisation of $(\id\times f)p$ into an initial map and a left fibration. The map $\phi\colon f_!s^\ast(p)\to s^\ast f_!(p)$ is given by the unique lift in the commutative square
	\begin{equation*}
	\begin{tikzcd}[column sep=large]
	s^\ast \I{P}\arrow[d, "j"]\arrow[r, "s^\ast i"] & s^\ast f_!\I{P}\arrow[d, "s^\ast f_!(p)"]\\
	f_!s^\ast \I{P}\arrow[ur, dotted, "\phi"]\arrow[r, "f_!s^\ast p"] & B\times\I{D}
	\end{tikzcd}
	\end{equation*}
	in which $j$ is initial. To complete the proof, it therefore suffices to show that $s^\ast i$ is initial, which follows from the fact that the map $s\colon B\to A$ is a right fibration and therefore proper, cf.~\cite[\S~4.4]{martini2021}.
\end{proof}

\begin{corollary}
	\label{cor:existenceYonedaExtension}
	For any functor $f\colon \I{C}\to\I{D}$ between $\BB$-categories, the functor
	\begin{equation*}
		f^\ast\colon \iFun(\I{D},\Univ)\to\iFun(\I{C},\Univ)
	\end{equation*}
	admits a left adjoint $f_!$ that fits into a commutative diagram
	\begin{equation*}
	\begin{tikzcd}
	\I{C}^{\op}\arrow[r, "f^\op"]\arrow[d, hookrightarrow, "h_{\I{C}^{\op}}"] & \I{D}^{\op}\arrow[d, hookrightarrow, "h_{\I{D}^{\op}}"]\\
	\iFun(\I{C},\Univ)\arrow[r, "f_!"] & \iFun(\I{D},\Univ)
	\end{tikzcd}
	\end{equation*}
	in which the two vertical arrows are given by the Yoneda embedding. Moreover, $f$ is fully faithful if and only if $f_!$ is fully faithful.
\end{corollary}
\begin{proof}
	The existence of the left adjoint $f_!$ follows immediately from Proposition~\ref{prop:YonedaExtensionFibrations} on account of the straightening/unstraightening equivalence for left fibrations (Theorem~\ref{thm:straightening}). To show that the composition $\I{C}^{\op}\into \iFun(\I{C},\Univ)\to\iFun(\I{D},\Univ)$ factors through the Yoneda embedding $\I{D}^{\op}\into\iFun(\I{D},\Univ)$, it suffices to show that for every representable left fibration $p\colon\I{P}\to A\times\I{C}$ the associated left fibration $f_!(p)\colon\I{Q}\to A\times\I{D}$ is representable as well. This follows immediately from the fact that there is an initial map $i\colon \I{P}\to\I{Q}$, which implies that $\I{Q}$ admits an initial section $A\to\I{Q}$ whenever $\I{P}$ admits such a section (cf.~Remark~\ref{rem:representableFunctorsFibrationalCriterion}).
\end{proof}

\begin{proposition}
	\label{prop:characterizationAdjunctionMappingGroupoids}
	A pair of functors $(l, r)\colon\I{C}\leftrightarrows\I{D}$ between $\BB$-categories defines an adjunction if and only if there is an equivalence of functors
	\begin{equation*}
		\alpha\colon\map{\I{D}}(l(-),-)\simeq\map{\I{C}}(-,r(-)).
	\end{equation*}
\end{proposition}
\begin{proof}
	Suppose that $l\dashv r$ is an adjunction in $\Cat(\BB)$. Then Proposition~\ref{prop:2functorFunContravariant} gives rise to an adjunction $l^\ast\dashv r^\ast\colon \IPSh(\I{D})\leftrightarrows\IPSh(\I{C})$. On the other hand, Corollary~\ref{cor:existenceYonedaExtension} provides a left adjoint $r_!$ to $r^\ast$, hence the uniqueness of adjoints implies that there is an equivalence $\beta\colon r_!\simeq l^\ast$. We therefore conclude that there is an equivalence $\alpha\colon h_{\I{C}}r\simeq l^\ast h_{\I{D}}$, where $h_{\I{C}}$ and $h_{\I{D}}$ denotes the Yoneda embedding of $\I{C}$ and $\I{D}$, respectively. On account of the adjunction $-\times\I{D}^{\op}\dashv \iFun(\I{D}^\op,-)$, the datum of such an equivalence corresponds precisely to an equivalence
	\begin{equation*}
			\alpha\colon\map{\I{D}}(l(-),-)\simeq\map{\I{C}}(-,r(-)),
	\end{equation*}
	as desired.
	
	Conversely, suppose that the pair $(l,r)$ comes along with an equivalence $\alpha$ as above.
	As functoriality of the twisted arrow construction (Definition~\ref{def:twistedArrow}) gives rise to a morphism of functors $\map{\I{C}}(-,-)\to\map{\I{D}}(l(-),l(-))$, one obtains a map
		\begin{equation*}
			\map{\I{C}}(-,-)\to\map{\I{D}}(l(-),l(-))\simeq\map{\I{C}}(-,rl(-)).
		\end{equation*}
	As the Yoneda embedding is fully faithful (Corollary~\ref{cor:YonedaEmbedding}), this map arises uniquely from a map $\eta\colon \id_{\I{C}}\to rl$. In fact, we may view the above map as a functor
		\begin{equation*}
			\I{C}\to \IPSh(\I{C})^{\Delta^1}
		\end{equation*}
		that sends an object $d\colon A\to \I{C}$ to the map
		\begin{equation*}
			\map{\I{C}}(-,d)\to\map{\I{D}}(l(-),l(d))\simeq\map{\I{C}}(-,rl(d))
		\end{equation*}
		in $\IPSh(\I{C})$. As the Yoneda embedding $\I{C}\into\IPSh(\I{C})$ is fully faithful, this map must arise from a map in $\I{C}$, hence the above functor factors through the fully faithful functor $\I{C}^{\Delta^1}\into \IPSh(\I{C})^{\Delta^1}$ that is induced by the Yoneda embedding. By a similar argument, one obtains a map $\epsilon\colon lr\to \id_{\I{D}}$. We complete the proof by showing that $\eta$ and $\epsilon$ satisfy the conditions of Proposition~\ref{prop:minimalAdjunctionData}, i.e.\ that the maps $r\epsilon\circ \eta r$ and $\epsilon l\circ l\eta$ are equivalences. We show this for the first case, the second case follows from an analogous argument. Since equivalences of functors can be detected objectwise by~\cite[Corollary~4.7.17]{martini2021}, it suffices to show that for any object $d\colon A\to \I{D}$ the map
		\begin{equation*}
				r(d)\xrightarrow{\eta r d} rlr(d)\xrightarrow{r\epsilon d} r(d)
		\end{equation*}
		is an equivalence. Now bifunctoriality of the equivalence $\map{\I{D}}(l(-),-)\simeq\map{\I{C}}(-,r(-))$ implies that there is a commutative diagram
		\begin{equation*}
			\begin{tikzcd}
				r(d)\arrow[r, "\eta rd"]\arrow[d, "\id_{r(d)}"] & rlr(d)\arrow[d, "r\epsilon d"] \\
				r(d)\arrow[r, "\id_{r(d)}"] & r(d)
			\end{tikzcd}
		\end{equation*}
		that arises from the transposed commutative diagram
		\begin{equation*}
			\begin{tikzcd}
				lr(d)\arrow[r, "\id_{lr(d)}"]\arrow[d, "\id_{lr(d)}"] & lr(d)\arrow[d, "\epsilon d"] \\
				lr(d)\arrow[r, "\epsilon d"] & d,
			\end{tikzcd}
		\end{equation*}
		which proves the claim.
\end{proof}
Recall that if $r\colon \I{D}\to\I{C}$ is a functor between $\BB$-categories and if $c\colon A\to \I{D}$ is an arbitrary object, the functor $\map{\I{C}}(c,r(-))\colon A\times\I{D}\to\Univ$ precisely classifies the left fibration $\Under{\I{D}}{c}\to A\times\I{D}$ that arises as the pullback of the slice projection $(\pi_c)_!\colon\Under{\I{C}}{c}\to A\times\I{C}$ along $\id\times r\colon A\times\I{D}\to A\times\I{C}$ (see~\cite[Definition~4.2.1]{martini2021}). We now obtain:
\begin{corollary}
	\label{cor:representabilityCriterionAdjoint}
	Let $r\colon\I{D}\to\I{C}$ be a functor between large $\BB$-categories. Then $r$ admits a left adjoint $l$ if and only if for any object $c\colon A\to \I{C}$ in context $A\in\BB$ the copresheaf $\map{\I{C}}(c, r(-))$ (viewed as an object in $\iFun(\I{D},\Univ)$ in context $A$) is representable by an object in $\I{D}$, in which case the representing object is given by $l(c)$ and the associated initial object in $\Under{\I{D}}{c}$ is given by the unit map $\eta c\colon c\to rl(c)$.
\end{corollary}
\begin{proof}
	By Proposition~\ref{prop:characterizationAdjunctionMappingGroupoids}, the functor $r$ admits a left adjoint if and only if there is a functor $l\colon\I{C}\to\I{D}$ and an equivalence
	\begin{equation*}
		\alpha\colon\map{\I{D}}(l(-),-)\simeq \map{\I{C}}(-,r(-)).
	\end{equation*}
	Therefore, if $r$ admits a left adjoint then $\map{\I{C}}(c,r(-))$ is representable by $l(c)\colon A\to\I{D}$, and the explicit construction of the equivalence $\alpha$ in Proposition~\ref{prop:characterizationAdjunctionMappingGroupoids} shows that the equivalence
	\begin{equation*}
		\Under{\I{D}}{l(c)}\simeq \Under{\I{D}}{c}
	\end{equation*}
	over $A\times\I{D}$ that arises from $\alpha$ sends the initial section $\id_{l(c)}\colon A\to \Under{\I{D}}{l(c)}$ to the unit map $\eta c\colon c\to rl(c)$.
	
	Conversely, if $\map{\I{C}}(c,r(-))$ is representable for every object $c$ in $\I{C}$ in context $A\in\BB$, then the functor $hr\colon \I{D}\to\I{C}\into\IPSh[\BBB](\I{C})=\iFun[\BB](\I{C}^\op,\Univ[\BBB])$ transposes to a functor
	\begin{equation*}
		\I{C}^{\op}\to\iFun(\I{D},\Univ[\BBB])
	\end{equation*}
	that factors through the Yoneda embedding $\I{D}^{\op}\into\iFun(\I{D},\Univ[\BBB])$ by~\cite[Proposition~3.9.4]{martini2021} and therefore defines a functor $l\colon\I{C}\to\I{D}$. By construction, this functor comes with an equivalence $\map{\I{D}}(l(-),-)\simeq \map{\I{C}}(-,r(-))$, hence the claim follows.
\end{proof}

Let $\I{C}$ and $\I{D}$ be $\BB$-categories and let $\iFun^R(\I{D},\I{C})\into\iFun(\I{D},\I{C})$ be the full subcategory that is spanned by those functors $\pi_A^\ast\I{D}\to\pi_A^\ast\I{C}$ in $\Cat(\Over{\BB}{A})$ (for every $A\in\BB$) that admit a left adjoint. Dually, let $\iFun^L(\I{C},\I{D})\into\iFun(\I{C},\I{D})$ denote the full subcategory spanned by those functors that admit a right adjoint.

\begin{remark}[locality of adjunctions]
	\label{rem:adjunctionsAreLocal}
	If $\I{C}$ and $\I{D}$ are $\BB$-categories and $A\in \BB$ is an arbitrary object, the property of a functor $f\colon\pi_A^\ast\I{C}\to\pi_A^\ast\I{D}$ to be a right adjoint is local in $\BB$ (see \S~\ref{sec:localityPrinciple}). In fact, by Corollary~\ref{cor:representabilityCriterionAdjoint} this property is equivalent to the condition that for every object $c$ in $\pi_A^\ast\I{C}$ (in arbitrary context), the functor $\map{\pi_A^\ast\I{C}}(c, f(-))$ is representable. Hence the claim follows from the fact that the representability of such functors is a local condition (see Example~\ref{ex:representabilityLocalCondition}).
	In particular, this implies that every object in $\iFun^R(\I{C},\I{D})$ in context $A\in\BB$ encodes a right adjoint functor $\pi_A^\ast\I{C}\to\pi_A^\ast\I{D}$, and one furthermore has a canonical equivalence $\pi_A^\ast\iFun^R(\I{D},\I{C})\simeq\iFun[\Over{\BB}{A}]^R(\pi_A^\ast\I{D},\pi_A^\ast\I{C})$ for every $A\in\BB$ (see Remarks~\ref{rem:localityPrinciplePropositions} and~\ref{rem:localityPrincipleBaseChangeProposition}).
\end{remark}

\begin{remark}[\'etale transposition invariance]
	\label{rem:etaleInvarianceAdjunctions}
	By its very definition, the property of an object $f\colon A\to \iFun(\I{D},\I{C})$ to be a right adjoint (i.e.\ to be contained in $\iFun^R(\I{D},\I{C})$) is invariant under \'etale transposition (see \S~\ref{sec:localityPrinciple}).
\end{remark}

\begin{corollary}
	\label{cor:functorialityAdjunction}
	For any two $\BB$-categories $\I{C}$ and $\I{D}$, there is an equivalence
	\begin{equation*}
		\iFun^R(\I{D},\I{C})\simeq\iFun^L(\I{C},\I{D})^{\op}
	\end{equation*}
	that sends a functor between $\I{D}$ and $\I{C}$ to its left adjoint, and vice versa.
\end{corollary}
\begin{proof}
	By postcomposition with the Yoneda embedding $\I{C}\into\IPSh(\I{C})$, the $\BB$-category $\iFun^R(\I{D},\I{C})$ embeds into $\iFun(\I{D}\times\I{C}^{\op},\Univ)$. Likewise, the $\BB$-category $\iFun^L(\I{C},\I{D})^{\op}\simeq\iFun^R(\I{C}^{\op},\I{D}^{\op})$ embeds into the $\BB$-category $\iFun(\I{D}\times\I{C}^{\op},\Univ)$. To finish the proof, we only need to show that an object $f\colon A\times\I{D}\times\I{C}^{\op}\to\Univ$ in $\iFun(\I{D}\times\I{C}^{\op},\Univ)$ in context $A\in\BB$ is contained in the essential image of $\iFun^R(\I{D},\I{C})$ if and only if it is contained in the essential image of $\iFun^L(\I{C},\I{D})^\op$.
	By Remarks~\ref{rem:adjunctionsAreLocal} and~\ref{rem:etaleInvarianceAdjunctions} (and the fact that the base change functor $\pi_A^\ast$ preserves the internal hom, cf.~Remark~\ref{rem:localityPrinciplePreservationStructure}), we may replace $\BB$ with $\Over{\BB}{A}$ and can thus assume that $A\simeq 1$ (see Remark~\ref{rem:etaleInvarianceReductionGlobalContext}). By Corollary~\ref{cor:representabilityCriterionAdjoint}, the functor $f$ is contained in $\iFun^R(\I{D},\I{C})$ if and only if	 $f(d,-)$ is representable for any object $d$ in $\I{D}$ and $f(-,c)$ is representable for any object $c$ in $\I{C}$, which is in turn equivalent to $f$ being contained in the essential image of $\iFun^L(\I{C},\I{D})^{\op}$. Thus the claim follows.
\end{proof}

\subsection{Reflective subcategories}
\label{sec:reflectiveSubcategories}
In this brief section we discuss the special case of an adjunction where the right adjoint is fully faithful.
Again this material is quite standard for ordinary $\infty$-categories, see for example \cite[\S 5.2.7]{htt}.
\begin{definition}
	\label{def:reflectiveCoreflectiveSubcategory}
	Let $i\colon\I{C}\into \I{D}$ be a fully faithful functor between $\BB$-categories. Then $\I{C}$ is said to be \emph{reflective} in $\I{D}$ if $i$ admits a left adjoint. Dually, $\I{C}$ is \emph{coreflective} if $i$ admits a right adjoint.
\end{definition}
\begin{proposition}
	\label{prop:adjunctionFullyFaithful}
	If $(l\dashv r)\colon\I{C}\leftrightarrows \I{D}$ is an adjunction between $\BB$-categories, then $l$ is fully faithful if and only if the adjunction unit $\eta$ is an equivalence, and $r$ is fully faithful if and only if the adjunction counit $\epsilon$ is an equivalence.
\end{proposition}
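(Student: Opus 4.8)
The plan is to prove the statement about $l$ and $\eta$ first and then to obtain the statement about $r$ and $\epsilon$ formally by passing to opposite categories. For the first equivalence I would rely on two recollections. On the one hand, a functor of $\BB$-categories is fully faithful precisely when it induces an equivalence on all mapping $\BB$-groupoids, so $l$ is fully faithful if and only if the canonical map $\map{\I{C}}(c_0,c_1)\to\map{\I{D}}(l(c_0),l(c_1))$ is an equivalence for every pair of objects $c_0,c_1\colon A\to\I{C}$ in every context $A\in\BB$. On the other hand, by objectwise detection of equivalences (\cite[Corollary~4.6.18]{martini2021}) the transformation $\eta$ is an equivalence if and only if each component $\eta_c\colon c\to rl(c)$ is an equivalence. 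The whole point is to connect these two objectwise conditions.

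The bridge is proposition~\ref{prop:characterizationAdjunctionMappingGroupoids}, which furnishes an equivalence $\alpha\colon\map{\I{C}}(l(-),-)\simeq\map{\I{D}}(-,r(-))$. Specialising its second variable to $l(c_1)$ gives, naturally in $c_0$, an equivalence $\map{\I{D}}(l(c_0),l(c_1))\simeq\map{\I{C}}(c_0,rl(c_1))$, and by the explicit construction of the unit in the proof of proposition~\ref{prop:characterizationAdjunctionMappingGroupoids} the composite $\map{\I{C}}(c_0,c_1)\xrightarrow{l}\map{\I{D}}(l(c_0),l(c_1))\xrightarrow{\sim}\map{\I{C}}(c_0,rl(c_1))$ is exactly postcomposition with $\eta_{c_1}$, i.e.\ the map $h(\eta_{c_1})$ read in the first variable. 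Since the second arrow is an equivalence, the map induced by $l$ on these mapping groupoids is an equivalence (for all $c_0$) if and only if $h(\eta_{c_1})$ is an equivalence in $\IPSh_{\Univ}(\I{C})$; and because the Yoneda embedding $h$ is fully faithful, $h(\eta_{c_1})$ is an equivalence if and only if $\eta_{c_1}$ is. Quantifying over $c_1$ and combining with the objectwise criterion for $\eta$ then yields that $l$ is fully faithful exactly when $\eta$ is an equivalence.

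To deduce the statement about $r$ and $\epsilon$, I would apply the first statement to the opposite adjunction. By proposition~\ref{prop:adjunctionOpposite} the equivalence $(-)^{\op}$ carries $l\dashv r$ to the adjunction $r^{\op}\dashv l^{\op}$, whose left adjoint is $r^{\op}$ and whose unit is $\epsilon^{\op}$. Since fully faithfulness is detected on mapping groupoids, it is preserved and reflected by $(-)^{\op}$, so $r$ is fully faithful if and only if $r^{\op}$ is; likewise $\epsilon$ is an equivalence if and only if $\epsilon^{\op}$ is. Applying the already-established equivalence ``left adjoint fully faithful $\iff$ unit an equivalence'' to $r^{\op}\dashv l^{\op}$ then gives that $r$ is fully faithful if and only if $\epsilon$ is an equivalence.

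The main obstacle is the compatibility invoked in the second paragraph: that under $\alpha$ the map induced by $l$ on mapping groupoids genuinely corresponds to postcomposition with the \emph{given} unit $\eta$ of the hypothesised adjunction. Proposition~\ref{prop:characterizationAdjunctionMappingGroupoids} constructs a unit for an adjunction, and one must match it with the unit supplied by the adjunction data of definition~\ref{def:internalAdjunction} — this is ensured by the uniqueness of adjoints and of units (proposition~\ref{prop:uniquenessAdjoints}), but it is the one step requiring care. Once this bookkeeping is settled, the remainder is a purely formal Yoneda argument.
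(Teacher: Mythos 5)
Your proof is correct and follows essentially the same route as the paper: both reduce fully faithfulness of $l$ to the induced map on mapping $\BB$-groupoids being an equivalence, transport this along the equivalence $\alpha$ of proposition~\ref{prop:characterizationAdjunctionMappingGroupoids}, identify the resulting map with the (Yoneda image of the) given unit $\eta$, and conclude from the conservativity of a fully faithful Yoneda-type embedding, with objectwise detection of equivalences handling the passage between components and the whole transformation. The only cosmetic differences are that you deduce the statement about $r$ and $\epsilon$ by op-duality via proposition~\ref{prop:adjunctionOpposite} where the paper simply runs the analogous argument, and that the compatibility issue you flag --- matching the abstractly constructed $\alpha$ with the given unit --- is equally present, and equally left implicit, in the paper's own proof, so it does not constitute a gap relative to it.
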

\begin{proof}
	The functor $l$ is fully faithful if and only if the map
	\begin{equation*}
		\map{\I{C}}(-,-)\to\map{\I{D}}(l(-),l(-))
	\end{equation*}
	is an equivalence~\cite[Proposition~3.8.7]{martini2021}. By postcomposition with the equivalence
	\begin{equation*}
		\map{\I{D}}(l(-),l(-))\simeq\map{\I{C}}(-,rl(-))
	\end{equation*}
	that is provided by Proposition~\ref{prop:characterizationAdjunctionMappingGroupoids}, this is in turn equivalent to the map
	\begin{equation*}
		\map{\I{C}}(-,-)\to\map{\I{C}}(-,rl(-))
	\end{equation*}
	being an equivalence. But this map is obtained as the image of the adjunction unit $\eta\colon\Delta^1\to \iFun(\I{C},\I{C})$ along the fully faithful functor $\iFun(\I{C},\I{C})\into\iFun(\I{C}^{\op}\times\I{C},\Univ)$ that is induced by postcomposition with the Yoneda embedding $\I{C}\into\IPSh(\I{C})$.  The claim thus follows from the observation that fully faithful functors are conservative (since the map $\Delta^1\to\Delta^0$ is essentially surjective, see~\cite[Lemma~3.8.8]{martini2021}). The dual statement about $r$ and $\epsilon$ is proved by an analogous argument.
\end{proof}
By combining Proposition~\ref{prop:adjunctionFullyFaithful} with Proposition~\ref{prop:minimalAdjunctionData}, one immediately deduces:
\begin{corollary}
	\label{cor:criterionSubcategoryReflective}
	Let $i\colon\I{D}\into\I{C}$ be a fully faithful functor between $\BB$-categories. Then $\I{D}$ is reflective in $\I{C}$ if and only if $i$ admits a retraction $L\colon \I{C}\to\I{D}$ together with a map $\eta\colon\id_{\I{C}}\to iL$ such that both $\eta i$ and $L\eta$ are equivalences.\qed
\end{corollary}
If $\I{D}\into\I{C}$ is a reflective subcategory, then the reflection functor $L\colon \I{C}\to\I{D}$ is a retraction and therefore in particular essentially surjective (cf.~Proposition~\ref{prop:characterisationEssentiallySurjective}). Consequently, we may recover the subcategory $\I{D}$ from the endofunctor $iL\colon \I{C}\to\I{C}$ be means of its factorisation into an essentially surjective and a fully faithful functor. Conversely, given an arbitrary endofunctor $f\colon \I{C}\to\I{C}$, Corollary~\ref{cor:criterionSubcategoryReflective} shows that the essential image of $f$ defines a reflective subcategory precisely if there is a map $\eta\colon \id_\I{C}\to f$ such that both $\eta f$ and $f\eta$ are equivalences. Let us record this observation for future use in the following proposition.
\begin{proposition}
	\label{prop:reflectiveSubcategoryFromEndofunctor}
	Let $\I{C}$ be a $\BB$-category, let $f\colon \I{C}\to\I{C}$ be a functor and let $iL\colon \I{C}\onto\I{D}\into\I{C}$ be its factorisation into an essentially surjective and a fully faithful functor. Then $L\dashv i$ precisely if there is a map $\eta\colon\id_{\I{C}}\to f$ such that both $\eta f$ and $f\eta$ are equivalences.\qed
\end{proposition}
\begin{example}
	\label{ex:StableFactorisationSystemReflectiveSubuniverse}
	If $(\LL,\RR)$ is a factorisation system in $\BB$, then for any $A\in\BB$ the full subcategory $\Over{\RR}{A}\into \Over{\BB}{A}$ is reflective: the associated reflection functor $\Over{L}{A}\colon \Over{\BB}{A}\to\Over{\RR}{A}$ is induced by the unique factorisation of maps. Such a factorisation system $(\LL,\RR)$ is called a \emph{modality} if $\LL$ is closed under base change in $\BB$, which precisely means that for every map $s\colon B\to A$ in $\BB$ the natural map $\Over{L}{B}s^\ast\to s^\ast\Over{L}{A}$ is an equivalence.
	Using Proposition~\ref{prop:existenceAdjointsBeckChevalley}, we thus conclude that the right orthogonality class $\RR$ of any modality $(\LL,\RR)$ defines a reflective subcategory of $\Univ$. In Example~\ref{ex:StableFactorisationSystemReflectiveSubuniverseCocomplete} below, we will characterise those reflective subcategories of $\Univ$ that arise in such a way.
\end{example}

Reflective subcategories are examples of \emph{localisations} in the sense of Appendix~\ref{sec:localisation}:
\begin{proposition}
	\label{prop:reflectiveSubcategoryBousfield}
	Let $(l\dashv r)\colon \I{C}\leftrightarrows\I{D}$ be a reflective subcategory. Then $l$ is the localisation of $\I{C}$ at the subcategory $\I{S}= l^{-1}\I{D}^{\core}\into \I{C}$.
\end{proposition}
\begin{proof}
	By construction of $\I{S}$, we obtain a commutative diagram
	\begin{equation*}
	\begin{tikzcd}
	\I{S}\arrow[r] \arrow[d, hookrightarrow] & \I{S}^{\gp}\arrow[r]\arrow[d] & \I{D}^{\core}\arrow[d, hookrightarrow]\\
	\I{C}\arrow[r, "L"] \arrow[rr, bend right, "l"]& \I{S}^{-1}\I{C}\arrow[r, "g"] & \I{D}, 
	\end{tikzcd}
	\end{equation*}
	hence we only need to show that $g$ is an equivalence. Let us define $h=Lr$. Then $gh\simeq lr\simeq \id$, hence $h$ is a right inverse of $g$. We finish the proof by showing that $h$ is a left inverse of $g$ as well. Since $L^\ast\colon \iFun(\I{S}^{-1}\I{C},\I{S}^{-1}\I{C})\to \iFun(\I{C},\I{S}^{-1}\I{C})$ is fully faithful by Proposition~\ref{prop:universalPropertyLocalisation}, it suffices to produce an equivalence $hgL\simeq L$. Let $\eta\colon \id\to rl$ be the adjunction unit. Since $l\eta$ is an equivalence, the map $l\eta c$ factors through the core $\I{D}^\core\into\I{D}$ for every object $c\colon A\to \I{C}$ in context $A\in\BB$. By construction of $\I{S}$, this means that $\eta c$ is contained in $\I{S}$, hence $L\eta c$ is an equivalence. Since equivalences of functors can be detected objectwise~\cite[Corollary~4.7.17]{martini2021}, we conclude that $L\eta\colon L\to Lrl\simeq hgL$ is the desired equivalence.
\end{proof}

It will be useful to have a name for the class of localisations that arise from reflective subcategories:
\begin{definition}
	\label{def:bousfieldLocalisation}
	Let $\I{S}\to\I{C}$ be a functor between $\BB$-categories. The localisation $L\colon \I{C}\to S^{-1}\I{C}$ is said to be a \emph{Bousfield localisation} if $L$ admits a fully faithful right adjoint $i\colon \I{S}^{-1}\I{C}\into\I{C}$.
\end{definition}

\begin{remark}
    The extra condition on the right adjoint in Definition~\ref{def:bousfieldLocalisation} to be fully faithful is superfluous: in fact, by Proposition~\ref{prop:universalPropertyLocalisation} the functor $L^\ast\colon \IPSh(S^{-1}\I{C})\to\IPSh(\I{C})$ is fully faithful and by Proposition~\ref{prop:2functorFunContravariant} $L^\ast$ is left adjoint to $i^\ast$. We therefore obtain an equivalence $L^\ast\simeq i_!$, hence Corollary~\ref{cor:existenceYonedaExtension} implies that $i$ must be fully faithful as well.
\end{remark}

\section{Limits and colimits}
	\label{chap:limitsColimits}
In this chapter we discuss limits and colimits in a $\BB$-category. We set up the general theory in \S~\ref{sec:definitionsLimitColimit}--\ref{sec:limitsColimitsFunctorCategories}. 
All in all our treatment is quite parallel to the one in ordinary higher category theory, see for example \cite[\S 19]{joyal2008notes} or~\cite[\S 6.2]{cisinski2019a}.
In \S~\ref{sec:limitsColimitsUniverse} and \S~\ref{sec:limitsColimitsCat} we discuss limits and colimits in the universe $\Univ$ and in the $\BB$-category of $\BB$-categories $\ICat_{\BB}$. In \S~\ref{sec:initialFunctorsLimits} we show that initial and final functors can be characterised by their property of preserving limits and colimits. Finally, in \S~\ref{sec:decompositionColimits} we explain how general internal limits and colimits can be decomposed into groupoidal and constant limits and colimits.

\subsection{Definitions and first examples}
\label{sec:definitionsLimitColimit}
Let $\I{C}$ be a $\BB$-category. Recall from Proposition~\ref{prop:CatBPresentable} that for any simplicial object $I$ in $\BB$ the internal hom $\iFun(I,\I{C})$ in $\Simp\BB$ is a $\BB$-category. We refer to the objects of this $\BB$-category as \emph{$I$-indexed diagrams in $\I{C}$}. Note that this $\BB$-category is equivalent to $\iFun(\I{I},\I{C})$, where $\I{I}$ is the image of the simplicial object $I$ along the localisation functor $\Simp\BB\to\Cat(\BB)$. Thus, in what follows we can always safely assume that $I$ is a $\BB$-category.

Now recall from~\cite[Definition~4.2.1]{martini2021} that to any pair of maps $f\colon \I{D}\to\I{C}$ and $g\colon\I{E}\to\I{C}$ in $\Cat(\BB)$ we can associate the \emph{comma $\BB$-category} $\Comma{\I{D}}{\I{C}}{\I{E}}=(\I{D}\times\I{E})\times_{\I{C}\times\I{C}}\I{C}^{\Delta^1}$. We may now define:

\begin{definition}
	\label{def:cones}
	Let $\I{C}$ be a $\BB$-category and let $d\colon A\to \iFun(I,\I{C})$ be an $I$-indexed diagram  in $\I{C}$ in context $A\in \BB$, for some $I\in \Simp\BB$. The \emph{$\BB$-category of cones over $d$} is defined as the comma $\BB$-category $\Over{\I{C}}{d}=\Comma{\I{C}}{\iFun(I,\I{C})}{A}$ formed from $d\colon A\to\iFun(I,\I{C})$ and the diagonal map $\diag\colon\I{C}\to\iFun(I,\I{C})$. Dually, the \emph{$\BB$-category of cocones under $d$} is defined as the comma $\BB$-category $\Under{\I{C}}{d}=\Comma{A}{\iFun(I,\I{C})}{\I{C}}$.
\end{definition}

In the situation of Definition~\ref{def:cones}, the $\BB$-category of cones $\Over{\I{C}}{d}$ admits a structure map into $\I{C}\times A$ that fits into the pullback square
\begin{equation*}
	\begin{tikzcd}
	\Over{\I{C}}{d}\arrow[d]\arrow[r] & \Over{\iFun(I,\I{C})}{d}\arrow[d, "(\pi_d)_!"]\\
		\I{C}\times A\arrow[r, "\diag\times\id"] & \iFun(I,\I{C})\times A
	\end{tikzcd}
\end{equation*}
in which the vertical map on the right is the forgetful functor from the slice $\BB$-category, cf.~Definition~\ref{def:sliceBCategories}. Since this is a right fibration (Proposition~\ref{prop:sliceProjectionLeftFibration}), so is the map $\Over{\I{C}}{d}\to\I{C}\times A$. In other words, we may regard this map as an object in $\IRFib_{\I{C}}$ in context $A$. Dually, the map $\Under{\I{C}}{d}\to A\times\I{C}$ is a left fibration and therefore defines an object in $\ILFib_{\I{C}}$ in context $A$. With respect to the straightening/unstraightening equivalence $\IRFib_{\I{C}}\simeq\IPSh(\I{C})$ from Theorem~\ref{thm:straightening}, the right fibration $\Over{\I{C}}{d}\to\I{C}\times A$ corresponds to the presheaf $\map{\iFun(I,\I{C})}(\diag(-), d)$ on $\I{C}$, and the left fibration $\Under{\I{C}}{d}\to A\times\I{C}$ corresponds to the copresheaf $\map{\iFun(I,\I{C})}(d,\diag(-))$ on $\I{C}$.

\begin{remark}[locality of cones]
	\label{rem:localityCones}
	In the situation of Definition~\ref{def:cones}, if $B\in\BB$ is an arbitrary object, it follows immediately from Remark~\ref{rem:localityPrinciplePreservationStructure} that one obtains a canonical equivalence of $\Over{\BB}{B}$-categories $\pi_B^\ast(\Over{\I{C}}{d})\simeq\Over{(\pi_B^\ast \I{C})}{\pi_B^\ast(d)}$.
\end{remark}

\begin{remark}[\'etale transposition invariance for cones]
	\label{rem:etaleInvarianceCones}
	In the situation of Definition~\ref{def:cones}, let us denote by $\bar{d}\colon 1_{\Over{\BB}{A}}\to \pi_A^\ast\iFun(I,\I{C})\simeq\iFun[\Over{\BB}{A}](\pi_A^\ast I,\pi_A^\ast\I{C})$ the transpose of $d$. Since the forgetful functor $(\pi_A)_!\colon \Cat(\Over{\BB}{A})\to\Cat(\BB)$ preserves pullbacks, we deduce from Remark~\ref{rem:baseChangeSlice} that the map $\Over{\I{C}}{d}\to\I{C}\times A$ arises as the image of $\Over{(\pi_A^\ast\I{C})}{\bar{d}}\to\pi_A^\ast\I{C}$ along $(\pi_A)_!$. In other words, when regarded as a $\Over{\BB}{A}$-category, we can identify $\Over{\I{C}}{d}$ with $\Over{(\pi_A^\ast\I{C})}{\bar d}$.
\end{remark}

\begin{remark}
	\label{rem:cones}
	Let $I$ be a simplicial object in $\BB$ and let $\I{C}$ be a $\BB$-category. Recall from~\cite[Definition~4.3.11]{martini2021} the definition of the \emph{right cone} $I^{\triangleright}$ as the pushout
	\begin{equation*}
	\begin{tikzcd}
	I\sqcup I\arrow[d, "\id\times \pi_I"]\arrow[r, "{(d^1,d^0)}"] & \Delta^1\otimes I\arrow[d]\\
	I\sqcup 1\arrow[r, "{(\iota, \infty)}"] & I^{\triangleright}.
	\end{tikzcd}
	\end{equation*}
	By applying the functor $\iFun(-,\I{C})$ to this diagram, one obtains an equivalence
	\begin{equation*}
	\iFun(I^{\triangleright},\I{C})\simeq \Comma{\iFun(I,\I{C})}{\iFun(I,\I{C})}{\I{C}}
	\end{equation*}
	over $\iFun(I,\I{C})\times\I{C}$, in which the right-hand side denotes the comma $\BB$-category that is formed from the cospan
	\begin{equation*}
	\iFun(I,\I{C})\xrightarrow{\id}\iFun(I,\I{C})\xleftarrow{\diag}\I{C}.
	\end{equation*}
	By construction, if $d\colon A\to \iFun(I,\I{C})$ is an $I$-indexed diagram in $\I{C}$ in context $A\in\BB$, one obtains a pullback square
	\begin{equation*}
	\begin{tikzcd}
	\Under{\I{C}}{d}\arrow[r]\arrow[d] & \iFun(I^{\triangleright},\I{C})\arrow[d, "({\iota^\ast, \infty^\ast})"]\\
	A\times\I{C}\arrow[r, "d\times\id"] & \iFun(I,\I{C})\times\I{C}. 
	\end{tikzcd}
	\end{equation*}
	In other words, the pullback of $\iFun(I^{\triangleright},\I{C})$ along $d\times \id$ recovers the $\BB$-category of cocones under $d$. We may therefore regard any object $\bar d\colon A\to \iFun(I^{\triangleright},\I{C})$  as a cocone $d\to \diag c$ under the diagram $d=\iota^\ast \bar d$ with $c=\infty^\ast\bar d$.
	
	Dually, one defines the \emph{left cone} $I^{\triangleleft}$ as the pushout
	\begin{equation*}
	\begin{tikzcd}
	I\sqcup I\arrow[d, "\pi_I\times\id"]\arrow[r, "{(d^1,d^0)}"] & \Delta^1\otimes I\arrow[d]\\
	1\times I\arrow[r] & I^{\triangleleft}
	\end{tikzcd}
	\end{equation*}
	and therefore obtains an equivalence 
	\begin{equation*}
	\iFun(I^{\triangleleft},\I{C})\simeq \Comma{\I{C}}{\iFun(I,\I{C})}{\iFun(I,\I{C})}
	\end{equation*}
	over $\I{C}\times\iFun(I,\I{C})$. Consequently, the pullback of $\iFun(I^{\triangleleft},\I{C})$ along $\id\times d$ recovers the $\BB$-category of cones $\Over{\I{C}}{d}$ over $d$.
\end{remark}
\begin{definition}
	\label{def:limitColimit}
	Let $\I{C}$ be a $\BB$-category and let $d\colon A\to \iFun(I,\I{C})$ be an $I$-indexed diagram in context $A$ in $\I{C}$, for some $A\in \BB$ and some $I\in \Simp\BB$. A \emph{limit} cone of $d$ is a map $\diag(\lim d)\to d$ in $\iFun(I,\I{C})$ in context $A$ that defines a final section $A\to \Over{\I{C}}{d}$ over $A$. Dually, a \emph{colimit} cocone of $d$ is a map $d\to\diag(\colim d)$ in $\iFun(I,\I{C})$ in context $A$ that defines an initial section $A\to \Under{\I{C}}{d}$ over $A$.
\end{definition}

\begin{remark}
    The above definition is a direct analogue of Joyal's original definition of limits and colimits in an $\infty$-category \cite{Joyal2002}. 
\end{remark}

\begin{remark}
	\label{rem:conesRepresentable}
	In the situation of Definition~\ref{def:limitColimit}, Remark~\ref{rem:representableFunctorsFibrationalCriterion} implies that an $I$-indexed diagram $d\colon A\to \iFun(I,\I{C})$ admits a colimit cocone if and only if the presheaf $\map{\iFun(I,\I{C})}(d,\diag(-))$ is representable, in which case the representing object is given by $\colim d$. In other words, if $d$ admits a colimit cocone, one obtains an equivalence $\Under{\I{C}}{\colim d}\simeq \Under{\I{C}}{d}$ over $A\times\I{C}$, and conversely if there is an object $c\colon A\to \I{C}$ and an equivalence $\Under{\I{C}}{c}\simeq \Under{\I{C}}{d}$ over $A\times\I{C}$ then the image of the object $\id_{c}$ in $\Under{\I{C}}{c}$ along this equivalence defines a colimit cocone of $d$. A similar observation can be made for limits. In particular, the colimit and limit of a diagram are unique up to equivalence if they exist.
\end{remark}
\begin{remark}[locality of limits and colimits]
	\label{rem:limitsAreLocal}
	The existence of limits and colimits is a \emph{local} condition: in fact, by the same reasoning as in Remark~\ref{rem:adjunctionsAreLocal}, a diagram $d\colon A\to \iFun(I,\I{C})$ admits a limit in $\I{C}$ if and only if for every cover $(s_i)\colon  \bigsqcup_i A_i\onto A$ the diagram $s_i^\ast(d)\colon A_i\to \iFun(I,\I{C})$ admits a limit in $\I{C}$. Analogous observations can be made for colimits.
\end{remark}

\begin{remark}[\'etale transposition invariance for limits and colimits]
	\label{rem:etaleInvarianceLimits}
	In light of Remark~\ref{rem:etaleInvarianceCones}, a cone $\diag c \to d $ in $\iFun(I,\I{C}) $ in context $ A $ transposes to a cone $\diag \bar{c} \to  \bar{d}$ in $\iFun[\BB_{/A}](\pi_A^\ast I,\pi_A^\ast\I{C}) $ in context $1_{\BB_{/A}}$ (where $\bar{d}\colon 1_{\Over{\BB}{A}}\to\iFun(\Over{\BB}{A})(\pi_A^\ast I,\pi_A^\ast \I{C})$ and $\bar{c}\colon 1_{\Over{\BB}{A}}\to\pi_A^\ast\I{C}$ are the transpose of $d$ and $c$, respectively), and the former defines an intial section $A \to \I{C}_{/d}$ over $A$ if and only if the latter defines an initial object $1_{\BB_{/A}} \to \pi_A^* \I{C}_{/\bar{d}}$. In other words, we may compute the limit of $d\colon A\to\iFun(I,\I{C})$ as the transpose of the limit of $\bar d\colon 1_{\Over{\BB}{A}}\to\iFun[\Over{\BB}{A}](\pi_A^\ast I,\pi_A^\ast \I{C})$. Analogous observations can be made for colimits.
\end{remark}

\begin{example}
	\label{ex:objectAsDiagramLimit}
	Let $\I{C}$ be $\BB$-category and let $c\colon A\to\I{C}$ be an object, viewed as a $1$-indexed diagram $c\colon A\to \iFun(1,\I{C})\simeq\I{C}$. Then there are equivalences $\lim c\simeq c\simeq \colim c$, and the associated limit and colimit cocones are given by $\id_c\colon A\to \Over{\I{C}}{c}$ and $\id_c\colon A\to \Under{\I{C}}{c}$.
\end{example}

\begin{example}
	\label{ex:initialObjectColimit}
	For any $\BB$-category $\I{C}$ and any object $c\colon A\to\I{C}$, the object $c$ is initial if and only if it defines a colimit of the initial diagram $d\colon \varnothing\to \I{C}$, and dually $c$ is final if and only if it defines a limit of $d$. In fact, since $\varnothing$ is initial in $\Cat(\BB)$, there is an equivalence $\iFun(\varnothing,\I{C})\simeq 1$, which implies that the left fibration $\Under{\I{C}}{d}\to A\times\I{C}$ is an equivalence. Consequently, a section $A\to \Under{\I{C}}{d}$ is initial if and only if the map $A\to A\times\I{C}$ is, which is in turn the case if and only if the associated map $1\to \pi_A^\ast\I{C}$ is initial in $\Cat(\Over{\BB}{A})$. As this is precisely the condition that $c$ is an initial object in $\I{C}$, the result follows. The statement about final objects and limits follows by dualisation.
\end{example}

\begin{proposition}
	\label{prop:limitColimitFunctor}
	Let $\I{C}$ be a $\BB$-category and let $I$ be a simplicial object in $\BB$. The following conditions are equivalent:
	\begin{enumerate}
		\item every diagram $d\colon A\to \iFun(I,\I{C})$ admits a colimit $\colim d$;
		\item the diagonal functor $\diag\colon \I{C}\to \iFun(I,\I{C})$ admits a left adjoint $\colim\colon \iFun(I,\I{C})\to \I{C}$.
	\end{enumerate}
	If either of these conditions are satisfied, the functor $\colim$ carries $d$ to $\colim d$, and the adjunction unit $d\to \diag \colim d$ defines a colimit cocone of $d$. 
	The dual statement for limits holds as well.
\end{proposition}
\begin{proof}
	By the dual of Corollary~\ref{cor:representabilityCriterionAdjoint}, the functor $\diag$ admits a left adjoint if and only if for every diagram $d\colon A\to \iFun(I,\I{C})$ the functor $\map{\iFun(I,\I{D})}(d, \diag(-))$ is representable by an object in $\I{C}$, in which case the left adjoint sends $d$ to the representing object in $\I{C}$. By definition, this functor classifies the left fibration $\Under{\I{C}}{d}\to A\times\I{C}$. Therefore, Remark~\ref{rem:conesRepresentable} shows that $\diag$ admits a left adjoint if and only if every diagram $d$ admits a colimit $\colim d\colon A\to \I{C}$, in which case this is the representing object of the functor $\map{\iFun(I,\I{C})}(d,\diag(-))$. Corollary~\ref{cor:representabilityCriterionAdjoint} moreover shows that in this case the adjunction unit $d\to \diag\colim d$ defines an initial section $A\to\Under{\I{C}}{d}$.
 \end{proof}

\begin{example}
	\label{ex:groupoidalLimitsColimits}
	Let $\I{C}$ be a large $\BB$-category and $\I{G}$ be a $\BB$-groupoid. By using Proposition~\ref{prop:existenceAdjointsBeckChevalley}, the following two conditions are equivalent:
	\begin{enumerate}
	\item $\I{C}$ admits $\I{G}$-indexed colimits;
	\item for every $A\in\BB$ the functor $\pi_{\I{G}}^\ast\colon \I{C}(A)\to\I{C}(\I{G}\times A)$ admits a left adjoint $(\pi_{\I{G}})_!$ such that for every map $s\colon B\to A$ in $\BB$ the natural morphism $(\pi_{\I{G}})_!s^\ast\to s^\ast (\pi_{\I{G}})_!$ is an equivalence.
	\end{enumerate}
	In particular, if $\I{C}$ has $\I{G}$-indexed colimits, then the colimit of a diagram $d\colon A\to\iFun(\I{G},\I{C})$ can be identified with the image of $d\in \I{C}(\I{G}\times A)$ along the functor $(\pi_{\I{G}})_!$.
	
	Dually, the following two conditions are equivalent:
		\begin{enumerate}
		\item $\I{C}$ admits $\I{G}$-indexed limits;
		\item for every $A\in\BB$ the functor $\pi_{\I{G}}^\ast\colon \I{C}(A)\to\I{C}(\I{G}\times A)$ admits a right adjoint $(\pi_{\I{G}})_\ast$ such that for every map $s\colon B\to A$ in $\BB$ the natural morphism $s^\ast(\pi_{\I{G}})_\ast\to (\pi_{\I{G}})_\ast s^\ast$ is an equivalence.
		\end{enumerate}
	In particular, if $\I{C}$ has $\I{G}$-indexed limits, then the limit of a diagram $d\colon A\to\iFun(\I{G},\I{C})$ can be identified with the image of $d\in \I{C}(\I{G}\times A)$ along the functor $(\pi_{\I{G}})_\ast$.
\end{example}

\begin{example}
	\label{ex:externalLimitsColimits}
	Let $\I{C}$ be a large $\BB$-category and let $\II$ be an $\infty$-category. By using Proposition~\ref{prop:existenceAdjointsBeckChevalley}, the following two conditions are equivalent:
	\begin{enumerate}
			\item $\I{C}$ admits $\II$-indexed colimits;
			\item for every $A\in\BB$ the $\infty$-category $\I{C}(A)$ admits $\II$-indexed colimits, and for every map $s\colon B\to A$ in $\BB$ the functor $s^\ast\colon \I{C}(A)\to\I{C}(B)$ preserves such colimits. 
	\end{enumerate}
	Dually, the following two conditions are equivalent:
	\begin{enumerate}
			\item $\I{C}$ admits $\II$-indexed limits;
			\item for every $A\in\BB$ the $\infty$-category $\I{C}(A)$ admits $\II$-indexed limits, and for every map $s\colon B\to A$ in $\BB$ the functor $s^\ast\colon \I{C}(A)\to\I{C}(B)$ preserves such limits. 
	\end{enumerate}
\end{example}

\begin{remark}
    \label{rem:existenceColimitsGenerators}
    Let $\CC$ be a small $\infty$-category such that $\BB$ is a left exact and accessible localisation of $\PSh(\CC)$. Let $L\colon \PSh(\CC)\to\BB$ be the localisation functor. Then Corollary~\ref{cor:restricedBeckChevalleyAdj} implies that in the situation of Example~\ref{ex:groupoidalLimitsColimits} and Example~\ref{ex:externalLimitsColimits}, it suffices to check the condition in~(2) for the special case where $A=L(c)$, $B=L(d)$ and $s=L(t)$ for some objects $c,d\in \CC$ and some map $t\colon d\to c$ in $\CC$.
\end{remark}

\subsection{Preservation of limits and colimits}

Let $f\colon \I{C}\to\I{D}$ be a functor between $\BB$-categories and let $I$ be a simplicial object in $\BB$. Let $f_\ast\colon \iFun(I,\I{C})\to\iFun(I,\I{D})$ be the functor that is given by postcomposition with $f$. For any diagram $d\colon A\to \iFun(I,\I{C})$, the functor $f_\ast$ gives rise to an evident commutative square
\begin{equation*}
	\begin{tikzcd}
		\Over{\I{C}}{d}\arrow[r, "f_\ast"]\arrow[d] & \Over{\I{D}}{f_\ast d}\arrow[d]\\
		\I{C}\times A\arrow[r, "f\times\id"] & \I{D}\times A.
	\end{tikzcd}
\end{equation*}
Suppose that $d$ has a limit in $\I{C}$, i.e.\ there is a limit cone given by a final section $ A\to \Over{\I{C}}{d}$ over $A$. We say that the functor $f$ \emph{preserves} this limit if the image of this limit cone along $f_\ast$ defines a final section of $\Over{\I{D}}{f_\ast d}$. Dually, if $d$ has a colimit in $\I{C}$ then $f$ is said to preserve this colimit if the image of the colimit cocone along $f_\ast$ is an initial section of $\Under{\I{D}}{f_\ast d}$ over $A$.

\begin{remark}[locality of preservation of limits and colimits]
	\label{rem:preservationLimitsIsLocal}
	The property that a functor $f\colon\I{C}\to\I{D}$ preserves the limit (colimit) of a diagram $d\colon A\to \iFun(I,\I{C})$ is a \emph{local} condition: in fact, the same reasoning as in Remark~\ref{rem:adjunctionsAreLocal} implies that $f$ preserves the limit of $d$ if and only if for every cover $(s_i)\colon \bigsqcup_i A_i\onto A$ in $\BB$ the limit of the induced diagram $s_i^\ast(d)$ is preserved by $f$. Analogous observations can be made for colimits.
\end{remark}

\begin{remark}[\'etale transposition invariance for the preservation of limits and colimits]
	\label{rem:etaleInvariancePreservationLimits}
	Note that by means of the projections to $A$, the functor $f_\ast\colon\Over{\I{C}}{d}\to\Over{\I{D}}{f_\ast d}$ can be regarded as a map in $\Cat(\Over{\BB}{A})$. When viewed as such,  Remark~\ref{rem:etaleInvarianceCones} implies that this map can be identified with the functor $(\pi_A^\ast f)_\ast\colon \Over{(\pi_A^\ast\I{C})}{\bar d}\to\Over{(\pi_A^\ast\I{D})}{(\pi_A^\ast f)_\ast \bar d}$ (where $\bar d\colon 1_{\Over{\BB}{A}}\to \iFun[\Over{\BB}{A}](\pi_A^\ast I,\pi_A^\ast\I{C})$ denotes the transpose of $d$). Together with Remark~\ref{rem:etaleInvarianceLimits}, this implies that $f$ preserves the limit of $d$ if and only if $\pi_A^\ast f$ preserves the limit of $\bar d$. Analogous observations hold for colimits.
\end{remark}

\begin{lemma}
	\label{lem:equivalenceTransposeFinalObject}
	Let $(l\dashv r)\colon\I{C}\leftrightarrows\I{D}$ be an adjunction between $\BB$-categories, and let $f\colon c\to r(d)$ be a map in $\I{C}$ in context $A\in \BB$. Then $f$ is an equivalence if and only if the transpose map $g\colon l(c)\to d$ defines a final section of $\Over{\I{C}}{d}$ over $A$.
\end{lemma}
\begin{proof}
	By Corollary~\ref{cor:representabilityCriterionAdjoint}, the counit $\epsilon d\colon lr(d)\to d$ defines a final section of $\Over{\I{C}}{d}$ over $A$, hence the dual of Corollary~\ref{cor:universalPropertyInitialObject} implies that there is a map $g\to \epsilon d$ in $\Over{\I{C}}{d}$  that is an equivalence if and only if $g$ is final. On account of the equivalence $\Over{\I{C}}{d}\simeq\Over{\I{C}}{r(d)}$, this map corresponds to a map $f\to \id_{r(d)}$ in $\Over{\I{C}}{r(d)}$. The result now follows from the straightforward observation that the latter is an equivalence if and only if $f$ is an equivalence in $\I{C}$.
\end{proof}
\begin{proposition}
	\label{prop:globalDefinitionPreservationLimits}
	Let $f\colon\I{C}\to\I{D}$ be a functor between $\BB$-categories and let $I$ be a simplicial object in $\BB$ such that $\I{C}$ and $\I{D}$ admit all $I$-indexed limits, i.e\ the diagonal maps $\I{C}\to\iFun(I,\I{C})$ and $\I{D}\to\iFun(I,\I{D})$ admit right adjoints (cf. Proposition~\ref{prop:limitColimitFunctor}). Then $f$ preserves all $I$-indexed limits precisely if the mate of the commutative square
	\begin{equation*}
		\begin{tikzcd}
			\iFun(I,\I{C})\arrow[d, "f_\ast"]\arrow[from=r, "\diag"] & \I{C}\arrow[d, "f"]\\
			\iFun(I,\I{D})\arrow[from=r, "\diag"] & \I{D}
		\end{tikzcd}
	\end{equation*}
	commutes. The dual statement about colimits holds as well.
\end{proposition}
\begin{proof}
	Suppose that $f$ preserves all $I$-indexed limits. The mate of the commutative square in the statement of the proposition is encoded by a map $\phi\colon f\lim\to \lim f_\ast$ that is given by the composite
	\begin{equation*}
			f\lim\xrightarrow{\eta f\lim} \lim \diag f\lim\xrightarrow{\simeq} \lim f_\ast \diag \lim\xrightarrow{\lim f_\ast \epsilon}\lim f_\ast
	\end{equation*}
	in which $\eta$ and $\epsilon$ are the units and counits of the two adjunctions $\diag\dashv \lim$. By~\cite[Corollary~4.7.17]{martini2021}, this map is an equivalence if and only if for any $d\colon A\to \iFun(I,\I{D})$ the associated map $\phi(d)\colon f(\lim d)\to\lim f_\ast d$ is an equivalence in $\I{D}$. Now since the transpose map $\diag f(\lim d)\to f_\ast d$ is given by postcomposing the equivalence $\diag f(\lim d)\simeq f_\ast\diag(\lim d)$ with the map $f_\ast\epsilon d$ and since Proposition~\ref{prop:limitColimitFunctor} implies that $\epsilon d$ is precisely the limit cone over $d$ in $\I{D}$, the claim follows from lemma~\ref{lem:equivalenceTransposeFinalObject}.
\end{proof}

\begin{remark}
	\label{rem:preservationColimitsMateExplicitly}
	Let $f\colon\I{C}\to\I{D}$ be a functor between $\BB$-categories, let $I$ be an arbitrary simplicial object in $\BB$ and let $d\colon A\to \iFun(\I{I},\I{C})$ be a diagram that has a limit in $\I{C}$. Suppose furthermore that $f_\ast d$ has a limit in $\I{D}$. Then the universal property of final objects (see~Corollary~\ref{cor:universalPropertyInitialObject}) gives rise to a unique map 
	\begin{equation*}
	\begin{tikzcd}[column sep=tiny]
	\diag f(\lim d)\arrow[rr] && \diag \lim f_\ast d \\
		& f_\ast d\arrow[from=ul]\arrow[from=ur] & 
	\end{tikzcd}
	\end{equation*}
	in $\Over{\I{D}}{f_\ast d}$ that is an equivalence if and only if $f$ preserves the limit of $d$. Since $\Over{\I{D}}{f_\ast d}\to \I{D}$ is a right fibration and therefore in particular conservative (cf.~\cite[Definition~4.1.10]{martini2021}), this is in turn equivalent to the map $f(\lim d)\to \lim f_\ast d$ being an equivalence in $\I{D}$. If both $\I{C}$ and $\I{D}$ admit $I$-indexed limits, this map is nothing but the mate transformation $f\lim \to \lim f_\ast$ from Proposition~\ref{prop:globalDefinitionPreservationLimits} evaluated at the object $d$.
\end{remark}

\begin{example}
	\label{ex:groupoidalLimitsColimitsPreservation}
	Let $f\colon\I{C}\to\I{D}$ be a functor between large $\BB$-categories and let $\I{G}$ be a $\BB$-groupoid. Suppose that both $\I{C}$ and $\I{D}$ admit $\I{G}$-indexed colimits. By using Proposition~\ref{prop:existenceAdjointsBeckChevalley} and Proposition~\ref{prop:globalDefinitionPreservationLimits}, the following two conditions are equivalent:
	\begin{enumerate}
	\item $f$ preserves $\I{G}$-indexed colimits;
	\item for every $A\in\BB$ the natural morphism $(\pi_{\I{G}})_! f(\I{G}\times A)\to f(A) (\pi_{\I{G}})_!$ is an equivalence.
	\end{enumerate}
	
	Dually, if $\I{C}$ and $\I{D}$ admit $\I{G}$-indexed limits, the following two conditions are equivalent:
	\begin{enumerate}
	\item $f$ preserves $\I{G}$-indexed limits;
	\item for every $A\in\BB$ the natural morphism $  f(A)(\pi_{\I{G}})_\ast\to(\pi_{\I{G}})_\ast f(A)$ is an equivalence.
	\end{enumerate}
\end{example}

\begin{example}
	\label{ex:externalLimitsColimitsPreservation}
	Let $f\colon\I{C}\to\I{D}$ be a functor between large $\BB$-categories, let $\II$ be an $\infty$-category and suppose that both $\I{C}$ and $\I{D}$ admit $\II$-indexed colimits. By using Proposition~\ref{prop:existenceAdjointsBeckChevalley} and Proposition~\ref{prop:globalDefinitionPreservationLimits}, the following two conditions are equivalent:
	\begin{enumerate}
			\item $f$ preserves $\II$-indexed colimits;
			\item for every $A\in\BB$ the functor $f(A)\colon\I{C}(A)\to\I{D}(A)$ preserves $\II$-indexed colimits.
	\end{enumerate}
	Dually, if $\I{C}$ and $\I{D}$ admit $\II$-indexed limits, the following two conditions are equivalent:
	\begin{enumerate}
			\item $f$ preserves $\II$-indexed limits;
			\item for every $A\in\BB$ the functor $f(A)\colon\I{C}(A)\to\I{D}(A)$ preserves $\II$-indexed limits.
	\end{enumerate}
\end{example}

Checking whether a functor between $\BB$-categories preserves certain limits or colimits becomes simpler when the functor is fully faithful:
\begin{proposition}
	\label{prop:limitsFullyFaithfulFunctor}
	Let $f\colon \I{C}\into\I{D}$ be a fully faithful functor between $\BB$-categories, let $I$ be a simplicial object in $\BB$ and let $d\colon A\to \iFun(I,\I{C})$ be a diagram in $\I{C}$. Suppose that $f_\ast(d)$ admits a colimit in $\I{D}$ such that $\colim f_\ast d$ is contained in $\I{C}$. Then $\colim f_\ast d$ already defines a colimit of $d$ in $\I{C}$. The analogous statement for limits holds as well.
\end{proposition}
\begin{proof}
	Since $f$ is fully faithful, the canonical square
	\begin{equation*}
		\begin{tikzcd}
		\Under{\I{C}}{d}\arrow[r, hookrightarrow, "f_\ast"] \arrow[d]& \Under{\I{D}}{f_\ast d} \arrow[d]\\
		A\times\I{C}\arrow[r, hookrightarrow, "\id\times f"] & A\times\I{D}
		\end{tikzcd}
	\end{equation*}
	is a pullback and $f_\ast$ is fully faithful. Therefore, if $\colim f_\ast d\colon A\to \Under{\I{D}}{f_\ast d}$ is an initial section such that the underlying object $\colim f_\ast d$ in $\I{D}$ is contained in $\I{C}$, then the entire colimit cocone is contained in the essential image of $f_\ast$, i.e.\ defines a section $A\to \Under{\I{C}}{d}$ over $A$. By Lemma~\ref{lem:cancellabilityInitialFF}, this section must be initial as well, hence the result follows.
\end{proof}
\begin{corollary}
	\label{cor:preservationColimitsFF}
	Let $f\colon \I{C}\into\I{D}$ be a fully faithful functor between $\BB$-categories, and suppose that both $\I{C}$ and $\I{D}$ admit $I$-indexed colimits for some simplicial object $I$ in $\BB$. Then $f$ preserves $I$-indexed colimits if and only if the restriction of $\colim\colon \iFun(I,\I{D})\to\I{D}$ along the inclusion $f_\ast\colon\iFun(I,\I{C})\into\iFun(I,\I{D})$ factors through the inclusion $f\colon \I{C}\into\I{D}$. The analogous statement for limits holds as well.\qed
\end{corollary}
We conclude this section with a discussion of the preservation of (co)limits by adjoint functors. We will need the following lemma:
\begin{lemma}
	\label{lem:mateBifunctor}
	Let $(l\dashv r)\colon \I{C}\leftrightarrows\I{D}$ be an adjunction between $\BB$-categories and let $i\colon L\to K$ be a map between simplicial objects in $\BB$. Then the two commutative squares
	\begin{equation*}
		\begin{tikzcd}
			\iFun(K,\I{C})\arrow[r, "l_\ast"]\arrow[d, "i^\ast"] & \iFun(K,\I{D})\arrow[d, "i^\ast"] && \iFun(K,\I{C})\arrow[from=r, "r_\ast"']\arrow[d, "i^\ast"] & \iFun(K,\I{D})\arrow[d, "i^\ast"]\\
			\iFun(L,\I{C})\arrow[r, "l_\ast"]& \iFun(L,\I{D}) && \iFun(L,\I{C})\arrow[from=r, "r_\ast"']& \iFun(L,\I{D})
		\end{tikzcd}
	\end{equation*}
	that are obtained from the bifunctoriality of $\iFun(-,-)$ are related by the mate correspondence.
\end{lemma}
\begin{proof}
    To prove the lemma, we may argue in the homotopy bicategory of the $(\infty,2)$-category $\Cat(\BB)$.
    Then the claim follows from the fact that the natural transformation $\iFun(K,-) \to \iFun(L,-)$ determines a pseudonatural transformation between 2-functors. 
    See \cite[Proposition 2.5]{kelly2006} for an argument in the strict case.
\end{proof}

\begin{proposition}
	\label{prop:adjointsPreserveLimitsColimits}
	Let $(l\dashv r)\colon \I{C}\leftrightarrows\I{D}$ be an adjunction between $\BB$-categories. Then $l$ preserves all colimits that exist in $\I{C}$, and $r$ preserves all limits that exist in $\I{D}$.
\end{proposition}
\begin{proof}
	We will show that the right adjoint $r\colon\I{D}\to\I{C}$ preserves all limits that exist in $\I{D}$, the dual statement about $l$ and colimits follows by taking opposite $\BB$-categories.
	Let therefore $I$ be a simplicial object in $\BB$ and let $d\colon A\to\iFun(I,\I{D})$ be a diagram that has a limit in $\I{D}$. We need to show that the image of the final section $\diag\lim d\to d$ along $r_\ast\colon \Over{\I{D}}{d}\to\Over{\I{C}}{r_\ast d}$ is final. By Corollary~\ref{cor:internalFunctorCategoryAdjunction}, the functor $\iFun(I,-)$ sends the adjunction $l\dashv r$ to an adjunction $l_\ast\dashv r_\ast\colon \iFun(I,\I{C})\leftrightarrows\iFun(I,\I{D})$, hence by using Proposition~\ref{prop:characterizationAdjunctionMappingGroupoids} one obtains a chain of equivalences
	\begin{align*}
		\map{\I{C}}(-,r(\lim d)) &\simeq \map{\I{D}}(l(-),\lim d)\\
		&\simeq\map{\iFun(I,\I{D})}(\diag l(-), d)\\
		&\simeq\map{\iFun(I,\I{D})}(l_\ast\diag (-), d)\\
		&\simeq\map{\iFun(I,\I{C})}(\diag(-),r_\ast d)
	\end{align*}
	of presheaves on $\I{C}$. We complete the proof by showing that this equivalence sends the identity $\id_{r(\lim d)}$ to the map $\diag r(\lim d)\simeq r_\ast\diag\lim d\to r_\ast d$ that arises as the image of the limit cone $\diag\lim d\to d$ under the functor $r_\ast$. By construction, the image of the identity $\id_{r(\lim d)}$ under this chain of equivalences is given by the composition
	\begin{equation*}
			\diag r(\lim d)\xrightarrow{\eta\diag r} r_\ast l_\ast \diag r(\lim d)\xrightarrow{\simeq} r_\ast\diag lr(\lim d)\xrightarrow{r_\ast\diag \epsilon} r_\ast\diag \lim d\to r_\ast d
	\end{equation*}
	in which the right-most map is the image of the limit cone $\diag\lim d\to d$ under the functor $r_\ast$, the map $\eta$ denotes the unit of the adjunction $l_\ast\dashv r_\ast$ and $\epsilon$ denotes the counit of the adjunction $l\dashv r$. As the composition of the first three maps is precisely the mate of the equivalence $l_\ast\diag\simeq \diag l$ and therefore recovers the equivalence $\diag r(\lim d)\simeq r_\ast\diag(\lim d)$ by Lemma~\ref{lem:mateBifunctor}, the result follows.
\end{proof}

\begin{proposition}
	\label{prop:reflectiveSubcategoryLimitsColimits}
	Let $(l\dashv r)\colon \I{C}\leftrightarrows\I{D}$ be an adjunction in $\Cat(\BB)$ that exhibits $\I{D}$ as a reflective subcategory of $\I{C}$, let $I$ be a simplicial object in $\BB$ and let $d\colon A\to \iFun(I,\I{D})$ be a diagram in context $A\in\BB$ such that $r_\ast d$ admits a colimit in $\I{C}$. Then $l(\colim r_\ast d)$ defines a colimit of $d$ in $\I{D}$.
	Dually, if $r_\ast d$ admits a limit in $\I{C}$, then $l(\lim r_\ast d)$ defines a limit of $d$ in $\I{D}$.
\end{proposition}
\begin{proof}
	Suppose first that $r_\ast d$ admits a colimit in $\I{C}$. Since $r$ is fully faithful, we obtain a chain of equivalences
		\begin{align*}
				\map{\iFun(I,\I{D})}(d,\diag(-)) &\simeq \map{\iFun(I,\I{C})}(r_\ast d, \diag r(-))\\
				&\simeq \map{\I{C}}(\colim r_\ast d, r(-))\\
				&\simeq \map{\I{D}}(l(\colim r_\ast d), -),
		\end{align*}
	which shows that the colimit of $d$ in $\I{D}$ exists and is explicitly given by $l(\colim r_\ast d)$.
	
	Next, let us suppose that $r_\ast d$ admits a limit in $\I{C}$. By the triangle identities, the functor $l$ sends the adjunction unit $\eta\colon \id\to rl$ to an equivalence. In particular, the map $\lim r_\ast d\to rl(\lim r_\ast d)$ is sent to an equivalence in $\I{D}$. Note that on account of the equivalence
	\begin{equation*}
			\map{\I{C}}(-, \lim r_\ast d)\simeq \map{\iFun(I,\I{D})}(\diag l(-), d),
	\end{equation*}
	the presheaf $\map{\I{C}}(-, \lim r_\ast d)$ sends any map in $\I{C}$ that is inverted by $l$ to an equivalence in $\Univ$.
	Applying this observation to $\eta\colon \lim r_\ast d\to rl(\lim r_\ast d)$, we obtain a retraction $\phi\colon rl(\lim r_\ast d)\to \lim r_\ast d$ of $\eta$ that gives rise to a retract diagram
	\begin{equation*}
		\begin{tikzcd}
		\lim r_\ast d\arrow[d, "\eta"]\arrow[r, "\eta"] & rl(\lim r_\ast d)\arrow[r, "\phi"] \arrow[d, "\eta"]& \lim r_\ast d\arrow[d, "\eta"]\\
		rl(\lim r_\ast d)\arrow[r, "rl\eta"] & rlrl(\lim r_\ast d)\arrow[r, "rl\phi"] & rl(\lim r_\ast d)
		\end{tikzcd}
	\end{equation*}
	in which the two maps in the lower row are equivalences. By the triangle identities and the fact that since $r$ is fully faithful the adjunction counit $\epsilon\colon lr\to\id$ is an equivalence (see Proposition~\ref{prop:adjunctionFullyFaithful}), the vertical map in the middle must be an equivalence as well, hence we conclude that $\eta\colon \lim r_\ast d\to rl(\lim r_\ast d)$ too is an equivalence. Therefore, the computation
	\begin{align*}
		\map{\iFun(I,\I{D})}(\diag(-),d) &\simeq \map{\iFun(I,\I{C})}(\diag r(-), r_\ast d)\\
		&\simeq \map{\I{C}}(r(-), \lim r_\ast d)\\
		&\simeq \map{\I{C}}(r(-), rl(\lim r_\ast d))\\
		&\simeq\map{\I{D}}(lr(-), l(\lim r_\ast d))\\
		&\simeq \map{\I{D}}(-, l(\lim r_\ast d))
	\end{align*}
	proves the claim.
\end{proof}

\begin{remark}
	We adopted the strategy for the proof of the second claim in proposition~\ref{prop:reflectiveSubcategoryLimitsColimits} from Denis-Charles Cisinski's proof of the analogous statement for $\infty$-categories, see~\cite[Proposition~6.2.17]{cisinski2019a}.
\end{remark}

\subsection{Limits and colimits in functor categories}
\label{sec:limitsColimitsFunctorCategories}
In this section, we discuss the familiar fact that limits and colimits in functor $\infty$-categories can be computed objectwise in the context of $\BB$-categories. 

\begin{proposition}
	\label{prop:limitsFunctorCategories}
	Let $I$ be a simplicial object in $\BB$ and let $\I{C}$ be a $\BB$-category that admits all $I$-indexed limits. Then $\iFun(K,\I{C})$ admits all $I$-indexed limits for any simplicial object $K$ in $\BB$, and the precomposition functor $i^\ast\colon \iFun(K,\I{C})\to\iFun(L,\I{C})$ preserves $I$-indexed limits for any map $i\colon L\to K$ in $\Simp\BB$. The dual statement for colimits is true as well.
\end{proposition}
\begin{proof}
    Proposition~\ref{prop:limitColimitFunctor} implies that the diagonal functor $\diag\colon \I{C}\to \iFun(I,\I{C})$ admits a right adjoint $\lim\colon \iFun(I,\I{C})\to\I{C}$. By Corollary~\ref{cor:internalFunctorCategoryAdjunction}, the functor $\lim_\ast\colon \iFun(K,\iFun(I,\I{C}))\to\iFun(K,\I{C})$ therefore defines a right adjoint to the diagonal functor $\diag_\ast\colon \iFun(K,\I{C})\to\iFun(K,\iFun(I,\I{C}))$. As postcomposing the latter with the equivalence $\iFun(K,\iFun(I,\I{C}))\simeq\iFun(I,\iFun(K,\I{C}))$ recovers the diagonal functor $\diag\colon \iFun(K,\I{C})\to\iFun(I,\iFun(K,\I{C}))$, Corollary~\ref{cor:internalFunctorCategoryAdjunction} implies that $\iFun(K,\I{C})$ admits all $I$-indexed limits. If $i\colon L\to K$ is an arbitrary map in $\Simp\BB$, the commutative diagram
	\begin{equation*}
		\begin{tikzcd}
			\iFun(K,\I{C})\arrow[d, "i^\ast"]\arrow[r, "\diag_\ast"]\arrow[rr, bend left, "\diag"] & \iFun(K,\iFun(I,\I{C}))\arrow[d, "i^\ast"]\arrow[r, "\simeq"] & \iFun(I,\iFun(K,\I{C}))\arrow[d, "(i^\ast)_\ast"]\\
			\iFun(L,\I{C})\arrow[r, "\diag_\ast"] \arrow[rr, bend right, "\diag"]& \iFun(L,\iFun(I,\I{C}))\arrow[r, "\simeq"] &\iFun(I,\iFun(L,\I{C}))
		\end{tikzcd}
	\end{equation*}
	and the functoriality of the mate construction (cf.\ Remark~\ref{rem:functorialityMates}) imply that in order to show that the functor $i^\ast\colon \iFun(K,\I{C})\to\iFun(L,\I{C})$ preserves $I$-indexed limits, we only need to show that the mate of the left square in the above diagram commutes, which is an immediate consequence of Lemma~\ref{lem:mateBifunctor}.
\end{proof}

\begin{proposition}
    \label{prop:limitsFunctorCategoryObjectwise}
    Let $I$ be a simplicial object in $\BB$ and let $\I{C}$ and $\I{D}$ be $\BB$-categories such that $\I{D}$ admits $I$-indexed limits. Let $d\colon A\to\iFun(I,\iFun(\I{C},\I{D}))$ be a diagram in context $A\in \BB$, and let $\diag F\to d$ be a cone over $d$, where $F\colon A\to \iFun(\I{C},\I{D})$ is an arbitrary object. Then $\diag F\to d$ is a limit cone if and only if for every map $s\colon B\to A$ in $\BB$ and every $c\colon B\to \I{C}$ the induced map $\diag (\overline F)(c)\to \bar d(c)$ is a limit cone in $\pi_A^\ast\I{D}$ in context $B$ (where $\diag \overline F\to \bar d$ denotes the transpose of $\diag F\to d$ across the adjunction $(\pi_A)_!\dashv \pi_A^\ast$).
    The dual statements for colimits holds as well.
\end{proposition}
\begin{proof}
	Using that $\pi_A^\ast$ preserves the internal hom (Remark~\ref{rem:localityPrinciplePreservationStructure}) together with the \'etale transposition invariance of limits (Remark~\ref{rem:etaleInvarianceLimits}), we may replace $\BB$ with $\Over{\BB}{A}$ and can therefore assume $A\simeq 1$ (see Remark~\ref{rem:etaleInvarianceReductionGlobalContext}).
    By means of the adjunction $\diag\dashv \lim$ and Lemma~\ref{lem:equivalenceTransposeFinalObject}, the map $\diag F\to d$ defines a limit cone if and only if the transpose map $F\to\lim d$ is an equivalence in $\iFun(\I{C},\I{D})$.
    Using that equivalences in functor $\BB$-categories are detected object-wise (see~\cite[Corollary~4.7.17]{martini2021}), this is in turn the case precisely if for every $c\colon B\to \I{C}$ the map $F(c)\to(\lim d)(c)$ is an equivalence in context $ B$. Note that by Remark~\ref{rem:limitsAreLocal}, this map transposes to the map $\pi_B^\ast(F)(\bar c)\to \lim \pi_B^\ast(d)(\bar c)$ (where $\bar c\colon 1_{\Over{\BB}{B}}\to\pi_B^\ast\I{C}$ is the transpose of $c$). Using Proposition~\ref{prop:limitsFunctorCategories}, we can identify the latter with the map $\pi_B^\ast(F)(\bar c)\to \lim(\pi_B^\ast(d)(c))$, i.e.\ with the transpose of the morphism of diagrams $\diag \pi_B^\ast(F)(\bar c)\to \pi_B^\ast(d)(\bar c)$. Hence, we conclude that $\diag F\to d$ is a limit cone if and only if $\diag \pi_B^\ast(F)(\bar c)\to \pi_B^\ast(d)(\bar c)$ is one for each $c\colon B\to \I{C}$. Now by Remark~\ref{rem:etaleInvarianceCones}, the latter transposes to $\diag{F}(c)\to d(c)$, hence the claim follows from the invariance of limit cones under \'etale transposition (Remark~\ref{rem:etaleInvarianceLimits}).
\end{proof}

\begin{proposition}
	\label{prop:postcompositionPreservationLimits}
	Let $f\colon \I{C}\to\I{D}$ be a functor between $\BB$-categories, let $I$ be a simplicial object in $\BB$ and suppose that both $\I{C}$ and $\I{D}$ admits $I$-indexed limits and that $f$ preserves such limits. Then for every simplicial object $K$ in $\BB$, the induced functor $f_\ast\colon \iFun(K,\I{C})\to\iFun(K,\I{D})$ preserves $I$-indexed limits as well. The dual statement for colimits holds too.
\end{proposition}
\begin{proof}
    Similarly as in the proof in Proposition~\ref{prop:limitsFunctorCategories}, we need to show that the mate of the left square in the commutative diagram
	\begin{equation*}
			\begin{tikzcd}
				\iFun(K,\I{C})\arrow[d, "f_\ast"]\arrow[r, "\diag_\ast"]\arrow[rr, bend left, "\diag"] & \iFun(K,\iFun(I,\I{C}))\arrow[d, "(f_\ast)_\ast"]\arrow[r, "\simeq"] & \iFun(I,\iFun(K,\I{C}))\arrow[d, "(f_\ast)_\ast"]\\
				\iFun(K,\I{D})\arrow[r, "\diag_\ast"] \arrow[rr, bend right, "\diag"]& \iFun(K,\iFun(I,\I{D}))\arrow[r, "\simeq"] & \iFun(I,\iFun(K,\I{D}))
			\end{tikzcd}
	\end{equation*}
	commutes, which follows from the observation that this mate is obtained by applying the functor $\iFun(K,-)$ to the mate of the commutative square
	\begin{equation*}
	\begin{tikzcd}
	\I{C}\arrow[d, "f"] \arrow[r, "\diag"] & \iFun(I,\I{C})\arrow[d, "f_\ast"] \\
	\I{D}\arrow[r, "\diag"] & \iFun(I,\I{D}),
	\end{tikzcd}
	\end{equation*}
	which by assumption is an equivalence. Hence the claim follows.
\end{proof}

\subsection{Limits and colimits in the universe $\Univ$}
\label{sec:limitsColimitsUniverse}
Our goal of this section is to prove that the universe $\Univ$ for $\BB$-groupoids admits small limits and colimits, and to give explicit constructions of those. We start with the case of colimits:
\begin{proposition}
	\label{prop:colimitsUniverse}
	The universe $\Univ$ for small $\BB$-groupoids admits small colimits. Moreover, if $\I{I}$ is a $\BB$-category and if $d\colon A\to \iFun(\I{I},\Univ)$ is an $\I{I}$-indexed diagram in context $A\in\BB$, then the colimit $\colim d\colon A\to\Univ$ is given by the $\Over{\BB}{A}$-groupoid $(\int d)^{\gp}$ , where $\int d\to A\times\I{I}$ denotes the left fibration that is classified by $d$.
\end{proposition}
\begin{proof}
	In light of Proposition~\ref{prop:limitColimitFunctor}, we need to show that the diagonal functor $\diag\colon\Univ\to\iFun(\I{I},\Univ)$ has a left adjoint, which is a consequence of Corollary~\ref{cor:existenceYonedaExtension}. The explicit description of this colimit furthermore follows from Proposition~\ref{prop:YonedaExtensionFibrations}.
\end{proof}
\begin{remark}
    For the special case $\BB\simeq\SS$, the explicit construction of colimits in Proposition~\ref{prop:colimitsUniverse} is given in~\cite[Corollary~3.3.4.6]{htt}.
\end{remark}

\begin{remark}
	\label{rem:howtocomputecolimitsUniverse}
	Let $i \colon \BB \hookrightarrow \PSh(\mathcal{C})$ be a left exact accessible localisation with left adjoint $L$, where $\mathcal{C}$ is a small $\infty$-category.
	Let $\I{I}$ be a $\BB$-category and let $d \colon 1 \rightarrow \iFun(\I{I},\Univ)$ be a diagram classified by a left fibration $\I{P} \rightarrow \I{I}$.
	By Proposition~\ref{prop:colimitsUniverse} we have that $\colim d \simeq \I{P}^{\gp} \simeq \colim_{\Delta^{\op}} \I{P}$.
    Therefore $\colim d$ is given by applying $L$ to the presheaf
	\[
	     c \mapsto (\colim_{\Delta^{\op}} \I{P})(c) \simeq \colim_{\Delta^{\op}} (\I{P}(c)) \simeq \I{P}(c)^{\gp}.
	\]
    Since~\cite[Corollary~4.6.8]{martini2021} implies that for every $c \in \CC$ the left fibration $\I{P}(c) \rightarrow \I{I}(c)$ classifies the functor $\Gamma_{\Over{\BB}{L(c)}} \circ d(c) \colon \I{I}(c) \rightarrow \mathcal{S}$, we conclude that $\colim d \in \BB$ is given by applying $L$ to the presheaf $ c \mapsto \colim (\Gamma \circ d(c))$.
\end{remark}

We will now proceed by showing that $\Univ$ also admits small limits. By Proposition~\ref{prop:limitColimitFunctor}, we need to show that for any $\BB$-category $\I{I}$ the diagonal functor $\diag\colon \Univ\to\iFun(\I{I},\Univ)$ admits a right adjoint. To that end, recall that since $\Cat(\BB)$ is cartesian closed, the pullback functor $\pi_{\I{I}}^\ast\colon\Cat(\BB)\to\Over{\Cat(\BB)}{\I{I}}$ admits a right adjoint $(\pi_{\I{I}})_\ast$ that is given by sending a functor $p\colon\I{P}\to\I{I}$ to the $\BB$-category $\Over{\iFun(\I{I},\I{P})}{\I{I}}$ that is defined by the pullback square
\begin{equation*}
\begin{tikzcd}
\Over{\iFun(\I{I},\I{P})}{\I{I}}\arrow[r]\arrow[d] & \iFun(\I{I},\I{P})\arrow[d, "p_\ast"]\\
1\arrow[r, "\id_{\I{I}}"] &\iFun(\I{I},\I{I}). 
\end{tikzcd}
\end{equation*}
If $p$ is a left fibration, then so is $p_\ast$, hence $(\pi_\I{I})_\ast$ sends $p$ to a $\BB$-groupoid in this case. Upon replacing $\BB$ with $\Over{\BB}{A}$ (where $A\in\BB$ is an arbitrary object) and using the locality of $\ILFib$ (see Remark~\ref{rem:LFibExplicitly}), this argument also shows that the pullback functor $\pi_{\I{I}}^\ast\colon \Over{\BB}{A}\to\LFib(A\times\I{I})$ admits a right adjoint $(\pi_{\I{I}})_\ast$ for any $A\in\BB$. Moreover, if $s\colon B\to A$ is a map in $\BB$, the natural map $s^\ast(\pi_{\I{I}})_\ast\to (\pi_{\I{I}})_\ast s^\ast$ is an equivalence whenever the transpose map $s_!(\pi_{\I{I}})^\ast\to (\pi_{\I{I}})^\ast s_!$ is one, and as this latter condition is evidently satisfied, Proposition~\ref{prop:existenceAdjointsBeckChevalley} and Theorem~\ref{thm:straightening} now show:

\begin{proposition}
	\label{prop:limitsUniverse}
	The universe $\Univ$ for small $\BB$-groupoids admits small limits. More precisely, if $\I{I}$ is a $\BB$-category and if $d\colon A\to \iFun(I,\Univ)$ is an $\I{I}$-indexed diagram in context $A\in\BB$, then the limit $\lim d\colon A\to \Univ$ is given by the $\Over{\BB}{A}$-groupoid $\Over{\iFun[\Over{\BB}{A}](\pi_{A}^\ast\I{I},\int \bar d)}{\pi_A^\ast\I{I}}$ in $\Over{\BB}{A}$, where $\int \bar d\to \pi_A^\ast\I{I}$ is the left fibration that is classified by the transpose $\bar d\colon \pi_A^\ast\I{I}\to\Univ[\Over{\BB}{A}]$ of $d$. \qed
\end{proposition}
\begin{proof}
    The discussion before the proposition shows the existence of limits.
    The explicit description of the limit follows from the description of the right adjoint $(\pi_{\I{I}})_*$ in the case $A = 1$ and the invariance of limits under étale transposition, Remark~\ref{rem:etaleInvarianceLimits}.
\end{proof}
\begin{remark}
    For the special case $\BB\simeq\SS$, the explicit construction of limits in Proposition~\ref{prop:limitsUniverse} is given in~\cite[Corollary~3.3.3.3]{htt}.
\end{remark}

If $\I{I}$ is an arbitrary $\BB$-category, the fact that right adjoint functors preserve limits (Proposition~\ref{prop:adjointsPreserveLimitsColimits}) combined with the fact that the final object $1_{\Univ}$ is the limit of the unique diagram $\varnothing\to \Univ$ (Example~\ref{ex:initialObjectColimit}) show that $\diag(1_{\Univ})\colon 1\to \iFun(\I{I},\Univ)$ defines a final object in $\iFun(\I{I},\Univ)$. We will denote this object by $1_{\iFun(\I{I},\Univ)}$. Proposition~\ref{prop:limitsUniverse} now implies:
\begin{corollary}
	\label{cor:limitFunctorUniverse}
	For any $\BB$-category $\I{I}$, the limit functor $\lim_{\I{I}}\colon \iFun(\I{I},\Univ)\to\Univ$ is explicitly given by the representable functor $\map{\iFun(\I{I},\Univ)}(1_{\iFun(\I{I},\Univ)},-)$, where $1_{\iFun(\I{I},\Univ)}\colon 1\to \iFun(\I{I},\Univ)$ denotes the final object in $\iFun(\I{I},\Univ)$.
\end{corollary}
\begin{proof}
	Since Proposition~\ref{prop:limitsUniverse} already implies the existence of $\lim_{\I{I}}$, the claim follows from the equivalence $\map{\iFun(\I{I},\Univ)}(1_{\iFun(\I{I},\Univ)},-)\simeq\map{\Univ}(1_{\Univ},\lim_{\I{I}}(-))$ and the fact that $\map{\Univ}(1_{\Univ},-)$ is equivalent to the identity functor on $\Univ$, see~\cite[Proposition~4.6.3]{martini2021}.
\end{proof}

Recall from \S~\ref{sec:universe} that there is a canonical embedding $i\colon \Univ[\BB]\into\Univ[\BBB]$. For later use, we note:
\begin{proposition}
	\label{prop:universeEnlargementColimits}
	The inclusion $i\colon \Univ[\BB]\into\Univ[\BBB]$ preserves small limits and colimits.
\end{proposition}
\begin{proof}
	We begin with the case of colimits.
	Using Corollary~\ref{cor:preservationColimitsFF}, it suffices to show that the restriction of the colimit functor $\colim\colon \iFun(\I{I},\Univ[\BBB])\to\Univ[\BBB]$ along the inclusion $i_\ast\colon \iFun(\I{I},\Univ[\BB])\into\iFun(\I{I},\Univ[\BBB])$ takes values in $\Univ[\BB]$ for any $\BB$-category $\I{I}$. Since Proposition~\ref{prop:colimitsUniverse} implies that the colimit of any diagram $d\colon A\to \iFun(\I{I},\Univ[\BBB])$ is given by the (large) $\Over{\BB}{A}$-groupoid $(\int d)^\gp$, the claim follows from~\cite[Proposition~3.3.3]{martini2021}, together with the fact that $d$ taking values in $\iFun(\I{I},\Univ[\BB])$ is tantamount to $\int d$ being a small $\Over{\BB}{A}$-category, cf.~\cite[Corollary~4.5.9]{martini2021}.
	
	As for the case of limits, by Corollary~\ref{cor:limitFunctorUniverse} we need to verify that $\map{\iFun(\I{I},\Univ[\BBB])}(1_{\iFun(\I{I},\Univ[\BBB])},i_\ast(-))$ takes values in $\Univ[\BB]$. Since we have $1_{\Univ[\BBB]}\simeq i(1_{\Univ})$, we find that $1_{\iFun(\I{I},\Univ[\BBB])}\simeq i_\ast(1_{\iFun(\I{I},\Univ)})$, so that the functor $\map{\iFun(\I{I},\Univ[\BBB])}(1_{\iFun(\I{I},\Univ[\BBB])},i_\ast(-))$ can be identified with $\map{\iFun(\I{I},\Univ)}(1_{\iFun(\I{I},\Univ)},-)$ (since $i_\ast$ is fully faithful). Hence the claim follows.
\end{proof}

We have now assembled the necessary results in order to prove the following:
\begin{proposition}
	\label{prop:limitsRepresentably}
	For any $\BB$-category $\I{C}$, the $\BB$-category $\IPSh(\I{C})$ of presheaves on $\I{C}$ admits small limits and colimits. Moreover, for any $\BB$-category $\I{I}$ and any diagram $d\colon A\to\iFun(\I{I},\I{C})$, a cone $\diag c\to d$ defines a limit of $d$ if and only if the induced cone $\diag h(c)\to h_\ast d$ defines a limit in $\IPSh(\I{C})$. In particular, the Yoneda embedding $h$ preserves small limits. 
\end{proposition}
\begin{proof}
	The fact that $\IPSh(\I{C})$ admits small limits and colimits follows immediately from combining Proposition~\ref{prop:limitsFunctorCategories} with Propositions~\ref{prop:limitsUniverse} and~\ref{prop:colimitsUniverse}. Now if $d\colon A\to \iFun(\I{I},\I{C})$ is an $\I{I}$-indexed diagram in $\I{C}$ and if $\diag c\to d$ is an arbitrary cone that is represented by a section $A\to \Over{\I{C}}{d}$ over $A$, we obtain a commutative diagram
	\begin{equation*}
		\begin{tikzcd}[column sep={6em,between origins}, row sep={3em,between origins}]
			& \Over{\I{C}}{c}\arrow[rr, hookrightarrow]\arrow[dd]\arrow[dl] && \Over{\IPSh(\I{C})}{h(c)}\arrow[dd]\arrow[dl]\\
			\Over{\I{C}}{d}\arrow[rr, crossing over, hookrightarrow]\arrow[dd] && \Over{\IPSh(\I{C})}{h_\ast d} &\\
			&\I{C}\times A \arrow[rr, hookrightarrow, "h\times\id", near start]\arrow[dl, "\id"] && \IPSh(\I{C})\times A\arrow[dl, "\id"]\\
			\I{C}\times A\arrow[rr, hookrightarrow, "h\times\id"] && \IPSh(\I{C})\times A \arrow[from=uu, crossing over]&
		\end{tikzcd}
	\end{equation*}
	in which the square in the front and the one in the back are cartesian as $h$ is fully faithful. Therefore, the upper horizontal square must be cartesian as well. The cone $\diag c\to d$ defines a limit of $d$ if and only if the map $\Over{\I{C}}{c}\to \Over{\I{C}}{d}$ is an equivalence. Likewise, the induced cone $\diag h(c)\to h_\ast d$ defines a limit of $h_\ast d$ precisely if the map $\Over{\IPSh(\I{C})}{h(c)}\to \Over{\IPSh(\I{C})}{h_\ast d}$ is an equivalence. To complete the proof, we therefore need to show that the first map is an equivalence if and only if the second map is one. As the upper square in the previous diagram is cartesian, the second condition implies the first. Conversely, the map $\Over{\IPSh(\I{C})}{h(c)}\to \Over{\IPSh(\I{C})}{h_\ast d}$ corresponds via Theorem~\ref{thm:straightening} to a map between presheaves on $\IPSh(\I{C})$ which are both representable by objects in $\IPSh(\I{C})$. Therefore, there is a unique map $h(c)\to \lim h_\ast d$ in $\IPSh(\I{C})$ such that the induced map
	\begin{equation*}
		\map{\IPSh(\I{C})}(-, h(c))\to \map{\IPSh(\I{C})}(-, \lim h_\ast d)
	\end{equation*}
	recovers the morphism $\Over{\IPSh(\I{C})}{h(c)}\to \Over{\IPSh(\I{C})}{h_\ast d}$ on the level of presheaves on $\IPSh(\I{C})$. As Yoneda's lemma (Theorem~\ref{thm:YonedaLemma}) implies that restricting this map along $h\colon \I{C}\into \IPSh(\I{C})$ recovers the map $h(c)\to \lim h_\ast d$, the latter being an equivalence implies that the morphism $\Over{\IPSh(\I{C})}{h(c)}\to \Over{\IPSh(\I{C})}{h_\ast d}$ is an equivalence as well, as desired.
\end{proof}

\begin{corollary}
	\label{cor:corepresentablePreserveLimits}
	For any $\BB$-category $\I{C}$ and any object $c\colon A\to\I{C}$ in context $A\in\BB$, the corepresentable functor $\map{\I{C}}(c,-)\colon A\times\I{C}\to \Univ$ transposes to a functor $\pi_A^\ast\I{C}\to\Univ[\Over{\BB}{A}]$ that preserves all limits that exist in $\pi_A^\ast\I{C}$.
\end{corollary}
\begin{proof}
    By Example~\ref{ex:representabilityLocalCondition}, the transpose of $\map{\I{C}}(c,-)$ can be identified with $\map{\pi_A^\ast\I{C}}(\bar c,-)$, where $\bar c\colon 1_{\Over{\BB}{A}}\to\pi_A^\ast\I{C}$ is the transpose of $c$. Therefore, by replacing $\BB$ with $\Over{\BB}{A}$, we may assume that $A\simeq 1$.
	On account of Yoneda's lemma, the functor $\map{\I{C}}(c,-)$ is equivalent to the composition $c^\ast h$, where $h$ denotes the Yoneda embedding and $c^\ast\colon \IPSh(\I{C})\to\Univ$ is the evaluation functor at $c$. By Proposition~\ref{prop:limitsRepresentably} and Proposition~\ref{prop:limitsFunctorCategories}, both of these functors preserve limits, hence the claim follows.
\end{proof}

Our next goal is to show that $\Univ$ is \emph{cartesian closed}. To that end, denote by $-\times -\colon \Univ\times\Univ\to\Univ$ the product functor.  One now finds:
\begin{proposition}
	\label{prop:CartesianClosureUniverse}
	The universe $\Univ$ for small $\BB$-groupoids is cartesian closed, in that there is an equivalence
	\begin{equation*}
		\map{\Univ}(-\times-,-)\simeq\map{\Univ}(-,\map{\Univ}(-,-))
	\end{equation*}
	of functors $\Univ^{\op}\times\Univ^{\op}\times\Univ\to\Univ$.
\end{proposition}
\begin{proof}
	First, we claim that the transpose $\phi\colon \Univ\to\iFun(\Univ,\Univ)$ of the product bifunctor $-\times -\colon \Univ\times\Univ\to\Univ$ takes values in $\iFun^L(\Univ,\Univ)$. To see this, we need to show that the image of every $\Over{\BB}{A}$-groupoid $\I{G}$ along $\phi$ defines a left adjoint functor of $\Over{\BB}{A}$-categories. Note that since $\pi_A^\ast$ preserves adjunctions (Corollary~\ref{cor:geometricMorphismAdjunction}) and the internal hom (Remark~\ref{rem:localityPrinciplePreservationStructure}), we may identify $\pi_A^\ast(-\times -)$ with the product bifunctor of $\pi_A^\ast\Univ$ and $\pi_A^\ast(\phi)$ with its transpose. Together with the equivalence $\pi_A^\ast\Univ\simeq\Univ[\Over{\BB}{A}]$ from Remark~\ref{rem:localityPrinciplePreservationStructure}, this implies that the image $\phi(\I{G})\colon A\to\iFun(\Univ,\Univ)$ transposes to the product functor $\I{G}\times-\colon \Univ[\Over{\BB}{A}]\to\Univ[\Over{\BB}{A}]$. Thus, by replacing $\BB$ with $\Over{\BB}{A}$, we may assume without loss of generality that $A\simeq 1$. In this case, Example~\ref{ex:externalLimitsColimits} implies that the functor $\I{G}\times -\colon \Univ\to\Univ$ is given on local sections over $A\in\BB$ by the $\infty$-categorical product functor
	\begin{equation*}
	\begin{tikzcd}
	\Over{\BB}{A}\arrow[r, "\pi_A^\ast\I{G}\times -"] & \Over{\BB}{A}
	\end{tikzcd}
	\end{equation*}
	which admits a right adjoint $\Hom_{\Over{\BB}{A}}(\pi_A^\ast\I{G},-)$. If $s\colon B\to A$ is a map in $\BB$, we deduce from~\cite[Lemma~4.2.3]{martini2021} that the natural map $s^\ast\Hom_{\Over{\BB}{A}}(\pi_A^\ast\I{G},-)\to \Hom_{\Over{\BB}{B}}(\pi_B^\ast\I{G},s^\ast(-))$ is an equivalence, hence Proposition~\ref{prop:existenceAdjointsBeckChevalley} shows that the functor $\I{G}\times -\colon \Univ\to\Univ$ admits a right adjoint, as desired. 
	
	As a consequence of what we've just shown and Corollary~\ref{cor:functorialityAdjunction}, we now obtain a bifunctor $f\colon \Univ^{\op}\times\Univ\to\Univ$ that fits into an equivalence
	\begin{equation*}
		\map{\Univ}(-\times-, -)\simeq\map{\Univ}(-, f(-,-)).
	\end{equation*}
	We complete the proof by showing that $f$ is equivalent to the mapping bifunctor $\map{\Univ}(-,-)$. Note that by~\cite[Proposition~4.6.3]{martini2021} the functor $\map{\Univ}(1_{\Univ},-)$ is equivalent to the identity on $\Univ$. Hence the chain of equivalences
	\begin{equation*}
	f(-,-)\simeq\map{\Univ}(1_{\Univ}, f(-,-))\simeq \map{\Univ}(1_{\Univ}\times -, -)\simeq \map{\Univ}(-,-)
	\end{equation*}
	in which the second step follows from the evident equivalence $1_{\Univ}\times - \simeq \id_{\Univ}$ gives rise to the desired identification.
\end{proof}
In~\cite[Proposition~3.7.3]{martini2021}, it was shown that for any two objects $g,h\colon A\rightrightarrows\Univ$ in context $A\in\BB$ that correspond to $\Over{\BB}{A}$-groupoids $\I{G},\I{H}$, there is an equivalence $\Hom_{\Over{\BB}{A}}(\I{G},\I{H})\simeq\map{\Univ}(g,h)$ of $\Over{\BB}{A}$-groupoids (where $\Hom_{\Over{\BB}{A}}(\I{G},\I{H})$ denotes the internal hom in $\Over{\BB}{A}$). We are now able to upgrade this result to a \emph{functorial} equivalence.
\begin{proposition}
	\label{prop:MappingGroupoidBifunctorUniverse}
	The mapping $\BB$-groupoid bifunctor $\map{\Univ}(-,-)$ recovers the internal hom bifunctor $\Hom_{\Over{\BB}{A}}(-,-)\colon \Over{\BB}{A}^{\op}\times\Over{\BB}{A}\to\Over{\BB}{A}$ when taking local sections over $A\in\BB$.
\end{proposition}
\begin{proof}
	By~\cite[Lemma~4.7.13]{martini2021} and Remark~\ref{rem:localityPrinciplePreservationStructure}, we can identify $\pi_A^\ast(\map{\Univ}(-,-))$ with $\map{\Univ[\Over{\BB}{A}]}(-,-)$. Therefore, by replacing $\BB$ with $\Over{\BB}{A}$ we may assume without loss of generality that $A\simeq 1$. Also,~\cite[Corollary~4.6.8]{martini2021} implies that one may identify the bifunctor $\map{\BB}(-,-)\colon\BB^\op\times\BB\to\SS$ with the composition
	\begin{equation*}
		\BB^\op\times\BB\xrightarrow{\Gamma_{\BB}(\map{\Univ}(-,-))}\BB\xrightarrow{\Gamma_{\BB}}\SS.
	\end{equation*}
	Since applying $\Gamma_{\BB}$ to the bifunctor $-\times -\colon \Univ\times\Univ\to\Univ$ recovers the ordinary product bifunctor on $\BB$, Proposition~\ref{prop:CartesianClosureUniverse} yields an equivalence
	\begin{equation*}
	\map{\BB}(-\times -,-)\simeq \map{\BB}(-, \Gamma_{\BB}(\map{\Univ}(-,-))),
	\end{equation*}
	which finishes the proof.
\end{proof}

\subsection{Limits and colimits in $\ICat_{\BB}$}
\label{sec:limitsColimitsCat}
Recall that by the discussion in Appendix~\ref{sec:CatB}, the assignment $A \mapsto \Cat(\BB_{/A})$ defines a sheaf of $\infty$-categories on $\BB$ that we denote by $\ICat_{\BB}$ and that we refer to as the $\BB$-category of (small) $\BB$-categories.
By combining Proposition~\ref{prop:reflectiveSubcategoryLimitsColimits} with Proposition~\ref{prop:presentationCategoryOfCategories} and the fact that presheaf $\BB$-categories admits small limits and colimits (Proposition~\ref{prop:limitsRepresentably}), we find:
\begin{proposition}
	\label{prop:CategoryOfCategoriesLimitsColimits}
	The $\BB$-category $\ICat_{\BB}$ admits small limits and colimits.\qed
\end{proposition}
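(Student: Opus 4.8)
The plan is to realise $\ICat_{\BB}$ as a reflective subcategory of a presheaf $\BB$-category and then to transport limits and colimits across the reflection, so that the two preceding lemmas do essentially all of the work.

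First I would invoke proposition~\ref{prop:presentationCategoryOfCategories}, which furnishes an adjunction $(L\dashv\iota)\colon\ICat_{\BB}\leftrightarrows\IPSh_{\Univ}(\Delta)$ whose right adjoint $\iota$ is the canonical fully faithful inclusion; thus $\ICat_{\BB}$ is reflective in $\IPSh_{\Univ}(\Delta)$ in exactly the sense required by lemma~\ref{lem:reflectiveSubcategoryColimits} and lemma~\ref{lem:reflectiveSubcategorylimits}. Next I would record that the ambient category already has the (co)limits we want: applying proposition~\ref{prop:limitsRepresentably} with $\I{C}=\Delta$ shows that $\IPSh_{\Univ}(\Delta)$ admits all small limits and colimits.

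With these two inputs the argument is immediate. Given any small simplicial object $I\in\Simp\BB$ and any diagram $d\colon A\to\iFun{I}{\ICat_{\BB}}$ in context $A\in\BB$, the postcomposed diagram $\iota_\ast d\colon A\to\iFun{I}{\IPSh_{\Univ}(\Delta)}$ admits both a limit and a colimit in $\IPSh_{\Univ}(\Delta)$. Lemma~\ref{lem:reflectiveSubcategoryColimits} then identifies $L(\colim\iota_\ast d)$ as a colimit of $d$ in $\ICat_{\BB}$, and dually lemma~\ref{lem:reflectiveSubcategorylimits} identifies $L(\lim\iota_\ast d)$ as a limit of $d$. Since $I$, $A$ and $d$ were arbitrary, $\ICat_{\BB}$ admits all small limits and colimits.

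Because the genuine content has been absorbed into the reflection lemmas, I do not expect a serious obstacle; the only points requiring care are bookkeeping ones, namely that $\Delta$ is small so that proposition~\ref{prop:limitsRepresentably} applies, and that ``small limits and colimits'' is read as (co)limits indexed by small simplicial objects $I$, which is precisely the level of generality at which both reflection lemmas are stated. It is worth emphasising that, in contrast to the usual situation, no separate preservation or Beck--Chevalley verification is needed here: lemma~\ref{lem:reflectiveSubcategoryColimits} and lemma~\ref{lem:reflectiveSubcategorylimits} require only the bare \emph{existence} of $\colim\iota_\ast d$ and $\lim\iota_\ast d$ in the ambient category, which is exactly what proposition~\ref{prop:limitsRepresentably} guarantees.
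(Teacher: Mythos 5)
Your proposal is correct and follows exactly the paper's argument: the paper deduces the proposition by combining proposition~\ref{prop:presentationCategoryOfCategories} (the reflection $L\dashv\iota$ into $\IPSh_{\Univ}(\Delta)$), the fact that presheaf $\BB$-categories admit small limits and colimits, and lemmas~\ref{lem:reflectiveSubcategoryColimits} and~\ref{lem:reflectiveSubcategorylimits}. Your bookkeeping remarks (smallness of $\Delta$, no preservation hypotheses needed in the reflection lemmas) are accurate and consistent with how the paper uses these results.
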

\begin{remark}
    Similar to the case of diagrams in $\Univ$, one can give explicit formulas for limits and colimits of diagrams in $\ICat_{\BB}$. However, these formulas rely on the theory of cartesian and cocartesian fibrations for $\BB$-categories, which we plan to feature in upcoming work.
\end{remark}
Next, our goal is to show that $\ICat_{\BB}$ is \emph{cartesian closed}. To that end, let $-\times -\colon \ICat_{\BB}\times\ICat_{\BB}\to\ICat_{\BB}$ be the product functor.
\begin{proposition}
	\label{prop:CatCartesianClosed}
	There is a functor $\iFun(-,-)\colon \ICat_{\BB}^{\op}\times\ICat_{\BB}\to\ICat_{\BB}$ together with an equivalence
	\begin{equation*}
		\map{\ICat_{\BB}}(-\times -,-)\simeq \map{\ICat_{\BB}}(-, \iFun(-,-)).
	\end{equation*}
	In other words, the $\BB$-category $\ICat_{\BB}$ is cartesian closed.
\end{proposition}
\begin{proof}
	This is proved in exactly the same way as Proposition~\ref{prop:CartesianClosureUniverse}. Namely, by using Corollary~\ref{cor:functorialityAdjunction}, it is enough to show that the product bifunctor transposes to a functor $\ICat_{\BB}\to \iFun^L(\ICat_{\BB},\ICat_{\BB})$. Using the equivalence $\pi_A^\ast\ICat_{\BB}\simeq \ICat_{\Over{\BB}{A}}$ from Remark~\ref{rem:BCCatB}, we may carry out the same reduction steps as in the proof of Proposition~\ref{prop:CartesianClosureUniverse}, so that it will be sufficient to prove that for every $\BB$-category $\I{C}$ the functor $\I{C}\times -\colon \ICat_{\BB}\to\ICat_{\BB}$ has a right adjoint. To see this, note that this functor is given on local sections over $A\in\BB$ by the $\infty$-categorical product functor
	\begin{equation*}
	\begin{tikzcd}
	\Cat(\Over{\BB}{A})\arrow[r, "\pi_A^\ast\I{C}\times -"] & \Cat(\Over{\BB}{A}).
	\end{tikzcd}
	\end{equation*}
	which admits a right adjoint $\iFun[\Over{\BB}{A}](\pi_A^\ast\I{C},-)$. Furthermore, if $s\colon B\to A$ is a map in $\BB$, we deduce from~\cite[Lemma~4.2.3]{martini2021} that the natural map $s^\ast\iFun[\Over{\BB}{A}](\pi_A^\ast\I{C},-)\to \iFun[\Over{\BB}{B}](\pi_B^\ast\I{C},s^\ast(-))$ is an equivalence. Hence, Proposition~\ref{prop:existenceAdjointsBeckChevalley} shows that the functor $\I{C}\times -\colon \ICat_{\BB}\to\ICat_{\BB}$ admits a right adjoint, as desired.
\end{proof}
\begin{remark}
	\label{rem:internalExternalCartesianClosureCat}
	By making use of~\cite[Corollary~4.6.8]{martini2021} and the fact that the product bifunctor $-\times -$ on $\ICat_{\BB}$ recovers the $\infty$-categorical product bifunctor on $\Cat(\Over{\BB}{A})$ upon taking local sections over $A\in\BB$, the equivalence
	\begin{equation*}
			\map{\ICat_{\BB}}(-\times -,-)\simeq \map{\ICat_{\BB}}(-, \iFun(-,-))
	\end{equation*}
	from Proposition~\ref{prop:CatCartesianClosed} implies that the bifunctor $\iFun(-,-)\colon\ICat_{\BB}^{\op}\times\ICat_{\BB}\to\ICat_{\BB}$ recovers the internal hom of $\Cat(\Over{\BB}{A})$ when being evaluated at $A\in \BB$, which justifies our choice of notation.
\end{remark}
\begin{corollary}
    \label{cor:mappingGroupoidBifunctorCatB}
	The mapping $\BB$-groupoid bifunctor $\map{\ICat_{\BB}}(-,-)\colon\ICat_{\BB}^\op\times\ICat_{\BB}\to\Univ$ is equivalent to the composition of the bifunctor $\iFun(-,-)\colon \ICat_{\BB}^{\op}\times\ICat_{\BB}\to\ICat_{\BB}$ with the core $\BB$-groupoid functor $(-)^{\core}\colon \ICat_{\BB}\to\Univ$.
\end{corollary}
\begin{proof}
	On account of Proposition~\ref{prop:internalCoreGroupoidification} and the fact that the functor $\map{\Univ}(1_{\Univ},-)$ is equivalent to the identity on $\Univ$ (see~\cite[Proposition~4.6.3]{martini2021}), we obtain equivalences
	\begin{align*}
		\iFun(-,-)^{\core} &\simeq \map{\Univ}(1_{\Univ}, \iFun(-,-)^\core) \\
		&\simeq \map{\ICat_{\BB}}(1_{\Univ}, \iFun(-,-))\\
		&\simeq \map{\ICat_{\BB}}(1_{\Univ}\times -, -)\\
		&\simeq \map{\ICat_{\BB}}(-,-)
	\end{align*}
	in which the last equivalence follows from the evident equivalence $1_{\Univ}\times -\simeq \id_{\ICat_{\BB}}$.
\end{proof}

\subsection{A characterisation of initial and final functors}
\label{sec:initialFunctorsLimits}
In this section, we show that initial and final functors (see \S~\ref{sec:leftFibrations}) can be characterised as those functors along which restriction of diagrams does not change their limits and colimits, respectively. For the case $\BB\simeq \SS$, this characterisation is proved in~\cite[Proposition~4.1.1.8]{htt} or \cite[Theorem 6.4.5]{cisinski2019a}. For the general case, note that precomposition with a functor $i\colon \I{J}\to \I{I}$ of $\BB$-categories defines a functor $i^\ast\colon\iFun(\I{I},\I{C})\to\iFun(\I{J},\I{C})$ that induces a functor $i^\ast\colon \Under{\I{C}}{d}\to \Under{\I{C}}{i^\ast d}$ over $A\times\I{C}$ for every $\I{I}$-indexed diagram $d\colon A\to \iFun(\I{I},\I{C})$ in $\I{C}$.
\begin{proposition}
	\label{prop:characterisationFinalLimit}
	For any functor $i\colon\I{J}\to\I{I}$ between $\BB$-categories, the following are equivalent:
	\begin{enumerate}
		\item $i$ is final;
		\item for every large $\BB$-category $\I{C}$ and every diagram $d\colon A\to \iFun(\I{I},\I{C})$ in context $A\in\BB$, the functor $i^\ast\colon \Under{\I{C}}{d}\to \Under{\I{C}}{i^\ast d}$ is an equivalence;
		\item For every large $\BB$-category $\I{C}$ and every diagram $d\colon A\to \iFun(\I{I},\I{C})$ in context $A\in\BB$ that admits a colimit $\colim d$, the image of the colimit cocone $d\to \diag\colim d$ along the functor $i^\ast\colon \Under{\I{C}}{d}\to \Under{\I{C}}{i^\ast d}$ defines a colimit cocone of $i^\ast d$.
		\item The mate of the commutative square
		\begin{equation*}
		\begin{tikzcd}
		\Univ\arrow[r, "\diag"]\arrow[d, "\id"] & \iFun(\I{I},\Univ)\arrow[d, "i^\ast"]\\
		\Univ\arrow[r, "\diag"] & \iFun(\I{J},\Univ)
		\end{tikzcd}
		\end{equation*}
		commutes.
	\end{enumerate}
	The dual characterisation of initial functors holds as well.
\end{proposition}
\begin{proof}
	Suppose that $i$ is final, and let $d\colon A\to\iFun(\I{I},\I{C})$ be an arbitrary diagram. By making use of Remark~\ref{rem:etaleInvarianceLimits} and the fact that the base change functor $\pi_A^\ast$ preserves final functors~\cite[Remark~4.4.9]{martini2021}, we may replace $\BB$ with $\Over{\BB}{A}$ and can therefore assume that $A\simeq 1$ (see Remark~\ref{rem:etaleInvarianceReductionGlobalContext}). On account of~\cite[Proposition~4.1.18]{martini2021}, it suffices to show that the induced map $i^\ast\vert_c$ on the fibres over every $c\colon A\to\I{C}$ is an equivalence. By the same argument as above, we may again assume $A\simeq 1$. Now the commutative diagram
	\begin{equation*}
		\begin{tikzcd}[column sep=large]
			1\arrow[r, "c"] \arrow[d, "d"]& \I{C}\arrow[d, "d\times\diag"]\\
			\iFun(\I{I},\I{C})\arrow[r, "\id\times \diag(c)"] & \iFun(\I{I},\I{C})\times\iFun(\I{I},\I{C}) 
		\end{tikzcd}
	\end{equation*}
	shows that the fibre of the left fibration $\Under{\I{C}}{d}\to \I{C}$ over $c$ is equivalent to the fibre of the right fibration $\iFun(\I{I},\Over{\I{C}}{c})\to\iFun(\I{I},\I{C})$ (that is given by postcomposition with $(\pi_c)_!\colon\Over{\I{C}}{c}\to \I{C}$) over $d\colon 1\to \iFun(\I{I},\I{C})$. Similarly, the fibre of $\Under{\I{C}}{i^\ast d}\to \I{C}$ over $c$ is equivalent to the fibre of the right fibration $\iFun(\I{J},\Over{\I{C}}{c})\to\iFun(\I{J},\I{C})$ over $i^\ast d$ such that the map $i^\ast\vert_c$ fits into the commutative diagram
	\begin{equation*}
		\begin{tikzcd}[column sep={5em,between origins}, row sep={3em,between origins}]
			& \Under{\I{C}}{i^\ast d}\vert_c\arrow[rr] \arrow[dd]&& \iFun(\I{J},\Over{\I{C}}{c}) \arrow[dd]\\
			\Under{\I{C}}{d}\vert_c\arrow[ur, "i^\ast\vert_c"]\arrow[rr, crossing over]\arrow[dd] && \iFun(\I{I},\Over{\I{C}}{c})\arrow[ur, "i^\ast"]&\\
			& 1\arrow[rr, "i^\ast d", near start] && \iFun(\I{J},\I{C}) \\
			1\arrow[rr, "d"]\arrow[ur, "\id"] && \iFun(\I{I},\I{C}) \arrow[ur, "i^\ast"]\arrow[from=uu, crossing over]&
		\end{tikzcd}
	\end{equation*}
	in which the two squares in the front and in the back are cartesian. Since $i$ is final, the right square must be cartesian as well, hence $i^\ast\vert_c$ is an equivalence, so that~(2) holds. Condition~(3) follows immediately from~(2).
	For the special case $\I{C}=\Univ$, the same argument as in the proof of Proposition~\ref{prop:globalDefinitionPreservationLimits} shows that condition~(3) is equivalent to the condition that the map $\colim_{\I{J}} i^\ast\to \colim_{\I{I}}$ must be an equivalence, hence condition~(3) implies condition~(4).
	Lastly, suppose that the map $\colim_{\I{J}}i^\ast\to \colim_{\I{I}}$ is an equivalence, and let us show that $i$ is final. It will be enough to show that $i$ is internally left orthogonal to the universal right fibration $\UnivHat^{\op}\to\Univ^{\op}$ (see~\cite[\S~4.6]{martini2021}) as every right fibration between (small) $\BB$-categories arises as a pullback of this functor. By Proposition~\ref{prop:limitsUniverse}, the universe $\Univ$ admits small limits, hence if $d\colon A\to \iFun(\I{I},\Univ^{\op})$ is an arbitrary diagram both $\Under{\Univ^\op}{d}$ and $\Under{\Univ^{\op}}{i^\ast d}$ admits an initial section. By assumption, the functor $i^\ast\colon\Under{\Univ^\op}{d}\to\Under{\Univ^\op}{i^\ast d}$ sends the colimit cocone $d
	\to \diag\colim d$ to an initial section of $\Under{\Univ^\op}{i^\ast d}$, which implies that the functor $i^\ast\colon\Under{\Univ^\op}{d}\to\Under{\Univ^\op}{i^\ast d}$ must be initial as well. But this map is already a left fibration since it can be regarded as a map betwee left fibrations over $\Univ^{\op}$, hence we conclude that this functor must be an equivalence. Similarly as above and by making use of the equivalence $\UnivHat\simeq\Under{\Univ}{1_{\Univ}}$ over $\Univ$ from~\cite[Proposition~4.6.3]{martini2021}, one obtains a commutative diagram
	\begin{equation*}
		\begin{tikzcd}[column sep={5em,between origins}, row sep={3em,between origins}]
			& \Under{\Univ^\op}{i^\ast d}\vert_{\pi_A^\ast(1_{\Univ})}\arrow[rr] \arrow[dd]&& \iFun(\I{J},\UnivHat^{\op}) \arrow[dd]\\
			\Under{\Univ^\op}{d}\vert_{\pi_A^\ast(1_{\Univ})}\arrow[ur, "i^\ast\vert_{\pi_A^\ast(1_{\Univ})}"]\arrow[rr, crossing over]\arrow[dd] && \iFun(\I{I},\UnivHat^{\op}) \arrow[ur, "i^\ast"]&\\
			& A\arrow[rr, "i^\ast d", near start] && \iFun(\I{J},\Univ^{\op}) \\
			A\arrow[rr, "d"]\arrow[ur, "\id"] && \iFun(\I{I},\Univ^{\op}) \arrow[ur, "i^\ast"]\arrow[from=uu, crossing over]&
		\end{tikzcd}
	\end{equation*}
	in which the squares in the front, in the back and on the left are cartesian. As the maps $\iFun(\I{I},\UnivHat^{\op})\to\iFun(\I{I},\Univ^{\op})$ and $\iFun(\I{J},\UnivHat^{\op})\to\iFun(\I{J},\Univ^{\op})$ are right fibrations, the vertical square on the right is cartesian already when its underlying square of core $\BB$-groupoids is. We therefore deduce that this square must be a pullback as well, which means that $i$ is final.
\end{proof}

\begin{remark}
	\label{rem:functorialityColimitIndexingCategory}
	Let $\I{C}$ be a large $\BB$-category, let $i\colon \I{J}\to\I{I}$ be a functor between $\BB$-categories and let us fix an $\I{I}$-indexed diagram $d\colon A\to \iFun(\I{I},\I{C})$. Suppose that both $d$ and $i^\ast d$ admit a colimit in $\I{C}$. Then the universal property of initial objects (see~\cite[Corollary~4.3.21]{martini2021}) gives rise to a unique map 
	\begin{equation*}
	\begin{tikzcd}[column sep=tiny]
	& i^\ast d\arrow[dl]\arrow[dr] & \\
	\diag \colim i^\ast d\arrow[rr] && \diag\colim d
	\end{tikzcd}
	\end{equation*}
	in $\Under{\I{C}}{i^\ast d}$ that is an equivalence if and only if the cocone $i^\ast d\to \diag\colim d$ (which is the image of the colimit cocone $d\to \diag\colim d$ along $i^\ast$) is a colimit cocone. Proposition~\ref{prop:characterisationFinalLimit} now implies that this map is always an equivalence when $i$ is final, and conversely $i$ must be final whenever this map is an equivalence for every $\BB$-category $\I{C}$ and every diagram $d$ that has a colimit in $\I{C}$ (in fact, Proposition~\ref{prop:characterisationFinalLimit} shows that it suffices to consider $\I{C}=\Univ$).
\end{remark}

\subsection{Decomposition of colimits}
\label{sec:decompositionColimits}
In~\cite[\S~4.2]{htt},~Lurie provides techniques for computing colimits in an $\infty$-category by means of decomposing diagrams into more manageable pieces. For example, he proves that an $\infty$-category has small colimits if and only if it has small coproducts and pushouts. In this section, we aim for similar results in the context of internal higher category theory.
We are mainly interested in the decompoisiton of arbitrary colimits into colimits indexed by constant $\BB$-categories (i.e.\ $\BB$-categories that are in the image of the functor $\const_{\BB}\colon \CatS\to\Cat(\BB)$, see Remark~\ref{rem:functorialityBCategories}) and $\BB$-groupoids.
In these two cases, colimits admit rather explicit descriptions that are often simpler to understand in practice (see Examples~\ref{ex:externalLimitsColimits} and \ref{ex:groupoidalLimitsColimits}). 
Note that in $\infty$-category theory such a decomposition is not really visible since internal to the $\infty$-topos of spaces $\mathcal{S}$, any $\SS$-groupoid is automatically constant.
However, the technique of proof that we use is still mostly the same as in~\cite[\S~4.2]{htt}.
Our main result will be the following proposition:

\begin{proposition}
	\label{prop:ColimitsGroupoidsExternal}
	A large $\BB$-category $\I{C}$ admits small colimits if and only if it admits colimits indexed by \emph{constant} $\BB$-categories and by $\BB$-groupoids, and a functor $f\colon \I{C}\to\I{D}$ between large $\BB$-categories that admit small colimits preserves such colimits if and only if it preserves colimits indexed by constant $\BB$-categories and by $\BB$-groupoids.
\end{proposition}
The proof of Proposition~\ref{prop:ColimitsGroupoidsExternal} requires a few preparations.

\begin{lemma}
	\label{lem:initialObjectProduct}
	Let $(\I{C}_i)_{i\in I}$ be a small family of $\BB$-categories, and let $c_i\colon 1\to \I{C}_i$ be an object in context $1\in\BB$ for every $i\in I$. If each $c_i$ is initial then the induced object $c=(c_i)_{i\in I}\colon 1\to \I{C}=\prod_i \I{C}_i$ is initial as well.
\end{lemma}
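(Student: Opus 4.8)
The plan is to characterise initial objects purely in terms of mapping $\BB$-groupoids and then to observe that these decompose along products. First I would combine example~\ref{ex:initialObjectColimit} with remark~\ref{rem:conesRepresentable}: an object is initial precisely when it is a colimit of the empty diagram $0\to\I{C}$, and by remark~\ref{rem:conesRepresentable} such a colimit exists and is given by that object if and only if the object represents the presheaf $\map{\iFun{0}{\I{C}}}(\ast,\diag(-))$. Since $\iFun{0}{\I{C}}\simeq 1$, this presheaf is the terminal object $\diag(1)$ of $\iFun{\I{C}}{\Univ}$, where $1\colon 1\to\Univ$ classifies the final object of $\BB$. By Yoneda's lemma the representing object of a covariant presheaf $F$ is an object $c$ with $\map{\I{C}}(c,-)\simeq F$, so I obtain the characterisation: $c\colon A\to\I{C}$ is initial if and only if $\map{\I{C}}(c,-)\simeq\diag(1)$. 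Finally, since equivalences of functors are detected objectwise by \cite[Corollary~4.6.18]{martini2021}, this is equivalent to asking that $\map{\I{C}}(c,d)\simeq 1$ for every object $d\colon B\to\I{C}$ in every context $B\in\BB$.

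Next I would record how mapping groupoids behave under products. Writing $\I{C}=\prod_i\I{C}_i$ with projections $p_i\colon\I{C}\to\I{C}_i$, the defining pullback square for $\map{\I{C}}(c,d)$ from \S\ref{sec:recollection}, together with the levelwise identities $\I{C}_0\simeq\prod_i(\I{C}_i)_0$ and $\I{C}_1\simeq\prod_i(\I{C}_i)_1$, exhibits $\map{\I{C}}(c,d)$ as a pullback of products. As products commute with pullbacks, this gives a natural equivalence
\begin{equation*}
	\map{\I{C}}(c,d)\simeq\prod_i\map{\I{C}_i}(p_i(c),p_i(d))
\end{equation*}
for any two objects $c,d\colon B\to\I{C}$.

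Finally I would combine the two ingredients. Fix an arbitrary object $d\colon B\to\I{C}$, so that each $p_i(d)\colon B\to\I{C}_i$ is an object in context $B$. Since the class of initial functors is internally saturated, it is stable under the base change functor $\pi_B^\ast$, so each base change $\pi_B^\ast c_i=p_i(\pi_B^\ast c)$ remains initial in $\pi_B^\ast\I{C}_i$; by the first paragraph this yields $\map{\I{C}_i}(p_i(c),p_i(d))\simeq 1$ for every $i$. Plugging this into the product formula, and using that a small product of terminal $\BB$-groupoids is again terminal, I get $\map{\I{C}}(c,d)\simeq\prod_i 1\simeq 1$. As $d$ was arbitrary, the characterisation of the first paragraph shows that $c$ is initial.

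The step requiring the most care is the first paragraph: setting up the equivalence between initiality of $c$ and terminality of the covariant presheaf $\map{\I{C}}(c,-)$, and in particular keeping careful track of contexts, so that the objectwise detection of equivalences ranges over objects $d$ in \emph{all} contexts $B\in\BB$. This is precisely what makes the base-change stability of initial objects usable in the last step. The product decomposition itself is routine once the pullback description of mapping groupoids is in hand.
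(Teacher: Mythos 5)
Your proof is correct, but it takes a genuinely different route from the paper's. The paper's own argument is two lines: by \cite[Proposition~4.3.19]{martini2021}, an object $c$ in context $1$ is initial precisely if the projection $(\pi_c)_!\colon \Under{\I{C}}{c}\to\I{C}$ is an equivalence, and since the coslice of a product is the product of the coslices, $(\pi_c)_!\simeq \prod_i (\pi_{c_i})_!$ is an equivalence whenever each factor is. You instead argue fibrewise: you characterise initiality of $c$ by terminality of the copresheaf $\map{\I{C}}(c,-)$ (assembled from example~\ref{ex:initialObjectColimit}, remark~\ref{rem:conesRepresentable}, Yoneda, and objectwise detection of equivalences), and then decompose mapping groupoids along products. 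This is essentially the fibrewise shadow of the paper's proof, since the coslice $\Under{\I{C}}{c}$ is exactly the left fibration bundling all the groupoids $\map{\I{C}}(c,d)$ together; the two decompositions carry the same information. What the paper's route buys is brevity: a single external citation does the work, and no context bookkeeping is needed because $\Under{\I{C}}{c}\simeq\prod_i\Under{(\I{C}_i)}{c_i}$ is one statement over $\I{C}$. What your route buys is that it stays within results quoted in this paper's text and makes the quantification over contexts explicit; the points you flag as delicate are the right ones, and both hold for the reasons you give ($\diag(1)$ is terminal, so the canonical comparison map exists and objectwise detection applies, and étale base change preserves initial functors). One small redundancy: since your first-paragraph characterisation already quantifies over objects $d$ in \emph{all} contexts $B$, the appeal to base-change stability in the final step can be dropped, as applying that characterisation to $c_i$ directly yields $\map{\I{C}_i}(c_i,p_i(d))\simeq 1$.
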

\begin{proof}
	By Proposition~\ref{prop:characterisationinitialObject}, the object $(c_i)_{i\in I}$ is initial precisely if the projection
	\begin{equation*}
		(\pi_c)_!\colon \Under{\I{C}}{c}\to\I{C}
	\end{equation*}
	is an equivalence. The result thus follows from the observation that $(\pi_c)_!$ is equivalent to the product
	\begin{equation*}
		\prod_i (\pi_{c_i})_!\colon \prod_i \Under{(\I{C}_i)}{{c_i}}\to \prod_i \I{C}_i
	\end{equation*}
	and is therefore an equivalence since each of the maps $(\pi_{c_i})_!$ is one.
\end{proof}

The key input in the proof of Proposition~\ref{prop:ColimitsGroupoidsExternal} is the following Proposition. The strategy of proof is the same as in~\cite[Proposition~4.4.2.6]{htt}.
\begin{proposition}
	\label{prop:decompositionColimits}
	Let $\kappa$ be a regular cardinal, let $\KK$ be a $\kappa$-small $\infty$-category and let 
	\begin{equation*}
	\alpha\colon \KK\to \Cat(\BB),\quad k \mapsto \I{J}_k
	\end{equation*}
	be a diagram with colimit $\I{J}=\colim_k \I{J}_k$ in $\Cat(\BB)$. Suppose that $\I{C}$ is a $\BB$-category and that $d\colon \I{J}\to\I{C}$ is a diagram such that
	\begin{enumerate}
		\item for every $k\in \KK$ the restricted diagram $d_k\colon \I{J}_k\to \I{C}$ admits a colimit in $\I{C}$;
		\item $\I{C}$ admits colimits indexed by $\kappa$-small constant $\BB$-categories.
	\end{enumerate}
	Then $d$ admits a colimit in $\I{C}$.
\end{proposition}
\begin{proof}
        We consider the full subcategory $ \CC$ of $(\CatS)_{/\KK}$ spanned by all functors $\varphi \colon \LL \to \KK$  such that the conclusion of the proposition holds for $\alpha \circ \varphi$.
        We wish to show that the $\CC$ contains $\id_{\KK}$.
        For this it suffices to see that $\CC$ contains all maps $\Delta^n \to \KK$ and is closed under $\kappa$-small coproducts and pushouts (as every $\kappa$-small simplicial set can be build as an interated pushout of $\kappa$-small coproducts of simplices).
        Since $\Delta^n$  has a final object, the first part is clear.
        Thus it remains to prove the proposition in the cases where $\KK$ is a $ \kappa$-small set and $\KK = \Lambda^2_0$.
	Suppose first that $\KK$ is a $\kappa$-small set. Then the inclusions $i_k\colon \I{J}_k\into \I{J}$ for each $k\in \KK$ determine a pullback square
	\begin{equation*}
	\begin{tikzcd}
		\Under{\I{C}}{d}\arrow[d]\arrow[r, "(i_k^\ast)_{k\in \KK}"] & \prod_k \Under{\I{C}}{d_k}\arrow[d]\\
		\I{C}\arrow[r, "\diag"] & \iFun(\KK,\I{C}).
	\end{tikzcd}
	\end{equation*}
	By assumption, each of the categories $\Under{\I{C}}{d_k}$ admits an initial global section, hence Lemma~\ref{lem:initialObjectProduct} implies that the induced global section $1\to \prod_k \Under{\I{C}}{d_k}$ is initial as well. Phrased differently, the functor $\iFun(\KK,\I{C})\to\Univ$ that classifies the left fibration $\prod_k \Under{\I{C}}{d_k}\to \iFun(\KK,\Univ)$ is corepresented by the diagram $(\colim d_k)_{k\in \KK}\colon \KK\to\I{C}$. Since $\diag$ by assumption admits a left adjoint, we thus conclude that the left fibration $\Under{\I{C}}{d}\to\I{C}$ is classified by the functor corepresented by $\bigsqcup_k \colim d_k\colon 1\to \I{C}$, which implies that $d$ has a colimit in $\I{C}$.
	
	Let us now assume $\KK=\Lambda^2_0$, i.e.\ that $\I{J}$ is given by a pushout. Then there is an equivalence
	\begin{equation*}
	\begin{tikzcd}[column sep={5em,between origins}]
	\Under{\I{C}}{d}\arrow[rr, "\simeq"]\arrow[dr] && \lim_k \Under{\I{C}}{d_k} \arrow[dl]\\
	& \I{C}&
	\end{tikzcd}
	\end{equation*}
	of left fibrations over $\I{C}$, which together with Example~\ref{ex:externalLimitsColimits} implies that the functor $\map{\iFun(\I{J},\I{C})}(d,\diag(-))$ is given by the $\KK^{\op}$-indexed limit of functors $\map{\iFun(\I{J}_k,\I{C})}(d_k,\diag(-))$ in $\iFun(\I{C},\Univ)$. Since $\I{C}$ by assumption admits $\KK$-indexed colimits, its opposite $\I{C}^{\op}$ admits $\KK^{\op}$-indexed limits. Moreover, since each of the functors $\map{\iFun(\I{J}_k,\I{C})}(d_k,\diag(-))$ is contained in the essential image of the Yoneda embedding $\I{C}^{\op}\into\iFun(\I{C},\Univ)$, we conclude that $\map{\iFun(\I{J},\I{C})}(d,\diag(-))$ is corepresentable since the Yoneda embedding commutes with limits (Proposition~\ref{prop:limitsRepresentably}). Hence the diagram $d$ admits a colimit in $\I{C}$.
\end{proof}
By a similar argument as in the proof of Proposition~\ref{prop:decompositionColimits} one shows:
\begin{proposition}
	\label{prop:decompositionPreservationColimits}
	Let $\kappa$ be a regular cardinal, let $\KK$ be a $\kappa$-small $\infty$-category and let 
	\begin{equation*}
	\alpha\colon \KK\to \Cat(\BB),\quad k\mapsto \I{J}_k
	\end{equation*}
	be a diagram with colimit $\I{J}=\colim_k \I{J}_k$ in $\Cat(\BB)$. Let $\I{C}$ be a $\BB$-category that satisfies the conditions of Proposition~\ref{prop:decompositionColimits}, let $d\colon \I{J}\to\I{C}$ be a diagram and suppose that $f\colon\I{C}\to\I{D}$ is a functor in $\Cat(\BB)$ such that
	\begin{enumerate}
		\item for every $k \in \KK$ the functor $f$ preserves the colimit of the restricted diagram $d_k\colon \I{J}_k\to \I{C}$;
		\item $f$ preserves colimits indexed by $\kappa$-small constant $\BB$-categories.
	\end{enumerate}
	Then $f$ preserves the colimit of $d$.\qed
\end{proposition}

\begin{proof}[{Proof of Proposition~\ref{prop:ColimitsGroupoidsExternal}}]
Let $\I{J}$ be a $\BB$-category and let $d\colon A\to \iFun(\I{J},\I{C})$ be a diagram in context $A\in\BB$. We want to show that $d$ admits a colimit in $\I{C}$.
By making use of Remark~\ref{rem:etaleInvarianceLimits}, we may replace $\BB$ by $\Over{\BB}{A}$ and can thus assume that $A\simeq 1$ (see Remark~\ref{rem:etaleInvarianceReductionGlobalContext}).
Recall from \cite[Lemma 4.5.2 and the discussion following it]{martini2021} that we have a canoncial equivalence
    \[
        \I{J} \simeq \colim_{(\Delta^n \times \I{G})_{/ \I{J}}} \Delta^n \otimes \I{G}.
    \]
Furthermore it follows from Proposition~\ref{prop:characterisationFinalLimit} that a $\BB$-category $\I{C}$ has $\Delta^n\otimes \I{G}$-indexed colimits if and only if it has $\I{G}$-indexed colimits since $\Delta^n$ admits a final object. 
So if $\I{C}$ admits colimits indexed by constant $\BB$-categories and $\BB$-groupoids $\I{G}$, we may apply Proposition~\ref{prop:decompositionColimits} to conclude that $d$ has a colimit in $\I{C}$. The argument for the preservation of small colimits is analogous, by making use of Proposition~\ref{prop:decompositionPreservationColimits} instead.
\end{proof}

\section{Cocompleteness}
\label{chap:Cocompleteness}
This chapter is dedicated to a more global study of (co)limits in a $\BB$-category. More precisely, if $\I{U}$ is an internal class of $\BB$-categories (i.e.\ a full subcategory of $\ICat_{\BB}$, see Definition~\ref{def:internalClass}), we define and study what it means for a $\BB$-category $\I{C}$ to be \emph{$\I{U}$-(co)complete} and for a functor $f\colon\I{C}\to\I{D}$ between $\BB$-categories to be \emph{$\I{U}$-(co)continuous}. For the special case where $\I{U}=\ICat_{\BB}$, this will yield the correct internal analogue of the usual notion of cocompleteness and cocontinuity in (higher) category theory. One should note that this will be a strictly stronger notion than to simply admit all internal colimits that are indexed by small $\BB$-categories, cf.\ Example~\ref{ex:MotivicHomotopyCategoryColimits} below. We begin in \S~\ref{sec:internalClasses} by defining the notion of an internal class $\I{U}$ of $\BB$-categories, which is the internal analogue of a collection of $\infty$-categories. In \S~\ref{sec:UCocomplete}, we give the definition of $\I{U}$-cocompleteness and $\I{U}$-cocontinuity with respect to such an internal class and we recast some of the results from~\S~\ref{chap:limitsColimits} in this language. In~\S~\ref{sec:CatU}, we define the large $\BB$-category of $\I{U}$-cocomplete $\BB$-categories, and in \S~\ref{sec:cocompleteness} we study the special case where $\I{U}$ is the internal class of \emph{all} (small) $\BB$-categories. Finally, we briefly review the concept of \emph{proper} and \emph{smooth} maps between simplicial objects in $\BB$ in the context of this newly established framework.

\subsection{Internal classes}
\label{sec:internalClasses}
In this section we introduce the correct $\BB$-categorical analogue of \emph{classes} of $\infty$-categories:
\begin{definition}
	\label{def:internalClass}
	An \emph{internal class} of $\BB$-categories is a full subcategory $\I{U}\into\ICat_{\BB}$.
\end{definition}

\begin{remark}
	\label{def:whyInternalClasses}
	The reason why we define an internal class to be a full subcategory $\I{U} \into \ICat_{\BB}$ rather than just a subcategory $\UU\into \Cat(\BB)$ in the usual $\infty$-categorical sense is that when using internal classes as indexing classes for colimits, only the former notion leads to a theory of cocompleteness that is local in $\BB$ (cf.~\S~\ref{sec:localityPrinciple}), whereas the latter does not.
	For example, it is not reasonable to call a $\BB$-category \emph{cocomplete} even when it admits $\I{I}$-indexed colimits for every $\BB$-category $\I{I}$ (see Definition~\ref{def:limitColimit}), because it could still happen that there is a $\Over{\BB}{A}$-category $\I{J}$ (for some $A\in\BB$) such that $\pi_A^\ast\I{C}$ does not have all $\I{J}$-indexed colimits (see Example~\ref{ex:MotivicHomotopyCategoryColimits} below).
	Instead, on should ask that $\I{C}$ admits all colimits indexed by the maximal internal class $\ICat_{\BB}$ (Example~\ref{ex:SmallCategoriesInternalClass}), which precisely amounts to asking that every small diagram $\I{I} \rightarrow \pi_A^* \I{C}$ of $\BB_{/A}$-categories admits a colimit for every $A \in \BB$. In this way, the notion of cocompleteness is forced to be local.
\end{remark}

\begin{example}
	\label{ex:SmallCategoriesInternalClass}
	By Remark~\ref{rem:CatBB}, the (large) $\BB$-category $\ICat_{\BB}$ may be regarded as an internal class of large $\BB$-categories, so as a subcategory of the (very large) $\BB$-category $\ICat_{\BBB}$.
\end{example}

\begin{example}
	\label{ex:locallyConstantCategories}
	On account of the adjunction $\const\dashv \Gamma\colon\CatSS\leftrightarrows \Cat(\BBB)$, the transpose of the functor $\const\colon \CatS\to \Cat(\BB)\simeq\Gamma(\ICat_{\BB})$ defines a map $\const(\CatS)\to \ICat_{\BB}$ in $\Cat(\BBB)$. The essential image of this functor thus defines an internal class of $\BB$-categories that we denote by $\ILConst\into\ICat_{\BB}$ and that we refer to as the internal class of \emph{locally constant $\BB$-categories}. By construction, this is the full subcategory of $\ICat_{\BB}$ that is spanned by the constant $\BB$-categories, i.e.\ by those objects $1\to \ICat_{\BB}$ that correspond to categories of the form $\const(\CC)$ for some $\CC\in\CatS$. Thus, a $\Over{\BB}{A}$-category $\I{C}$ defines an object in $\ILConst$ in context $A\in\BB$ precisely if there is a cover $(s_i)_{i\in I}\colon\bigsqcup_{i\in I} A_i\onto A$ in $\BB$ such that $s_i^\ast\I{C}$ is a constant $\Over{\BB}{A_i}$-category for each $i\in I$.
\end{example}

\begin{example}
	\label{ex:internalClassGroupoids}
	On account of the inclusion $\Univ\into\ICat_{\BB}$ from Proposition~\ref{prop:internalCoreGroupoidification}, the universe $\Univ$ can be viewed as an internal class of $\BB$-categories.
\end{example}

\subsection{$\I{U}$-cocomplete $\BB$-categories}
\label{sec:UCocomplete}

In this section we define and study the condition on a $\BB$-category to admit colimits indexed by objects in an internal class $\I{U}$ of $\BB$-categories (see Definition~\ref{def:internalClass}).

\begin{definition}
    \label{def:cocomplete}
	Let $\I{U}$ be an internal class of $\BB$-categories. A $\BB$-category $\I{C}$ is said to be \emph{$\I{U}$-cocomplete} if $\pi_A^\ast\I{C}$ admits $\I{I}$-indexed colimits for every object $\I{I}\in\I{U}(A)$ and every $A\in\BB$. Similarly, if $f\colon \I{C}\to\I{D}$ is a functor between $\BB$-categories that are both $\I{U}$-cocomplete, we say that $f$ is \emph{$\I{U}$-cocontinuous} if $\pi_A^\ast f$ preserves $\I{I}$-indexed colimits for any $A\in\BB$ and any $\I{I}\in\I{U}(A)$. We simply say that a (large) $\BB$-category $\I{C}$ is \emph{cocomplete} if it is $\ICat_{\BB}$-cocomplete (when viewing $\ICat_{\BB}$ as an internal class of $\BBB$-categories), and we call a functor between cocomplete (large) $\BB$-categories \emph{cocontinuous} if it is $\ICat_{\BB}$-cocontinuous.
	
	Dually, we say that a $\BB$-category $\I{C}$ is \emph{$\I{U}$-complete} if $\pi_A^\ast\I{C}$ admits $\I{I}$-indexed limits for every object $\I{I}\in\I{U}(A)$ and every $A\in\BB$. If $f\colon \I{C}\to\I{D}$ is a functor between $\BB$-categories that are both $\I{U}$-complete, we say that $f$ is \emph{$\I{U}$-continuous} if $\pi_A^\ast f$ preserves $\I{I}$-indexed limits for any $A\in\BB$ and any $\I{I}\in\I{U}(A)$. We simply say that a (large) $\BB$-category $\I{C}$ is \emph{complete} if it is $\ICat_{\BB}$-complete, and we call a functor between complete (large) $\BB$-categories \emph{continuous} if it is $\ICat_{\BB}$-continuous.
\end{definition}

\begin{remark}
    \label{rem:UcocompletenessDuality}
    If $\I{U}$ is an internal class of $\BB$-categories, let $\op(\I{U})$ be the internal class that arises as the image of $\I{U}$ along the equivalence $(-)^{\op}\colon \ICat_{\BB}\simeq
    \ICat_{\BB}$ from Remark~\ref{rem:oppositeCategoriesInternalFunctor}. Then a $\BB$-category $\I{C}$ is $\I{U}$-complete if and only if $\I{C}^{\op}$ is $\op(\I{U})$-cocomplete, and a functor $f$ is $\I{U}$-continuous if and only if $f^{\op}$ is $\op(\I{U})$-cocontinuous. For this reason, we may dualise statements about $\op(\I{U})$-cocompleteness and $\op(\I{U})$-cocontinuity to obtain the corresponding statements about $\I{U}$-completeness and $\I{U}$-continuity.
\end{remark}

\begin{remark}[locality of $\I{U}$-cocompleteness and $\I{U}$-cocontinuity]
	\label{rem:CocompletenessLocalCondition}
	Since both the existence of (co)limits and the preservation of such (co)limits are local conditions (Remark~\ref{rem:limitsAreLocal} and Remark~\ref{rem:preservationLimitsIsLocal}), one finds that if $\bigsqcup_i A_i\onto 1$ is a cover in $\BB$, a $\BB$-category $\I{C}$ is $\I{U}$-(co)complete if and only if $\pi_{A_i}^\ast\I{C}$ is $\pi_{A_i}^\ast\I{U}$-(co)complete, and a functor $f\colon\I{C}\to\I{D}$ between $\I{U}$-(co)complete $\BB$-categories is $\I{U}$-(co)continuous if and only if $\pi_{A_i}^\ast(f)$ is $\pi_{A_i}^\ast\I{U}$-(co)continuous.
\end{remark}

\begin{remark}
	\label{rem:UcolimitsGenerators}
	Let $\I{U}$ be an internal class of $\BB$-categories that is spanned by a collection of objects $(\I{I}_i\in \ICat_{\BB}(A_i))_{i\in I}$ in $\ICat_{\BB}$ (in the sense of \S~\ref{sec:fullyFaithfulFunctors}). Then Remark~\ref{rem:limitsAreLocal} implies that a $\BB$-category $\I{C}$ is $\I{U}$-cocomplete whenever $\pi_{A_i}^\ast\I{C}$ has $\I{I}_i$-indexed colimits for all $i\in I$. Moreover, Remark~\ref{rem:preservationLimitsIsLocal} implies that a functor $f\colon \I{C}\to\I{D}$ between $\I{U}$-cocomplete $\BB$-categories is $\I{U}$-cocontinuous whenever $\pi_{A_i}^\ast f$ preserves  $\I{I}_i$-indexed colimits for all $i\in I$.
\end{remark}

Since by Corollary~\ref{cor:geometricMorphismAdjunction} the functor $\pi_A^\ast$ carries adjunctions in $\BB$ to adjunctions in $\Over{\BB}{A}$ for every $A\in\BB$, Proposition~\ref{prop:adjointsPreserveLimitsColimits} implies:
\begin{proposition}
	\label{prop:leftAdjointCocontinuous}
	 A left adjoint functor between $\I{U}$-cocomplete categories is $\I{U}$-cocontinuous, while a right adjoint between $\I{U}$-complete categories is $\I{U}$-continuous.\qed
\end{proposition}

Similarly, Proposition~\ref{prop:reflectiveSubcategoryLimitsColimits} shows:
\begin{proposition}
	\label{prop:reflectiveSubcategoryCocomplete}
	Suppose that $\I{U}$ is an internal class of $\BB$-categories and let $\I{D}$ be a $\I{U}$-cocomplete $\BB$-category. Then every reflective and every coreflective subcategory of $\I{D}$ is $\I{U}$-cocomplete as well.\qed
\end{proposition}

As we have a natural equivalence $\pi_A^\ast\iFun(-,-)\simeq \iFun[\Over{\BB}{A}](\pi_A^\ast(-),\pi_A^\ast(-))$ for every $A\in\BB$ (see Remark~\ref{rem:localityPrinciplePreservationStructure}), Propositions~\ref{prop:limitsFunctorCategories} and~\ref{prop:postcompositionPreservationLimits} show:
\begin{proposition}
	\label{prop:FunctorCategoryCocomplete}
	Let $f\colon \I{C}\to\I{D}$ be a $\I{U}$-cocontinuous functor between $\I{U}$-cocomplete $\BB$-categories. Then $f_\ast\colon\iFun(K,\I{C})\to\iFun(K,\I{D})$ is a $\I{U}$-cocontinuous functor between $\I{U}$-cocomplete $\BB$-categories for all $K\in\Simp\BB$. Moreover, for all $i\colon L\to K$ in $\Simp\BB$, the map $i^\ast\colon \iFun(K,\I{C})\to\iFun(L,\I{C})$ is $\I{U}$-cocontinuous as well.\qed
\end{proposition}

\begin{example}
	\label{ex:UniverseCompleteCocomplete}
	The universe $\Univ$ for small $\BB$-groupoids is complete and cocomplete since $\Univ$ admits small limits and colimits (Proposition~\ref{prop:colimitsUniverse} and Proposition~\ref{prop:limitsUniverse}) and since for any $A\in\BB$ there is a natural equivalence $\pi_A^\ast\Univ\simeq\Univ[\Over{\BB}{A}]$ (Remark~\ref{rem:localityPrinciplePreservationStructure}). By the same argument and Proposition~\ref{prop:universeEnlargementColimits}, the inclusion $i\colon\Univ[\BB]\into\Univ[\BBB]$ is continuous and cocontinuous.
\end{example}

Furthermore we conclude:
\begin{proposition}
	\label{prop:YonedaEmbeddingComplete}
	For any $\BB$-category $\I{C}$, the presheaf $\BB$-category $\IPSh(\I{C})$ is complete and cocomplete. If $\I{C}$ is $\I{U}$-complete for some internal class $\I{U}$, the Yoneda embedding $h_{\I{C}}\colon\I{C}\into\IPSh(\I{C})$ is $\I{U}$-continuous, and for every $c\colon A\to\I{C}$ the corepresentable copresheaf $\map{\I{C}}(c,-)\colon A\times\I{C}\to\Univ$ transposes to a $\pi_A^\ast\I{U}$-continuous functor $\pi_A^\ast\I{C}\to\Univ[\Over{\BB}{A}]$.
\end{proposition}
\begin{proof}
    The first claim is an immediate consequence of Example~\ref{ex:UniverseCompleteCocomplete} and Proposition~\ref{prop:FunctorCategoryCocomplete}.
    For the second claim, we have to see that $\pi_A^* h \colon \pi_A^* \I{C} \to \pi_A^* \IPSh(\I{C})$ preserves all limits indexed by the objects in $\I{U}(A)$. By Example~\ref{ex:representabilityLocalCondition}, we may identify $\pi_A^\ast h_{\I{C}}$ with $h_{\pi_A^\ast\I{C}}$, so that we may replace $\BB$ with $\Over{\BB}{A}$ and can therefore assume that $A\simeq 1$. Now the claim follows from Proposition~\ref{prop:limitsRepresentably}.
   Lastly, the third claim is a direct consequence of Corollary~\ref{cor:corepresentablePreserveLimits}.
\end{proof}

\begin{example}
	By combining Proposition~\ref{prop:YonedaEmbeddingComplete} with Proposition~\ref{prop:reflectiveSubcategoryCocomplete}, one finds that the $\BB$-category $\ICat_{\BB}$ is complete and cocomplete.
\end{example}

\subsection{The large $\BB$-category of $\I{U}$-cocomplete $\BB$-categories}
\label{sec:CatU}
In Proposition~\ref{prop:parametrisationSubcategories}, we show that in order to define a (non-full) subcategory of a $\BB$-category $\I{C}$, it suffices to specify a subobject of its object of morphisms $\I{C}_1$, i.e.\ an arbitrary family of maps in $\I{C}$. With this in mind, we define:
\begin{definition}
    \label{def:CatU}
    For any internal class $\I{U}$ of $\BB$-categories, the large $\BB$-category of $\I{U}$-cocomplete $\BB$-categories $\ICat_{\BB}^\cocont{\I{U}}$ is defined as the subcategory of $\ICat_{\BB}$ that is spanned by the $\pi_A^\ast\I{U}$-cocontinuous functors between $\pi_A^\ast\I{U}$-cocomplete $\Over{\BB}{A}$-categories for every $A\in\BB$. In the case where $\I{U}=\ICat_{\BB}$ (viewed as an internal class of large $\BB$-categories), we denote the resulting very large $\BB$-category by $\ICat_{\BBB}^\cc$.
\end{definition}

\begin{remark}[locality of $\ICat_{\BB}^{\cocont{\I{U}}}$]
    \label{rem:objectsCatU}
    The subobject of $(\ICat_{\BB})_1$ that is spanned by the $\pi_A^\ast\I{U}$-cocontinuous functors between $\pi_A^\ast\I{U}$-cocomplete $\BB$-categories is stable under equivalences and composition in the sense of Proposition~\ref{prop:classificationSubcategories}. As moreover $\I{U}$-cocompleteness and $\I{U}$-cocontinuity are local conditions (Remark~\ref{rem:CocompletenessLocalCondition}), we conclude (by the same argument as in Remark~\ref{rem:localityPrinciplePropositions}) that an object $A\to \ICat_{\BB}$ is contained in $\ICat_{\BB}^\cocont{\I{U}}$ if and only if the associated $\Over{\BB}{A}$-category is $\pi_A^\ast\I{U}$-complete, and a functor $f\colon\I{C}\to\I{D}$ between $\Over{\BB}{A}$-categories defines a morphism in $\ICat_{\BB}^\cocont{\I{U}}$ in context $A\in\BB$ precisely if it is a $\pi_A^\ast\I{U}$-cocontinuous functor between $\pi_A^\ast\I{U}$-cocomplete $\Over{\BB}{A}$-categories. In particular, if $\I{C}$ and $\I{D}$ are $\pi_A^\ast\I{U}$-cocomplete $\Over{\BB}{A}$-categories, a functor $\pi_A^\ast\I{C}\to\pi_A^\ast\I{D}$ is contained in the image of the monomorphism
    \begin{equation*}
        \map{\ICat_{\BB}^\cocont{\I{U}}}(\I{C},\I{D})\into \map{\ICat_{\BB}}(\I{C},\I{D})
    \end{equation*}
    if and only if it is $\pi_A^\ast\I{U}$-cocontinuous. Moreover, there is a canonical equivalence $\pi_A^\ast\ICat_{\BB}^\cocont{\I{U}}\simeq\ICat_{\Over{\BB}{A}}^\cocont{\pi_A^\ast\I{U}}$ for every $A\in\BB$ (by the same argument as in Remark~\ref{rem:localityPrincipleBaseChangeProposition}).
\end{remark}

\begin{definition}
	\label{def:UColimitPreservingFunctorCategory}
	Let $\I{U}$ be an internal class of $\BB$-categories. If $\I{C}$ and $\I{D}$ are $\I{U}$-cocomplete $\BB$-categories, we will denote by $\iFun^\cocont{\I{U}}(\I{C},\I{D})$ the full subcategory of $\iFun(\I{C},\I{D})$ that is spanned by those objects $A\to \iFun(\I{C},\I{D})$ in context $A\in\BB$ such that the corresponding functor $\pi_A^\ast\I{C}\to\pi_A^\ast\I{D}$ is $\pi_A^\ast\I{U}$-cocontinuous. In the case where $\I{U}=\ICat_{\BB}$, we will denote the associated large $\BB$-category by $\iFun^\cc(\I{C},\I{D})$.
\end{definition} 
\begin{remark}[locality of $\iFun^{\cocont{\I{U}}}(\I{C},\I{D})$]
	\label{rem:UColimitPreservingFunctorCategoryLocal}
	In the situation of Definition~\ref{def:UColimitPreservingFunctorCategory}, note that by combining Remark~\ref{rem:internalExternalCartesianClosureCat} and Corollary~\ref{cor:mappingGroupoidBifunctorCatB} with Remark~\ref{rem:objectsCatU}, we obtain an equivalence
    \begin{equation*}
        \map{\ICat_{\BB}^\cocont{\I{U}}}(\I{C},\I{D})\simeq \iFun^\cocont{\I{U}}(\I{C},\I{D})^\simeq.
    \end{equation*}
	As a consequence, Remark~\ref{rem:objectsCatU} implies that an object $A\to \iFun(\I{C},\I{D})$ is contained in $\iFun^\cocont{\I{U}}(\I{C},\I{D})$ if and only if the associated functor $\pi_A^\ast\I{C}\to\pi_A^\ast\I{D}$ is $\pi_A^\ast\I{U}$-cocontinuous, and we obtain a canonical equivalence $\pi_A^\ast\iFun^\cocont{\I{U}}(\I{C},\I{D})\simeq\iFun[\Over{\BB}{A}]^\cocont{\pi_A^\ast\I{U}}(\pi_A^\ast\I{C},\pi_A^\ast\I{D})$ for every $A\in\BB$ (see Remark~\ref{rem:localityPrincipleBaseChangeProposition}).
\end{remark}

The notion of $\I{U}$-cocompleteness and $\I{U}$-cocontinuity allows for some flexibility in the choice of internal class $\I{U}$. For example, Proposition~\ref{prop:characterisationFinalLimit} implies that whenever $\I{I}$ is a $\BB$-category that is contained in $\I{U}$ and $f\colon \I{I}\to\I{J}$ is a final functor, adjoining the $\BB$-category $\I{J}$ to $\I{U}$ does not affect whether a $\BB$-category is $\I{U}$-cocomplete or not. As it will be convenient later to impose certain stability conditions on an internal class, we define:
\begin{definition}
    \label{def:colimitClass}
    A \emph{colimit class} in $\BB$ is an internal class $\I{U}$ of $\BB$-categories that contains the final $\BB$-category $1$ and that is stable under final functors, i.e.\ satisfies the property that whenever $\I{I}\to\I{J}$ is a final functor in $\Over{\BB}{A}$ for some $A\in\BB$, then $\I{I}\in\I{U}(A)$ implies that $\I{J}\in \I{U}(A)$.
\end{definition}
For every internal class $\I{U}$ of $\BB$-categories one can construct a colimit class $\I{U}^{\colim}$ that is uniquely specified by the condition that $\I{U}^{\colim}$ is the minimal colimit class that contains $\I{U}$. Explicitly, this class is spanned by those $\Over{\BB}{A}$-categories $\I{J}$ that admit a final functor from either an object in $\I{U}(A)$ or the final $\Over{\BB}{A}$-category $1\in \Cat(\Over{\BB}{A})$. Thus, a $\Over{\BB}{A}$-category $\I{I}$ is contained in $\I{U}^{\colim}(A)$ if and only if there is a cover $(s_i)\colon \bigsqcup_i A_i\onto A$ in $\BB$ such that for each $i$ the $\Over{\BB}{{A_i}}$-category $s_i^\ast\I{I}$ admits a final functor from either an object in $\I{U}(A_i)$ or the final object $1\in\Cat(\Over{\BB}{A_i})$. By combining Proposition~\ref{prop:characterisationFinalLimit} with Remark~\ref{rem:UcolimitsGenerators}, we deduce that a $\BB$-category $\I{C}$ is $\I{U}$-cocomplete if and only if it is $\I{U}^{\colim}$-cocomplete, and similarly a functor $f\colon \I{C}\to\I{D}$ is $\I{U}$-cocontinuous if and only if it is $\I{U}^{\colim}$-cocontinuous. Together with the evident observation that  the above description of the objects in $\I{U}^{\colim}$ is local in $\BB$ (so that one obtains an equivalence $\pi_A^\ast(\I{U}^{\colim})\simeq(\pi_A^\ast\I{U})^{\colim}$ for all $A\in\BB$, cf.~\S~\ref{sec:localityPrinciple}), this implies that one has $\ICat_{\BB}^\cocont{\I{U}}\simeq \ICat_{\BB}^\cocont{\I{U}^{\colim}}$. Thus, for the sake of discussing colimits, we may therefore always assume that an internal class is a colimit class.

\subsection{Cocompleteness and cocontinuity}
\label{sec:cocompleteness}
In \S~\ref{sec:decompositionColimits}, we saw that every small internal colimit can be decomposed into colimits indexed by $\BB$-groupoids and by constant $\BB$-categories. In the terminology introduced in~\S~\ref{sec:UCocomplete}, this result can be formulated as follows:

\begin{proposition}
	\label{prop:CocompleteGroupoidsExternal}
	A large $\BB$-category $\I{C}$ is cocomplete if and only if it is both $\Univ$- and $\ILConst$-cocomplete, and a functor between cocomplete large $\BB$-categories is cocontinuous if and only if it is both $\Univ$- and $\ILConst$-cocontinuous.
\end{proposition}
\begin{proof}
	We show the case of cocompleteness, the case of cocontinuity is completely analogous.
	We need to show that for every $A\in\BB$ the $\Over{\BB}{A}$-category $\pi_A^\ast\I{C}$ admits colimits indexed by all small $\Over{\BB}{A}$-categories if it admits colimits indexed by all small $\Over{\BB}{A}$-groupoids and by the objects of $\ILConst(A)$.
	Note that by construction of $\ILConst$ (Example~\ref{ex:locallyConstantCategories}) and by the equivalence $\const_{\Over{\BB}{A}}\simeq\pi_A^\ast\const_{\BB}$ for every $A\in\BB$ (Remark~\ref{rem:localityPrinciplePreservationStructure}), we may identify $\pi_A^\ast\ILConst$  with the internal class of locally constant $\Over{\BB}{A}$-categories. Therefore, we may replace $\BB$ by $\Over{\BB}{A}$ and can thus assume that $A\simeq 1$. In this case, the result follows immediately from Proposition~\ref{prop:decompositionColimits} (since every constant $\BB$-category defines an object in $\ILConst(1)$).
\end{proof}
In light of Proposition~\ref{prop:CocompleteGroupoidsExternal}, it seems reasonable to investigate $\Univ$- and $\ILConst$-cocompleteness separately.
We begin with the case of $\BB$-groupoidal colimits. By combining Example~\ref{ex:groupoidalLimitsColimits} with Example~\ref{ex:groupoidalLimitsColimitsPreservation}, we find:
\begin{proposition}
	\label{prop:UcolimitsGroupoids}
	Let $S$ be a local class of maps in $\BB$ and let $\Univ[S]$ be the associated subuniverse (see \S~\ref{sec:universe}), where we view $\Univ[S]$ as an internal class of large $\BB$-categories. Then a large $\BB$-category $\I{C}$ is $\Univ[S]$-cocomplete if and only if the following two conditions are satisfied:
	\begin{enumerate}
	\item for every map $p\colon P\to A$ in $S$, the functor $p^\ast\colon \I{C}(A)\to\I{C}(P)$ admits a left adjoint $p_!$;
	\item for every pullback square
	\begin{equation*}
	\begin{tikzcd}
	Q\arrow[r, "t"]\arrow[d, "q"] & P\arrow[d, "p"]\\
	B\arrow[r, "s"] & A
	\end{tikzcd}
	\end{equation*}
	in $\BB$ in which $p$ and $q$ are contained in $S$, the natural map $q_!t^\ast\to s^\ast p_!$ is an equivalence.
	\end{enumerate}
	Furthermore, a functor $f\colon\I{C}\to\I{D}$ between (large) $\Univ[S]$-cocomplete $\BB$-categories is $\Univ[S]$-cocontinuous precisely if for every map $p\colon P\to A$ in $S$ the natural map $p_! f(P)\to f(A)p_!$ is an equivalence.\qed
\end{proposition}

\begin{example}
    \label{ex:localClassCocomplete}
    If $S$ is a local class in $\BB$, the associated subuniverse $\Univ[S]\into \Univ$ is closed under $\Univ[S]$-colimits (i.e.\ $\Univ[S]$ is $\Univ[S]$-cocomplete and the inclusion $\Univ[S]\into\Univ$ is $\Univ[S]$-cocontinuous) if and only if $S$ is stable under composition. For example, this is always the case when $S$ is the right class of a factorisation system in $\BB$.
\end{example}

\begin{example}
	\label{ex:StableFactorisationSystemReflectiveSubuniverseCocomplete}
	Recall from Example~\ref{ex:StableFactorisationSystemReflectiveSubuniverse} that every modality $(\LL,\RR)$ in $\BB$ (i.e.\ a factorisation system in wich $\LL$ is stable under base change in $\BB$) determines a reflective subcategory $\Univ[\RR]$ of $\Univ$. Conversely, if $\Univ[\RR]\into\Univ$ is an arbitrary reflective subcategory, then~\cite[Theorem~4.8]{Vergura2019} shows that the associated local class $\RR$ in $\BB$ arises from a modality as in Example~\ref{ex:StableFactorisationSystemReflectiveSubuniverse} precisely if $\RR$ is stable under composition, i.e.\ if $\Univ[\RR]\into\Univ$ is closed under $\Univ[\RR]$-colimits. Hence modalities in $\BB$ correspond precisely to those reflective subuniverses that are closed under self-indexed colimits in $\Univ$.
\end{example}

Let $\KK$ be a class of $\infty$-categories, i.e.\ a full subcategory of $\CatS$. As in example~\ref{ex:locallyConstantCategories} we obtain a functor $\KK \to \ICat_\BB$ by transposing the map $\const_{\BB}\colon \KK\into \CatS \to\Cat(\BB)$ across the adjunction $\const_{\BB}\dashv\Gamma_{\BB}$.
We denote the essential image of this functor by $\ILConst_{\KK}$.
By construction, for every $A\in\BB$ the internal class $\pi_A^\ast\ILConst_{\KK}$ is the full subcategory of $\ICat_{\Over{\BB}{A}}$ that is spanned by $\const_{\Over{\BB}{A}}(\II)$ for each $\II\in\KK$. Hence a $\BB_{/A}$-category $\I{C}$ defines an object in $\ILConst_{\KK}(A)$ if and only if there is a cover $(s_i)_i \colon \bigsqcup A_i \onto A$ such that $s_i^*\I{C}\simeq\const_{\Over{\BB}{A_i}}(\II_i)$ for some $\II_i\in\KK$.
Using Remark~\ref{rem:UcolimitsGenerators}, Examples~\ref{ex:externalLimitsColimits} and~\ref{ex:externalLimitsColimitsPreservation} now imply:
\begin{proposition}
	\label{prop:LConstCocomplete}
	If $\KK$ is a class of $\infty$-categories, a $\BB$-category $\I{C}$ is $\ILConst_{\KK}$-cocomplete if and only if
	for every $A\in\BB$ the $\infty$-category $\I{C}(A)$ admits colimits indexed by every object in $\KK$ and for every map $s\colon B\to A$ in $\BB$ the functor $s^\ast\colon \I{C}(A)\to\I{C}(B)$ preserves such colimits. Furthermore, a functor $f\colon\I{C}\to\I{D}$ between $\ILConst_\KK$-cocomplete $\BB$-categories is $\ILConst_\KK$-cocontinuous if and only if for all $A\in\BB$ the functor $f(A)$ preserves all colimits that are indexed by objects in $\KK$.\qed
\end{proposition}

In Construction~\ref{constr:PresCatsAsInternalCats} we define a functor $-\otimes\Univ\colon\RPr\to\Cat(\BBB)$. Its explicit formula and Proposition~\ref{prop:LConstCocomplete} now yield:
\begin{corollary}
	\label{cor:CatOfLConstKCocompleteCats}
	For every class of $\infty$-categories $\KK$ there is an equivalence $\ICat_{\BB}^{\cocont{\ILConst_{\KK}}}\simeq \CatS^{\cocont{\KK}}\otimes\Univ$ with respect to which the inclusion $\ICat_{\BB}^{\cocont{\ILConst_{\KK}}}\into\ICat_{\BB}$ is obtained by applying $-\otimes\Univ$ to the inclusion $\CatS^{\cocont{\KK}}\into \CatS$.\qed
\end{corollary}

By combining Propositions~\ref{prop:CocompleteGroupoidsExternal}, \ref{prop:UcolimitsGroupoids} and \ref{prop:LConstCocomplete} we now arrive at the following:

\begin{corollary}
    \label{cor:CocompleteExternalBeckChevalley}
    A $\BB$-category $\I{C}$ is cocomplete if and only if the following conditions are satisfied:
    \begin{enumerate}
        \item For every $A \in \BB$ the $\infty$-category $\I{C}(A)$ is cocomplete and for any $s \colon B \to A$ the functor $s^* \colon \I{C}(A) \rightarrow \I{C}(B)$ preserves colimits.
        \item For every map $p \colon P \rightarrow A$ in $\BB$ the functor $p^*$ has a left adjoint $p_!$ such that for every pullback square
	\begin{equation*}
	\begin{tikzcd}
	Q\arrow[r, "t"]\arrow[d, "q"] & P\arrow[d, "p"]\\
	B\arrow[r, "s"] & A
	\end{tikzcd}
	\end{equation*}
	the natural map $q_!t^\ast\to s^\ast p_!$ is an equivalence.
    \end{enumerate}
    Furthermore a functor $f \colon  \I{C} \rightarrow \I{D}$ of cocomplete $\BB$-categories is cocontinuous if and only if for every $A\in\BB$ the functor $f(A)$ preserves colimits, and for every map $p \colon P \rightarrow A$ in $\BB$ the natural map $p_! f(P) \rightarrow f(A)p_!$ is an equivalence.\qed
\end{corollary}

\begin{example}
	Let $\CC$ be a presentable $\infty$-category. 
	Then Corollary~\ref{cor:CocompleteExternalBeckChevalley} and its dual show that the $\BB$-category of Construction~\ref{constr:PresCatsAsInternalCats} is both complete and cocomplete.
	In fact $\CC \otimes {\Univ}$ will give rise to a \emph{presentable $\BB$-category}, which are defined to be suitable localisations of presheaf $\BB$-categories.
	We will pursue a detailed study of presentable $\BB$-categories in upcoming work.
\end{example}

\begin{remark}
    \label{rem:existenceSmallColimitsGenerators}
    Let $\CC$ be a small $\infty$-category such that $\BB$ is a left exact and accessible localisation of $\PSh(\CC)$, and let $L\colon \PSh(\CC)\to\BB$ be the localisation functor. Then in order to see that a $\BB$-category $\I{C}$ is cocomplete, it suffices to check the conditions of Corollary~\ref{cor:CocompleteExternalBeckChevalley} for objects in $\mathcal{C}$:
    Indeed, as the existence of colimits is a local condition (Remark~\ref{rem:limitsAreLocal}), one may assume without loss of generality that the object $A$ appearing in condition (1) and (2) of Corollary~\ref{prop:UcolimitsGroupoids} is of the form $L(a)$ for some $a\in \CC$. By furthermore using Remark~\ref{rem:existenceColimitsGenerators}, one can also assume that $B=L(b)$ and $s=L(s^\prime)$ for some $d\in\CC$ and some map $s^\prime\colon b \to a$ in $\CC$. Finally, provided that $\I{C}$ is $\ILConst$-cocomplete, Proposition~\ref{prop:decompositionColimits} allows us to further assume that $P=L(p)$ and $u=L(u^\prime)$ for some $p\in\CC$ and some map $u^\prime\colon p\to a$ in $\CC$. Together with Proposition~\ref{prop:CocompleteGroupoidsExternal}, these observations imply that $\I{C}$ is cocomplete if and only if
    \begin{enumerate}
        \item for every $a\in\CC$ the $\infty$-category $\I{C}(L(a))$ has small colimits, and for every $t\colon b\to a$ in $\CC$ the functor $L(t)^\ast\colon \I{C}(L(a))\to\I{C}(L(b))$ preserves small colimits;
        \item for every pullback square
        \begin{equation*}
            \begin{tikzcd}
                Q\arrow[r, "t"]\arrow[d, "v"] & p\arrow[d, "u"]\\
                b\arrow[r, "s"] & a
            \end{tikzcd}
        \end{equation*}
        in $\PSh(\CC)$ where $s\colon b\to a$ and $u\colon p\to a$ are maps in $\CC$, the functors $L(u)^\ast\colon \I{C}(L(a))\to\I{C}(L(p))$ and $L(v)^\ast\colon \I{C}(L(d))\to \I{C}(L(Q))$ admits left adjoints $L(u)_!$ and $L(v)_!$ such that the natural map $L(v)_!L(t)^\ast\to L(s)^\ast L(u)_!$ is an equivalence.
    \end{enumerate}
\end{remark}

\begin{example}
	Let $\XX$ be an $\infty$-topos and let $f_* \colon \XX \rightarrow \BB$ be a geometric morphism.
	We may consider the limit-preserving functor
	\[
		\XX_{/f^\ast(-)} \colon \BB^\op \xrightarrow{(f^*)^\op} \XX^\op \xrightarrow{ \XX_{/-}} \CatSS
	\]
	which defines a large $\BB$-category $\I{X}$.
	Clearly $\I{X}$ is $\ILConst$-cocomplete. 
	Furthermore, for every pullback square 
	\[
	\begin{tikzcd}
	Q\arrow[r, "t"]\arrow[d, "q"] & P\arrow[d, "p"]\\
	B\arrow[r, "S"] & A
	\end{tikzcd}
	\]
	in $\BB$, the lax square
	\[
		\begin{tikzcd}
		\XX_{/f^*(Q)} \arrow[d, "f^*(q)_!"] \arrow[dr, Rightarrow, shorten=4mm]& \XX_{/f^*(P)} \arrow[d, "f^*(p)_!"] \arrow[l, "f^*(t)^*"'] \\
		\XX_{/f^*(B)}                       & \XX_{/f^*(A)} \arrow[l, "f^*(s)^*"]                       
		\end{tikzcd}
	\]
	commutes since $f^*$ preserves pullbacks.
	Thus it follows from Corollary~\ref{cor:CocompleteExternalBeckChevalley} that $\I{X}$ is cocomplete.
	Dually one shows that $\I{X}$ is also complete.
	In fact $\I{X}$ will be an example of a \emph{$\BB$-topos}, i.e.\ a left exact localisation (in a suitable sense) of a presheaf $\BB$-category.
	We intend to make these ideas precise in future work.
\end{example}

\begin{example}
    \label{ex:U+LConstcocomplete}
    One may also combine Proposition~\ref{prop:UcolimitsGroupoids} and \ref{prop:LConstCocomplete} in a more general way. 
    Namely let $S$ be a local class of maps in $\BB$ and $\KK$ a class of $\infty$-categories, and consider the internal class $\left<S,\KK \right>$ generated by $\Univ_S$ and $\ILConst_\KK$ (i.e.\ the essential image of the functor $\Univ[S]\sqcup \ILConst_{\KK}\to\ICat_{\BB}$).
    Then Remark~\ref{rem:UcolimitsGenerators} shows that a $\BB$-category $\I{C}$ is $\left<S,\KK \right>$-cocomplete if and only if 
    \begin{enumerate}
    \item for every $A\in\BB$ the $\infty$-category $\I{C}(A)$ admits colimits indexed by objects in $\KK$, and for every map $s\colon B\to A$ in $\BB$ the transition functor $s^* \colon \I{C}(A) \rightarrow \I{C}(B)$ preserve these colimits;
    \item for every map $p \colon P \rightarrow A$ in $S$ the functor $p^*$ admits a left adjoint $p_!$ that is compatible with base change in the sense of Proposition~\ref{prop:UcolimitsGroupoids}.
    \end{enumerate}
\end{example}

\begin{example}
	\label{ex:MotivicHomotopyCategoryColimits}
	The notion of being cocomplete is strictly stronger than simply admitting small colimits.
	For a concrete counterexample, consider be the category of (topological) manifolds $\mathrm{Man}$.
	There is a functor
	\[
		\ISh = \Shv(-) \colon \mathrm{Man} \to \LPr
	\]
	that takes a manifold $M$ to the $\infty$-category of sheaves of spaces on $M $.
	This defines a limit-preserving functor
	\[
		\ISh \colon \PSh_{\SS}(\mathrm{Man})^\op \rightarrow \LPr
	\]
	via Kan extension and thus a $\PSh_{\SS}(\mathrm{Man})$-category that in particular admits all colimits indexed by constant $\PSh_{\SS}(\mathrm{Man})$-categories.
	Furthermore $\ISh$ has all colimits indexed by $\PSh_{\SS}(\mathrm{Man})$-groupoids:
	by Proposition~\ref{prop:decompositionColimits} it suffices to see this for representable $\PSh_{\SS}(\mathrm{Man})$-groupoids. 
	By Corollary~\ref{cor:restricedBeckChevalleyAdj}, we have to check that for any two manifolds $M$ and $N$
	the functor
	\[
		\pi_{M}^* \colon \Shv(N) \rightarrow \Shv(M \times N)	
	\]
	admits a left adjoint and for any map $\alpha \colon N' \rightarrow N$ the mate of the commutative square
	\[
	\begin{tikzcd}
\Shv(N) \arrow[r, "\pi_M^*"] \arrow[d, "\alpha^*"] & \Shv(M \times N) \arrow[d, "\alpha_X^*"] \\
\Shv(N') \arrow[r, "\pi_M^*"]                      & \Shv(M \times  N')                      
\end{tikzcd}
	\]
	is an equivalence.
	    Since the projections $M \times N \to N$ and $M \times N' \to N'$ are topological submersions, the left adjoint exists and the mate is an equivalence by the smooth base change isomorphism, see \cite[Lemma 3.25]{Volpe2021}. Therefore $\ISh$ admits small colimits.
	However, if $\ISh$ was cocomplete, it would follow that for \textit{any} continuous map $f \colon M \rightarrow N$ of manifolds, the pullback functor
	\[
		f^* \colon \Shv(N) \rightarrow \Shv(M)
	\]
	would have a left adjoint.
        This is certainly not the case. 
        For example if $Y$ is a point, the pullback $f^*$ is simply the stalk functor at the point determined by $f$, and in general stalk functors don't preserve infinite products.
	However if we let $\mathrm{Sub}$ denote the local class in $\PSh_{\SS}(\mathrm{Man})$ that is generated by the topolgocial submersions in $\mathrm{Man}$, the above arguments show that the $\PSh_{\SS}(\mathrm{Man})$-category $\ISh$ is in fact $\langle \mathrm{Sub}, \CatS \rangle$-cocomplete (see Example~\ref{ex:U+LConstcocomplete}).
\end{example}

\subsection{Smooth and proper maps}
While our strategy thus far was to fix an internal class $\I{U}$ of $\BB$-categories and study those $\BB$-categories that are $\I{U}$-(co)complete, we will now reverse this discussion. That is, we fix a $\BB$-category $\I{C}$ and consider the largest internal class $\I{U}$ with respect to which $\I{C}$ is $\I{U}$-(co)complete. For simplicity, we restrict our attention to internal classes of $\BB$-groupoids, i.e.\ to subuniverses in $\BB$.

\begin{definition}
	\label{def:smoothProper}
	For any $\BB$-category $\I{C}$, we let $\ISm_{\I{C}}$ be the largest subuniverse for which $\I{C}$ is $\ISm_{\I{C}}$-cocomplete. We say that a map $p\colon P\to A$ in $\BB$ is \emph{$\I{C}$-smooth} if $p$ is contained in $\ISm_{\I{C}}(A)$. Dually, we define $\IPrp_{\I{C}}$ to be the largest subuniverse for which $\I{C}$ is $\IPrp_{\I{C}}$-complete, and we say that a map $p\colon P\to A$ in $\BB$ is \emph{$\I{C}$-proper} if $p$ is contained in $\IPrp_{\I{C}}(A)$. 
\end{definition}
Explicitly,  $\ISm_{\I{C}}$ is the full subcategory of $\Univ$ that is spanned by those $\Over{\BB}{A}$-groupoids $\I{G}$ for which $\pi_A^\ast\I{C}$ admits $\I{G}$-indexed colimits. The dual description holds for $\IPrp_{\I{C}}$. 

\begin{remark}[locality of $\ISm_{\I{C}}$ and $\IPrp_{\I{C}}$]
	\label{rem:localityPrincipleSmoothProper}
	Since the existence of colimits is local (Remark~\ref{rem:limitsAreLocal}), it follows that for every $A\in\BB$ a $\Over{\BB}{A}$-groupoid $\I{G}$ is contained in $\ISm_{\I{C}}$ precisely if $\pi_A^\ast\I{C}$ admits $\I{G}$-indexed colimits (cf.~Remark~\ref{rem:localityPrinciplePropositions}). As a consequence (see Remark~\ref{rem:localityPrincipleBaseChangeProposition}), we obtain an equivalence $\pi_A^\ast\ISm_{\I{C}}\simeq\ISm_{\pi_A^\ast\I{C}}$ of subuniverses in $\Univ[\Over{\BB}{A}]$. A similar observation holds for $\IPrp_{\I{C}}$.
\end{remark}

By combining Proposition~\ref{prop:UcolimitsGroupoids} with Remark~\ref{rem:localityPrincipleSmoothProper}, we now find:
\begin{proposition}
	\label{prop:smoothMapsExplicitly}
	A map $p\colon P\to A$ in $\BB$ is $\I{C}$-smooth precisely if for every cartesian square
	\begin{equation*}
	\begin{tikzcd}
	Q\arrow[d, "q"]\arrow[r, "t"] & P\arrow[d, "p"]\\
	B\arrow[r, "s"] & A
	\end{tikzcd}
	\end{equation*}
	in $\BB$, both $p^\ast\colon \I{C}(A)\to\I{C}(P)$ and $q^\ast\colon\I{C}(B)\to\I{C}(Q)$ admit a left adjoint $p_!$ and $q_!$ such that the natural map $q_! t^\ast\to s^\ast p_!$ is an equivalence.
	
	Dually, $p$ is $\I{C}$-proper precisely if for every cartesian square as above, both $p^\ast$ and $q^\ast$ admit right adjoints $p_\ast$ and $q_\ast$ such that the canonical map $s^\ast p_\ast \to q_\ast t^\ast$ is an equivalence.\qed
\end{proposition}

The notion of a $\I{C}$-proper map in $\BB$ can be made more explicit in the case where $\I{C}$ arises as the subuniverse that is attached to the right class of a factorisation system in $\BB$:
\begin{proposition}
	\label{prop:FactorisationSystemProper}
	Suppose that $(\LL,\RR)$ is a factorisation system in $\BB$, and let $\Univ[\RR]$ be the subuniverse that corresponds to the local class $\RR$. Then a map $p\colon P\to A$ is $\Univ[\RR]$-proper precisely if for every pullback $q\colon Q\to B$ of $p$ along some map $s\colon B\to A$, base change along $q$ preserves the maps in $\LL$.
\end{proposition}
\begin{proof}
	Suppose first that $p$ satisfies this condition. Then $q_\ast\colon \Over{\BB}{Q}\to\Over{\BB}{B}$ restricts to a map $\Over{\RR}{Q}\to\Over{\RR}{B}$. In fact, an object in $\Over{\BB}{B}$ is contained in $\Over{\RR}{B}$ precisely if it is right orthogonal to the collection $\LL_B$ of maps in $\Over{\BB}{B}$ whose underlying map in $\BB$ is contained in $\LL$. Using the adjunction $q^\ast\dashv q_\ast$, the functor $q_\ast$ restricting to a map $\Over{\RR}{Q}\to\Over{\RR}{B}$ is equivalent to $q^\ast$ carrying maps in $\LL_B$ to maps in $\LL_Q$, which precisely means that base change along $q$ preserves the maps in $\LL$. On account of Proposition~\ref{prop:limitsFullyFaithfulFunctor}, this already implies that $p$ is $\Univ[\RR]$-proper. Conversely, suppose that $p$ is $\Univ[\RR]$-proper. Then $q$ is $\Univ[\RR]$-proper as well, hence by replacing $q$ with $p$ it suffices to show that base change along $p$ preserves maps in $\LL$. Since $\RR$ is the right class of a factorisation system, the transition map $s^\ast\colon \Over{\RR}{A}\to\Over{\RR}{B}$ admits a left adjoint $s_!$ for \emph{every} map $s\colon B\to A$ in $\BB$. Explicitly, this left adjoint is computed by the composition
	\begin{equation*}
		\Over{\RR}{B}\into \Over{\BB}{B}\xrightarrow{s_!} \Over{\BB}{A}\xrightarrow{\Over{L}{A}} \Over{\RR}{A}
	\end{equation*}
	in which $\Over{L}{A}$ is the reflection functor, i.e.\ the left adjoint of the inclusion. Therefore, given a pullback as in Proposition~\ref{prop:smoothMapsExplicitly}, the mate transformation $s^\ast p_\ast \to q_\ast t^\ast$ being an equivalence implies that the map $t_! q^\ast\to p^\ast s_!$ is an equivalence as well. Unwinding the definitions, this is tantamount to base change along $p$ preserving maps in $\LL$ (see e.g.\ the argument in the proof of Proposition~\ref{prop:YonedaExtensionFibrations}).
\end{proof}

\begin{example}
	In~\cite[\S~4.4]{martini2021}, we defined the notion of a proper map between simplicial objects in $\BB$. Let $\ILFib$ be the large $\Simp\BB$-category that is associated with the sheaf $\LFib$ of left fibrations on $\Simp\BB$ (see~\cite[\S~4.1]{martini2021}). That is, $\ILFib$ is the subuniverse of $\Univ[\Simp\BB]$ that is determined by the right class of the factorisation system between initial maps and left fibrations in $\Simp\BB$. From this point of view, Proposition~\ref{prop:FactorisationSystemProper} shows that a map between simplicial objects is proper precisely if it is $\ILFib$-proper in the sense as described above. 
	
	Dually, a map in $\Simp\BB$ is \emph{smooth} (in the sense of~\cite[\S~4.4]{martini2021}) if and only if it is $\ILFib$-smooth in the above sense. To see this, let us first suppose that $p\colon P\to C$ is a smooth map in $\Simp\BB$. As the notion of smoothness is stable under base change in $\Simp\BB$, it suffices to show that for every pullback square
	\begin{equation*}
	\begin{tikzcd}
	Q\arrow[d, "q"]\arrow[r, "g"] & P\arrow[d, "p"]\\
	D\arrow[r, "f"] & C
	\end{tikzcd}
	\end{equation*}
	in $\Simp\BB$, the canonical map $q_! g^\ast\to f^\ast p_!$ is an equivalence. Since equivalences between left fibrations are detected fibrewise~\cite[Proposition~4.1.18]{martini2021} and by functoriality of the mate construction, we can assume $D\simeq A$ for some $A\in\BB$. Moreover, by factoring $f$ into a final map followed by a right fibration, we may assume that $f$ is either a right fibration or a final map whose domain is contained in $\BB$. In the first case, right fibrations being proper and proposition~\ref{prop:FactorisationSystemProper} immediately imply that the map $q_! g^\ast\to f^\ast p_!$ is an equivalence (as this is equivalent to the condition that $p^\ast f_\ast \to g_\ast q^\ast$ is an equivalence). The second case, on the other hand, follows from the argument in the proof of~\cite[Proposition~4.4.10]{martini2021}. Conversely, suppose that $p$ is $\ILFib$-smooth. Again, since $\ILFib$-smoothness is stable under base change in $\Simp\BB$, we only need to show that whenever we have a pullback square as the one above, then $g$ is final whenever $f$ is final. Using that the class of $\ILFib$-smooth maps in $\Simp\BB$ is local (by the correspondence explained in Proposition~\ref{prop:classificationSubuniverses}) and~\cite[Lemma~4.1.2]{martini2021}, we may assume that $f$ is given by the inclusion $d^{\{n\}}\colon A\into\Delta^n\otimes A$ for some $A\in\BB$ and some $n\geq 0$ and in particular that $D$ is contained in $\BB$. By~\cite[Proposition~4.4.3]{martini2021} (noting that its proof does not require the base to be a $\BB$-category) it will be enough to show that for every left fibration $Z\to P$ the map $\colim_{\Delta^{\op}}f^\ast Z\to \colim_{\Delta^{\op}}Z$ is an equivalence. Now by assumption on $p$ to be $\ILFib$-smooth, both vertical maps in the pullback square
	\begin{equation*}
	\begin{tikzcd}
	f^\ast Z\arrow[d, "f^\ast j"]\arrow[r] & Z\arrow[d, "j"]\\
	f^\ast p_! Z\arrow[r] & p_!Z
	\end{tikzcd}
	\end{equation*}
	are initial. Moreover, since $f$ is final and $D$ is contained in $\BB$, the lower horizontal map is final and $f^\ast p_! Z$ is contained in $\BB$ as well. By applying $\colim_{\Delta^{\op}}$ to this diagram and using the two out of three property of equivalences in $\BB$, the claim follows. Hence $p$ is smooth.
\end{example}

\section{Kan extensions}
\label{Chap:KanExtensions}
The goal of this chapter is to develop the theory of Kan extensions of functors between $\BB$-categories. The main theorem about the existence of Kan extensions will be discussed in \S~\ref{sec:KanExtensions}, but its proof requires a few preliminary steps. We begin in \S~\ref{sec:coYoneda} by discussing the \emph{co-Yoneda lemma}, which states that every presheaf can be obtained as the colimit of its Grothendieck construction. Secondly, \S~\ref{sec:UAccessible} contains a discussion of what we call \emph{$\I{U}$-small} presheaves, those that can be obtained as $\I{U}$-colimits of representables.

\subsection{The co-Yoneda lemma}
\label{sec:coYoneda}
If $\I{C}$ is a $\BB$-category and if $F\colon \I{C}^{\op}\to\Univ$ is a presheaf on $\I{C}$, Yoneda's lemma (Theorem~\ref{thm:YonedaLemma}) and the straightening/unstraightening equivalence (Theorem~\ref{thm:straightening}) allow us to identify the pullback $p\colon\Over{\I{C}}{F}\to\I{C}$ of the right fibration $(\pi_F)_!\colon \Over{\IPSh(\I{C})}{F}\to\IPSh(\I{C})$ along the Yoneda embedding $h\colon \I{C}\into\IPSh(\I{C})$ with the right fibration $\int F\to\I{C}$ that is classified by $F$. Let us denote by $\Over{h}{F}\colon\Over{\I{C}}{F}\into\Over{\IPSh(\I{C})}{F}$ the induced embedding. Since $\Over{\IPSh(\I{C})}{F}$ admits a final object $\id_{F}\colon 1\to\Over{\IPSh(\I{C})}{F}$, Proposition~\ref{prop:characterisationFinalLimit} implies that the functor $(\pi_F)_!$ admits a colimit that is given by $F$ itself (cf.\ Example~\ref{ex:objectAsDiagramLimit}). Using Remark~\ref{rem:functorialityColimitIndexingCategory}, the functor $\Over{h}{F}$ therefore induces a canonical map
\begin{equation*}
\colim hp\simeq\colim (\pi_F)_!\Over{h}{F}\to \colim (\pi_F)_!\simeq F
\end{equation*}
of presheaves on $\I{C}$.
Our goal in this section is to prove that this map is an equivalence:
\begin{proposition}
	\label{prop:coYoneda}
	Let $\I{C}$ be a $\BB$-category, let $F\colon 1\to\IPSh(\I{C})$ be a presheaf on $\I{C}$ and let $p\colon \Over{C}{F}\to\I{C}$ be the associated right fibration. Then the map $\colim hp\to F$	is an equivalence.
\end{proposition}
\begin{remark}
    The analogue of Proposition~\ref{prop:coYoneda} for $\infty$-categories can be found in~\cite[Lemma~5.1.5.3]{htt}.
\end{remark}

The proof of Proposition~\ref{prop:coYoneda} requires a few preparations. We begin with the following special case:
\begin{proposition}
	\label{prop:colimitYonedaEmbedding}
	For any $\BB$-category $\I{C}$, the colimit of the Yoneda embedding $h\colon \I{C}\into\IPSh(\I{C})$ is given by the final object $1_{\IPSh(\I{C})}$.
\end{proposition}
\begin{proof}
	Using Proposition~\ref{prop:universeEnlargementColimits} in conjunction with Proposition~\ref{prop:limitsFullyFaithfulFunctor}, it suffices to show that the  colimit of $\hat h\colon \I{C}\into \IPSh[\BBB](\I{C})=\iFun(\I{C}^\op,\Univ[\BBB])$ is given by $1_{\IPSh[\BBB](\I{C})}$. 
	On account of the commutative diagram
	\begin{equation*}
	\begin{tikzcd}
		\IPSh[\BBB](\I{C})\arrow[r, "\diag"]\arrow[d, "\pr_0^\ast"] & \iFun(\I{C},\IPSh[\BBB](\I{C}))\arrow[d, "\simeq"]\\
		\iFun(\I{C}^{\op}\times \IPSh(\I{C}),\Univ[\BBB])\arrow[r, "(\id\times h)^\ast"] & \iFun(\I{C}^{\op}\times\I{C},\Univ[\BBB])
	\end{tikzcd}
	\end{equation*}
	and Corollary~\ref{cor:existenceYonedaExtension}, the colimit of $\hat h$ in $\IPSh[\BBB](\I{C})$ is equivalent to $(\pr_0)_!(\id\times h)_!(i\map{\I{C}})$, where $i\colon \Univ\into\Univ[\BBB]$ denotes the inclusion. On the other hand, Yoneda's lemma provides a commutative square
	\begin{equation*}
			\begin{tikzcd}
			\Tw(\I{C})\arrow[d]\arrow[r, "j"] & \int \ev \arrow[d]\\
			\I{C}^{\op}\times\I{C}\arrow[r, "\id\times h"] & \I{C}^{\op}\times\IPSh(\I{C})
			\end{tikzcd}
	\end{equation*}
	in which $j$ is initial (see the proof of~\cite[Theorem~4.7.8]{martini2021}), which together with Proposition~\ref{prop:YonedaExtensionFibrations} implies that $(\id\times h)_!(i\map{\I{C}})$ is given by the functor $i\circ\ev$. Note that by postcomposing $\pr_0^\ast$ with the equivalence $\iFun(\I{C}^{\op}\times\IPSh(\I{C}),\Univ[\BBB])\simeq\iFun(\IPSh(\I{C}),\IPSh[\BBB](\I{C}))$, we recover the diagonal functor $\diag\colon \IPSh[\BBB](\I{C})\to \iFun(\IPSh(\I{C}),\IPSh[\BBB](\I{C}))$. As this equivalence furthermore transforms the composition $i\circ\ev$ into the inclusion $i_\ast\colon\IPSh(\I{C})\into\IPSh[\BBB](\I{C})$, we conclude that the colimit of $\hat h$ is equivalent to the colimit of $i_\ast$. Since $1_{\IPSh(\I{C})}$ is a final object, the result thus follows from Proposition~\ref{prop:characterisationFinalLimit}, together with Example~\ref{ex:objectAsDiagramLimit}.
\end{proof}
\begin{remark}
	\label{rem:twistedArrowProjectionInitial}
	In the situation of Proposition~\ref{prop:colimitYonedaEmbedding}, Proposition~\ref{prop:YonedaExtensionFibrations} implies that the colimit of the Yoneda embedding $h\colon \I{C}\into\IPSh(\I{C})$ classifies the left fibration $q\colon \I{Q}\to\I{C}^{\op}$ that is defined by the unique commutative square
	\begin{equation*}
	\begin{tikzcd}
	\Tw(\I{C})\arrow[d, "p"]\arrow[r, "i"]& \I{Q}\arrow[d, "q"]\\
	\I{C}^{\op}\times\I{C}\arrow[r, "\pr_0"] & \I{C}^{\op}
	\end{tikzcd}
	\end{equation*}
	in which $i$ is initial. By Proposition~\ref{prop:colimitYonedaEmbedding}, the map $q$ is an equivalence, hence we conclude that the projection $\pr_0 p\colon\Tw(\I{C})\to\I{C}^{\op}$ must be initial.
\end{remark}

\begin{lemma}
	\label{lem:PshSlice}
	Let $\I{C}$ be a $\BB$-category and let $F\colon\I{C}^{\op}\to\Univ$ be a presheaf on $\I{C}$. Then there is a canonical equivalence $\IPSh(\Over{\I{C}}{F})\simeq \Over{\IPSh(\I{C})}{F}$ that fits into a commutative diagram
	\begin{equation*}
	\begin{tikzcd}
	\Over{\I{C}}{F}\arrow[dr, "\Over{(h_{\I{C}})}{F}"] \arrow[d, hookrightarrow, "h_{(\Over{\I{C}}{F})}"]& \\
	\IPSh(\Over{\I{C}}{F})\arrow[r, "\simeq"] & \Over{\IPSh(\I{C})}{F}  
	\end{tikzcd}
	\end{equation*}
\end{lemma}
\begin{proof}
	Let $p\colon \Over{\I{C}}{F}\to\I{C}$ be the projection, and let $p_!\colon \IPSh(\Over{\I{C}}{F})\to\IPSh(\I{C})$ be the left adjoint of the precomposition functor $p^\ast$. By Corollary~\ref{cor:existenceYonedaExtension}, there is an equivalence $p_!h_{(\Over{\I{C}}{F})}\simeq h_{\I{C}} p$, hence it suffices to show that $p_!$ factors through $(\pi_F)_!\colon \Over{\IPSh(\I{C})}{F}\to\IPSh(\I{C})$ via an equivalence. By construction of $p_!$, this functor sends the final object $1_{\IPSh(\I{C})}$ to $F$, hence we obtain a lifting problem
	\begin{equation*}
	\begin{tikzcd}
		1\arrow[r, "F"]\arrow[d, "1_{\IPSh(\I{C})}"] & \Over{\IPSh(\I{C})}{F}\arrow[d, "(\pi_F)_!"]\\
		\IPSh(\Over{\I{C}}{F})\arrow[r, "p_!"] \arrow[ur, dashed]& \IPSh(\I{C})
	\end{tikzcd}
	\end{equation*}
	in which $F$ and $1_{\IPSh(\I{C})}$ define final maps and $(\pi_F)_!$ is a right fibration. On account of the factorisation system between final maps and right fibrations, the dashed arrow exists and has to be final as well. To complete the proof, it therefore suffices to show that it is also a right fibration, which follows once we verify that $p_!$ is a right fibration. By Proposition~\ref{prop:YonedaExtensionFibrations}, this map evaluates at any $A\in\BB$ to the the functor $\RFib(A\times\Over{\I{C}}{F})\to \RFib(A\times\I{C})$ that is given by restricting the right fibration $\Over{\Cat(\BB)}{A\times \Over{\I{C}}{F}}\to\Over{\Cat(\BB)}{A\times \I{C}}$ of $\infty$-categories. Since the canonical square
	\begin{equation*}
	\begin{tikzcd}
	\RFib(A\times\Over{\I{C}}{F})\arrow[d]\arrow[r, hookrightarrow] & \Over{\Cat(\BB)}{A\times\Over{\I{C}}{F}}\arrow[d]\\
	\RFib(A\times\I{C})\arrow[r, hookrightarrow] & \Over{\Cat(\BB)}{A\times\I{C}}
	\end{tikzcd}
	\end{equation*}
	is a pullback, it thus follows that $p_!$ is sectionwise a right fibration and must therefore be a right fibration itself.
\end{proof}

\begin{proof}[{Proof of Proposition~\ref{prop:coYoneda}}]
	The map $\colim hp \to F$
	is determined by the cocone under $hp\simeq (\pi_F)_!\Over{h}{F}$ that arises as the image of the colimit cocone $(\pi_F)_!\to \diag(F)$ along the functor
	\begin{equation*}
		(\Over{h}{F})^\ast\colon 	\Under{\IPSh(\I{C})}{(\pi_F)_!}\to \Under{\IPSh(\I{C})}{hp}.
	\end{equation*}
	By making use of the equivalence $\phi\colon \Over{\IPSh(\I{C})}{F}\simeq \IPSh(\Over{\I{C}}{F})$ from Lemma~\ref{lem:PshSlice}, we now obtain a commutative square
	\begin{equation*}
	\begin{tikzcd}			
		 	\Under{\IPSh(\Over{\I{C}}{F})}{\phi}\arrow[r, "(\Over{h}{F})^\ast"]\arrow[d, "(p_!)_\ast"] & \Under{\IPSh(\Over{\I{C}}{F})}{h_{\Over{\I{C}}{F}}}\arrow[d, "(p_!)_\ast"]\\
			\Under{\IPSh(\I{C})}{(\pi_F)_!}\arrow[r, "(\Over{h}{F})^\ast"] & \Under{\IPSh(\I{C})}{hp}.
	\end{tikzcd}
	\end{equation*}
	As $p_!$ is a left adjoint and therefore preserves colimits, we may thus replace $\I{C}$ by $\Over{\I{C}}{F}$ and can therefore assume without loss of generality $F=1_{\IPSh(\I{C})}$, in which case the desired result follows from Proposition~\ref{prop:colimitYonedaEmbedding}.
\end{proof}

\subsection{$\I{U}$-small presheaves}
\label{sec:UAccessible}

In this section we study those subcategories of the $\BB$-category $\IPSh(\I{C})$ of presheaves on a $\BB$-category $\I{C}$ that are spanned by $\I{U}$-colimits of representable presheaves for an arbitrary internal class $\I{U}$ of $\BB$-categories. 

\begin{definition}
	\label{def:smallPresheaf}
	Let $\I{C}$ be a $\BB$-category and let $\I{U}$ be an internal class of $\BB$-categories. We say that a presheaf $F\colon A\to \IPSh[\BB](\I{C})$ in context $A\in\BB$ is \emph{$\I{U}$-small} if $\Over{\I{C}}{F}$ is contained in $\I{U}^{\colim}(A)$ (see the discussion after Definition~\ref{def:colimitClass}). We denote by $\Sml_{\BB}^{\I{U}}(\I{C})$ the full subcategory of $\IPSh(\I{C})$ that is spanned by the $\I{U}$-small presheaves.
\end{definition}

\begin{remark}[locality of $\I{U}$-small presheaves]
	\label{rem:localitySmall}
	The property of a presheaf $F\colon A\to\IPSh(\I{C})$ to be $\I{U}$-small is local in $\BB$. That is, for every cover $(s_i)\colon\bigsqcup_i A_i\onto A$ in $\BB$, the presheaf $F$ is $\I{U}$-small if and only if $s_i^\ast(F)$ is $\I{U}$-small. This follows immediately from the fact that since $\I{U}^{\colim}(-)$ is a subsheaf of $\Cat(\Over{\BB}{-})$, the property to be contained in $\I{U}^{\colim}(A)$ can be checked locally. As a consequence (see Remark~\ref{rem:localityPrinciplePropositions}), a presheaf $F$ is contained in $\Sml_{\BB}(\I{C})$ if and only if $F$ is $\I{U}$-small. From this observation and Remark~\ref{rem:etaleInvarianceSmallPresheaves} below, it furthermore follows (by the argument in Remark~\ref{rem:localityPrincipleBaseChangeProposition}) that there is a natural equivalence
	\begin{equation*}
	\Sml_{\Over{\BB}{A}}^{\pi_A^\ast\I{U}}(\pi_A^\ast\I{C})\simeq\pi_A^\ast\Sml_{\BB}^{\I{U}}(\I{C}).
	\end{equation*}
	for every $A\in\BB$.
\end{remark}

\begin{remark}[\'etale transposition invariance of $\I{U}$-small presheaves]
	\label{rem:etaleInvarianceSmallPresheaves}
	Since for every $A\in\BB$ we have an equivalence $\pi_A^\ast(\I{U}^{\colim})\simeq(\pi_A^\ast\I{U})^{\colim}$ (see the discussion following Definition~\ref{def:colimitClass}) and on account of Remark~\ref{rem:baseChangeSlice}, it follows that a presheaf $F\colon A\to\IPSh(\I{C})$ is $\I{U}$-small if and only if its transpose $\hat F\colon 1_{\Over{\BB}{A}}\to\IPSh[\Over{\BB}{A}](\pi_A^\ast\I{C})$ is $\pi_A^\ast\I{U}$-small.
\end{remark}

\begin{remark}
	For the special case where $\BB\simeq\SS$ and where $\I{U}$ is the class of $\kappa$-filtered $\infty$-categories for some regular cardinal $\kappa$, the $\infty$-category of $\I{U}$-small presheaves on a small $\infty$-category is precisely its ind-completion by $\kappa$-filtered colimits in the sense of~\cite[\S~5.3.5]{htt}. In general, however, the $\BB$-category $\Sml_{\BB}^{\I{U}}(\I{C})$ need not be a free cocompletion, see \S~\ref{sec:freeCocompletion} below.
\end{remark}
\begin{example}
	\label{ex:representablePresheafSmall}
	For any internal class $\I{U}$ of $\BB$-categories and for any $\BB$-category $\I{C}$, the presheaf represented by an object $c$ in $\I{C}$ in context $A\in\BB$ is $\I{U}$-small: the canonical section $\id_c\colon A\to \Over{\I{C}}{c}$ provides a final map from an object contained in $\I{U}^{\colim}(A)$, which implies that $\Over{\I{C}}{c}$ defines an object of $\I{U}^{\colim}$ as well.
	By making use of~\cite[Proposition~3.9.4]{martini2021}, the Yoneda embedding $h\colon \I{C}\into\IPSh(\I{C})$ thus factors through the inclusion $\Sml_{\BB}^{\I{U}}(\I{C})\into \IPSh(\I{C})$.
\end{example}

\begin{proposition}
	\label{prop:colimitAccessiblePresheaf}
	For any $\BB$-category $\I{C}$ and any internal class $\I{U}$ of $\BB$-categories, the $\BB$-category $\Sml_{\BB}^{\I{U}}(\I{C})$ is closed under $\I{U}$-colimits of representables in $\IPSh(\I{C})$. More precisely, for any object $A\to \I{U}$ in context $A\in\BB$ that corresponds to a $\Over{\BB}{A}$-category $\I{I}$, the colimit functor $\colim \colon \iFun[\Over{\BB}{A}](\I{I},\pi_A^\ast\IPSh(\I{C}))\to\pi_A^\ast\IPSh(\I{C})$ restricts to a functor
	\begin{equation*}
	\colim\colon \iFun[\Over{\BB}{A}](\I{I},\pi_A^\ast\I{C})\to \pi_A^\ast\Sml_{\BB}^{\I{U}}(\I{C}).
	\end{equation*}
\end{proposition}
\begin{proof}
	By using Example~\ref{ex:representabilityLocalCondition} and Remark~\ref{rem:localitySmall}, we may replace $\BB$ by $\Over{\BB}{A}$, so that it will be enough to show that for any diagram $d\colon B\to \iFun(\I{I},\I{C})$ in context $B\in\BB$ the colimit $\colim hd\colon B\to \IPSh[\BBB](\I{C})$ is a $\I{U}$-small presheaf on $\I{C}$. By the same argument and Remark~\ref{rem:etaleInvarianceLimits}, we may again replace $\BB$ with $\Over{\BB}{B}$, so that we can also reduce to $B\simeq 1$. Let $pi\colon \I{I}\to\I{P}\to\I{C}$ be the factorisation of $d$ into a final functor and a right fibration. By Proposition~\ref{prop:characterisationFinalLimit} we find $\colim hd\simeq\colim hp$, hence Proposition~\ref{prop:coYoneda} implies $\I{P}\simeq \Over{\I{C}}{\colim hd}$. Since $i$ is a final functor into $\I{P}$ from the $\BB$-category $\I{I}\in\I{U}(1)$, this shows that $\colim hd$ is $\I{U}$-small.
\end{proof}

We finish this section by showing that for any $\BB$-category $\I{C}$, the functor $h\colon \I{C}\into \Sml_{\BB}^{\I{U}}(\I{C})$ that is induced by the Yoneda embedding has a left adjoint whenever $\I{C}$ is $\I{U}$-cocomplete.

\begin{proposition}
	\label{prop:leftAdjointYonedaEmbedding}
	Let $\I{U}$ be an internal class of $\BB$-categories. If $\I{C}$ is a $\I{U}$-cocomplete $\BB$-category, the functor $h\colon\I{C}\into\Sml_{\BB}^{\I{U}}(\I{C})$ that is induced by the Yoneda embedding admits a left adjoint $L\colon \Sml_{\BB}^{\I{U}}(\I{C})\to\I{C}$. 
\end{proposition}
\begin{proof}
	As $\I{C}$ being $\I{U}$-cocomplete is equivalent to $\I{C}$ being $\I{U}^{\colim}$-cocomplete, we may assume without loss of generality that $\I{U}$ is already a colimit class.
	Let $F$ be an object in $\Sml_{\BB}^{\I{U}}(\I{C})$ in context $A\in\BB$. On account of Proposition~\ref{cor:representabilityCriterionAdjoint}, it suffices to show that the copresheaf $\map{\Sml_{\BB}^{\I{U}}(\I{C})}(F, h(-))$ is corepresentable by an object in $\I{C}$.
	Using Example~\ref{ex:representabilityLocalCondition} together with Remark~\ref{rem:localitySmall}, we may replace $\BB$ with $\Over{\BB}{A}$ and can therefore assume without loss of generality that $F$ is a $\I{U}$-small presheaf in context $1\in\BB$ (see Remark~\ref{rem:etaleInvarianceReductionGlobalContext}). In this case, we have $\Over{\I{C}}{F}\in\I{U}(1)$, where $p\colon \Over{\I{C}}{F}\to\I{C}$ is the right fibration that is classified by $F$. Now Proposition~\ref{prop:coYoneda} and Proposition~\ref{prop:characterisationFinalLimit} give rise to an equivalence $F\simeq\colim hp$. Thus, one obtains a chain of equivalences
	\begin{align*}
		\map{\Sml_{\BB}^{\I{U}}(\I{C})}(F, h(-)) &\simeq \map{\IPSh(\I{C})}(\colim hp, h(-))\\
		&\simeq \map{\iFun(\Over{\I{C}}{F},\IPSh(\I{C}))}( hp, \diag h(-))\\
		&\simeq \map{\iFun(\Over{\I{C}}{F},\I{C})}(p, \diag(-))\\
		&\simeq \map{\I{C}}(\colim p, -),
	\end{align*}
	which shows that the presheaf $\map{\Sml_{\BB}^{\I{U}}(\I{C})}(F, h(-))$ is represented by $L(F)=\colim p$.
\end{proof}

\subsection{The functor of left Kan extension}
\label{sec:KanExtensions}
Throughout this section, let $\I{C}$, $\I{D}$ and $\I{E}$ be $\BB$-categories and let $f\colon \I{C}\to\I{D}$ be a functor.
\begin{definition}
\label{def:KanExtension}
A \emph{left Kan extension} of a functor $F\colon \I{C}\to\I{E}$ along $f$ is a functor $f_!F\colon \I{D}\to\I{E}$ together with an equivalence 
\begin{equation*}
\map{\iFun(\I{D},\I{E})}(f_! F,-)\simeq \map{\iFun(\I{C},\I{E})}(F, f^\ast(-)).
\end{equation*}
Dually, a \emph{right Kan extension} of $F$ along $f$ is a functor $f_\ast F\colon \I{D}\to\I{E}$ together with an equivalence
\begin{equation*}
\map{\iFun(\I{D},\I{E})}(-,f_\ast F)\simeq \map{\iFun(\I{C},\I{E})}(f^\ast(-), F).
\end{equation*}
\end{definition} 

\begin{remark}[locality of Kan extensions]
	\label{rem:BCKanExtensions}
	In the situation of Definition~\ref{def:KanExtension}, if $A\in\BB$ is an arbitrary object, one easily deduces from Remark~\ref{rem:localityPrinciplePreservationStructure} and~\cite[Lemma~4.7.13]{martini2021} that the functor $\pi_A^\ast(f_! F)$ is a left Kan extension of $\pi_A^\ast F$ along $\pi_A^\ast f$.
\end{remark}

\begin{remark}
    As usual, the theory of right Kan extensions can be formally obtained from the theory of left Kan extensions by taking opposite $\BB$-categories. We will therefore only discuss the case of left Kan extensions.
\end{remark}
\begin{remark}
    The theory of Kan extensions for the special case $\BB\simeq\SS$ is discussed in \cite[\S 22]{joyal2008notes},~\cite[\S~4.3]{htt},  or \cite[\S 6.4]{cisinski2019a}.
\end{remark}
The main goal of this section is to prove the following theorem about the existence of left Kan extensions:
\begin{theorem}
	\label{thm:existenceKanExtension}
	Let $\I{U}$ be an internal class of $\BB$-categories such that for every object $d\colon A\to\I{D}$ in context $A\in\BB$ the $\Over{\BB}{A}$-category $\Over{\I{C}}{d}$ is contained in $\I{U}^{\colim}(A)$.
	Then, whenever $\I{E}$ is $\I{U}$-cocomplete, the functor $f^\ast\colon \iFun(\I{D},\I{E})\to\iFun(\I{C},\I{E})$ has a left adjoint $f_!$ which is fully faithful whenever $f$ is fully faithful.
\end{theorem}
\begin{proof}
	To begin with, by replacing $\I{U}$ with $\I{U}^{\colim}$, we may assume without loss of generality that $\I{U}$ is a colimit class and therefore that $\Over{\I{C}}{d}$ is contained in $\I{U}$ for every object $d$ in $\I{D}$.
	
	By Corollary~\ref{cor:existenceYonedaExtension}, the functor $(f\times\id)^\ast\colon \iFun(\I{D}\times \I{E}^{\op},\Univ)\to\iFun(\I{C}\times\I{E}^{\op},\Univ)$ admits a left adjoint $(f\times\id)_!$. 
	We now claim that the composition
	\begin{equation*}
	\iFun(\I{C},\I{E})\xrightarrow{h_\ast} \iFun(\I{C},\IPSh(\I{E}))\simeq\iFun(\I{C}\times\I{E}^{\op},\Univ)\xrightarrow{(f\times\id)_!}\iFun(\I{D}\times\I{E}^{\op},\Univ)\simeq\iFun(\I{D},\IPSh(\I{E}))
	\end{equation*}
	takes values in $\iFun(\I{D},\Sml_{\BB}^{\I{U}}(\I{E}))$. To see this, let $F\colon A\to \iFun(\I{C},\I{E})$ be an object in context $A\in\BB$. Using Example~\ref{ex:representabilityLocalCondition} together with Remark~\ref{rem:localitySmall} and the fact that as $\pi_A^\ast$ preserves adjunctions (Corollary~\ref{cor:geometricMorphismAdjunction}) we may identify $\pi_A^\ast(f\times\id)_!$ with $(\pi_A^\ast(f)\times\id)_!$, which allows us to replace $\BB$ with $\Over{\BB}{A}$ and therefore to reduce to the case where $A\simeq 1$ (see Remark~\ref{rem:etaleInvarianceReductionGlobalContext}). Let $p\colon \I{P}\to\I{C}\times\I{E}^{\op}$ be the left fibration that is classified by the transpose of $hF$, and let $qi\colon \I{P}\to\I{Q}\to \I{D}\times\I{E}^{\op}$ be the factorisation of $(f\times\id)p$ into an initial functor and a left fibration. Then $q\colon \I{Q}\to\I{D}\times\I{E}^{\op}$ classifies $(f\times\id)_!(hF)$, hence we need to show that for any object $d\colon A\to \I{D}$ in context $A\in\BB$ the fibre $\I{Q}\vert_d\to A\times\I{E}^{\op}$ is classified by a $\I{U}$-small presheaf on $\I{E}$. By the same argument as above, we may again assume that $A\simeq 1$. Consider the commutative diagram
	\begin{equation*}
	\begin{tikzcd}[column sep=small, row sep=small]
	&& && \I{Q}\vert_d\arrow[dd]\arrow[dl]\arrow[dr, "s"]  &\\
	& \Over{\I{P}}{d} \arrow[dd]\arrow[rr, "\Over{i}{d}"]\arrow[dl]&& \Over{\I{Q}}{d}\arrow[dd] \arrow[dl]\arrow[rr, crossing over, "j", near end]&& \I{R} \arrow[dd, "r"]\\
	\I{P}\arrow[dd, "p", crossing over]\arrow[rr, "i", crossing over, near end] && \I{Q} && \I{E}^{\op}\arrow[dl]\arrow[dr, "\id"]&\\
	& \Over{\I{C}}{d}\times\I{E}^{\op} \arrow[dl]\arrow[rr]&& \Over{\I{D}}{d} \times\I{E}^{\op}\arrow[dl] \arrow[rr]&& \I{E}^{\op} \\
	\I{C}\times\I{E}^{\op}\arrow[rr, "f\times\id"] && \I{D}\times\I{E}^{\op}\arrow[from=uu, "q", crossing over, near start] &&&
	\end{tikzcd}
	\end{equation*}
	in which $\I{R}$ is uniquely determined by the condition that $j$ be initial and $r$ be a left fibration. Since $\Over{i}{d}$ is the pullback of $i$ along a right fibration and since right fibrations are proper~\cite[Proposition~4.4.7]{martini2021}, this map is initial. As a consequence, the composition $j \Over{i}{d}$ is initial as well, which implies that the left fibration $r$ is classified by the colimit of the composition $\Over{\I{C}}{d}\to\I{C}\to \I{E}\into\IPSh(\I{E})$. By Proposition~\ref{prop:colimitAccessiblePresheaf} and the condition on $\Over{\I{C}}{d}$ to be contained in $\I{U}(1)$, the left fibration $r$ is classified by a  $\I{U}$-small presheaf. To prove our claim, we therefore need only show that the map $s\colon \I{Q}\vert_d\to\I{R}$ is an equivalence. As this is a map of right fibrations over $\I{E}^{\op}$, we may work fibrewise~\cite[Proposition~4.1.18]{martini2021}. If $e\colon A\to\I{E}^{\op}$ is an object in context $A\in\BB$, we obtain an induced commutative triangle
	\begin{equation*}
	\begin{tikzcd}[column sep=small]
	& (\I{Q}\vert_d)\vert_e\arrow[dl]\arrow[dr, "s\vert_e"] & \\
	(\Over{\I{Q}}{d})\vert_e\arrow[rr, "j\vert_e"] & & \I{R}\vert_e
	\end{tikzcd}
	\end{equation*}
	over $A$. Since the projections $\Over{\I{Q}}{d}\to\I{E}^{\op}$ and  $\I{R}\to\I{E}^{\op}$ are left fibrations and therefore smooth~\cite[Proposition~4.4.7]{martini2021} and since initial functors are a fortiori covariant equivalences (see~\cite[\S~4.4]{martini2021}), we deduce from~\cite[Proposition~4.4.10]{martini2021} that $j\vert_e$ exhibits $\I{R}\vert_e$ as the groupoidification of $(\Over{\I{Q}}{d})\vert_e$. Moreover, the map $(\I{Q}\vert_d)\vert_e\to(\Over{\I{Q}}{d})\vert_e$ is a pullback of the final map $A\to \Over{\I{D}}{d}\times A$ along a smooth map and therefore final as well. Since final functors induce equivalences on groupoidifications, we thus conclude that $s\vert_e$ must be an equivalence, as desired.
	
	By making use of the discussion thus far, we may now define $f_!$ as the composition of the two horizontal arrows in the top row of the commutative diagram
	\begin{equation*}
	\begin{tikzcd}
	\iFun(\I{C},\I{E})\arrow[d, hookrightarrow, "h"]\arrow[r] & \iFun(\I{D},\Sml_{\BB}^{\I{U}}(\I{E}))\arrow[d, hookrightarrow]\arrow[r, "L_\ast"] & \iFun(\I{D},\I{E})\\
	\iFun(\I{C}\times\I{E}^{\op},\Univ)\arrow[r, "(f\times\id)_!"] & \iFun(\I{D}\times\I{E}^{\op},\Univ)
	\end{tikzcd}
	\end{equation*}
	in which $L$ denotes the left adjoint to the Yoneda embedding that is supplied by Proposition~\ref{prop:leftAdjointYonedaEmbedding}. It is now clear from the construction of $f_!$ that this functor defines a left adjoint of $f^\ast$.
	
	Lastly, suppose that $f$ is fully faithful.
	We show that in this case the adjunction counit $\id_{\iFun(\I{C},\I{E})}\to f^\ast f_!$ is an equivalence. Since equivalences are computed objectwise (see~\cite[Corollary~4.7.17]{martini2021}), we only have to show that for every object $F$ in $\iFun(\I{C},\I{E})$ the induced map $F\to f^\ast f_! F$ is an equivalence. Since $\pi_A^\ast$ preserves adjunctions and the internal hom (Corollary~\ref{cor:geometricMorphismAdjunction} and Remark~\ref{rem:localityPrinciplePreservationStructure}), we may replace $\BB$ with $\Over{\BB}{A}$ and can therefore assume that $F$ is in context $1\in \BB$ (see Remark~\ref{rem:etaleInvarianceReductionGlobalContext}). By construction of the adjunction $f_!\dashv f^\ast$, the unit $F\to f^\ast f_! F$ is determined by the composition
	\begin{equation*}
	h_\ast(F)\xrightarrow{\eta_1 h_\ast(F)} (f\times\id)^\ast (f\times\id)_! h_\ast(F)\xrightarrow{(f\times\id)^\ast \eta_2 (f\times\id)_! h_\ast(F)}  (f\times\id)^\ast h_\ast L_\ast (f\times\id)_! h_\ast(F)
	\end{equation*}
	in which $\eta_1$ is the unit of the adjunction $(f\times\id)_!\dashv (f\times\id)^\ast$ and $\eta_2$ is the unit of the adjunction $L_\ast\dashv h_\ast$. By Corollary~\ref{cor:existenceYonedaExtension}, the first map is an equivalence, hence it suffices to show that the second one is an equivalence as well. Again, it suffices to show this objectwise. Let therefore $c$ be an object of $\I{C}$, as above without loss of generality in context $1\in\BB$. By the above argument, the object $(f\times \id)_! h_\ast(F)(c)$ is given by the colimit of the diagram $h F(\pi_c)_!\colon\Over{\I{C}}{c}\to \I{C}\to \I{E}\into\IPSh(\I{E})$. By making use of the final section $\id_c\colon 1\to\Over{\I{C}}{c}$, this presheaf is therefore representable by $F(c)$, which implies the claim.
\end{proof}

\begin{remark}
	\label{rem:counitKanExtension}
	In the situation of Theorem~\ref{thm:existenceKanExtension}, the construction of $f_!$ shows that if $F\colon \I{D}\to\I{E}$ is a functor, the counit $f_!f^\ast F\to F$ is given by the composition
	\begin{equation*}
	L_\ast (f\times \id)_! (f\times\id)^\ast h_\ast(F)\xrightarrow{L_\ast \epsilon_1 h_\ast F} L_\ast h_\ast(F)\xrightarrow{\epsilon_2} F
	\end{equation*}
	where $\epsilon_1$ is the counit of the adjunction $(f\times\id)_!\dashv (f\times\id)^\ast$ and $\epsilon_2$ is the counit of the adjunction $L_\ast\dashv h_\ast$. Since the latter is an equivalence, the functor $F$ arises as the left Kan extension of $f^\ast F$ precisely if the map $L_\ast \epsilon_1 h_\ast(F)$ is an equivalence. Let $q\colon \I{Q}\to \I{D}\times\I{E}^{\op}$ be the left fibration that is classified by $h_\ast(F)$ and let $p\colon \I{P}\to\I{C}\times\I{E}^{\op}$ be the pullback of $q$ along $f\times\id$. Let furthermore $q^\prime\colon \I{Q}^\prime\to\I{C}\times\I{E}^{\op}$ be the functor that arises from factoring $(f\times\id)p$ into an initial map and a left fibration. On the level of left fibrations over $\I{D}\times\I{E}^{\op}$, the map $\epsilon_1 h_\ast(F)$ is then given by the map $g$ that arises as the unique lift in the commutative diagram
	\begin{equation*}
	\begin{tikzcd}
	\I{P}\arrow[r, "i"]\arrow[d, "i^\ast"] & \I{Q}\arrow[d, "q"]\\
	\I{Q}^\prime\arrow[r, "q"]\arrow[ur, dotted, "g"] & \I{D}\times\I{E}^{\op}.
	\end{tikzcd}
	\end{equation*}
	 Then the condition that $L_\ast \epsilon_1 h_\ast F$ is an equivalence corresponds to the condition that for any object $d\colon A\to \I{D}$ in context $A\in\BB$ the map $g\vert_d\colon\I{Q}^\prime\vert_d\to \I{Q}\vert_d$, viewed as a map over $\pi_A^\ast\I{E}^{\op}$, induces an equivalence $\colim (q^\prime\vert_d^{\op})\simeq \colim (q\vert_d^{\op})$ in $\pi_A^\ast\I{E}$. Note that by a similar argument as in the proof of Theorem~\ref{thm:existenceKanExtension}, the map $g\vert_d$ fits into a commutative square
	 \begin{equation*}
	 \begin{tikzcd}
	 	\Over{\I{Q}^\prime}{d}\arrow[r, "\Over{g}{d}"]\arrow[d, "j^\prime"] & \Over{\I{Q}}{d}\arrow[d, "j"] \\
	 	\I{Q}^\prime\vert_d\arrow[r, "g\vert_d"] & \I{Q}\vert_d
	 \end{tikzcd}
	 \end{equation*}
	 in which $j^\prime$ and $j$ are initial. As a consequence, the map $g\vert_d$ is determined by the factorisation of the map $j\Over{i}{d}$ in the commutative diagram
	 \begin{equation*}
	 \begin{tikzcd}
	 \Over{\I{P}}{d}\arrow[d]\arrow[r, "\Over{i}{d}"] \arrow[d]& \Over{\I{Q}}{d} \arrow[r, "j"] \arrow[d]& \I{Q}\vert_d\arrow[d] \\
	 \Over{\I{C}}{d}\times\I{E}^{\op}\times A\arrow[r, "\Over{f}{d}\times\id"] & \Over{\I{D}}{d}\times\I{E}^{\op}\times A\arrow[r] & \I{E}^{\op}\times A
	 \end{tikzcd}
	 \end{equation*}
	 into an initial map and a right fibration. This argument shows that the map $g\vert_d$ classifies the canonical map 
	 \begin{equation*}
	 	 \colim hF(\pi_d)_!\Over{f}{d} \to \colim hF (\pi_d)_!
	 \end{equation*}
	 of presheaves on $\pi_A^\ast\I{E}$ that is induced by the functor $\Over{f}{d}\colon \Over{\I{C}}{d}\to\Over{\I{D}}{d}$. Since $L$ is a left inverse of $h$ that preserves colimits, we thus conclude that $F$ is a left Kan extension of its restriction $f^\ast F$ precisely if the map
	 $\Over{f}{d}\colon\Over{\I{C}}{d}\to\Over{\I{D}}{d}$ induces an equivalence
	 \begin{equation*}
	 \colim F(\pi_d)_!\Over{f}{d} \simeq \colim F (\pi_d)_!\simeq F(d)
	 \end{equation*}
	 in $\pi_A^\ast\I{E}$ for every object $d\colon A\to \I{D}$.
\end{remark}

Recall from~\cite[\S~4.7]{martini2021} that a large $\BB$-category $\I{D}$ is \emph{locally} small if the left fibration $\Tw(\I{D})\to\I{D}^{\op}\times\I{D}$ is small (in the sense of~\cite[\S~4.5]{martini2021}). Theorem~\ref{thm:existenceKanExtension} now implies:
\begin{corollary}
	\label{cor:leftKanExtensionsSmall}
	If $f\colon \I{C}\to\I{D}$ is a functor of $\BB$-categories such that $\I{C}$ is small and $\I{D}$ is locally small (but not necessarily small). 
        If $\I{E}$ is a cocomplete large $\BB$-category, the functor of left Kan extension $f_!$ always exists.
\end{corollary}
\begin{proof}
	By Theorem~\ref{thm:existenceKanExtension}, it suffices to show that for any object $d\colon A\to \I{D}$ in context $A\in\BB$ the $\Over{\BB}{A}$-category $\Over{\I{C}}{d}$ is small, which follows immediately from the observation that the right fibration $\Over{\I{C}}{d}\to \I{C}\times A$ a pullback of the small fibration $\Tw(\I{D})\to\I{D}^{\op}\times\I{D}$ and therefore small itself.
\end{proof}

We conclude this section with an application of the theory of Kan extensions to a characterisation of \emph{colimit cocones}. If $\I{I}$ is a $\BB$-category, recall from Remark~\ref{rem:cones} that the associated right cone $\I{I}^\triangleright$ comes equipped with two functors $\iota\colon\I{I}\to \I{I}^\triangleright$ and $\infty\colon 1\to \I{I}^\triangleright$. Our goal is to prove:

\begin{proposition}
\label{prop:colimitCocones}
	Let $\I{I}$ and $\I{C}$ be $\BB$-categories and suppose that $\I{C}$ admits $\I{I}$-indexed colimits. Then the functor of left Kan extension
	\begin{equation*}
	\iota_!\colon \iFun(\I{I},\I{C})\to \iFun(\I{I}^\triangleright,\I{C})
	\end{equation*}
	along $\iota\colon \I{I}\to \I{I}^\triangleright$ exists and is fully faithful, and its essential image coincides with the full subcategory of $\iFun(\I{I}^\triangleright,\I{C})$ that is spanned by the colimit cocones.
\end{proposition}
The proof of Proposition~\ref{prop:colimitCocones} relies on the following two general facts:

\begin{lemma}
    \label{lem:pullbackCoreflectiveSubcategory}
    Suppose that
	\begin{equation*}
	\begin{tikzcd}
		\I{P}\arrow[d, "p"]\arrow[r, "g"] & \I{Q}\arrow[d, "q"]\\
		\I{C}\arrow[r, "f"] & \I{D}
	\end{tikzcd}
	\end{equation*}
	is a cartesian square in $\Cat(\BB)$ such that $q$ admits a fully faithful left adjoint. Then $p$ admits a fully faithful left adjoint as well.
\end{lemma}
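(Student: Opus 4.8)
The plan is to produce an explicit candidate for the left adjoint and then verify the adjunction and its fully faithfulness by means of mapping $\BB$-groupoids. Write $s\colon\I{D}\to\I{Q}$ for the fully faithful left adjoint of $q$; by proposition~\ref{prop:adjunctionFullyFaithful} its unit $\eta\colon\id_{\I{D}}\to qs$ is an equivalence. Using the universal property of the pullback $\I{P}\simeq\I{C}\times_{\I{D}}\I{Q}$, I would define a functor $t\colon\I{C}\to\I{P}$ by the pair $(\id_{\I{C}}, sf)$ together with the equivalence $\eta f\colon f\simeq qsf$ witnessing that the two composites $\I{C}\to\I{D}$ agree. By construction $pt\simeq\id_{\I{C}}$ and $gt\simeq sf$, and $t$ is the evident candidate for the left adjoint of $p$.

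The key computational input is that the twisted arrow construction preserves limits---since the powering $(-)^{K}$ and the core functor $(-)^{\core}$ both preserve limits and limits in $\Cat(\BB)$ are computed levelwise---so that $\Tw(\I{P})\simeq\Tw(\I{C})\times_{\Tw(\I{D})}\Tw(\I{Q})$. Taking fibres over pairs of objects then yields, for $x,y\colon A\to\I{P}$, a natural equivalence
\[
\map{\I{P}}(x,y)\simeq\map{\I{C}}(px,py)\times_{\map{\I{D}}(fpx,fpy)}\map{\I{Q}}(gx,gy),
\]
where the right-hand leg is induced by $q$. I would record the auxiliary observation that for any $d$ in $\I{D}$ and any $Q$ in $\I{Q}$ the map $q_\ast\colon\map{\I{Q}}(s(d),Q)\to\map{\I{D}}(qs(d),q(Q))$ is an equivalence: indeed it differs from the adjunction equivalence $\map{\I{Q}}(s(d),Q)\simeq\map{\I{D}}(d,q(Q))$ of proposition~\ref{prop:characterizationAdjunctionMappingGroupoids} only by precomposition with the equivalence $\eta_d$, by the triangle identities. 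Specialising the displayed formula to $x=t(c)$ and invoking this observation, the right-hand leg becomes an equivalence, so projection onto the left factor gives a natural equivalence $\map{\I{P}}(t(-),-)\simeq\map{\I{C}}(-,p(-))$. By proposition~\ref{prop:characterizationAdjunctionMappingGroupoids} this exhibits $t$ as left adjoint to $p$.

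Finally, to see that $t$ is fully faithful I would apply the same pullback square to $x=t(c)$ and $y=t(c')$: its right vertical leg $q_\ast$ is an equivalence by the observation above (with $Q=s(c')$), hence so is the left vertical leg $p_\ast\colon\map{\I{P}}(t(c),t(c'))\to\map{\I{C}}(c,c')$; since $p_\ast\circ t_\ast\simeq(pt)_\ast\simeq\id$, the map induced by $t$ on mapping groupoids is an equivalence, so $t$ is fully faithful. (Alternatively, fully faithfulness follows from proposition~\ref{prop:adjunctionFullyFaithful} once one identifies the unit of $t\dashv p$ with the equivalence $\id_{\I{C}}\simeq pt$.) The main obstacle I anticipate is the bookkeeping needed to make the decomposition of $\map{\I{P}}$ genuinely natural in both variables and to identify its right-hand leg with the map induced by $q$, so that proposition~\ref{prop:characterizationAdjunctionMappingGroupoids} applies on the nose; the adjunction statement itself is then formal.
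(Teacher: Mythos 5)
Your proof is correct, but it verifies the adjunction by a genuinely different route than the paper. The construction of the candidate adjoint is the same: the paper also pulls back the section $l_1\colon\I{D}\to\I{Q}$ of $q$ (the fully faithful left adjoint, whose unit is an equivalence by proposition~\ref{prop:adjunctionFullyFaithful}) along $f$ to obtain a section $l_0$ of $p$, which is your $t$. From there, however, the paper stays entirely formal: since the unit of $l_1\dashv q$ is an equivalence, the counit $\epsilon_1\colon\Delta^1\otimes\I{Q}\to\I{Q}$ is a transformation \emph{over} $\I{D}$ (i.e.\ $q\epsilon_1\simeq qs^0$), so it can itself be pulled back through the square to a transformation $\epsilon_0\colon\Delta^1\otimes\I{P}\to\I{P}$ satisfying $\epsilon_0 d^0\simeq\id_{\I{P}}$ and $\epsilon_0 d^1\simeq l_0p$, with $p\epsilon_0$ and $\epsilon_0 l_0$ equivalences; the adjunction $l_0\dashv p$ and its fully faithfulness then follow from the retraction criterion of corollary~\ref{cor:criterionSubcategoryReflective} (equivalently proposition~\ref{prop:minimalAdjunctionData}). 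This makes visible that the content of the lemma is stability of \emph{relative} adjunctions under pullback, and it avoids mapping groupoids altogether. You instead verify the adjunction through proposition~\ref{prop:characterizationAdjunctionMappingGroupoids}, using that $\Tw(-)$ preserves limits (it does, for the reasons you give) to decompose $\map{\I{P}}$ as a fibre product, and observing that $q$ induces equivalences on mapping groupoids out of objects in the image of $l_1$. This is more computational and requires the naturality bookkeeping you flag---all of it doable, since the $\Tw$-decomposition is an equivalence of left fibrations over $\I{P}^{\op}\times\I{P}$ whose legs are by construction the maps induced by $f$, $g$ and $q$, and equivalences of functors are detected objectwise---but it buys a hom-formula for pullbacks of $\BB$-categories that the paper's argument does not produce, and it localises exactly where fully faithfulness of $l_1$ enters. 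One small point to tighten: your auxiliary observation that $q_\ast\colon\map{\I{Q}}(s(d),Q)\to\map{\I{D}}(qs(d),q(Q))$ is an equivalence does not follow from the triangle identities but from naturality of the adjunction equivalence $\alpha$ of proposition~\ref{prop:characterizationAdjunctionMappingGroupoids} in the second variable: evaluating the naturality square for a morphism $\phi\colon s(d)\to Q$ at $\id_{s(d)}$ gives $\alpha(\phi)\simeq q(\phi)\circ\eta_d$, i.e.\ $\alpha\simeq\eta_d^\ast\circ q_\ast$, and $\eta_d$ is an equivalence because $s$ is fully faithful.
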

\begin{proof}
    By assumption $q$ has a section $l_1\colon \I{D}\to\I{Q}$ which pulls back along $f$ to form a section $l_0\colon \I{C}\into\I{P}$ of $p$. Moreover, the adjunction counit $\epsilon_1\colon \Delta^1\otimes \I{Q}\to\I{Q}$ fits into a commutative diagram
    \begin{equation*}
        \begin{tikzcd}
        \Delta^1\otimes \I{D}\arrow[r, "\id\otimes l_1"] \arrow[d, "s^0"] & \Delta^1\otimes \I{Q}\arrow[d, "\epsilon_1"]\arrow[r, "s^0"] & \I{Q}\arrow[d, "q"]\\
        \I{D}\arrow[r, "l_1"] &\I{Q}\arrow[r, "q"] & \I{D},
        \end{tikzcd}
    \end{equation*}
    hence pullback along $f$ defines a map $\epsilon_0\colon\Delta^1\otimes \I{P}\to\I{P}$ that fits into a commutative square
    \begin{equation*}
        \begin{tikzcd}
        \Delta^1\otimes \I{C}\arrow[r, "\id\otimes l_0"] \arrow[d, "s^0"] & \Delta^1\otimes \I{P}\arrow[d, "\epsilon_0"]\arrow[r, "s^0"] & \I{P}\arrow[d, "p"]\\
        \I{C}\arrow[r, "l_0"] &\I{P}\arrow[r, "p"] & \I{C},
        \end{tikzcd}
    \end{equation*}
    By construction, the map $\epsilon_0 d^0$ is equivalent to the identity on $\I{P}$, and the map $\epsilon_1 d^1$ recovers the functor $l_1 p$. The previous commutative diagram now precisely expresses that both $p\epsilon_0$ and $\epsilon_0 l_0$ are equivalence, hence the desired result follows from Corollary~\ref{cor:criterionSubcategoryReflective}.
\end{proof}
\begin{lemma}
	\label{lem:pushoutFF}
	Fully faithful functors in $\Cat(\BB)$ are stable under pushout.
\end{lemma}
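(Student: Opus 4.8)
The plan is to avoid computing mapping groupoids in the pushout directly and instead transport the whole situation into presheaf categories, where the hypothesis becomes "admits a fully faithful left adjoint" and the preceding Lemma~\ref{lem:pullbackCoreflectiveSubcategory} applies verbatim. Concretely, fix a pushout square in $\Cat(\BB)$
\begin{equation*}
\begin{tikzcd}
\I{A}\arrow[r, "g"]\arrow[d, "f"'] & \I{B}\arrow[d, "f'"]\\
\I{C}\arrow[r, "g'"] & \I{D}
\end{tikzcd}
\end{equation*}
in which $f$ is fully faithful, and let me show that $f'$ is fully faithful.

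First I would apply the functor $\IPSh_{\Univ}(-)=\iFun{(-)^{\op}}{\Univ}$. Since $(-)^{\op}$ is an equivalence it preserves the pushout, so $\I{D}^{\op}\simeq\I{C}^{\op}\sqcup_{\I{A}^{\op}}\I{B}^{\op}$. Moreover $\iFun{-}{\Univ}$ carries colimits in its (contravariant) variable to limits: for any $T$ one has $\map{}(T,\iFun{\colim_i K_i}{\Univ})\simeq\map{}(\colim_i(T\times K_i),\Univ)\simeq\lim_i\map{}(T,\iFun{K_i}{\Univ})$, using that $T\times-$ preserves colimits in the cartesian closed $\Cat(\BB)$. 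Hence, working in $\Cat(\BBB)$ to accommodate the large presheaf categories, the square becomes cartesian:
\begin{equation*}
\begin{tikzcd}
\IPSh_{\Univ}(\I{D})\arrow[r, "(g')^\ast"]\arrow[d, "(f')^\ast"'] & \IPSh_{\Univ}(\I{C})\arrow[d, "f^\ast"]\\
\IPSh_{\Univ}(\I{B})\arrow[r, "g^\ast"] & \IPSh_{\Univ}(\I{A}),
\end{tikzcd}
\end{equation*}
where all the maps are the precomposition functors.

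Now by corollary~\ref{cor:existenceYonedaExtension} the fully faithfulness of $f$ gives that the left adjoint $f_!$ of $f^\ast$ is fully faithful; that is, the right-hand vertical map $f^\ast$ of the cartesian square admits a fully faithful left adjoint. Lemma~\ref{lem:pullbackCoreflectiveSubcategory} then applies directly to this cartesian square and shows that the opposite vertical map $(f')^\ast$ also admits a fully faithful left adjoint. On the other hand, $(f')^\ast$ already carries the left adjoint $(f')_!$ supplied by corollary~\ref{cor:existenceYonedaExtension}, so by uniqueness of adjoints (proposition~\ref{prop:uniquenessAdjoints}) this $(f')_!$ must be the fully faithful one. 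A final application of corollary~\ref{cor:existenceYonedaExtension}, now in the reverse direction, turns the fully faithfulness of $(f')_!$ back into the fully faithfulness of $f'$, completing the argument.

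The only steps requiring genuine care are the identification of $\IPSh_{\Univ}$ of the pushout with the displayed cartesian square (continuity of $\iFun{-}{\Univ}$ together with the correct matching of precomposition functors) and the size bookkeeping that lets me invoke lemma~\ref{lem:pullbackCoreflectiveSubcategory} and corollary~\ref{cor:existenceYonedaExtension} for the large categories $\IPSh_{\Univ}(-)$; both are routine. The main conceptual move—and the crux of the proof—is recognizing that passing to presheaves converts "pushout of a fully faithful functor" into precisely the pullback configuration handled by the previous lemma, so that no direct analysis of morphisms in $\I{D}$ is needed.
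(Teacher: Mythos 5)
Your proof is correct and follows essentially the same route as the paper: apply $\IPSh_{\Univ}(-)$ to turn the pushout into a pullback of presheaf categories, invoke Lemma~\ref{lem:pullbackCoreflectiveSubcategory} to get a fully faithful left adjoint for the pulled-back restriction functor, and transfer back to fully faithfulness of the pushed-out functor via Corollary~\ref{cor:existenceYonedaExtension}. The extra details you supply (continuity of $\iFun{-}{\Univ}$ and the uniqueness-of-adjoints step identifying the fully faithful left adjoint with $(f')_!$) are points the paper leaves implicit, but the argument is the same.
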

\begin{proof}
	If
	\begin{equation*}
	\begin{tikzcd}
	\I{C}\arrow[d, hookrightarrow, "f"] \arrow[r, "h"] & \I{E}\arrow[d, "g"]\\
	\I{D}\arrow[r, "k"] & \I{F}
	\end{tikzcd}
	\end{equation*}
	is a pushout square in $\Cat(\BB)$ in which $f$ is fully faithful, applying the functor $\IPSh(-)$ results in a pullback square
	\begin{equation*}
	\begin{tikzcd}
	\IPSh(\I{F})\arrow[d, "k^\ast"] \arrow[r, "g^\ast"] & 	\IPSh(\I{E}) \arrow[d, "h^\ast"]\\
	\IPSh(\I{D})\arrow[r, "f^\ast"] & 	\IPSh(\I{C})
	\end{tikzcd}
	\end{equation*}
	in which $f^\ast$ admits a fully faithful left adjoint $f_!$. By Lemma~\ref{lem:pullbackCoreflectiveSubcategory}, this implies that $g^\ast$ admits a fully faithful left adjoint as well, hence that the functor of left Kan extension $g_!$ is fully faithful. This in turn implies that $g$ must be fully faithful too, see Corollary~\ref{cor:existenceYonedaExtension}.
\end{proof}

\begin{proof}[{Proof of Proposition~\ref{prop:colimitCocones}}]
	Let $\I{U}$ be the smallest colimit class in $\BB$ that contains $\I{I}$. Then $\I{C}$ is $\I{U}$-cocomplete (by Remark~\ref{rem:UcolimitsGenerators}). Hence the existence of $\iota_!$ follows from Theorem~\ref{thm:existenceKanExtension} once we show that for every object $j\colon A\to \I{I}^\triangleright$ the $\Over{\BB}{A}$-category $\Over{\I{I}}{j}$ is contained in $\I{U}(A)$. By definition of the right cone, we have a cover $\I{I}_0\sqcup 1\onto (\I{I}^\triangleright)_0$ which induces a cover $A_0\sqcup A_1\onto A$ by taking the pullback along $j\colon A\to (\I{I}^{\triangleright})_0$. Let $j_0\colon A_0\to \I{I}^\triangleright$ and $j_1\colon A_1\to \I{I}^{\triangleright}$ be the induced objects. Since $j_0$ factors through the inclusion $\iota_0\colon\I{I}_0\into (\I{I}^\triangleright)_0$ and since $\iota$ is fully faithful by Lemma~\ref{lem:pushoutFF}, we obtain an equivalence $\Over{\I{I}}{j_0}\simeq \Over{\I{I}}{j_0^\prime}$ over $A_0$, where $j_0^\prime$ is the unique object in $\I{I}$ such that $\iota(j_0^\prime)\simeq j_0$. Since $j_1$ factors through the inclusion of the cone point $\infty\colon 1\to \I{I}^\triangleright$ which defines a final object in $\I{I}^\triangleright$, we furthermore obtain an equivalence $\Over{\I{I}}{j_1}\simeq \pi_{A_i}^\ast\I{I}$. Therefore the $\Over{\BB}{A}$-category $\Over{\I{I}}{j}$ is \emph{locally} contained in $\I{U}$ and therefore contained in $\I{U}$ itself, for $\I{U}$ defines a sheaf on $\BB$. We therefore deduce that the functor of left Kan extension $\iota_!$ exists. Since Lemma~\ref{lem:pushoutFF} implies that $\iota$ is fully faithful, Corollary~\ref{cor:existenceYonedaExtension} furthermore shows that $\iota_!$ is fully faithful as well.
	
	We finish the proof by identifying the essential image of $\iota_!$. By combining Remark~\ref{rem:cones} with Lemma~\ref{lem:pullbackCoreflectiveSubcategory}, if $d\colon A\to\iFun(I,\I{C})$ is a diagram, the object $\iota_!(d)$ defines a fully faithful left adjoint $A\to \Under{\I{C}}{d}$ to the projection $\Under{\I{C}}{d}\to A$. By Example~\ref{ex:initialObjectColimit}, this precisely means that $\iota_!(d)$ is an initial section over $A$ and is therefore a colimit cocone. 
	Conversely, if $\bar d\colon A\to \iFun(\I{I}^\triangleright,\I{C})$ is a cocone under $d=\iota^\ast\bar d$, the map $\epsilon \bar d\colon \iota_! d\to \bar d$ defines a map in $\Under{\I{C}}{d}$. By the above argument, the domain of this map is a colimit cocone, hence if $\bar d$ defines a colimit cocone in $\Under{\I{C}}{d}$ as well, the map $\epsilon \bar d$ must necessarily be an equivalence since \emph{any} map between two initial objects in a $\Over{\BB}{A}$-category is an equivalence (see Corollary~\ref{cor:universalPropertyInitialObject}).
\end{proof}

\section{Cocompletion}
\label{chap:Cocompletion}
The main goal of this section is to construct and study the \emph{free cocompletion} by $\I{U}$-colimits of an arbitrary $\BB$-category, for any internal class $\I{U}$ of $\BB$-categories. In \S~\ref{sec:freeCocompletion} we give the construction of this $\BB$-category and prove its universal property. \S~\ref{sec:detectingCocompletion} contains a criterion to detect free cocompletions, and we finish this chapter by studying the $\I{U}$-cocompletion of the point in \S~\ref{sec:freeCocompletionPoint}.

\subsection{The free $\I{U}$-cocompletion}
\label{sec:freeCocompletion}

Let $\I{C}$ be a $\BB$-category and let $\I{U}$ be an internal class of $\BB$-categories. The goal of this section is to construct the free $\I{U}$-cocompletion of $\I{C}$, i.e.\ the initial $\I{U}$-cocomplete $\BB$-category that is equipped with a functor from $\I{C}$.

We begin our discussion of free cocompletions with the maximal case $\I{U}=\ICat_{\BB}$:
\begin{theorem}
	\label{thm:universalPropertyPSh}
	For any $\BB$-category $\I{C}$ and any cocomplete large $\BB$-category $\I{E}$, the functor of left Kan extension $(h_{\I{C}})_!$ along the Yoneda embedding $h_{\I{C}}\colon \I{C}\into\IPSh(\I{C})$ induces an equivalence
	\begin{equation*}
	(h_{\I{C}})_!\colon\iFun(\I{C},\I{E})\simeq \iFun^\cc(\IPSh(\I{C}),\I{E}).
	\end{equation*}
	In other words, the Yoneda embedding $h_{\I{C}}\colon \I{C}\into\IPSh(\I{C})$ exhibits the $\BB$-category of presheaves on $\I{C}$ as the free cocompletion of $\I{C}$.
\end{theorem}
\begin{remark}
    The analogue of Theorem~\ref{thm:universalPropertyPSh} for $\infty$-categories is the content of~\cite[Theorem~5.1.5.6]{htt} or \cite[Theorem 6.3.13]{cisinski2019a}.
\end{remark}
The proof of Theorem~\ref{thm:universalPropertyPSh} relies on the following lemma:
\begin{lemma}
	\label{lem:YonedaExtensionIsKanExtension}
	Let $f\colon \I{C}\to \I{D}$ be a functor of $\BB$-categories and assume that $\I{C}$ is small. Then the left Kan extension $(h_{\I{C}})_!(h_{\I{D}}f)\colon \IPSh(\I{C})\to\IPSh[\BBB](\I{D})=\iFun(\I{D}^\op,\Univ[\BBB])$ of $h_{\I{D}}F$ along $h_{\I{C}}$ is equivalent to the composition
	\begin{equation*}
	\IPSh(\I{C})\xhookrightarrow{i_\ast} \IPSh[\BBB](\I{C})\xrightarrow{f_!} \IPSh[\BBB](\I{D}),
	\end{equation*}
	where $i\colon\Univ[\BB]\into\Univ[\BBB]$ is the inclusion from \S~\ref{sec:universe}.
\end{lemma}
\begin{proof}
	Since $(h_{\I{C}})_!$ is fully faithful and since the restriction of $f_!i_\ast$ along $h_{\I{C}}$ recovers the functor $h_{\I{D}}f$, it suffices to show that $f_!i_\ast$ is a left Kan extension along its restriction. By Remark~\ref{rem:counitKanExtension}, this is the case precisely if for any presheaf $F$ on $\I{C}$ the inclusion $\Over{h}{F}\colon \Over{\I{C}}{F}\into \Over{\IPSh(\I{C})}{F}$ induces an equivalence
	\begin{equation*}
		\colim f_!(\pi_{F})_! \Over{h}{F}\simeq \colim f_!(\pi_F)_!\simeq f_!(F).
	\end{equation*}
	Since $f_!i_\ast$ commutes with small colimits (Proposition~\ref{prop:universeEnlargementColimits}) and since $\PSh_{\Univ}(\I{C})$ admits small colimits (Proposition~\ref{prop:FunctorCategoryCocomplete}), it suffices to show that the map
	\begin{equation*}
	\colim (\pi_F)_! \Over{h}{F}\to F
	\end{equation*}
	is an equivalence in $\IPSh(\I{C})$, which follows immediately from Proposition~\ref{prop:coYoneda}.
\end{proof}
\begin{proof}[{Proof of Theorem~\ref{thm:universalPropertyPSh}}]
	Let us first show that for any object $f\colon A\to \iFun(\I{C},\I{E})$ in context $A\in\BB$ the object $(h_{\I{C}})_!(f)$ is contained in $\iFun^\cc(\IPSh(\I{C}),\I{E})$. By making use of Remarks~\ref{rem:UColimitPreservingFunctorCategoryLocal},~\ref{rem:localityPrinciplePreservationStructure} and~\ref{rem:BCKanExtensions} as well as Example~\ref{ex:representabilityLocalCondition}, we may replace $\BB$ with $\Over{\BB}{A}$ and can therefore assume that $A\simeq 1$ (see Remark~\ref{rem:etaleInvarianceReductionGlobalContext}). Hence, we only need to show that $h_!(f)$ is cocontinuous. By again making use of Remark~\ref{rem:BCKanExtensions} and Example~\ref{ex:representabilityLocalCondition}, it is enough to show that $h_!(f)$ preserves $\I{I}$-indexed colimits for every small $\BB$-category $\I{I}$. By Lemma~\ref{lem:YonedaExtensionIsKanExtension} and the explicit construction of $h_!$ in Theorem~\ref{thm:existenceKanExtension}, the functor $h_!(f)$ is equivalent to the composition
	\begin{equation*}
			\IPSh(\I{C})\xhookrightarrow{i_\ast}\IPSh[\BBB](\I{C})\xrightarrow{f_!} \Sml_{\BBB}^{\ICat_{\BB}}(\I{E})\xrightarrow{L} \I{E}
	\end{equation*}
	in which $L$ is left adjoint to the Yoneda embedding $h_{\I{E}}$. Since all three functors preserve small colimits, the claim follows.
	
	By what we have just shown, the embedding $h_!$ takes values in $\iFun^\cc(\IPSh(\I{C}),\I{E})$ and therefore determines an inclusion $h_!\colon \iFun(\I{C},\I{E})\into\iFun^\cc(\IPSh(\I{C}),\I{E})$. To show that this functor is essentially surjective as well, we need only show that any object $g\colon A\to \iFun(\IPSh(\I{C}),\I{E})$ in context $A\in\BB$ whose associated functor in $\Cat(\Over{\BBB}{A})$ is cocontinuous is a left Kan extension of its restriction along $h$. By the same reduction argument as above, we may again assume $A\simeq 1$. By using Remark~\ref{rem:counitKanExtension}, the functor $g$ is a Kan extension of $gh$ precisely if for any presheaf $F\colon A\to \IPSh(\I{C})$ the functor $\Over{h}{F}\colon \Over{\I{C}}{F}\to \Over{\IPSh(\I{C})}{F}$ induces an equivalence
	\begin{equation*}
	\colim g(\pi_F)_! \Over{h}{F}\simeq g(F)
	\end{equation*}
	in $\I{E}$. Since Proposition~\ref{prop:coYoneda} implies that the canonical map $\colim (\pi_F)_! \Over{h}{F}\to F$ is an equivalence in $\IPSh(\I{C})$ and since $g$ is cocontinuous, this is immediate.
\end{proof}

\begin{remark}
	\label{rem:universalPropertyPShLocallySmall}
	In the situation of Theorem~\ref{thm:universalPropertyPSh}, suppose that $\I{E}$ is in addition locally small. If $f\colon \I{C}\to\I{E}$ is an arbitrary functor, its left Kan extension $h_!(f)$ is not only cocontinuous, but even admits a right adjoint. In fact, by the explicit construction of $h_!(f)$ in the proof of Theorem~\ref{thm:universalPropertyPSh}, we may compute
	\begin{align*}
		\map{\I{E}}(h_!(f)(-),-) &\simeq \map{\I{E}}(L f_! i_\ast(-), -)\\
		&\simeq \map{\IPSh[\BBB](\I{C})}(i_\ast(-), f^\ast h_{\I{E}}(-))
	\end{align*}
	and since $\I{E}$ is locally small, the functor $f^\ast h_{\I{E}}$ takes values in $\IPSh(\I{C})$, hence the claim follows. By replacing $\BB$ with $\Over{\BB}{A}$ and using Remark~\ref{rem:BCKanExtensions} and Example~\ref{ex:representabilityLocalCondition}, the same argument works for arbitrary objects $A\to\iFun(\I{C},\I{E})$, hence we conclude that the functor $h_!$ takes values in $\iFun(\IPSh(\I{C}),\I{E})^L$ and therefore gives rise to an equivalence
	\begin{equation*}
	\iFun(\IPSh(\I{C}),\I{E})^L\simeq \iFun^\cc(\IPSh(\I{C}),\I{E}).
	\end{equation*}
	This is a special (and in a certain sense universal) case of the adjoint functor theorem for presentable $\BB$-categories. We will treat the general case in future work.
\end{remark}
Our next goal is to generalise Theorem~\ref{thm:universalPropertyPSh} to an arbitrary internal class $\I{U}$ of $\BB$-categories. For this, we need to make the following general observation:
\begin{lemma}
	\label{lem:existenceSmallestSubcategory}
	Let $\I{E}$ be a $\BB$-category, let $\I{C}\into\I{E}$ be a full subcategory and let $\I{U}\subset \I{V}$ be two internal classes of $\BB$-categories. Suppose that $\I{E}$ is $\I{V}$-cocomplete. Then there exists a full subcategory $\I{D}\into \I{E}$ that is closed under $\I{U}$-colimits (i.e.\ that is $\I{U}$-cocomplete and the inclusion into $\I{E}$ is $\I{U}$-cocontinuous), contains $\I{C}$ and is the smallest full subcategory of $\I{E}$ with these properties, in that whenever $\I{D}^\prime\into \I{E}$ has the same properties there is an inclusion $\I{D}\into\I{D}^\prime$ over $\I{E}$.
\end{lemma}
\begin{proof}
	Recall that the full subposet $\Sub^{\mathrm{full}}(\I{E})\into \Sub(\I{E})$ that is spanned by the fully faithful functors is a reflective subcategory (cf.\ the discussion in~\cite[\S~3.9]{martini2021}), which implies that this subposet is closed under limits in $\Sub(\I{E})$, i.e.\ meets. To complete the proof, we therefore only need to show that the collection of full subcategories of $\I{E}$ that contain $\I{C}$ and that are closed under $\I{U}$-colimits in $\I{E}$ is closed under limits in $\Sub(\I{E})$. Clearly, if $(\I{D}_i)_{i\in I}$ is a collection of full subcategories in $\I{E}$ that each contain $\I{C}$, then so does their meet $\I{D}=\bigwedge_{i}\I{D}_i$. Similarly, suppose that each $\BB$-category $\I{D}_i$ is closed under $\I{U}$-colimits in $\I{E}$, and let $A\in\BB$ be an arbitrary context. Since $\pi_A^\ast$ commutes with limits and carries fully faithful functors to fully faithful functors, we may assume without loss of generality that $A\simeq 1$. We thus only need to show that the meet of the $\I{D}_i$ is closed under $\I{I}$-indexed colimits in $\I{E}$ for any $\I{I}\in\I{U}(1)$. Let $d\colon B\to \iFun(\I{I},\I{D})$ be a diagram in context $B\in\BB$. Since by assumption the object $\colim d$ is contained in $\I{D}_i$ for every $i\in I$ and thus defines an object in $\I{D}$, the result follows.
\end{proof}
In light of Lemma~\ref{lem:existenceSmallestSubcategory}, we may now define:
\begin{definition}
	\label{def:freeCocompletion}
	For any $\BB$-category $\I{C}$ and any internal class $\I{U}$ of $\BB$-categories, we define the large $\BB$-category $\IPSh^{\I{U}}(\I{C})$ as the smallest full subcategory of $\IPSh(\I{C})$ that contains $\I{C}$ and is closed under $\I{U}$-colimits.
\end{definition}
\begin{remark}
    \label{rem:freeCocompletionSmall}
    Suppose that $\I{U}$ is a \emph{small} internal class of $\BB$-categories and $\I{C}$ is a $\BB$-category. Then $\IPSh^{\I{U}}(\I{C})$ is small as well. To see this, let us first fix a small full subcategory of generators $\GG\subset \BB$ (i.e.\ a full subcategory such that every object in $\BB$ admits a small cover by objects in $\GG$). Since $\I{U}$ is small, there exists a small regular cardinal $\kappa$ such that for every $\BB$-category $\I{I}$ in $\I{U}$ in context $G\in\GG$ the object $\I{I}_0\in \Over{\BB}{G}$ is $\kappa$-compact. We construct a diagram $\I{E}^{\bullet}\colon\kappa\to \Sub^{\mathrm{full}}(\IPSh(\I{C}))$ by transfinite recursion as follows: set $\I{E}^0=\I{C}$ and $\I{E}^{\lambda}=\bigvee_{\tau < \lambda} \I{E}^{\tau}$ for any limit ordinal $\lambda < \kappa$, where the right-hand side denotes the join operation in the poset $\Sub^{\mathrm{full}}(\IPSh(\I{C}))$. For $\lambda < \kappa$, we furthermore define $\I{E}^{\lambda+1}$ to be the full subcategory of $\IPSh(\I{C})$ that is spanned by $\I{E}^{\lambda}$ together with those objects that arise as the colimit of a diagram of the form $d\colon \I{I}\to \pi_G^\ast\I{E}^{\lambda}$ for $G\in \GG$ and $\I{I}\in \I{U}(G)$. Let us set $\I{E}=\bigvee_{\tau < \kappa} \I{E}^{\tau}$. Since $\kappa$ is small and $\I{E}^\tau$ is a small large $\BB$-category for every $\tau<\kappa$, the large $\BB$-category $\I{E}$ is small as well. We claim that $\I{E}$ is $\I{U}$-cocomplete. In fact, it suffices to show that for every $G\in\GG$ and every diagram $d\colon \I{I}\to \pi_G^\ast \I{E}$ the object $\colim d$ is contained in $\pi_G^\ast\I{E}$ as well. Since $\I{I}_0$ is $\kappa$-compact in $\Over{\BB}{G}$ and since $\kappa$ is $\kappa$-filtered as it is regular, the map $d_0\colon \I{I}_0\to \I{E}_0=\bigvee_{\tau<\kappa}\I{E}^\tau_0$ factors through $\I{E}^\tau_0$ for some $\tau<\kappa$. As a consequence, the colimit $\colim d$ is contained in $\I{E}^{\tau+1}$ and therefore a fortiori in $\I{E}$, as claimed. Now since $\I{E}$ is $\I{U}$-cocomplete and contains $\I{C}$, it must also contain $\IPSh^{\I{U}}(\I{C})$, which is therefore small.
\end{remark}

In the situation of Definition~\ref{def:freeCocompletion},  Proposition~\ref{prop:coYoneda} implies that there are inclusions
\begin{equation*}
\I{C}\into\Sml_{\BB}^{\I{U}}(\I{C})\into \IPSh^{\I{U}}(\I{C})\into\IPSh(\I{C}).
\end{equation*}
In general, the middle inclusion is not an equivalence, as the following example shows.
\begin{example}
	Let $\BB=\SS$ be $\infty$-topos of spaces, let $\I{C}=(\Delta^1)^{\op}$ and let $\I{U}$ be the smallest colimit class that contains $\Lambda^2_0$. An $\infty$-category is thus $\I{U}$-cocomplete precisely if it admits pushouts. An object in $\Fun(\Delta^1,\SS)$ is representable when viewed as a presheaf on $(\Delta^1)^{\op}$ precisely if it is one of the two maps $0\to 1$ and $1\to 1$. Hence $\Sml_{\BB}^{\I{U}}(\I{C})$ is the full subcategory of $\Fun(\Delta^1,\SS)$ that is spanned by the maps $n\to 1$ for natural numbers $n\leq 2$. But this $\infty$-category is not closed under pushouts in $\Fun(\Delta^1,\SS)$: for example, the map $S^1\to 1$ is a pushout of objects in $\Sml_{\BB}^{\I{U}}(\I{C})$ which is not contained in $\Sml_{\BB}^{\I{U}}(\I{C})$ itself.
\end{example}

\begin{lemma}
	\label{lem:geometricMorphismCocontinuity}
	Let $A\in\BB$ be an arbitrary object, let $\I{U}$ be an internal class of $\BB$-categories and let $f\colon\I{C}\to\I{D}$ be a $\pi_A^\ast\I{U}$-cocontinuous functor of $\pi_A^\ast\I{U}$-cocomplete $\Over{\BB}{A}$-category. Then $(\pi_A)_\ast(f)$ is a $\I{U}$-cocontinuous functor of $\I{U}$-cocomplete $\BB$-categories.
\end{lemma}
\begin{proof}
	Let $B\in\BB$ be an arbitrary object. We need to show that for every $\I{I}\in\I{U}(B)$ the $\Over{\BB}{B}$-categories $\pi_B^\ast(\pi_A)_\ast\I{C}$ and $\pi_B^\ast(\pi_A)_\ast\I{D}$ admit $\I{I}$-indexed colimits and that $\pi_B^\ast(\pi_A)_\ast(f)$ preserves these. Note that if $\pr_0\colon A\times B\to A$ and $\pr_1\colon A\times B\to B$ are the two projections, the natural map $\pi_B^\ast(\pi_A)_\ast\to (\pr_1)_\ast \pr_0^\ast$
	is an equivalence, owing to the transpose map $(\pr_0)_!\pr_1^\ast\to \pi_A^\ast(\pi_B)_!$ being one. Thus, we may identify $\pi_B^\ast(\pi_A)_\ast(f)$ with $(\pr_1)_\ast\pr_0^\ast(f)$. Now since $f$ is a $\pi_A^\ast\I{U}$-cocontinuous functor between $\pi_A^\ast\I{U}$-cocomplete $\Over{\BB}{A}$-categories, it follows that $\pr_0^\ast(f)$ is a $\pi_{A\times B}^\ast\I{U}$-cocontinuous functor between $\pi_{A\times B}^\ast\I{U}$-cocomplete $\Over{\BB}{A\times B}$-categories (Remark~\ref{rem:CocompletenessLocalCondition}). Therefore, by passing to $\Over{\BB}{B}$, we can assume that $B\simeq 1$. In other words, we only need to show that for every $\I{I}\in\I{U}(1)$ the two horizontal maps in the commutative square
	\begin{equation*}
		\begin{tikzcd}
			(\pi_A)_\ast \I{C}\arrow[d, "(\pi_A)_\ast(f)"]\arrow[r, "\diag"] & \iFun(\I{I},(\pi_A)_\ast\I{C})\arrow[d, "(\pi_A)_\ast(f)_\ast"]\\
			(\pi_A)_\ast \I{C}\arrow[r, "\diag"] & \iFun(\I{I},(\pi_A)_\ast\I{C})
		\end{tikzcd}
	\end{equation*}
	have left adjoints and that the associated mate transformation is an equivalence. This is a consequence of the equivalence $\iFun[\Over{\BB}{A}](-,(\pi_A)_\ast (-))\simeq(\pi_A)_\ast\iFun(\pi_A^\ast(-),-)$ (which follows by adjunction from the evident equivalence $\pi_A^\ast(-\times -)\simeq\pi_A^\ast(-)\times_A\pi_A^\ast(-)$) and the fact that by Corollary~\ref{cor:geometricMorphismAdjunction} the geometric morphism $(\pi_A)_\ast$ preserves adjunctions.
\end{proof}

\begin{lemma}
	\label{lem:pullbackUCocomplete}
	Let $\I{U}$ be an internal class of $\BB$-categories and let
	\begin{equation*}
		\begin{tikzcd}
			\I{Q}\arrow[r, "j", hookrightarrow]\arrow[d, "q"] & \I{P}\arrow[d, "p"]\\
			\I{D}\arrow[r, "i", hookrightarrow] & \I{C}
		\end{tikzcd}
	\end{equation*}
	be a pullback square in $\Cat(\BB)$ in which $f$ and $g$ are fully faithful. Assume furthermore that $\I{D}$, $\I{C}$ and $\I{P}$ are $\I{U}$-cocomplete and $p$ and $i$ are $\I{U}$-cocontinuous. Then $\I{Q}$ is $\I{U}$-cocomplete and $j$ is $\I{U}$-cocontinuous.
\end{lemma}
\begin{proof}
	We need to show that for every $A\in\BB$ and every $\I{I}\in\I{U}(A)$, the $\Over{\BB}{A}$-category $\pi_A^\ast\I{Q}$ admits $\I{I}$-indexed colimits and the functor $\pi_A^\ast j$ preserves them.
	Since $\pi_A^\ast$ preserves pullbacks and fully faithful functors and on account of Remark~\ref{rem:CocompletenessLocalCondition}, we may replace $\BB$ with $\Over{\BB}{A}$ and can therefore assume that $A\simeq 1$. Now we obtain a commutative diagram
	\begin{equation*}
	\begin{tikzcd}[column sep={5em,between origins},row sep={3em,between origins}]
		& \iFun(\I{I},\I{Q})\arrow[rr, hookrightarrow]\arrow[dd, dashed]\arrow[dl] && \iFun(\I{I},\I{P})\arrow[dd, "\colim"]\arrow[dl]\\
		\iFun(\I{I},\I{D})\arrow[rr, hookrightarrow, crossing over] && \iFun(\I{I},\I{C})\arrow[dd] &\\
		& \I{Q}\arrow[rr, hookrightarrow] \arrow[dl]&& \I{P}\arrow[dl] \\
		\I{D}\arrow[rr, hookrightarrow] \arrow[from=uu, "\colim", near start]&& \I{C}\arrow[from=uu, "\colim", crossing over, near start]
	\end{tikzcd}
	\end{equation*}
	where the dashed arrow exists on account of the lower square being a pullback. Thus Proposition~\ref{prop:limitsFullyFaithfulFunctor} yields that $\I{Q}$ admits $\I{I}$-indexed colimits and that $j$ preserves these, as desired.
\end{proof}

\begin{proposition}[locality of $\IPSh^{{\I{U}}}(\I{C})$]
	\label{prop:BCCocompletion}
	For any $\BB$-category $\I{C}$, any internal class $\I{U}$ of $\BB$-categories and any object $A\in\BB$, there is a natural equivalence
	\begin{equation*}
	\pi_A^\ast\IPSh^{\I{U}}(\I{C})\simeq\IPSh[\Over{\BB}{A}]^{\pi_A^\ast\I{U}}(\pi_A^\ast\I{C}).
	\end{equation*}
\end{proposition}
\begin{proof}
	It follows from Example~\ref{ex:representabilityLocalCondition} that there is a commutative diagram
	\begin{equation*}
	\begin{tikzcd}
	\pi_A^\ast\I{C}\arrow[ddr, bend left, hookrightarrow, "h_{\pi_A^\ast\I{C}}"] \arrow[d, hookrightarrow, "\pi_A^\ast h"]& \\
	\pi_A^\ast\IPSh^{\I{U}}(\I{C})\arrow[dr, hookrightarrow] \arrow[d, hookrightarrow]& \\
	\pi_A^\ast\IPSh(\I{C})\arrow[r, "\simeq"] & \IPSh[\Over{\BB}{A}](\pi_A^\ast\I{C}),
	\end{tikzcd}
	\end{equation*}
	and it is clear that $\pi_A^\ast\IPSh^{\I{U}}(\I{C})$ is closed under $\pi_A^\ast\I{U}$-colimits in $\IPSh[\Over{\BB}{A}](\pi_A^\ast\I{C})$. It therefore suffices to show that if $\I{D}\into\IPSh[\Over{\BB}{A}](\pi_A^\ast\I{C})$ is a full subcategory that contains $\pi_A^\ast \I{C}$ and that is likewise closed under $\pi_A^\ast\I{U}$-colimits in $\IPSh[\Over{\BB}{A}](\pi_A^\ast\I{C})$, this subcategory must contain $\pi_A^\ast\IPSh^{\I{U}}(\I{C})$. Consider the commutative diagram
	\begin{equation*}
	\begin{tikzcd}
	\I{C}\arrow[d, "\eta_A"]\arrow[r, hookrightarrow] & \I{D}^\prime\arrow[d]\arrow[r, hookrightarrow] & \IPSh(\I{C})\arrow[d, "\eta_A"]\\
	(\pi_A)_\ast\pi_A^\ast\I{C}\arrow[r, hookrightarrow] & (\pi_A)_\ast\I{D}\arrow[r, hookrightarrow] & (\pi_A)_\ast\pi_A^\ast\IPSh(\I{C})
	\end{tikzcd}
	\end{equation*}
	in which $\eta_A$ denotes the adjunction unit of $\pi_A^\ast\dashv (\pi_A)_\ast$ and in which $\I{D}^\prime$ is defined by the condition that the right square is a pullback. Note that the triangle identities for the adjunction $\pi_A^\ast\dashv (\pi_A)_\ast$ imply that $\I{D}$ contains $\pi_A^\ast\I{D}^\prime$. The proof is therefore finished once we show that $\I{D}^\prime$ is closed under $\I{U}$-colimits in $\IPSh(\I{C})$. 
	To prove this claim, note that we may identify $(\pi_A)_\ast\pi_A^\ast\simeq\iFun(A,-)$. With respect to this identification, the unit $\eta_A$ corresponds to precomposition with the unique map $\pi_A\colon A\to 1$. Thus, Proposition~\ref{prop:FunctorCategoryCocomplete} implies that $\eta_A$ is a $\I{U}$-cocontinuous functor between $\I{U}$-cocomplete $\BB$-categories. Also, Lemma~\ref{lem:geometricMorphismCocontinuity} implies that the inclusion $(\pi_A)_\ast \I{D}\into(\pi_A)_\ast\pi_A^\ast\IPSh(\I{C})$ is closed under $\I{U}$-colimits. Therefore, the result follows from Lemma~\ref{lem:pullbackUCocomplete}.
\end{proof}

\begin{lemma}
	\label{lem:morphismColimitPreserving}
	Let $\I{U}$ be an internal class and let $\I{C}$ and $\I{D}$ be $\I{U}$-cocomplete $\BB$-categories. Let $\alpha\colon f\to g$ be a map in $\iFun^{\cocont{\I{U}}}(\I{C},\I{D})$ in context $1\in\BB$. Then $\alpha$ is $\I{U}$-cocontinuous when viewed as a functor $\I{C}\to\I{D}^{\Delta^1}$ (where $\I{D}^{\Delta^1}$ is indeed $\I{U}$-cocomplete by Proposition~\ref{prop:FunctorCategoryCocomplete}).
\end{lemma}
\begin{proof}
	We need to show that for every $A\in\BB$ and every $\I{I}\in\I{U}(A)$, the functor $\pi_A^\ast\alpha$ preserves $\I{I}$-indexed colimits. Since by Remark~\ref{rem:localityPrinciplePreservationStructure} the base change functor $\pi_A^\ast$ commutes with cotensoring, we may replace $\BB$ with $\Over{\BB}{A}$ and can therefore assume that $A\simeq 1$. Now consider the commutative diagram
	\begin{equation*}
	\begin{tikzcd}
	\I{C}\arrow[d, "\alpha"] \arrow[r, "\diag"]& \iFun(\I{I}, \I{C})\arrow[d, "\alpha_\ast"]\\
	{\I{D}^{\Delta^1}}\arrow[r, "\diag"] \arrow[d, "{(d_1,d_0)}"]& \iFun(\I{I},\I{D}^{\Delta^1})\arrow[d, "{(d_1,d_0)_\ast}"]\\
	\I{D}\times\I{D}\arrow[r, "\diag"] & \iFun(\I{I},\I{D}\times\I{D}).
	\end{tikzcd}
	\end{equation*}
	In order to show that $\alpha$ preserves $\I{I}$-indexed colimits, we need to verify that the mate transformation $\phi$ of the upper square is an equivalence. On account of Proposition~\ref{prop:limitsFunctorCategories}, the mate of the lower square is an equivalence. We claim that the mate of the composite square is an equivalence as well, i.e.\ that $(f,g)\colon\I{C}\to\I{D}\times\I{D}$ preserves $\I{I}$-indexed colimits. To see this, let $d\colon A\to \iFun(\I{I},\I{C})$ be a diagram in context $A\in\BB$. Using Remark~\ref{rem:etaleInvariancePreservationLimits}, we may once again replace $\BB$ by $\Over{\BB}{A}$ and can thus assume that $A\simeq 1$ (see Remark~\ref{rem:etaleInvarianceReductionGlobalContext}). Now as $\iFun(\I{I},-)$ commutes with limits, we obslitain an equivalence $\Under{(\I{D}\times\I{D})}{(f,g)_\ast d}\simeq\Under{\I{D}}{f_\ast d}\times\Under{\I{D}}{g_\ast d}$, so that the claim follows once we show that the image of the initial cocone $1\to\Under{\I{C}}{d}$ along the functor
	\begin{equation*}
		(f_\ast, g_\ast)\colon \Under{\I{C}}{d}\to \Under{\I{D}}{f_\ast d}\times\Under{\I{D}}{g_\ast d}
	\end{equation*}
	is initial as well. This in turn follows from the assumption that both $f$ and $g$ preserve $\I{I}$-indexed colimits, together with the fact that the product of two initial maps is again initial.
	
	As a consequence of what we've shown so far and the functoriality of mates, we conclude that postcomposing $\phi$ with the functor $(d_1,d_0)\colon \I{C}^{\Delta^1}\to\I{D}\times\I{D}$ yields an equivalence. Therefore, $\phi$ is itself an equivalence once we verify that $(d_1,d_0)$ is conservative, i.e.\ internally right orthogonal to the map $s^0\colon\Delta^1\to\Delta^0$ (see~\cite[Definition~4.1.10]{martini2021}). Unwinding the definitions, this amounts to showing that the functor $\I{D}^{(-)}$ carries the commutative square
	\begin{equation*}
		\begin{tikzcd}
		\Delta^1\sqcup\Delta^1\arrow[r, "({d^1,d^0)}"]\arrow[d, "s^0\sqcup s^0"] & \Delta^1\times\Delta^1\arrow[d, "\pr_1"]\\
		\Delta^0\sqcup\Delta^0\arrow[r, "{(d^1,d^0)}"] & \Delta^1
		\end{tikzcd}
	\end{equation*}
	to a pullback, which follows from the observation that this square is a pushout in $\CatS$. Thus, we conclude that $\alpha$ preserves $\I{I}$-indexed colimits.
\end{proof}

\begin{theorem}
	\label{thm:freeCocompletion}
	Let $\I{C}$ be a $\BB$-category, let $\I{U}$ be an internal class of $\BB$-categories and let $\I{E}$ be a $\I{U}$-cocomplete large $\BB$-category. Then the functor of left Kan extension along $h_{\I{C}}\colon \I{C}\into\IPSh^{\I{U}}(\I{C})$ exists and determines an equivalence
	\begin{equation*}
	(h_{\I{C}})_!\colon \iFun(\I{C},\I{E})\simeq \iFun^\cocont{\I{U}}(\IPSh^{\I{U}}(\I{C}),\I{E}) .
	\end{equation*}
	In other words, the $\BB$-category $\IPSh^{\I{U}}(\I{C})$ is the free $\I{U}$-cocompletion of $\I{C}$.
\end{theorem}
\begin{proof}
	Let us define $\I{E}^{\prime}=\iFun(\I{E},\Univ[\BBB])^{\op}$. By Proposition~\ref{prop:YonedaEmbeddingComplete}, the inclusion $h_{\I{E}}^{\op}\colon\I{E}\into \I{E}^{\prime}$ that is given by the Yoneda embedding is $\I{U}$-cocontinuous. Let $j\colon \IPSh^{\I{U}}(\I{C})\into\IPSh(\I{C})$ be the inclusion. By Theorem~\ref{thm:existenceKanExtension}, the functors of left Kan extension along $h_{\I{C}}$ and $j$ exist and define inclusions
	\begin{equation*}
		\iFun(\I{C},\I{E}^{\prime})\xhookrightarrow{(h_{\I{C}})_!} \iFun(\IPSh^{\I{U}}(\I{C}),\I{E}^{\prime})\xhookrightarrow{j_!}\iFun(\IPSh(\I{C}),\I{E}^{\prime}),
	\end{equation*}
	and by Theorem~\ref{thm:universalPropertyPSh} the essential image of the composition is spanned by those objects in $\iFun(\IPSh(\I{C}),\I{E}^\prime)$ which define cocontinuous functors. Since $j$ is by construction $\I{U}$-cocontinuous, the restriction functor $j^\ast\colon\iFun(\IPSh(\I{C}),\I{E}^{\prime})\to\iFun(\IPSh^{\I{U}}(\I{C}),\I{E}^{\prime})$ restricts to a functor
	\begin{equation*}
	j^\ast\colon\iFun^\cc(\IPSh(\I{C}),\I{E}^{\prime})\to\iFun^\cocont{\I{U}}(\IPSh^{\I{U}}(\I{C}),\I{E}^{\prime}).
	\end{equation*}
	Consequently, we deduce that the left Kan extension functor $(h_{\I{C}})_!\colon\iFun(\I{C},\I{E}^{\prime})\into \iFun(\IPSh^{\I{U}}(\I{C}),\I{E}^{\prime})$ factors through an inclusion
	\begin{equation*}
	(h_{\I{C}})_!\colon\iFun(\I{C},\I{E}^{\prime})\into \iFun^\cocont{\I{U}}(\IPSh^{\I{U}}(\I{C}),\I{E}^{\prime}).
	\end{equation*}
	We claim that this functor is essentially surjective and therefore an equivalence. On account of Remarks~\ref{rem:UColimitPreservingFunctorCategoryLocal} and~\ref{rem:BCKanExtensions} as well as Proposition~\ref{prop:BCCocompletion}, it suffices to show (by replacing $\BB$ with $\Over{\BB}{A}$, see Remark~\ref{rem:etaleInvarianceReductionGlobalContext}) that any $\I{U}$-cocontinuous functor $f\colon \IPSh^{\I{U}}(\I{C})\to\I{E}^{\prime}$ is a left Kan extension along its restriction to $\I{C}$. Let $\epsilon\colon (h_{\I{C}})_!h_{\I{C}}^\ast f\to f$ be the adjunction counit, and let $\I{D}$ be the full subcategory of $\IPSh^{\I{U}}(\I{C})$ that is spanned by those objects $F$ in $\IPSh^{\I{U}}(\I{C})$ (in arbitrary context) for which $\epsilon F$ is an equivalence. We need to show that $\I{D}=\IPSh^{\I{U}}(\I{C})$. By construction, we have $\I{C}\into\I{D}$, so that it suffices to show that $\I{D}$ is closed under $\I{U}$-colimits in $\IPSh^{\I{U}}(\I{C})$. 
	Note that the inclusion $\I{D}\into\IPSh^{\I{U}}(\I{C})$ is precisely the pullback of $s_0\colon\I{E}^\prime\into(\I{E}^\prime)^{\Delta^1}$ along $\epsilon\colon \IPSh^{\I{U}}(\I{C})\to(\I{E}^{\prime})^{\Delta^1}$. Since Proposition~\ref{prop:FunctorCategoryCocomplete} implies that $s_0$ is cocontinuous and Lemma~\ref{lem:morphismColimitPreserving} shows that $\epsilon$ is $\I{U}$-cocontinuous, we deduce from Lemma~\ref{lem:pullbackUCocomplete} that the inclusion $\I{D}\into\IPSh^{\I{U}}(\I{C})$ is indeed closed under $\I{U}$-colimits.
	
	To finish the proof, we still need to show that the equivalence $(h_{\I{C}})_!\colon \iFun(\I{C},\I{E}^\prime)\simeq\iFun^\cocont{\I{U}}(\IPSh^{\I{U}}(\I{C}),\I{E}^\prime)$ restricts to the desired equivalence
	\begin{equation*}
		(h_{\I{C}})_!\colon \iFun(\I{C},\I{E})\simeq \iFun^\cocont{\I{U}}(\IPSh^{\I{U}}(\I{C}),\I{E}) .
	\end{equation*}
	As clearly $h_{\I{C}}^\ast$ restricts in the desired way, it suffices to show that $(h_{\I{C}})_!$ restricts as well.
	By the same reduction steps as above, this follows once we show that for every functor $f\colon \I{C}\to\I{E}$, the left Kan extension $ (h_{\I{C}})_!f\colon \IPSh^{\I{U}}(\I{C})\to \I{E}^\prime$ factors through $\I{E}$. Consider the commutative diagram
	\begin{equation*}
	\begin{tikzcd}
	\I{C}\arrow[dr, hookrightarrow, "h"] \arrow[r, dotted, hookrightarrow] & \I{D}\arrow[r] \arrow[d, hookrightarrow]& \I{E}\arrow[d, hookrightarrow] \\
	& \IPSh^{\I{U}}(\I{C})\arrow[r, "(h_{\I{C}})_! f"] & \I{E}^\prime
	\end{tikzcd}
	\end{equation*}
	in which the square is a pullback. Since both $\hat f$ and $\I{E}\into \I{E}^\prime$ are $\I{U}$-cocontinuous, it follows from Lemma~\ref{lem:pullbackUCocomplete} that the inclusion $\I{D}\into \IPSh^{\I{U}}(\I{C})$ is closed under $\I{U}$-colimits and must therefore be an equivalence. As a consequence, the functor $(h_{\I{C}})_! f$ factors through $\I{E}$, as needed.
\end{proof}

\begin{corollary}
	\label{cor:CocompletionLeftAdjoint}
	Let $\I{C}$ be a $\BB$-category and let $\I{U}\subset\I{V}$ be internal classes such that $\IPSh^{\I{U}}(\I{C})$ is $\I{V}$-cocomplete. Then the inclusion $i\colon\IPSh^{\I{U}}(\I{C})\into\IPSh^{\I{V}}(\I{C})$ admits a left adjoint. In particular, if $\I{C}$ itself is $\I{V}$-cocomplete, the inclusion $h_{\I{C}}\colon\I{C}\into\IPSh^{\I{V}}(\I{C})$ admits a left adjoint.
\end{corollary}
\begin{proof}
	By choosing $\I{U}=\varnothing$ (i.e.\ the initial object in $\Cat(\BB)$), the second claim is an immediate consequence of the first. To prove the first statement, let $j\colon\I{C}\into\IPSh^{\I{U}}(\I{C})$ be the inclusion. Then Theorem~\ref{thm:freeCocompletion} allows us to construct a candidate for the left adjoint $L\colon \IPSh^{\I{V}}(\I{C})\to\IPSh^{\I{U}}(\I{C})$ of $i$ as the left Kan extension of $j$ along $ij$. By construction, $L$ is $\I{V}$-cocontinuous. As $i$ is $\I{U}$-cocontinuous and since we have equivalences $j^\ast (Li)\simeq (ij)^\ast(L)\simeq j$, Theorem~\ref{thm:freeCocompletion} moreover gives rise to an equivalence $Li\simeq \id_{\IPSh^{\I{U}}(\I{C})}$. Similarly, since $j^\ast(i)\simeq ij$, one obtains an equivalence $i\simeq j_!(ij)$. Therefore, transposing the identity on $ij$ across the adjunction $(ij)_!\dashv (ij)^\ast$ gives rise to a map $\eta\colon \id_{\IPSh^{\I{V}}(\I{C})}\to iL$ such that $\eta i$ is an equivalence, being a map between $\I{U}$-cocontinuous functors that restricts to an equivalence along $j$.
	By making use of Corollary~\ref{cor:criterionSubcategoryReflective}, we conclude that $L$ is a left adjoint once we verify that $L\eta$ is an equivalence as well. As both domain and codomain of this map are $\I{V}$-cocontinuous functors, this is the case already if its restriction along $ij$ is an equivalence, which follows from the construction of $\eta$.
\end{proof}

\begin{corollary}
    \label{cor:CocompletionIsLeftAdjoint}
    Let $\I{U}$ be a \emph{small} internal class of $\BB$-categories. Then the inclusion $\ICat_{\BB}^\cocont{\I{U}}\into\ICat_{\BB}$ admits a left adjoint that carries a $\BB$-category $\I{C}$ to its free $\I{U}$-cocompletion. Moreover, the adjunction unit is given by the Yoneda embedding $\I{C}\into\IPSh^{\I{U}}(\I{C})$.
\end{corollary}
\begin{proof}
    By Remark~\ref{rem:freeCocompletionSmall}, the free $\I{U}$-cocompletion $\IPSh^{\I{U}}(\I{C})$ is indeed a small $\BB$-category. Therefore, the Yoneda embedding $h_{\I{C}}\colon\I{C}\into \IPSh^{\I{U}}(\I{C})$ is a well-defined map in $\ICat_{\BB}$. By Corollary~\ref{cor:representabilityCriterionAdjoint}, it suffices to show that the composition
    \begin{equation*}
        \phi\colon\map{\ICat_{\BB}^\cocont{\I{U}}}(\IPSh^{\I{U}}(\I{C}),-)\into \map{\ICat_{\BB}}(\IPSh^{\I{U}}(\I{C}),-)\to \map{\ICat_{\BB}}(\I{C},-)
    \end{equation*}
    is an equivalence of functors $\ICat_{\BB}^\cocont{\I{U}}\to \Univ$. Using that equivalences of functors are detected object-wise~\cite[Corollary~4.7.17]{martini2021}, this follows once we show that the evaluation of this map at any object $A\to \ICat_{\BB}^{\cocont{\I{U}}}$ yields an equivalence of $\Over{\BB}{A}$-groupoids. By combining Remark~\ref{rem:objectsCatU} with Proposition~\ref{prop:BCCocompletion} and with Example~\ref{ex:representabilityLocalCondition}, we may pass to $\Over{\BB}{A}$ and can therefore assume that $A\simeq 1$ (see Remark~\ref{rem:etaleInvarianceReductionGlobalContext}). In this case, the result follows from Theorem~\ref{thm:freeCocompletion} in light of the observation that by~ Remark~\ref{rem:UColimitPreservingFunctorCategoryLocal}, the evaluation of $\phi$ at a $\I{U}$-cocomplete $\BB$-category $\I{E}$ is precisely the restriction of the equivalence from Theorem~\ref{thm:freeCocompletion} to core $\BB$-groupoids.
\end{proof}

\subsection{Detecting cocompletions}
\label{sec:detectingCocompletion}
In this section we give a characterisation when a functor $f\colon \I{C}\to\I{D}$ exhibits $\I{D}$ as the free $\I{U}$-cocompletion of $\I{C}$. To achieve this, we need the notion of \emph{$\I{U}$-cocontinuous objects}, which is in a certain way an internal analogue of the notion of a $\kappa$-compact object in an $\infty$-category:
\begin{definition}
	\label{def:smallObject}
	Let $\I{D}$ be a $\I{U}$-cocomplete $\BB$-category. We define the full subcategory $\I{D}^{\cocont{\I{U}}}\into\I{D}$ of \emph{$\I{U}$-cocontinuous objects} as the pullback
	\begin{equation*}
		\begin{tikzcd}
			\I{D}^{\cocont{\I{U}}}\arrow[r, hookrightarrow]\arrow[d, hookrightarrow] & \iFun^{\cocont{\I{U}}}(\I{D},\Univ)^\op\arrow[d, hookrightarrow]\\
			\I{D}\arrow[r, hookrightarrow, "h_{\I{D}^\op}"] & \iFun(\I{D},\Univ)^\op.
		\end{tikzcd}
	\end{equation*}
\end{definition}
\begin{remark}[locality of $\I{U}$-cocontinuous objects]
	\label{rem:USmallObjectsLocal}
	In the situation of Definition~\ref{def:smallObject}, we may combine Example~\ref{ex:representabilityLocalCondition} with Remark~\ref{rem:UColimitPreservingFunctorCategoryLocal} to deduce that there is a canonical equivalence $\pi_A^\ast(\I{D}^{\cocont{\I{U}}})\simeq(\pi_A^\ast\I{D})^{\cocont{\pi_A^\ast\I{U}}}$ of full subcategories of $\pi_A^\ast\I{D}$, for every $A\in\BB$.
\end{remark}
\begin{remark}[\'etale transposition invariance of $\I{U}$-cocontinuous objects]
	\label{rem:etaleInvarianceUCocontinuousObjects}
	By Remark~\ref{rem:USmallObjectsLocal}, an object $d\colon A\to\I{D}$ is contained in $\I{D}^{\cocont{\I{U}}}$ if and only if its transpose $\bar d\colon 1\to\pi_A^\ast\I{D}$ is $\pi_A^\ast\I{U}$-cocontinuous.
\end{remark}

The following proposition and its proof is an adaptation of~\cite[Proposition~5.1.6.10]{htt}.
\begin{proposition}
	\label{prop:criterionFreeCocompletion}
	Let $f\colon\I{C}\to\I{D}$ be a functor between $\BB$-categories such that $\I{D}$ is $\I{U}$-cocomplete, and let $\hat f\colon \IPSh^{\I{U}}(\I{C})\to\I{D}$ be its unique $\I{U}$-cocontinuous extension. Then the following are equivalent:
	\begin{enumerate}
		\item $\hat f$ is an equivalence;
		\item $f$ is fully faithful, takes values in $\I{D}^{\cocont{\I{U}}}$, and generates $\I{D}$ under $\I{U}$-colimits.
	\end{enumerate}
\end{proposition}
\begin{proof}
	We first note that $\IPSh^{\I{U}}(\I{C})^{\cocont{\I{U}}}$ contains $\I{C}$. Indeed, Yoneda's lemma implies that the composition
	\begin{equation*}
		\I{C}\xhookrightarrow{h_{\I{C}}}\IPSh^{\I{U}}(\I{C})\xhookrightarrow{h_{\IPSh^{\I{U}}(\I{C})^\op}}\iFun(\IPSh^{\I{U}}(\I{C}),\Univ)^\op
	\end{equation*}
	can be identified with the opposite of the transpose of the evaluation functor $\ev\colon\I{C}^\op\times\IPSh^{\I{U}}(\I{C})\to\Univ$. Together with Proposition~\ref{prop:BCCocompletion} and Remark~\ref{rem:localityPrinciplePreservationStructure}, this implies that the image of $c\colon A\to\I{C}$ along this composition transposes to the functor
	\begin{equation*}
		\IPSh[\Over{\BB}{A}]^{\pi_A^\ast\I{U}}(\pi_A^\ast\I{C})\into \IPSh[\Over{\BB}{A}](\pi_A^\ast\I{C})\xrightarrow{\ev_c} \Univ[\Over{\BB}{A}]
	\end{equation*}
	which is $\pi_A^\ast\I{U}$-cocontinuous by Proposition~\ref{prop:FunctorCategoryCocomplete}. Therefore,~(1) implies~(2).
	
	Conversely, suppose that condition~(2) is satisfied. We first prove that $\hat f$ is fully faithful. Tot that end, if $c\colon A\to \I{C}$ is an arbitrary object, we claim that the morphism
	\begin{equation*}
		\map{\IPSh^{\I{U}}(\I{C})}(c,-)\to\map{\I{D}}(\hat f(c), \hat f(-))
	\end{equation*}
	is an equivalence. By combining Remarks~\ref{rem:BCKanExtensions} and~\ref{rem:localitySmall} with Proposition~\ref{prop:BCCocompletion} and Example~\ref{ex:representabilityLocalCondition}, we may replace $\BB$ by $\Over{\BB}{A}$ and can thus assume that $A\simeq 1$ (see Remark~\ref{rem:etaleInvarianceReductionGlobalContext}). In this case, the fact that $\I{C}$ is contained in $\IPSh^{\I{U}}(\I{C})^{\cocont{\I{U}}}$ and condition~(2) imply that both domain and codomain of the morphism
	are $\I{U}$-cocontinuous functors. By using Lemma~\ref{lem:morphismColimitPreserving} and the fact that the above morphism restricts to an equivalence on $\I{C}$, the universal property of $\IPSh^{\I{U}}(\I{C})$ thus implies that this map is an equivalence of functors. By what we just have shown, if $F\colon A\to \IPSh^{\I{U}}(\I{C})$ is an arbitrary object, the natural transformation
	\begin{equation*}
		\map{\IPSh^{\I{U}}(\I{C})}(-,F)\to\map{\I{D}}(\hat f(-), \hat f(F))
	\end{equation*}
	restricts to an equivalence on $\I{C}$. As this map transposes to a morphism of $\pi_A^\ast\I{U}$-cocontinuous functors (using Proposition~\ref{prop:YonedaEmbeddingComplete} and the fact that $\hat f$ is $\I{U}$-cocontinuous), the same argument as above shows that the entire natural transformation is in fact an equivalence and therefore that $\hat f$ is fully faithful, as desired. As therefore $\hat f$ exhibits $\IPSh^{\I{U}}(\I{C})$ as a full subcategory of $\I{D}$ that is closed under $\I{U}$-colimits and that contains $\I{C}$, the assumption that $\I{D}$ is generated by $\I{C}$ under $\I{U}$-colimits implies that $\hat f$ is an equivalence.
\end{proof}

\subsection{Cocompletion of the point}
\label{sec:freeCocompletionPoint}
Let $\I{U}$ be an internal class of $\BB$-categories. Our goal in this section is to study the $\BB$-category $\IPSh^{\I{U}}(1)\into\Univ$. To that end, let us denote by $\gp(\I{U})\into\Univ$ the image of $\I{U}$ along the groupoidification functor $(-)^{\gp}\colon\ICat_{\BB}\to\Univ$ from Proposition~\ref{prop:internalCoreGroupoidification}.

\begin{definition}
    We call an internal class $\I{U}$ \emph{closed under groupoidification}, if for any $A \in \BB$ and $\I{I} \in \I{U}(A) $ the groupoidification  $\I{I}^\gp$ is also contained in $\I{U}$.
    For any internal class $\I{U}$ we can form its \emph{closure under groupoidification}, denoted $\gpdcl{\I{U}}$, that is defined as the internal class spanned by $ \I{U}$ and $\gp(\I{U})$.
\end{definition}

\begin{remark}
    Since for any $\BB$-category $\I{I}$, the morphism $\I{I} \to \I{I}^\gp$ is final, it follows that any colimit class (in the sense of Definition~\ref{def:colimitClass}) is closed under groupoidification.
    Furthermore, for any internal class $\I{U}$, we have inclusions $\I{U} \subseteq \gpdcl{\I{U}} \subseteq \I{U}^{\colim}$.
    In particular the discussion after Definition~\ref{def:colimitClass} shows that a $\BB$-category is $\I{U}$-cocomplete if and only if it is $\gpdcl{\I{U}}$-cocomplete.
    The same statement holds for $\I{U}$-cocontinuity.
\end{remark}

\begin{remark}
    \label{rem:colimitClassGroupoidification}
    If $\I{U}$ is closed under groupodification, the adjunction $(-)^{\gp}\dashv \iota\colon\ICat_{\BB}\leftrightarrows \Univ$ restricts to an adjunction
    \begin{equation*}
        ((-)^{\gp}\dashv i)\colon\I{U}\leftrightarrows\gp(\I{U}).
    \end{equation*}
\end{remark}

\begin{proposition}
    \label{prop:cocompletionPoint}
    For any internal class $\I{U}$ of $\BB$-categories, there is an inclusion $\gp(\I{U})\into \IPSh^{\I{U}}(1)$ which is an equivalence whenever $\gpdcl{\I{U}}$ is closed under $\I{U}$-colimits in $\ICat_\BB$.
\end{proposition}
\begin{proof}
    By construction, the canonical map $\I{U}\into \gpdcl{\I{U}}$ induces an equivalence $\gp(\I{U})\simeq\gp(\gpdcl{\I{U}})$. Therefore we may assume that $\I{U}$ is closed under groupoidification.
    For any $\BB$-category $\I{I}$ contained in $\I{U}(1)$, its groupoidification $\I{I}^{\gp}$ is the colimit of the functor $\I{I}\to 1 \into \Univ$ (see Proposition~\ref{prop:colimitsUniverse}) and therefore by definition contained in $\IPSh^{\I{U}}(1)$. Note that by using Remarks~\ref{rem:localityPrinciplePreservationStructure} and~\ref{rem:BCCatB} as well as Corollary~\ref{cor:geometricMorphismAdjunction}, for every $A\in\BB$ the functor $\pi_A^\ast$ carries the adjunction $(-)^\gp\dashv \iota\colon \ICat_{\BB}\leftrightarrows\Univ$ to the adjunction $(-)^\gp\dashv \iota\colon\ICat_{\Over{\BB}{A}}\leftrightarrows\Univ[\Over{\BB}{A}]$. Together with Proposition~\ref{prop:BCCocompletion}, this observation and the above argument also yields that for every $\I{I}\in\I{U}(A)$ the groupoidification $\I{I}^\gp$ defines an object $A\to\IPSh^{\I{U}}(1)$. Thus, the groupoidification functor $(-)^\gp\colon\ICat_{\BB}\to\Univ$ restricts to a functor $\I{U}\to\IPSh^{\I{U}}(1)$ and therefore gives rise to the desired inclusion $\gp(\I{U})\into\IPSh^{\I{U}}(1)$. Now by definition of $\IPSh^{\I{U}}(1)$, this inclusion is an equivalence if and only if $\gp(\I{U})$ is closed under $\I{U}$-colimits in $\Univ$.
    But if the subcategory $\I{U} \into \ICat_\BB$ is closed under $\I{U}$-colimits in $\ICat_\BB$ it follows by Remark~\ref{rem:colimitClassGroupoidification} that $\gp(\I{U}) = \I{U} \cap \Univ$, hence the claim follows from Lemma~\ref{lem:pullbackUCocomplete}.
\end{proof}

\begin{example}
    Let $S$ be a local class of maps in $\BB$ and let $\Univ[S]\into\Univ$ be the associated full subcategory of $\Univ$ (cf.~Proposition~\ref{prop:classificationSubuniverses}). Then $\Univ[S]$ is clearly closed under groupoidification.
    Recall that $\Univ[S]$ is closed under $\Univ[S]$-colimits in $\Univ$ precisely if the local class $S$ is stable under composition (see Example~\ref{ex:localClassCocomplete}). Therefore, if $S$ is stable under composition, Proposition~\ref{prop:cocompletionPoint} provides an equivalence $\Univ[S]\simeq \IPSh^{\Univ[S]}(1)$.
    
    If $S$ is not closed under composition, the free cocompletion $ \IPSh^{\Univ[S]}(1)$ still admits an explicit description.
    Namely, an object $c \colon A \to\Univ$ in context $A\in \BB$ defines an object of $\IPSh^{\Univ[S]}(1)$ if and only if it is locally a composition of two morphisms in $S$. 
    To be more precise, $c$ is in $\IPSh^{\Univ[S]}(1)$ if and only if there is a cover $(s_i)\colon\bigsqcup_i A_i\onto A$ in $\BB$
    such that every $s_i^* c \in \Univ(A_i) = \BB_{/A_i}$ can be written as a composition $g_i f_i$ of two morphisms $g_i \colon P_i \rightarrow Q_i$ and $f_i \colon Q_i \rightarrow A_i$ that are in $S$.
    This description holds since the full subcategory spanned by these objects is clearly closed under $\Univ[S]$-indexed colimits and it is easy to see that it is the smallest full subcategory of $\Univ$ with this property.
\end{example}

\begin{example}
	The following observation is due to Bastiaan Cnossen:
	Let $\BB = \PSh_{\SS}(\CC)$ for some small $\infty$-category $\CC$ with pullbacks and let $S$ be a class of morphisms in $\CC$ that is closed under pullbacks in $\CC$. It generates a local class in $\BB = \PSh_{\SS}(\CC)$ that we denote by $W$.
	As in Example~\ref{ex:U+LConstcocomplete}, we obtain an internal class $\I{U}_S=\left<W, \CatS \right>$, so that we may now consider the free $\I{U}_S$-cocompletion $\IPSh^{\I{U}_S}(1)$ of the point. It may be explicitly described as the presheaf on $\CC$ given by
	\begin{equation*}
		\IPSh^{\I{U}_S}(1) \colon \CC^\op \rightarrow \Cat_\infty, \quad c \mapsto \PSh_{\SS}(\Over{S}{c})
	\end{equation*}
	where $\Over{S}{c}$ denotes the full subcategory of $\CC_{/c}$ spanned by the morphisms in $S$.
	In particular it agrees with the $\PSh(\CC)$-category underlying the \emph{initial cocomplete pullback formalism} described in \cite[\S~4]{Drew2022}.
    One can use this observation to give an alternative proof of \cite[corollary 4.9]{Drew2022}.
	In fact one can prove something more general since the proof in~\cite{Drew2022} relies on $\CC$ being a $1$-category, which is not necessary in our framework.
\end{example}

We conclude this section by showing that any $\I{U}$-cocomplete large $\BB$-category $\I{E}$ is \emph{tensored} over $\IPSh^{\I{U}}(1)$ in the following sense:
\begin{definition}
	\label{def:tensoring}
	A large $\BB$-category $\I{E}$ is \emph{tensored} over $\IPSh^{\I{U}}(1)$ if there is a functor $-\otimes -\colon \IPSh^{\I{U}}(1)\times\I{E}\to\I{E}$ together with an equivalence
	\begin{equation*}
	\map{\I{E}}(-\otimes -, -)\simeq\map{\Univ[\BBB]}(-, \map{\I{E}}(-,-)).
	\end{equation*}
\end{definition}

\begin{proposition}
	\label{prop:tensoringCocomplete}
	If $\I{E}$ is a $\I{U}$-cocomplete large $\BB$-category, then $\I{E}$ is tensored over $\IPSh^{\I{U}}(1)$.
\end{proposition}
\begin{proof}
	Since $\I{E}$ is $\I{U}$-cocomplete, Proposition~\ref{prop:limitsFunctorCategories} implies that the functor $\BB$-category $\iFun(\I{E},\I{E})$ is $\I{U}$-cocomplete as well.
	As a consequence, we may apply Theorem~\ref{thm:freeCocompletion} to extend the identity $\id_{\I{E}}\colon 1\to \iFun(\I{E},\I{E})$ in a unique way to a $\I{U}$-cocontinuous functor $f\colon \IPSh^{\I{U}}(1)\to \iFun(\I{\I{E}},\I{E})$. We define the desired bifunctor $-\otimes -$ as the transpose of $f$. To see that it has the desired property, note that $\map{\I{E}}(-\otimes -, -)$ is the transpose of the composition
	\begin{equation*}
		\IPSh^{\I{U}}(1)^{\op}\xrightarrow{f^{\op}} \iFun(\I{E}^{\op},\I{E}^{\op})\xhookrightarrow{(h_{\I{E}^{\op}})_\ast} \iFun(\I{E}^{\op}\times\I{E},\Univ[\BBB]),
	\end{equation*}
	whereas the functor $\map{\Univ[\BBB]}(-, \map{\I{E}}(-,-))$ transposes to the functor
	\begin{equation*}
			\IPSh^{\I{U}}(1)^{\op}\xhookrightarrow{i} \Univ[\BBB]^{\op}\xhookrightarrow{h_{\Univ[\BBB]^{\op}}} \iFun(\Univ[\BBB],\Univ[\BBB])\xrightarrow{\map{\I{E}}^\ast} \iFun(\I{E}^{\op}\times\I{E},\Univ[\BBB]).
	\end{equation*}
	As the opposite of either of these functors is $\I{U}$-cocontinuous, Theorem~\ref{thm:freeCocompletion} implies that they are both uniquely determined by their value at the point $1\colon 1\to \Univ$. Since $\map{\Univ[\BBB]}(1,-)$ is equivalent to the identity functor, we find that both of these functors send $1\colon 1\to \Univ$ to $\map{\I{E}}$ and that they are therefore equivalent, as required.
\end{proof}

\begin{remark}
    \label{rem:cotensoring}
    By dualising Proposition~\ref{prop:tensoringCocomplete}, one obtains that a  $\I{U}$-complete large $\BB$-category $\I{E}$ is \emph{powered} over $\IPSh^{\op(\I{U})}(1)$: since $\IPSh^{\op(\I{U})}(1)^{\op}$ is the free $\I{U}$-completion of the final $\BB$-category $1\in\Cat(\BB)$, there is a functor $(-)^{(-)}\colon \IPSh^{\op(\I{U})}(1)^{\op}\times\I{E}\to\I{E}$
    that fits into an equivalence
    \begin{equation*}
        \map{\I{E}}(-,(-)^{(-)})\simeq \map{\Univ[\BBB]}(-, \map{\I{E}}(-,-)).
    \end{equation*}
\end{remark}

\appendix

\section{The large $\BB$-category of $\BB$-categories}
\label{sec:CatB}
\numberwithin{equation}{section}
The goal in this section is to define the large $\BB$-category of $\BB$-categories. What makes this possible is the following general construction:
\begin{construction}
	\label{constr:PresCatsAsInternalCats}
	Recall that Lurie's tensor product of presentable $\infty$-categories introduced in~\cite[\S~ 4.8.1]{lurie2017} defines a functor
	\begin{equation*}
		-\otimes-\colon\RPr\times\RPr\to\RPr, \quad (\CC,\DD) \mapsto \Shv_{\CC}(\DD)
	\end{equation*}
	that preserves limits in each variable.
	Since the functor $\BB_{/-} \colon \BB^\op \rightarrow \CatSS$ factors through the inclusion $\RPr \hookrightarrow \CatSS$ we may consider the composite
	\[
	\RPr \times \BB^\op \xrightarrow{\id \times \BB_{/-}} \RPr \times \RPr \xrightarrow{-\otimes-} \RPr \rightarrow \CatSS.
	\]
	Its transpose defines a functor $\RPr \rightarrow \Fun(\BB^\op,\CatSS)$.
	It follows from \cite[Theorem 5.5.3.18]{htt} that this map factors through the full subcategory spanned by the limit-preserving functors and thus defines a functor
	\[
	- \otimes \Univ \colon \RPr \rightarrow \Cat(\BBB).
	\]
	By the explicit description of the tensor product between presentable $\infty$-categories, this functor is equivalently given by $\Shv_{-}(\Over{\BB}{-})$. In other words, given any presentable $\infty$-category $\EE$, the associated large $\BB$-category $\EE\otimes\Univ$ is given by the composition
	\[
	\BB^{\op}\xrightarrow{\Over{\BB}{-}} (\LPr)^{\op}\xrightarrow{\Shv_{\EE}(-)} \CatSS.
	\]
\end{construction}
Let us now consider the above construction in the special case $\EE = \CatS$.
By definition, $\CatS \otimes \Univ$ is given by the composite
\[
\BB^\op \xrightarrow{\BB_{/-}} (\LPr)^\op \xrightarrow{\Shv_{\CatS}(-)} \CatSS
\]
and thus agrees with the presheaf of $\infty$-categories $\Cat(\BB_{/-})$ defined in \cite[\S~3.3]{martini2021}.
In particular it follows that the latter is indeed a sheaf.
Therefore we feel inclined to make the following definition:

\begin{definition}
	\label{def:categoryOfCategories}
	We define the \emph{large $\BB$-category $\ICat_{\BB}$ of (small) $\BB$-categories} to be $\ICat_\BB = \CatS \otimes \Univ$, i.e.\ as the large $\BB$-category that corresponds to the sheaf $\Cat(\BB_{/-})$.
\end{definition}

\begin{remark}[locality of the $\BB$-category of $\BB$-categories]
	\label{rem:BCCatB}
	By definition of $\ICat_{\BB}$, there is a canonical equivalence $\pi_A^\ast\ICat_{\BB}\simeq\ICat_{\Over{\BB}{A}}$ for every $A\in\BB$ (where $\pi_A^\ast\colon \Cat(\BBB)\to\Cat(\Over{\BBB}{A})$ denotes the base change functor induced by $\pi_A^\ast\colon \BB\to\Over{\BB}{A}$, cf.~Remark~\ref{rem:functorialityBCategories}). In fact, by Remark~\ref{rem:BCategoriesSheavesFunctoriality} we may compute $\pi_A^\ast\ICat_{\BB}\simeq \Cat(\Over{\BB}{(\pi_A)_!(-)})$, which is evidently equivalent to $\Cat(\Over{(\Over{\BB}{A})}{-})$.
\end{remark}

\begin{remark}
	\label{rem:oppositeCategoriesInternalFunctor}
	By applying $-\otimes \Univ$ to the equivalence $(-)^\op\colon\CatS\simeq\CatS$, one obtains an equivalence $(-)^\op\colon\ICat_{\BB}\simeq\ICat_{\BB}$. On global sections over $A\in\BB$, this equivalence recovers the equivalence $(-)^\op\colon\Cat(\Over{\BB}{A})\simeq\Cat(\Over{\BB}{A})$ that carries a $\Over{\BB}{A}$-category to its opposite (cf.~Remark~\ref{rem:sheafFromBCategoryExplicitly}).
\end{remark}

\begin{remark}
	\label{rem:CatBB}
	By working internal to $\BBB$, we may define the (very large) $\BB$-category $\ICat_{\BBB}$ of large $\BB$-categories. By regarding $\ICat_{\BB}$ as a very large $\BB$-category, we furthermore obtain a fully faithful functor $i\colon\ICat_{\BB}\into\ICat_{\BBB}$. In fact, by the discussion in ~\cite[\S~3.3]{martini2021}, the inclusion $\Cat(\Over{\BB}{A})\into\Cat(\Over{\BBB}{A})$ defines an embedding of presheaves $\Cat(\Over{\BB}{-})\into\Cat(\Over{\BBB}{-})$ on $\BB$. Since moreover restriction along the inclusion $\BB\into\BBB$ defines an equivalence
	\begin{equation*}
		\begin{tikzcd}
			\Shv_{\Cat(\SSSS)}(\BBB)\simeq \Shv_{\Cat(\SSSS)}(\BB)
		\end{tikzcd}
	\end{equation*}
	(see the argument in~\cite[Remark~2.4.1]{martini2021}), we obtain the desired fully faithful functor $\ICat_{\BB}\into\ICat_{\BBB}$ in $\Cat(\BBBB)$. Explicitly, an object $A\to \ICat_{\BBB}$ in context $A\in\BBB$ that corresponds to a $\Over{\BB}{A}$-category $\I{C}\to A$ is contained in $\ICat_{\BB}$ precisely if for any map $s\colon A^\prime\to A$ with $A^\prime\in\BB$ the pullback $s^\ast\I{C}$ is small.
\end{remark}

\section{Monomorphisms and subcategories of $\BB$-categories}
\numberwithin{equation}{subsection}
\subsection{Monomorphisms}
\label{sec:monomorphisms}
Recall that a \emph{monomorphism} in $\Cat(\BB)$ (i.e.\ a $(-1)$-truncated map) is a functor that is internally left orthogonal to the map $\Delta^0\sqcup\Delta^0\to\Delta^0$. In other words, a functor $f\colon \I{C}\to\I{D}$ between $\BB$-categories is a monomorphism if and only if the square
\begin{equation*}
	\begin{tikzcd}
		\I{C}\arrow[r, "f"]\arrow[d, "{(\id_{\I{C}}, \id_{\I{C}})}"] & \I{D}\arrow[d, "{(\id_{\I{D}}, \id_{\I{D}})}"]\\
		\I{C}\times\I{C}\arrow[r, "f\times f"] & \I{D}\times\I{D}
	\end{tikzcd}
\end{equation*}
is a pullback, or equivalently that the diagonal map $\I{C}\to \I{C}\times_{\I{D}}\I{C}$ is an equivalence. We say that a monomorphism $f\colon \I{C}\into\I{D}$ exhibits $\I{C}$ as a \emph{subcategory} of $\I{D}$. We will study subcategories more extensively in \S~\ref{sec:subcategories}.

\begin{proposition}
	\label{prop:conditionsMonomorphism}
	A functor $f\colon\I{C}\to\I{D}$ between $\BB$-categories is a monomorphism if and only if both $f_0$ and $f_1$ are monomorphisms in $\BB$. In particular, both the inclusion $\Grpd(\BB)\into\Cat(\BB)$ and the core $\BB$-groupoid functor $(-)^{\core}\colon\Cat(\BB)\to\Grpd(\BB)$ preserve monomorphisms.
\end{proposition}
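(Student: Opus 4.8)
The plan is to reduce the statement to a levelwise condition and then exploit the Segal conditions. Since $\Cat(\BB)\into\Simp\BB$ is reflective, the inclusion is fully faithful and preserves limits; consequently a functor $f\colon\I{C}\to\I{D}$ is a monomorphism in $\Cat(\BB)$ if and only if the diagonal $\I{C}\to\I{C}\times_{\I{D}}\I{C}$ is an equivalence in $\Simp\BB=\Fun(\Delta^{\op},\BB)$. As both limits and equivalences in $\Simp\BB$ are detected objectwise, this holds precisely when $f_n\colon\I{C}_n\to\I{D}_n$ is a monomorphism in $\BB$ for every $n\geq 0$. In particular the forward implication of the proposition is immediate, since $f_0$ and $f_1$ are among these maps.

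For the converse I would show that if $f_0$ and $f_1$ are monomorphisms then so is $f_n$ for every $n$. Here the Segal conditions enter: for a $\BB$-category $\I{C}$ the spine inclusion induces an equivalence $\I{C}_n\simeq\I{C}_1\times_{\I{C}_0}\cdots\times_{\I{C}_0}\I{C}_1$ ($n$ factors), naturally in the underlying simplicial object. Thus $f_n$ is identified with the limit, over the cospan diagram $J=(\bullet\to\bullet\leftarrow\bullet\to\cdots\leftarrow\bullet)$, of a natural transformation whose components are copies of $f_1$ (on the factors $\I{C}_1$) and of $f_0$ (on the gluing objects $\I{C}_0$); every component is a monomorphism by hypothesis. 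It then suffices to observe that a limit of monomorphisms is a monomorphism: if $\alpha\colon F\to G$ is a natural transformation of $J$-diagrams in $\BB$ each of whose components is a monomorphism, then since limits commute with one another one has $(\lim_J F)\times_{\lim_J G}(\lim_J F)\simeq\lim_J(F\times_G F)$, and the diagonal of $\lim_J\alpha$ is the limit of the diagonals $F(j)\to F(j)\times_{G(j)}F(j)$; as each of the latter is an equivalence, so is the limit, whence $\lim_J\alpha$ is a monomorphism. Applying this to the spine diagram shows $f_n$ is a monomorphism and completes the equivalence. The proof is largely formal; the only genuine input is the closure of monomorphisms under limits, and the one point requiring care is confirming that the Segal comparison is natural in $f$, so that $f_n$ really is the limit of the asserted transformation.

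Finally, for the two consequences: a $\BB$-groupoid is an essentially constant simplicial object, so for a map $f$ of $\BB$-groupoids the identifications $\I{C}_0\simeq\I{C}_1$ give $f_0\simeq f_1$; hence if $f$ is a monomorphism in $\Grpd(\BB)\simeq\BB$ then both $f_0$ and $f_1$ are monomorphisms, and the criterion exhibits $f$ as a monomorphism in $\Cat(\BB)$. Conversely, since $\I{C}^{\core}\simeq\I{C}_0$ and $f^{\core}\simeq f_0$, if $f$ is a monomorphism in $\Cat(\BB)$ then $f_0$, and therefore $f^{\core}$, is a monomorphism in $\BB\simeq\Grpd(\BB)$; thus the core $\BB$-groupoid functor preserves monomorphisms.
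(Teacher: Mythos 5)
Your proposal is correct and follows essentially the same route as the paper's proof: reduce to the levelwise statement (monomorphism in $\Cat(\BB)$ if and only if each $f_n$ is a monomorphism in $\BB$, via the limit-preserving inclusion into $\Simp\BB$) and then use the Segal conditions to deduce all $f_n$ from $f_0$ and $f_1$, since monomorphisms are stable under the iterated fibre products appearing in the spine decomposition. You simply spell out details the paper leaves implicit, including the closure of monomorphisms under limits and the ``in particular'' statements about $\Grpd(\BB)\into\Cat(\BB)$ and $(-)^{\core}$.
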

\begin{proof}
	Since limits in $\Cat(\BB)$ are computed levelwise, the map $f$ is a monomorphism precisely if $f_n$ is a monomorphism in $\BB$ for all $n\geq 0$. Owing to the Segal conditions, this is automatically satisfied whenever only $f_0$ and $f_1$ are monomorphisms.
\end{proof}

\begin{proposition}
	\label{prop:characterizationMonomorphismMappingGroupoids}
	Let $f\colon\I{C}\to \I{D}$ be a functor between large $\BB$-categories. Then the following are equivalent:
	\begin{enumerate}
		\item $f$ is a monomorphism;
		\item $f^{\core}$ is a monomorphism in $\BBB$, and for any $A\in \BB$ and any two objects $c_0,c_1\colon A\to \I{C}$ in context $A\in\BB$, the morphism
		\begin{equation*}
			\map{\I{C}}(c_0,c_1)\to\map{\I{D}}(f(c_0), f(c_1))
		\end{equation*}
		that is induced by $f$ is a monomorphism in $\Over{\BBB}{A}$;
		\item for every $A\in \BB$ the functor $f(A)\colon \I{C}(A)\to\I{D}(A)$ is a monomorphism of $\infty$-categories;
		\item the map of cartesian fibrations over $\BB$ that is determined by $f$ is a monomorphism of $\infty$-categories.
	\end{enumerate}
\end{proposition}
\begin{proof}
	As monomorphisms are defined by a limit condition, one easily sees that conditions~(1),~(3) and~(4) are equivalent, by making use of the equivalence of $\infty$-categories $\PSh_{\CatSS}(\BB)\simeq\Cart(\BB)$ (here the latter denotes the $\infty$-category of carestian fibrations over $\BB$, see \S~\ref{sec:parametrisedCategories}) and the fact that the inclusion $\Cat(\BBB)\into\PSh_{\CatSS}(\BB)$ creates limits.
	Moreover, Proposition~\ref{prop:conditionsMonomorphism} implies that $f$ is a monomorphism if and only if both $f_0$ and $f_1$ are monomorphisms in $\BBB$. It therefore suffices to show that $f_1$ is a monomorphism if and only if for every $A\in \BB$ and any two objects   $c_0,c_1\colon A\to \I{C}$ in context $A$, the morphism
	\begin{equation*}
		\map{\I{C}}(c_0,c_1)\to\map{\I{D}}(f(c_0), f(c_1))
	\end{equation*}
	that is induced by $f$ is a monomorphism in $\Over{\BBB}{A}$, provided that $f_0$ is a monomorphism. By definition, the map that $f$ induces on mapping $\BB$-groupoids fits into the commutative diagram
	\begin{equation*}
		\begin{tikzcd}[column sep={5em,between origins},row sep={2em,between origins}]
			& \map{\I{C}}(c_0,c_1)\arrow[dl]\arrow[rr]\arrow[dd] & & \map{\I{D}}(f(c_0), f(c_1))\arrow[dl]\arrow[dd] \\
			\I{C}_1 \arrow[rr, crossing over, "f_1", near end]\arrow[dd]& & \I{D}_1 & \\
			& A \arrow[rr, "\id", near start]\arrow[dl] & & A\arrow[dl] \\
			\I{C}_0\times \I{C}_0 \arrow[rr, "f_0\times f_0"]& & \I{D}_0\times \I{D}_0. \arrow[from=uu,crossing over]
		\end{tikzcd}
	\end{equation*}
	in which the two squares on the left and on the right are pullbacks. As $f_0$ is a monomorphism, the bottom square is a pullback, which implies that the top square is a pullback as well. Hence if $f_1$ is a monomorphism, then the morphism on mapping $\BB$-groupoids must be a monomorphism as well. Conversely, suppose that $f$ induces a monomorphism on mapping $\BB$-groupoids. Let $P\simeq (\I{C}_0\times\I{C}_0)\times_{\I{D}_0\times\I{D}_0}\I{D}_1$ denote the pullback of the front square in the above diagram. Then $f_1$ factors as $\I{C}_1\to P\to \I{D}_1$ in which the second arrow is a monomorphism. It therefore suffices to show that the map $\I{C}_1\to P$ is a monomorphism as well. Note that the map $\map{\I{D}}(f(c_0), f(c_1))\to\I{D}_1$ factors through the inclusion $P\into\I{D}_1$ such that the induced map $\map{\I{D}}(f(c_0), f(c_1))\to P$ arises as the pullback of the map $P\to \I{C}_0\times\I{C}_0$ along  $(c_0,c_1)$.
	As the object $C_0\times C_0$ is obtained as the colimit of the diagram
	\begin{equation*}
		\Over{\BB}{C_0\times C_0}\to\BB\into\BBB,
	\end{equation*}
	we obtain a cover $\bigsqcup_{A\to \I{C}_0\times\I{C}_0} A\onto C_0\times C_0$ in $\BBB$ and therefore a cover
	\begin{equation*}
		\bigsqcup_{(c_0,c_1)}\map{\I{D}}(f(c_0),f(c_1))\onto P.
	\end{equation*}
	We conclude the proof by observing that there is a pullback diagram
	\begin{equation*}
		\begin{tikzcd}
			\bigsqcup_{(c_0,c_1)}\map{\I{C}}(c_0,c_1)\arrow[d, hookrightarrow]\arrow[r, twoheadrightarrow] & \I{C}_1\arrow[d]\\
			\bigsqcup_{(c_0,c_1)}\map{\I{D}}(f(c_0),f(c_1))\arrow[r, twoheadrightarrow] & P
		\end{tikzcd}
	\end{equation*}
	in which the left vertical map is a monomorphism. Thus $\I{C}_1 \to P$ is also a monomorphism by \cite[Proposition 6.2.3.17]{htt}.
\end{proof}
\begin{example}
	\label{ex:coreGroupoidMonomorphism}
	For any $\BB$-category $\I{C}$, the canonical map $\I{C}^{\core}\to \I{C}$ is a monomorphism. In fact, using Proposition~\ref{prop:characterizationMonomorphismMappingGroupoids} this follows from the observation that on the level of cartesian fibrations over $\BB$ this map is given by the inclusion of the wide subcategory of $\int \I{C}$ spanned by the cartesian arrows and that this defines a monomorphism of $\infty$-categories.
\end{example}

A \emph{strong epimorphism} in $\Cat(\BB)$ is a functor that is left orthogonal to the collection of monomorphisms. As a consequence of Proposition~\ref{prop:conditionsMonomorphism}, one finds:
\begin{proposition}
	\label{prop:strongEsoGroupoids}
	A functor between $\BB$-groupoids is a strong epimorphism if and only if it is essentially surjective. Furthermore, both the inclusion $\Grpd(\BB)\into\Cat(\BB)$ and the functor $(-)^{\gp}\colon \Cat(\BB)\to\Grpd(\BB)$ preserve strong epimorphisms.
\end{proposition}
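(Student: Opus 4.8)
The plan is to run everything through the adjoint triple $(-)^\gp\dashv i\dashv(-)^\core$ relating $\Grpd(\BB)$ and $\Cat(\BB)$, using as the sole nontrivial input the content of proposition~\ref{prop:conditionsMonomorphism} that both $i\colon\Grpd(\BB)\into\Cat(\BB)$ and $(-)^\core$ preserve monomorphisms. The driving principle is the following elementary fact about an adjunction $L\dashv R$: for a map $f$ in the source of $L$ and a map $g$ in the source of $R$, the adjunction equivalence identifies the square of mapping spaces witnessing $Lf\perp g$ with the one witnessing $f\perp Rg$, so that $Lf\perp g$ if and only if $f\perp Rg$. Consequently, if $R$ carries a class $S$ of maps into a class $T$, then $L$ carries maps left orthogonal to $T$ to maps left orthogonal to $S$. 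I will also use at the outset that, under the identification $\Grpd(\BB)\simeq\BB$, the strong epimorphisms are exactly the covers — since an $\infty$-topos carries the (effective epimorphism, monomorphism) factorisation system — and that a functor between $\BB$-groupoids is essentially surjective precisely when its underlying map in $\BB$ is a cover.

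For the stated equivalence, let $f\colon\I{G}\to\I{H}$ be a functor between $\BB$-groupoids. The implication from strong epimorphism to essential surjectivity is the formal direction: since $i$ preserves monomorphisms and is fully faithful, orthogonality of $f$ against $i(n)$ for every monomorphism $n$ of $\BB$ restricts to orthogonality against $n$ inside $\Grpd(\BB)$, exhibiting $f$ as a cover. The converse is the step I expect to be the main obstacle, because a strong epimorphism must be orthogonal to \emph{every} monomorphism of $\Cat(\BB)$, including those between genuine $\BB$-categories, whereas essential surjectivity only records orthogonality against monomorphisms of $\BB$. The resolution is to apply the driving principle to $i\dashv(-)^\core$: since $(-)^\core$ sends monomorphisms of $\Cat(\BB)$ to monomorphisms of $\BB$, a cover $f$ — being left orthogonal to all monomorphisms of $\BB$ — is sent by $i$ to a map left orthogonal to all monomorphisms of $\Cat(\BB)$, that is, to a strong epimorphism. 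Concretely, for a monomorphism $m\colon\I{A}\to\I{B}$ in $\Cat(\BB)$ the relevant lifting problem only ever sees $m$ through the monomorphism $m^\core$, against which $f$ is already orthogonal.

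The two preservation statements then fall out. For the inclusion, this is immediate from the equivalence just proved: $i(f)$ is a strong epimorphism in $\Cat(\BB)$ if and only if $f$ is essentially surjective, if and only if $f$ is a strong epimorphism in $\Grpd(\BB)$. For the groupoidification functor I would apply the driving principle once more, now to $(-)^\gp\dashv i$: since $i$ carries monomorphisms of $\BB$ to monomorphisms of $\Cat(\BB)$, the left adjoint $(-)^\gp$ carries maps left orthogonal to all monomorphisms of $\Cat(\BB)$ — namely the strong epimorphisms — to maps left orthogonal to all monomorphisms of $\BB$, namely the covers. The only point demanding attention throughout is the bookkeeping of which class of monomorphisms is being tested in which category; no genuine construction or estimate is required.
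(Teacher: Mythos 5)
Your proof is correct and takes essentially the same route as the paper's: both arguments run all orthogonality questions through the adjoint triple $(-)^{\gp}\dashv i\dashv(-)^{\core}$ with proposition~\ref{prop:conditionsMonomorphism} as the only substantive input, transposing $f^{\gp}\perp g$ to $f\perp i(g)$ for preservation under groupoidification and $i(f)\perp m$ to $f\perp m^{\core}$ for the implication from essential surjectivity to being a strong epimorphism. The only difference is cosmetic bookkeeping: you identify the relevant classes on groupoids via the (cover, monomorphism) factorisation system of the topos $\BB$, whereas the paper invokes the (essentially surjective, fully faithful) factorisation system of $\Cat(\BB)$ together with the fact that monomorphisms between $\BB$-groupoids are precisely the fully faithful functors — the same fact in different terms.
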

\begin{proof}
	Let $f$ be a functor between $\BB$-categories. Then $f^{\gp}$ is left orthogonal to a map $g$ in $\Grpd(\BB)$ if and only if $f$ is left orthogonal to $g$ when viewing the latter as a map in $\Cat(\BB)$. Since by Proposition~\ref{prop:conditionsMonomorphism} $g$ is a monomorphism in $\Grpd(\BB)$ if and only if $g$ is a monomorphism in $\Cat(\BB)$, the map $f^{\gp}$ is a strong epimorphism whenever $f$ is one. Now if $f$ is an essentially surjective map between $\BB$-groupoids and if $g$ is a monomorphism in $\Cat(\BB)$, then $f$ is left orthogonal to $g$ if and only if $f$ is left orthogonal to $g^{\core}$, hence $f$ is a strong epimorphism in $\Cat(\BB)$ since the core $\BB$-groupoid functor preserves monomorphisms by Proposition~\ref{prop:conditionsMonomorphism} and since~\cite[Corollary~3.8.11]{martini2021} implies that a map between $\BB$-groupoids is a monomorphism if and only if it is fully faithful. As every strong epimorphism is in particular essentially surjective (since fully faithful functors are always monomorphisms and since essentially surjective maps are left orthogonal to fully faithful functors), this argument also shows that the inclusion $\Grpd(\BB)\into\Cat(\BB)$ preserves strong epimorphisms. 
\end{proof}

\begin{remark}
	\label{rem:strongEpiLevelwise}
	In light of Proposition~\ref{prop:conditionsMonomorphism} it might be tempting to expect that a map $f\colon\I{C}\to\I{D}$ in $\Cat(\BB)$ is a strong epimorphism if and only if $f_0$ and $f_1$ are covers. In fact, since the Segal conditions imply that $f_0$ and $f_1$ being a cover is equivalent to $f$ being a cover in the $\infty$-topos $\Simp\BB$ (where covers are given by levelwise covers in $\BB$), this is easily seen to be a sufficient condition. It is however not necessary. For example, the functor $(d_2,d_0)\colon\Delta^1\sqcup\Delta^1\to \Delta^2$ in $\CatS$ is a strong epimorphism since every subcategory of $\Delta^2$ that contains the image of this functor must necessarily be $\Delta^2$, but this map is not surjective on the level of morphisms.
\end{remark}

\subsection{Subcategories}
\label{sec:subcategories}
For any $\infty$-category $\CC$ with finite limits and any object $c\in\CC$, we write $\Sub_{\CC}(c)$ for the poset of \emph{subobjects} of $c$, i.e.\ the full subcategory of $\Over{\CC}{c}$ that is spanned by the $(-1)$-truncated objects. Since a functor $f\colon \I{C}\to\I{D}$ is a monomorphism in $\Cat(\BB)$ if and only if $f$ is a $(-1)$-truncated object in $\Over{\Cat(\BB)}{\I{D}}$, it makes sense to define:
\begin{definition}
	Let $\I{D}$ be a $\BB$-category. A \emph{subcategory} of $\I{D}$ is defined to be an object in $\Sub_{\Cat(\BB)}(\I{D})$.
\end{definition}

\begin{warning}
    \label{warn:simplicialSubobject}
    If $\I{C}$ is a $\BB$-category, not every subobject of $\I{C}$ in $\Simp\BB$ need to be a $\BB$-category. Therefore, the two posets $\Sub_{\Cat(\BB)}(\I{C})$ and $\Sub_{\Simp\BB}(\I{C})$ are in general different.
\end{warning}

Recall from the discussion in \S~\ref{sec:BCategories} (but see also \S~\ref{sec:objectsMorphisms}) that if $\I{C}$ is a $\BB$-category and $A$ is an object in $\BB$, the datum of a map $A\to \I{C}_1$ is equivalent to that of a map $A\to\I{C}^{\Delta^1}$, which is in turn equivalent to that of a map $\Delta^1\otimes A\to \I{C}$. Hence, the identity $\I{C}_1\to\I{C}_1$ transposes to a functor $\Delta^1\otimes \I{C}_1\to\I{C}$.
\begin{lemma}
	\label{lem:universalMapStrongEso}
	For any $\BB$-category $\I{C}$, the functor $\Delta^1\otimes \I{C}_1\to\I{C}$ is a strong epimorphism in $\Cat(\BB)$.
\end{lemma}
\begin{proof}
	In light of Remark~\ref{rem:strongEpiLevelwise}, it suffices to show that the functor $\Delta^1\otimes\I{C}_1\to \I{C}$ induces a cover on level $0$ and level $1$. On level $0$, the map is given by
	\begin{equation*}
		(d_1,d_0)\colon \I{C}_1\sqcup\I{C}_1\to \I{C}_0
	\end{equation*}
	which is clearly a cover since precomposition with $s_0\sqcup s_0\colon \I{C}_0\sqcup\I{C}_0\to \I{C}_1\sqcup\I{C}_1$ recovers the diagonal $\I{C}_0\sqcup\I{C}_0\to\I{C}_0$ which is always a cover in $\BB$. On level $1$, one obtains the map
	\begin{equation*}
		(s_0d_1,\id,s_0d_0)\colon \I{C}_1\sqcup\I{C}_1\sqcup\I{C}_1\to\I{C}_1
	\end{equation*}
	which is similarly a cover in $\BB$, as desired.
\end{proof}

\begin{proposition}
	\label{prop:mappingPropertySubcategory}
	Let $f\colon\I{C}\to\I{D}$ be a functor between large $\BB$-categories and let $\I{E}\into\I{D}$ be a subcategory. The following are equivalent:
	\begin{enumerate}
		\item $f$ factors through the inclusion $\I{E}\into\I{D}$;
		\item $f^{\core}$ factors through $\I{E}^{\core}\into\I{D}^{\core}$, and for each pair of objects $(c_0,c_1)\colon A\to \I{C}_0\times\I{C}_0$ in context $A\in \BB$, the map
		\begin{equation*}
			\map{\I{C}}(c_0,c_1)\to\map{\I{D}}(f(c_0),f(c_1))
		\end{equation*}
		that is induced by $f$ factors through the inclusion
		\begin{equation*}
			\map{\I{E}}(f(c_0),f(c_1))\into\map{\I{D}}(f(c_0),f(c_1));
		\end{equation*}
		\item for each map $\Delta^1\otimes A\to \I{C}$ in context $A\in\BB$ its image in $\I{D}$ is contained in $\I{E}$.
	\end{enumerate}
\end{proposition}
\begin{proof}
	It is immediate that~(1) implies~(2) and that~(2) implies~(3). Suppose therefore that condition~(3) holds. As in the proof of Proposition~\ref{prop:characterizationMonomorphismMappingGroupoids}, the collection of all maps $A\to\I{C}_1$ constitutes a cover
	\begin{equation*}
		\bigsqcup_{A\to\I{C}_1} A\onto \I{C}_1
	\end{equation*}
	in $\BBB$. By applying Proposition~\ref{prop:strongEsoGroupoids} and~\cite[Corollary~3.8.12]{martini2021}, we may view this map as a strong epimorphism between large $\BB$-groupoids. Since strong epimorphisms are \emph{internally} left orthogonal to monomorphisms and therefore closed under products in $\Cat(\BBB)$, we deduce that the induced map $ \bigsqcup_{A\to \I{C}_1}\Delta^1\otimes A\to \Delta^1\otimes\I{C}_1$ is a strong epimorphism. Together with Lemma~\ref{lem:universalMapStrongEso}, we therefore obtain a strong epimorphism $\bigsqcup_{A\onto \I{C}_1}\Delta^1\otimes A\to \I{C}$.
	Using the assumptions, we may now construct a lifting problem
	\begin{equation*}
		\begin{tikzcd}
			\bigsqcup_{A\to\I{C}_1} \Delta^1\otimes A\arrow[d, twoheadrightarrow]\arrow[r] & \I{E}\arrow[d, hookrightarrow]\\
			\I{C}\arrow[r, "f"]\arrow[ur, dashed] & \I{D}
		\end{tikzcd}
	\end{equation*}
	which admits a unique solution, hence condition~(1) follows.
\end{proof}

\begin{corollary}
	\label{cor:functorIntoCore}
	A functor $f\colon\I{C}\to\I{D}$ of $\BB$-categories factors through the inclusion $\I{D}^{\core}\into\I{D}$ if and only if $f$ sends all morphisms in $\I{C}$ to equivalences in $\I{D}$.\qed
\end{corollary}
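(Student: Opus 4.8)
The plan is to deduce the corollary directly from Proposition~\ref{prop:mappingPropertySubcategory} by taking the subcategory $\I{E}\into\I{D}$ to be the inclusion $\I{D}^{\core}\into\I{D}$, which is a monomorphism---and hence a subcategory---by Example~\ref{ex:coreGroupoidMonomorphism}. (Since $\Cat(\BB)\into\Cat(\BBB)$ is fully faithful and compatible with cores and contexts, it is harmless to view the given $\BB$-categories as $\BBB$-categories in order to apply that proposition.) With $\I{E}=\I{D}^{\core}$, condition~(3) states that $f$ factors through $\I{D}^{\core}$ if and only if for every $A\in\BB$ and every map $\Delta^1\otimes A\to\I{C}$, the composite $\Delta^1\otimes A\to\I{C}\xrightarrow{f}\I{D}$ factors through $\I{D}^{\core}$. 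By the tensoring--powering adjunction, a map $\Delta^1\otimes A\to\I{C}$ is the same datum as a map $A\to\I{C}^{\Delta^1}$, i.e.\ precisely a morphism of $\I{C}$ in context $A$. It therefore remains only to identify, for such a morphism, the condition that its image factors through $\I{D}^{\core}$ with the condition that it be an equivalence in $\I{D}$.

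To this end I would establish the following claim: a map $\Delta^1\otimes A\to\I{D}$, transposing to a morphism $g\colon A\to\I{D}^{\Delta^1}$, factors through $\I{D}^{\core}\into\I{D}$ if and only if $g$ is an equivalence, i.e.\ factors through the degeneracy $s_0\colon\I{D}\into\I{D}^{\Delta^1}$. For the first implication, since $\I{D}^{\core}$ is a $\BB$-groupoid it is local with respect to $s^0\colon\Delta^1\to\Delta^0$, so the powering map $(\I{D}^{\core})^{\Delta^1}\to\I{D}^{\Delta^1}$ may be identified, through the equivalence $(\I{D}^{\core})^{\Delta^1}\simeq\I{D}^{\core}$, with the composite $\I{D}^{\core}\xrightarrow{s_0}(\I{D}^{\core})^{\Delta^1}\to\I{D}^{\Delta^1}$. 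By naturality of $s_0$ this agrees with $\I{D}^{\core}\into\I{D}\xrightarrow{s_0}\I{D}^{\Delta^1}$, which manifestly factors through $s_0$; hence every morphism factoring through $\I{D}^{\core}$ is an equivalence.

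For the converse, suppose $g$ is an equivalence, so that it factors as $A\to\I{D}\xrightarrow{s_0}\I{D}^{\Delta^1}$. Since $A$ is a $\BB$-groupoid and $(-)^{\core}$ is right adjoint to the inclusion $\Grpd(\BB)\into\Cat(\BB)$, the map $A\to\I{D}$ factors through the counit $\I{D}^{\core}\into\I{D}$. Combining this factorisation with the identification of the previous paragraph exhibits $g$ as the image of a map $\Delta^1\otimes A\to\I{D}^{\core}$, so it factors through $\I{D}^{\core}$. This proves the claim, and feeding it back into condition~(3) yields the corollary.

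I expect the only genuine subtlety to lie in the claim of the second paragraph, and specifically in checking that the two descriptions of the map out of $\I{D}^{\core}$ into $\I{D}^{\Delta^1}$ coincide up to the groupoid-locality equivalence $(\I{D}^{\core})^{\Delta^1}\simeq\I{D}^{\core}$. Once that coincidence is verified and the adjunction $(-)^{\core}\dashv{}$inclusion is invoked to factor maps out of the groupoid $A$ through the core, both implications follow formally.
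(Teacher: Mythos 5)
Your proposal is correct and follows exactly the route the paper intends: the corollary is the case $\I{E}=\I{D}^{\core}$ of Proposition~\ref{prop:mappingPropertySubcategory}, combined with the observation (which you verify carefully using groupoid-locality with respect to $s^0$ and the adjunction $\Grpd(\BB)\hookrightarrow\Cat(\BB)\dashv(-)^{\core}$) that a map $\Delta^1\otimes A\to\I{D}$ factors through $\I{D}^{\core}$ precisely when the transposed morphism $A\to\I{D}^{\Delta^1}$ factors through $s_0$, i.e.\ is an equivalence. The paper leaves this identification implicit (hence the \qed), so your write-up simply supplies the details of the same argument.
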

\begin{definition}
	\label{def:Image}
	Let $f\colon \I{C}\to\I{D}$ be a map in $\Cat(\BB)$ and let $\I{C}\onto\I{E}\into\I{D}$ be the factorisation of $f$ into a strong epimorphism and a monomorphism. Then the subcategory $\I{E}\into\I{D}$ is referred to as the \emph{$1$-image} of $f$.
\end{definition}

In~\cite[\S~3.9]{martini2021} we have shown that full subcategories of a $\BB$-category $\I{C}$
can be parametrised by the subobjects of $\I{C}_0$ in $\BB$ (see also Proposition~\ref{prop:fullSubcategoriesParametrisation}). Our goal hereafter is to obtain a similar result for \emph{all} subcategories of $\I{C}$. To that end, note that the functor
\begin{equation*}
	(-)^{\Delta^1}\colon\Over{\Cat(\BB)}{\I{C}}\to \Over{\Cat(\BB)}{\I{C}^{\Delta^1}}
\end{equation*}
admits a left adjoint that is given by the composition
\begin{equation*}
	\Over{\Cat(\BB)}{\I{C}^{\Delta^1}}\xrightarrow{\Delta^1\otimes -} \Over{\Cat(\BB)}{\Delta^1\otimes\I{C}^{\Delta^1}}\xrightarrow{\ev_!} \Over{\Cat(\BB)}{\I{C}}
\end{equation*}
in which $\ev$ denotes the evaluation map. Similarly, the functor
\begin{equation*}
	(-)^{\simeq}\colon \Over{\Cat(\BB)}{\I{C}^{\Delta^1}}\to \Over{\BB}{\I{C}_1}
\end{equation*}
has a left adjoint that is given by the composition
\begin{equation*}
	\Over{\BB}{\I{C}_1}\into \Over{\Cat(\BB)}{\I{C}_1}\xrightarrow{i_!} \Over{\Cat(\BB)}{\I{C}^{\Delta^1}}
\end{equation*}
where $i\colon \I{C}_1\simeq (\I{C}^{\Delta^1})^{\simeq}\into \I{C}^{\Delta^1}$ denotes the canonical inclusion. By Proposition~\ref{prop:conditionsMonomorphism}, the functor $(-)_1=(-)^{\simeq}\circ (-)^{\Delta^1}$ sends a monomorphism $\I{D}\into\I{C}$ to the inclusion $\I{D}_1\into \I{C}_1$ and therefore restricts to a functor $\Sub_{\Cat(\BB)}(\I{C})\to \Sub_{\BB}(\I{C}_1)$.
Since the inclusion $\Sub_{\Cat(\BB)}(\I{C})\into \Over{\Cat(\BB)}{\I{C}}$ admits a left adjoint that sends a functor $f\colon \I{D}\to\I{C}$ to its $1$-image in $\I{C}$, we thus obtain an adjunction
\begin{equation*}
	(\Gen{-}\dashv (-)_1)\colon \Sub_{\BB}(\I{C}_1)\leftrightarrows \Sub_{\Cat(\BB)}(\I{C})
\end{equation*}
in which the left adjoint $\Gen{-}$ sends a monomorphism $S\into\I{C}_1$ to the $1$-image $\Gen{S}$ of the associated map $\Delta^1\otimes S\to \I{C}$. Note that for any subcategory $\I{D}\into\I{C}$, the counit $\Gen{\I{D}_1}\to \I{D}$ is given by the unique solution to the lifting problem
\begin{equation*}
	\begin{tikzcd}
		\Delta^1\otimes\I{D}_1\arrow[r] \arrow[d, twoheadrightarrow]& \I{D}\arrow[d, hookrightarrow]\\
		\Gen{\I{D}_1}\arrow[r, hookrightarrow]\arrow[ur, dashed] & \I{C}
	\end{tikzcd}
\end{equation*}
in which the upper horizontal map is the transpose of the identity $\I{D}_1\to\I{D}_1$. By Lemma~\ref{lem:universalMapStrongEso}, this is a strong epimorphism, hence we conclude that the map $\Gen{\I{D}_1}\to\I{D}$ must be an equivalence. We have thus shown:

\begin{proposition}
	\label{prop:parametrisationSubcategories}
	For any $\BB$-category $\I{C}$, the functor $(-)_1\colon \Sub_{\Cat(\BB)}(\I{C})\to \Sub_{\BB}(\I{C}_1)$ exhibits the poset $\Sub_{\Cat(\BB)}(\I{C})$ as a reflective subposet of $\Sub_{\BB}(\I{C}_1)$.\qed
\end{proposition}

\begin{remark}
	\label{rem:counterexampleMono}
	The inclusion $(-)_1\colon \Sub_{\Cat(\BB)}(\I{C})\into \Sub_{\BB}(\I{C}_1)$ is in general not an equivalence. For example, consider $\BB=\SS$ and $\I{C}=\Delta^2$: here the two maps $d^{\{0,1\}}\colon \Delta^1\to\Delta^2$ and $d^{\{1,2\}}\colon\Delta^1\to\Delta^2$ determine a proper subobject of $\Delta^2_1$, but the associated subcategory of $\Delta^2$ is nevertheless $\Delta^2$ itself.
\end{remark}

As Remark~\ref{rem:counterexampleMono} exemplifies, one obstruction to $(-)_1\colon \Sub(\I{C})\into\Sub(\I{C}_1)$ being an equivalence is that the collection of maps that determine a subobject $S\into \I{C}_1$ need not be stable under composition. In other words, to make sure that a subobject of $\I{C}_1$ arises as the object of morphisms of a subcategory of $\I{C}$, we need to impose a composability condition on this subobject. Altogether, we obtain the following characterisation of the essential image of $(-)_1$:
\begin{proposition}
	\label{prop:classificationSubcategories}
	For any $\BB$-category $\I{C}$, a subobject $S\into\I{C}_1$ lies in the essential image of the inclusion $\Sub_{\Cat(\BB)}\I{C})\into\Sub_{\BB}(\I{C}_1)$ if and only if
	\begin{enumerate}
		\item it is closed under equivalences, i.e.\ the map $(s_0d_1,s_0d_0)\colon S\sqcup S\to \I{C}_1$ factors through $S\into\I{C}_1$;
		\item it is closed under composition, i.e.\ the restriction of the composition map $d_1\colon \I{C}_1\times_{\I{C}_0}\I{C}_1\to \I{C}_1$ along the inclusion $S\times_{\I{C}_0}S\into\I{C}_1\times_{\I{C}_0}\I{C}_1$ factors through $S\into\I{C}_1$.
	\end{enumerate}
\end{proposition}
The remainder of this section is devoted to the proof of Proposition~\ref{prop:classificationSubcategories}. Our strategy is to make use of the intuition that the datum of a subcategory of $\I{C}$ should be equivalent to the datum of a collection of objects in $\I{C}$, together with a composable collection of maps between these objects. Our goal hereafter is turn this surmise into a formal statement.

For any integer $k\geq 0$, let $i_k\colon \Delta^{\leq k}\into\Delta$ denote the full subcategory spanned by $\ord{n}$ for $n\leq k$, and let $\Simp\BB^{\leq k}$ denote the $\infty$-category of $\BB$-valued presheaves on $\Delta^{\leq k}$. The truncation functor $i_k^\ast\colon\Simp\BB\to\Simp\BB^{\leq k}$ admits both a left adjoint $(i_k)_!$ and a right adjoint $(i_k)_\ast$ given by left and right Kan extension. Note that both $(i_k)_!$ and $(i_k)_\ast$ are fully faithful. We will generally identify $\Simp\BB^{\leq k}$ with its essential image in $\Simp\BB$ along the \emph{right} Kan extension $(i_k)_\ast$. We define the associated \emph{coskeleton} functor as $\cosk_k=(i_k)_\ast i_k^\ast$ and the \emph{skeleton} functor as $\sk_k=(i_k)_!i_k^\ast$. The unit of the adjunction $i_k^\ast\dashv (i_k)_\ast$ provides a map $\id_{\Simp\BB}\to \cosk_k$, and the counit of the adjunction $(i_k)_!\dashv i_k^\ast$  provides a map $\sk_k\to\id_{\Simp\BB}$. We say that $C\in \Simp\BB$ is \emph{$k$-coskeletal} if the map $C\to\cosk_k(C)$ is an equivalence, i.e.\ if $C$ is contained in $\Simp\BB^{\leq k}\subset\Simp\BB$. Dually, $C$ is \emph{$k$-skeletal} if the map $\sk_k(C)\to C$ is an equivalence.
Note that the adjunction  $\sk_k\dashv \cosk_k$ implies that a simplicial object is $k$-coskeletal if and only if it is local with respect to the maps $\sk_k(D)\to D$ for every $D\in\Simp\BB$.

\begin{definition}
	\label{def:coskeletalMaps}
	For any integer $k\geq 0$, a map $f\colon C\to D$ in $\Simp\BB$ is said to be \emph{$k$-coskeletal} if it is right orthogonal to $\sk_k(K)\to K$ for every $K\in\Simp\BB$.
\end{definition}
Note that by using the adjunction $\sk_k\dashv \cosk_k$ and Yoneda's lemma, one has the following criterion for a map between simplicial objects in $\BB$ to be $k$-coskeletal:
\begin{proposition}
	\label{prop:pullbackCriterionCoskeletalMap}
	For any integer $k\geq 0$, a map $f\colon C\to D$ in $\Simp\BB$ is $k$-coskeletal precisely if the canonical map $C\to D\times_{\cosk_k (D)}\cosk_k(C)$ is an equivalence.\qed
\end{proposition}
For any $n\geq 1$, denote by $\partial \Delta^n$ the simplicial $\infty$-groupoid $\sk_{n-1}\Delta^n$ and by $\partial\Delta^n\into\Delta^n$ the natural map induced by the adjunction counit.

For later use, we record the following obvious consqeuence of the skeletal filtration on simplical sets:

\begin{lemma}
	\label{lem:ObviousCombinatorialThing}
	Let $ j \colon K \hookrightarrow L $ be a monomorphism of finite simplicial sets and assume that $ \sk_k K = \sk_k L $ for some $ k \in \mathbb{N} $.
	Then $ j $ is contained in the smallest saturated class containing the maps $ \partial \Delta^l \to \Delta^l $ for $ k < l < \dim L$.\qed
\end{lemma}

\begin{lemma}
	\label{lem:generatorsSkeleta}
	Let $k\geq 0$ be an integer. Then the following sets generate the same saturated class of morphisms in $\Simp\BB$:
	\begin{enumerate}
		\item $\{\sk_k D\to D~\vert~ D\in\Simp\BB\}$;
		\item $\{\partial\Delta^n\otimes A\into \Delta^n\otimes A~\vert~ n>k,~A\in \BB\}$.
		\item $\{\partial\Delta^{k+1}\otimes D\into\Delta^{k+1}\otimes D~\vert~D\in\Simp\BB\}$.
	\end{enumerate}
\end{lemma}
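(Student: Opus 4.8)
Write $\I{S}_1,\I{S}_2,\I{S}_3\subseteq\Fun(\Delta^1,\Simp\BB)$ for the saturated classes generated by the three displayed sets. As written these sets range over the large categories $\Simp\BB$ and $\BB$, but since $\BB$ (hence $\Simp\BB$) is presentable and since $\sk_k$, $-\otimes A$ and $\partial\Delta^{\bullet}\otimes-$ all preserve colimits, each of the three has the same saturation as the small subset obtained by restricting $A$ and $D$ to a generating family; consequently each $\I{S}_i$ is the left class of an orthogonal factorisation system $(\I{S}_i,\RR_i)$ and is recovered as $\I{S}_i={}^{\perp}\RR_i$. By Definition~\ref{def:coskeletalMaps}, $\RR_1$ is exactly the class of $k$-coskeletal maps, with the pullback criterion of Proposition~\ref{prop:pullbackCriterionCoskeletalMap} at our disposal. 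The plan is to prove $\I{S}_1=\I{S}_2$, then $\I{S}_3\subseteq\I{S}_2$, and finally the hard inclusion $\I{S}_2\subseteq\I{S}_3$.

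\textbf{Step 1: $\I{S}_1=\I{S}_2$.} The skeletal filtration exhibits each $\sk_{n-1}D\to\sk_n D$ as a pushout of the Leibniz tensor of $\partial\Delta^n\into\Delta^n$ with the latching map $L_nD\to D_n$, where $L_nD=(\sk_{n-1}D)_n$. Leibniz tensoring with a fixed map preserves saturated classes, and a standard retract argument shows that \emph{every} map of $\BB$ lies in the saturation of $\{\emptyset\to A:A\in\BB\}$; since the Leibniz tensor of $\partial\Delta^n\into\Delta^n$ with $\emptyset\to A$ is precisely $\partial\Delta^n\otimes A\to\Delta^n\otimes A$, for $n>k$ all these pushouts lie in $\I{S}_2$. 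As $\sk_kD\to D$ is the sequential colimit of the $\sk_{n-1}D\to\sk_n D$ with $n>k$, this gives $\I{S}_1\subseteq\I{S}_2$. Conversely, for $n>k$ the inclusion $\partial\Delta^n\into\Delta^n$ is an isomorphism in degrees $\le k$, so $\sk_k(\partial\Delta^n\otimes A)\simeq\sk_k(\Delta^n\otimes A)$; the two skeletal inclusions of this common object into $\partial\Delta^n\otimes A$ and into $\Delta^n\otimes A$ both lie in $\I{S}_1$, and because the left class of a factorisation system satisfies the cancellation ``$u\in\I{S}_1$ and $gu\in\I{S}_1$ imply $g\in\I{S}_1$'' (immediate from the lifting definition), we conclude $\partial\Delta^n\otimes A\to\Delta^n\otimes A\in\I{S}_1$. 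Hence $\I{S}_2\subseteq\I{S}_1$.

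\textbf{Step 2: $\I{S}_3\subseteq\I{S}_2$, and reduction of the converse.} Writing $D$ as a colimit of objects $\Delta^l\otimes A$ and using that $\partial\Delta^{k+1}\otimes-\to\Delta^{k+1}\otimes-$ preserves colimits, it suffices to treat $D=\Delta^l\otimes A$, where the map becomes $(\partial\Delta^{k+1}\times\Delta^l)\otimes A\to(\Delta^{k+1}\times\Delta^l)\otimes A$. The monomorphism $\partial\Delta^{k+1}\times\Delta^l\into\Delta^{k+1}\times\Delta^l$ is an isomorphism in degrees $\le k$ (any simplex of degree $\le k$ has non-surjective projection to $\Delta^{k+1}$), so its relative skeletal filtration attaches only cells of dimension $>k$ and, after tensoring with $A$, lands in $\I{S}_2$; thus $\I{S}_3\subseteq\I{S}_2$. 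For the reverse inclusion $\I{S}_2\subseteq\I{S}_3$ we must place $\partial\Delta^n\otimes A\to\Delta^n\otimes A$ in $\I{S}_3$ for every $n>k$. For $n=k+1$ this is a generator (take $D=A$). Since the colimit-preserving functor $(-)\otimes A\colon\Simp\SS\to\Simp\BB$ carries each $\partial\Delta^{k+1}\times M\to\Delta^{k+1}\times M$ into $\I{S}_3$ — indeed $(\partial\Delta^{k+1}\times M)\otimes A=\partial\Delta^{k+1}\otimes(M\otimes A)$ — the whole inclusion reduces to a purely combinatorial assertion in $\Simp\SS$.

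\textbf{Step 3 (the main obstacle) and conclusion.} The crux is the claim that, in $\Simp\SS$, the inclusion $\partial\Delta^n\into\Delta^n$ lies in the saturation of $\{\partial\Delta^{k+1}\times M\into\Delta^{k+1}\times M:M\in\Simp\SS\}$ for all $n>k$; this is where a single level $k+1$, fed with \emph{all} simplicial coefficients, must generate every higher boundary inclusion, and I expect it to be the hardest point. I would attack it by induction on $n$: the Leibniz product of $\partial\Delta^{k+1}\into\Delta^{k+1}$ with $\partial\Delta^{n-k-1}\into\Delta^{n-k-1}$ is the boundary inclusion of the product $\Delta^{k+1}\times\Delta^{n-k-1}$, whose non-degenerate top cells are the shuffles, and analysing the shuffle filtration while feeding in the lower boundary inclusions $\partial\Delta^m\into\Delta^m$ ($k<m<n$) available by induction exhibits $\partial\Delta^n\into\Delta^n$ as built from the allowed products. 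Equivalently — and perhaps more cleanly given the paper's coskeletal machinery — one verifies that the two saturated classes have the same local objects, namely the $k$-coskeletal simplicial spaces: a map is right orthogonal to every $\partial\Delta^{k+1}\otimes D\to\Delta^{k+1}\otimes D$ iff the induced square of internal mapping objects $\iFun{\Delta^{k+1}}{-}\to\iFun{\partial\Delta^{k+1}}{-}$ (in $\Simp\BB$) is cartesian, and an inductive matching-object argument upgrades this single condition to the coskeletal condition $X\simeq\cosk_k X$ of Proposition~\ref{prop:pullbackCriterionCoskeletalMap}, yielding $\RR_3=\RR_1$ and hence $\I{S}_3=\I{S}_1$. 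Combining the three steps gives $\I{S}_1=\I{S}_2=\I{S}_3$, as required.
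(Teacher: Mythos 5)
Your Steps 1 and 2 are correct and, modulo packaging, match the paper: the paper likewise reduces to objects of the form $\Delta^m\otimes A$ via density and colimit-preservation of the relevant constructions and then quotes classical simplicial-set combinatorics, and your left-cancellation trick for $\I{S}_2\subseteq\I{S}_1$ is precisely the mechanism behind the paper's terse claim that the saturation of (1) contains (2). (Your appeal to the $\infty$-categorical latching-object decomposition of $\sk_{n-1}D\to\sk_n D$ for arbitrary $D\in\Simp\BB$ is believable but heavier machinery than needed; the density reduction avoids Reedy theory altogether.)

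The genuine gap is Step 3, which you yourself flag as the crux: the inclusion $\I{S}_2\subseteq\I{S}_3$ is the real content of the lemma, and neither of your sketches establishes it; moreover, the first route fails as described. The Leibniz product of $\partial\Delta^{k+1}\into\Delta^{k+1}$ with $\partial\Delta^{n-k-1}\into\Delta^{n-k-1}$ is the \emph{prism} boundary, not the $(n-1)$-skeleton of $\Delta^{k+1}\times\Delta^{n-k-1}$ (already for $\Delta^1\times\Delta^1$ the diagonal edge is missing), and after adjoining, via the inductive hypothesis, all non-degenerate cells of dimension $<n$, the remaining inclusion into the full product attaches \emph{all the shuffles at once}: it is an iterated pushout of several copies of $\partial\Delta^n\into\Delta^n$, and a saturated class containing such a multi-cell attachment need not contain the single-cell attachment. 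A further device is needed to extract one copy, and this is exactly the idea the paper supplies: a \emph{retract} argument. The ``staircase'' embedding $\alpha\colon \Delta^{n+1}\to\Delta^n\times\Delta^1$ and the projection $\beta\colon \Delta^n\times\Delta^1\to\Delta^{n+1}$ collapsing $\Delta^n\times\{1\}$ to the last vertex exhibit $\partial\Delta^{n+1}\into\Delta^{n+1}$ as a retract of $\sk_n(\Delta^n\times\Delta^1)\into\Delta^n\times\Delta^1$. The latter lies in $\I{S}_3$ by left cancellation: the map $\partial\Delta^n\times\Delta^1\otimes A\into\sk_n(\Delta^n\times\Delta^1)\otimes A$ attaches only $n$-cells (inductive hypothesis), while the composite $\partial\Delta^n\times\Delta^1\otimes A\into(\Delta^n\times\Delta^1)\otimes A$ coincides with $\partial\Delta^n\otimes(\Delta^1\otimes A)\into\Delta^n\otimes(\Delta^1\otimes A)$ and hence lies in $\I{S}_3$ by the inductive hypothesis taken with \emph{simplicial} coefficients --- this is precisely where the arbitrary coefficients $D\in\Simp\BB$ in class (3) are exploited, and it is invisible in both of your sketches. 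Your second route (identifying the right-orthogonal complements $\RR_3=\RR_1$ by a ``matching-object induction'') merely repackages the same combinatorial problem and is likewise not carried out.
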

\begin{proof}
	We start by showing that the saturations of (1) and (2) agree.	
	Given $A\in\BB$, note that since the truncation functor $i_k^\ast$ commutes with postcomposition by both the pullback functor $\pi_A^\ast\colon \BB\to\Over{\BB}{A}$ and its right adjoint $(\pi_A)_\ast$, the uniqueness of adjoints implies that the functor $\sk_k$ commutes with $-\times A\colon \Simp\BB\to\Simp\BB$. 
	By a similar argument, the functor $\sk_k$ commutes with $\const\colon\Simp\SS\to \Simp\BB$. We therefore obtain an equivalence $\sk_k(\Delta^m\otimes A)\simeq \sk_k(\Delta^m)\otimes A$ with respect to which the canonical map $\sk_k(\Delta^m\otimes A)\to\Delta^m\otimes A$ corresponds to the map obtained by applying the functor $-\otimes A$ to the map $\sk_k(\Delta^m)\to\Delta^m$. This already implies that the set in~(2) is contained in the set in~(1), so that the saturation of~(2) is contained in the saturation of~(1). Conversely, as any $ D \in \BB_{\Delta}$ can be written as a colimit of objects of the form $ \Delta^n \otimes A $ (see~\cite[Lemma~4.5.2]{martini2021}), the above argument also shows that every map in~(1) is a colimit of maps of the form $\sk_k(\Delta^n)\otimes A\to \Delta^n\otimes A$. Since moreover $\const_{\BB}$ and $-\otimes A$ are colimit-preserving functors, one finds that~(1) is contained in the saturation of~(2) as soon as we can show that any saturated class $S$ of maps in $\Simp\SS$ which contains $ \partial \Delta^n \to \Delta^n $ for all $ n >k $ must also contain the maps $ \sk_k \Delta^m \to \Delta^m $ for all $ m $. To prove this latter claim, we argue by induction over $ n > k $. 
	If $ n = k+1  $ this is clear by definition. For $ n > k+1 $ we consider the composite $ \sk_k \Delta^n \to \partial \Delta^n \to \Delta^n$.
	By our induction hypothesis and Lemma~\ref{lem:ObviousCombinatorialThing}, the first map is in $S$ and the composite is so by assumption.
	Since saturated classes have the left cancellation property (see \cite[Proposition 2.5.2 (2) and Proposition 2.5.6]{martini2021}), the claim now follows.

	Next, to show that the saturation of~(2) contains~(3), we may again assume $D\simeq\Delta^m\otimes A$. In this case, the inclusion $\partial\Delta^{k+1}\times \Delta^m\into\Delta^{k+1}\times\Delta^m$ can be obtained as an iterated pushout of maps of the form $\partial \Delta^n\into\Delta^n$ for $n>k$ (by Lemma~\ref{lem:ObviousCombinatorialThing}), hence the claim follows. 
	For the converse inclusion, we will use induction on $n$, the case $n={k+1}$ being satisfied by definition. Given that for a fixed $n>k$ the inclusion $\partial\Delta^n\otimes A\into\Delta^n\otimes A$ is contained in the saturation of~(3), Lemma~\ref{lem:ObviousCombinatorialThing} allows us to build the inclusion $\partial\Delta^n\times \Delta^1\into \sk_{n}(\Delta^n\times\Delta^1)$ as an iterated pushout along $\partial\Delta^n\into\Delta^n$. Therefore, the map $\sk_n(\Delta^n\times\Delta^1)\otimes A\into(\Delta^n\times\Delta^1)\otimes A$ is contained in the saturation of~(3) by the left cancellation property. Let $\alpha\colon \Delta^{n+1}\to \Delta^n\times\Delta^1$ be defined by $\alpha(i)=(i,0)$ for $i=0,\dots, n$ and $\alpha(n+1)=(n+1,1)$, and let $\beta\colon \Delta^n\times\Delta^1\to\Delta^{n+1}$ be given by $\beta(i,0)=i$ and $\beta(i,1)=n+1$. We then obtain a retract diagram
	\begin{equation*}
		\begin{tikzcd}
			\partial\Delta^{n+1}\arrow[d, hookrightarrow]\arrow[r, "\alpha^\prime"] & \sk_n(\Delta^n\times \Delta^1)\arrow[r, "\beta^\prime"] \arrow[d, hookrightarrow]& \partial\Delta^{n+1}\arrow[d, hookrightarrow]\\
			\Delta^{n+1}\arrow[r, "\alpha"] & \Delta^n\times\Delta^1\arrow[r, "\beta"] & \Delta^{n+1}
		\end{tikzcd}
	\end{equation*}
	in which $\alpha^\prime$ and $\beta^\prime$ are given by the restriction of $\alpha$ and $\beta$, respectively. We therefore conclude that the map $\partial\Delta^{n+1}\otimes A\into \Delta^{n+1}\otimes A$ is in the saturation of~(3), as desired.
\end{proof}

As a consequence of Lemma~\ref{lem:generatorsSkeleta}, one finds:
\begin{proposition}
	\label{prop:internalCharacterisationCoskeletal}
	For any integer $k\geq 0$, a map $f\colon C\to D$ in $\Simp\BB$ is $k$-coskeletal if and only if it is internally right orthogonal to the map $\partial\Delta^{k+1}\into\Delta^{k+1}$.\qed
\end{proposition}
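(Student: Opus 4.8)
The plan is to reduce both sides of the claimed equivalence to right orthogonality against a single saturated class of maps in $\Simp\BB$, with Lemma~\ref{lem:generatorsSkeleta} providing the bridge between them. The key general fact I would invoke is that right orthogonality against a set of maps $S$ is equivalent to right orthogonality against the saturated class it generates: for a fixed $f$, the class of maps left orthogonal to $f$ is itself saturated (it contains the equivalences and is closed under pushouts and colimits in the arrow category), so once it contains $S$ it automatically contains the saturation of $S$.

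Concretely, I would argue in three steps. By Definition~\ref{def:coskeletalMaps}, a map $f\colon C\to D$ is $k$-coskeletal exactly when it is right orthogonal to every map in set~(1) of Lemma~\ref{lem:generatorsSkeleta}, hence --- by the fact above --- exactly when it is right orthogonal to the saturated class that this set generates. Lemma~\ref{lem:generatorsSkeleta} identifies this saturated class with the one generated by set~(3), so $f$ is $k$-coskeletal if and only if it is right orthogonal to every map $\partial\Delta^{k+1}\otimes D\into\Delta^{k+1}\otimes D$ with $D\in\Simp\BB$. Finally, unwinding the tensoring $-\otimes-=\const_{\BB}(-)\times-$ on $\Simp\BB$ shows that these are precisely the maps $(\partial\Delta^{k+1}\into\Delta^{k+1})\times\id_D$; right orthogonality against all of them is, by the description of the internal saturation recalled in the section on factorisation systems, the same as internal right orthogonality to $\partial\Delta^{k+1}\into\Delta^{k+1}$. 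Composing the three equivalences yields the proposition.

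Because Lemma~\ref{lem:generatorsSkeleta} already carries all of the combinatorial content, I do not expect any step here to be genuinely difficult; the only point that demands care is the last identification, where one must correctly match set~(3) with the internal-orthogonality generators of $\partial\Delta^{k+1}\into\Delta^{k+1}$, keeping track of the implicit identification of simplicial $\infty$-groupoids with their images in $\Simp\BB$ under $\const_{\BB}$.
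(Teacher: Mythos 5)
Your proposal is correct and is essentially the paper's own argument: the proposition is stated there as an immediate consequence of Lemma~\ref{lem:generatorsSkeleta}, via exactly the reduction you describe --- right orthogonality to a set equals right orthogonality to its saturation, the saturations of sets~(1) and~(3) agree by the lemma, and set~(3) consists precisely of the maps $(\partial\Delta^{k+1}\into\Delta^{k+1})\times\id_D$ defining internal right orthogonality. Nothing further is needed.
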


We can use Proposition~\ref{prop:internalCharacterisationCoskeletal} to show that every monomorphism between $\BB$-categories is $1$-coskeletal. To that end, recall that we denote by $I^2\into\Delta^2$ the inclusion of the $2$-spine (see \S~\ref{sec:BCategories}). We now obtain:
\begin{lemma}
	\label{lem:monosCoskeletal}
	Let $S$ be the internal saturation of $\Delta^0\sqcup\Delta^0\to \Delta^0$ and $I^2\into\Delta^2$ in $\Simp\BB$. Then $S$ contains the map $\partial\Delta^2\into\Delta^2$.
\end{lemma}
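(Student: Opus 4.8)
The plan is to factor the boundary inclusion $\partial\Delta^2\into\Delta^2$ as a composite of two maps, each of which is a cobase change of one of the two generators of $S$ (after, in one case, forming an internal product), and then to invoke that $S$ is closed under pushouts and under composition. Throughout I work in $\Simp\BB$, where all colimits are computed levelwise, so that the pushouts below may be analysed on the underlying finite simplicial sets.

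Let $\nabla\colon\Delta^0\sqcup\Delta^0\to\Delta^0$ denote the fold map and let $P=\partial\Delta^2\sqcup_{I^2}\Delta^2$ be the pushout of the span $\partial\Delta^2\hookleftarrow I^2\into\Delta^2$, where $I^2\into\partial\Delta^2$ is the inclusion of the spine into the boundary. By construction the leg $\partial\Delta^2\to P$ is the cobase change of the spine inclusion $I^2\into\Delta^2$ along $I^2\into\partial\Delta^2$, and therefore lies in $S$. Concretely, $P$ is the $2$-simplex with its edge $\{0,2\}$ doubled: it carries the vertices $0,1,2$, the edges $\{0,1\}$ and $\{1,2\}$, two parallel copies $e_a,e_b$ of the edge $\{0,2\}$ (one inherited from $\partial\Delta^2$, the other from $\Delta^2$), and the nondegenerate $2$-cell of $\Delta^2$, whose $d_1$-face is $e_b$. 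Let $q\colon P\to\Delta^2$ be the map induced on the pushout by the boundary inclusion $\partial\Delta^2\into\Delta^2$ and the identity of $\Delta^2$, which agree after restriction to $I^2$; thus $q$ identifies $e_a$ with $e_b$ and is otherwise the identity. Since $q$ restricts to the boundary inclusion on the $\partial\Delta^2$-leg, the composite $\partial\Delta^2\to P\xrightarrow{q}\Delta^2$ is exactly the map we wish to study.

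It remains to prove that $q$ lies in $S$. The two parallel edges determine a monomorphism $\Delta^1\sqcup_{\partial\Delta^1}\Delta^1\into P$ picking out $e_a$ and $e_b$ together with their shared endpoints, and $q$ is precisely the cobase change along this inclusion of the fold map $\mu\colon\Delta^1\sqcup_{\partial\Delta^1}\Delta^1\to\Delta^1$ that collapses the two copies to a single edge. In turn, $\mu$ is the cobase change of $\nabla\times\id_{\Delta^1}\colon\Delta^1\sqcup\Delta^1\to\Delta^1$ along the canonical map $\Delta^1\sqcup\Delta^1\onto\Delta^1\sqcup_{\partial\Delta^1}\Delta^1$ that glues the endpoints: a cocone on the span $\Delta^1\xleftarrow{\nabla\times\id}\Delta^1\sqcup\Delta^1\to\Delta^1\sqcup_{\partial\Delta^1}\Delta^1$ is the same datum as a single map out of $\Delta^1$, so the pushout is $\Delta^1$ and the induced map is $\mu$. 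The map $\nabla\times\id_{\Delta^1}$ is one of the maps generating the internal saturation $S$, hence $\mu\in S$, and therefore $q\in S$. As $S$ is closed under composition, we conclude that $\partial\Delta^2\into\Delta^2$ lies in $S$.

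The crux of the argument, and the step that I expect to require the most care, is the recognition of the second factor $q\colon P\to\Delta^2$ as a cobase change of (an internal product of) the fold map: this is where the two generators genuinely interact, the spine inclusion serving to produce the doubled edge $e_a,e_b$ and the fold map serving to collapse it again. By contrast, once $P$ and $q$ are in place, the remaining verifications are routine pushout computations that can be checked levelwise in $\BB$.
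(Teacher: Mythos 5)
Your reduction to the fold map $\mu\colon\Delta^1\sqcup_{\partial\Delta^1}\Delta^1\to\Delta^1$ is sound: the two pushouts you use there are taken along monomorphisms ($I^2\into\partial\Delta^2$, respectively the doubled-edge inclusion $\Delta^1\sqcup_{\partial\Delta^1}\Delta^1\into P$), and pushouts of levelwise discrete objects along monomorphisms are computed levelwise as pushouts of sets, so $\partial\Delta^2\to P$ is indeed a cobase change of the spine inclusion and $q\colon P\to\Delta^2$ is indeed a cobase change of $\mu$. The genuine gap is the last step, where you claim that $\mu$ is a cobase change of $\nabla\times\id_{\Delta^1}$ along $\Delta^1\sqcup\Delta^1\onto\Delta^1\sqcup_{\partial\Delta^1}\Delta^1$. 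That square commutes, but it is not cocartesian in $\Simp\BB$. Your justification ("a cocone on the span is the same datum as a single map out of $\Delta^1$") treats commutativity of a cocone as a property; in an $\infty$-category it is data, and here the discrepancy is fatal. Concretely, neither leg of the span $\Delta^1\leftarrow\Delta^1\sqcup\Delta^1\to\Delta^1\sqcup_{\partial\Delta^1}\Delta^1$ is a monomorphism, so the levelwise pushout in $\BB$ is not the set-level pushout: in each degree $n$ the span splits as a coproduct indexed by the elements of $\Delta^1_n$, and over each of the two constant simplices it restricts to $1\leftarrow 1\sqcup 1\to 1$, whose pushout is the constant sheaf on $S^1$ rather than the point (this is not equivalent to $1$ in any nontrivial $\infty$-topos, since its loop object is $\coprod_{\mathbb{Z}}1$). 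Thus the actual pushout is $\Delta^1$ with a circle attached in every level, and $\mu$ is not a cobase change of $\nabla\times\id_{\Delta^1}$; your opening reduction "pushouts may be analysed on the underlying simplicial sets" breaks down precisely at this non-monic pushout.

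The conclusion $\mu\in S$ is nevertheless true, so the factorisation strategy can be repaired, but it needs a different argument. One constructive fix: $\mu$ is the pushout \emph{in the arrow category} $\Fun(\Delta^1,\Simp\BB)$ of the span of arrows $(\nabla\times\id_{\Delta^1})\leftarrow(\nabla\times\id_{\partial\Delta^1})\to\id_{\partial\Delta^1}$; on domains this is the pushout of $\Delta^1\sqcup\Delta^1\hookleftarrow\partial\Delta^1\sqcup\partial\Delta^1\to\partial\Delta^1$, taken along a monomorphism and hence equal to $\Delta^1\sqcup_{\partial\Delta^1}\Delta^1$, while on codomains it is $\Delta^1$. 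Since all three arrows lie in $S$ and the saturation is stable under small colimits in $\Fun(\Delta^1,\Simp\BB)$, this gives $\mu\in S$. Alternatively, argue by orthogonality: every map in the right complement of $S$ is internally right orthogonal to $\Delta^0\sqcup\Delta^0\to\Delta^0$ and hence a monomorphism, and $\mu$ is left orthogonal to any monomorphism $f\colon K\to L$, because both $\map{\Simp\BB}(\Delta^1,K)$ and the pullback $\map{\Simp\BB}(\Delta^1\sqcup_{\partial\Delta^1}\Delta^1,K)\times_{\map{\Simp\BB}(\Delta^1\sqcup_{\partial\Delta^1}\Delta^1,L)}\map{\Simp\BB}(\Delta^1,L)$ are subobjects of $\map{\Simp\BB}(\Delta^1,L)$ with the same support. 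This second route is close in spirit to the paper's own proof, which never builds $\partial\Delta^2\into\Delta^2$ out of cells at all: it shows that any map internally right orthogonal to the two generators is a monomorphism, and then verifies internal right orthogonality against $\partial\Delta^2\into\Delta^2$ by a diagram chase on the powers $(-)^{\Delta^2}$, $(-)^{\partial\Delta^2}$, $(-)^{I^2}$, using that a monomorphism which is also a cover is an equivalence.
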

\begin{proof}
	Let $f\colon K\to L$ be a map in $\Simp\BB$ that is internally right orthogonal to the maps $\Delta^0\sqcup\Delta^0\to\Delta^0$ and the inclusion of the 2-spine $I^2\into\Delta^2$. Then $f$ is a monomorphism. Now consider the commutative diagram
	\begin{equation*}
		\begin{tikzcd}[column sep={3em,between origins}, row sep={2.5em,between origins}]
			K^{\Delta^2}\arrow[dr]\arrow[drrr, bend left=20]\arrow[dddr, bend right=20] &&&&&\\
			& P\arrow[dd, hookrightarrow]\arrow[ddrr, phantom, "\ulcorner", very near start]\arrow[rr] \arrow[dl, crossing over]&& K^{\partial\Delta^2}\arrow[rr]\arrow[dd, hookrightarrow]\arrow[dl] && K^{I^2}\arrow[dd, hookrightarrow]\arrow[dl, "\id"]\\
			Q\arrow[dd, hookrightarrow]\arrow[rr, crossing over]\arrow[ddrr, phantom, "\ulcorner", very near start] && R\arrow[rr, crossing over] \arrow[ddrr, phantom, "\ulcorner", very near start]&& K^{I^2}&\\
			& L^{\Delta^2}\arrow[rr] \arrow[dl, "\id"]&& L^{\partial\Delta^2}\arrow[rr]\arrow[dl, "\id"] && L^{I^2}\arrow[dl, "\id"]\\
			L^{\Delta^2}\arrow[rr] && L^{\partial\Delta^2}\arrow[rr]\arrow[from=uu, hookrightarrow, crossing over] && L^{I^2}\arrow[from=uu, crossing over, hookrightarrow]&
		\end{tikzcd}
	\end{equation*}
	in which $P$, $Q$ and $R$ are defined by the condition that the respective square is a pullback diagram. We need to show that the map $K^{\Delta^2}\to P$ is an equivalence. As by assumption on $f$ the map $K^{\Delta^2}\to Q$ is an equivalence, it suffices to show that $P\to Q$ is an equivalence as well. But this map is already a monomorphism, hence the claim follows from the observation that $P\to Q$ must be a cover as the map $K^{\Delta^2}\to Q$ is one.
\end{proof}
\begin{proposition}
	\label{prop:monosFFCoskeletal}
	Every monomorphism between $\BB$-categories is $1$-coskeletal.
\end{proposition}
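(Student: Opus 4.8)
The plan is to reduce the statement to two orthogonality checks by combining Proposition~\ref{prop:internalCharacterisationCoskeletal} with Lemma~\ref{lem:monosCoskeletal}. By Proposition~\ref{prop:internalCharacterisationCoskeletal} applied with $k=1$, a map $f\colon\I{C}\to\I{D}$ in $\Simp\BB$ is $1$-coskeletal precisely if it is internally right orthogonal to $\partial\Delta^2\into\Delta^2$. By Lemma~\ref{lem:monosCoskeletal} this map is contained in the internal saturation $S$ of the two maps $\Delta^0\sqcup\Delta^0\to\Delta^0$ and $I^2\into\Delta^2$. Since a map is internally right orthogonal to a given set of maps if and only if it is right orthogonal to the entire internal saturation of that set, it therefore suffices to show that a monomorphism $f$ of $\BB$-categories is internally right orthogonal to each of these two generators.

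For the generator $\Delta^0\sqcup\Delta^0\to\Delta^0$, I would unwind the definition of internal orthogonality using the powering: internal right orthogonality of $f$ to this map is equivalent, via the identifications $\I{C}^{\Delta^0}\simeq\I{C}$ and $\I{C}^{\Delta^0\sqcup\Delta^0}\simeq\I{C}\times\I{C}$, to the assertion that the square
\begin{equation*}
	\begin{tikzcd}
	\I{C}\arrow[r]\arrow[d, "f"] & \I{C}\times\I{C}\arrow[d, "f\times f"]\\
	\I{D}\arrow[r] & \I{D}\times\I{D}
	\end{tikzcd}
\end{equation*}
is cartesian, the horizontal maps being the diagonals. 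This is exactly the square characterising $f$ as a monomorphism in \S\ref{sec:monomorphisms}, so this orthogonality holds by hypothesis.

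For the generator $I^2\into\Delta^2$, the key point is that the required orthogonality is automatic for \emph{any} functor between $\BB$-categories. Indeed, internal right orthogonality of $f$ to $I^2\into\Delta^2$ is equivalent to the square
\begin{equation*}
	\begin{tikzcd}
	\I{C}^{\Delta^2}\arrow[r]\arrow[d] & \I{C}^{I^2}\arrow[d]\\
	\I{D}^{\Delta^2}\arrow[r] & \I{D}^{I^2}
	\end{tikzcd}
\end{equation*}
being cartesian. As both $\I{C}$ and $\I{D}$ satisfy the Segal conditions, the two horizontal maps are equivalences, and a commutative square with two parallel equivalences is automatically cartesian. Combining the two verifications shows that $f$ is internally right orthogonal to all of $S$, and in particular to $\partial\Delta^2\into\Delta^2$, which is what we wanted.

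The whole argument is essentially formal given the preceding lemmas, so there is no serious obstacle; the one place I would take care is the bookkeeping in the first paragraph, namely that internal right orthogonality is insensitive to replacing a generating set by its internal saturation, since it is exactly this stability that makes Lemma~\ref{lem:monosCoskeletal} applicable. The mildly surprising feature worth emphasising is that only the Segal condition on source and target is used for the second generator, while univalence plays no role at all.
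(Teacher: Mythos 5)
Your proof is correct and takes essentially the same route as the paper: the paper's one-sentence argument via Lemma~\ref{lem:monosCoskeletal} and Proposition~\ref{prop:internalCharacterisationCoskeletal} implicitly contains exactly your two orthogonality checks, namely that a monomorphism is internally right orthogonal to $\Delta^0\sqcup\Delta^0\to\Delta^0$ by definition and to $I^2\into\Delta^2$ because source and target satisfy the Segal conditions. You have simply made explicit the saturation bookkeeping that the paper leaves implicit.
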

\begin{proof}
	Lemma~\ref{lem:monosCoskeletal} implies that every monomorphism between $\BB$-categories is internally right orthogonal to $\partial \Delta^2\into\Delta^2$  and therefore $1$-coskeletal.
\end{proof}
Let $\I{C}$ be a $\BB$-category and let $\Over{\Cat(\BB)^{\leq 1}}{\I{C}}$ be the full subcategory of $\Over{\Cat(\BB)}{\I{C}}$ that is spanned by the $1$-coskeletal maps into $\I{C}$. By restricting the inclusion $\Over{\Cat(\BB)^{\leq 1}}{\I{C}}\into\Over{\Cat(\BB)}{\I{C}}$ to $(-1)$-truncated objects (i.e.\ to monomorphisms into $\I{D}$), one obtains a full embedding
\begin{equation*}
	\Sub^{\leq 1}_{\Cat(\BB)}(\I{C})\into\Sub_{\Cat(\BB)}(\I{C})
\end{equation*}
of partially ordered sets.
Proposition~\ref{prop:monosFFCoskeletal} now implies:
\begin{corollary}
	\label{cor:monosCoskeletalSubobjects}
	For any $\BB$-category $\I{C}$, the inclusion $\Sub^{\leq 1}_{\Cat(\BB)}(\I{C})\into\Sub_{\Cat(\BB)}(\I{C})$ is an equivalence.\qed
\end{corollary}
For any $\BB$-category $\I{C}$, the functor $\Over{(\cosk_1)}{\I{C}}\colon\Over{(\Simp\BB)}{\I{C}}\to\Over{(\Simp\BB^{\leq 1})}{\cosk_1\I{C}}$ that is induced by the coskeleton functor on the slice $\infty$-categories admits a fully faithful right adjoint $\eta^\ast$ that is given by base change along the adjunction unit $\eta\colon\I{C}\to \cosk_1\I{C}$. Upon restricting to subobjects, we therefore obtain an adjunction
\begin{equation*}
	\begin{tikzcd}
		\Sub_{\Simp\BB}(\I{C})\arrow[from=r, shift left=1mm, hookrightarrow, "\eta^\ast"]\arrow[r, shift left=1mm, "\Over{(\cosk_1)}{\I{C}}"] & \Sub_{\Simp\BB^{\leq 1}}(\cosk_1\I{C}).
	\end{tikzcd}
\end{equation*}
In general, the functor $\eta^\ast$ does not take values in $\Sub_{\Cat(\BB)}(\I{C})$, but we may explicitly characterise those subobjects of $\cosk_1{\I{C}}$ that do give rise to a $\BB$-category. To that end, note that given a subobject $D\into \cosk_1\I{C}$ in $\Simp\BB^{\leq 1}$, the restriction of $d_1\colon\I{C}_1\times_{\I{C}_0}\I{C}_1\to \I{C}_1$ along the inclusion $D_1\times_{D_0}D_1\into \I{C}_1\times_{\I{C}_0}\I{C}_1$ determines a map $d_1\colon D_1\times_{D_0}D_1\to\I{C}_1$. 
\begin{definition}
	\label{def:subobjectTruncatedComposition}
	Let $\I{C}$ be a $\BB$-category. A subobject $D\into \cosk_1 \I{C}$ in $\Simp\BB^{\leq 1}$ is said to be \emph{closed under composition} if the map $d_1\colon D_1\times_{D_0} D_1\to \I{C}_1$ factors through $D_1\into \I{C}_1$. We denote by $\Sub^{\mathrm{comp}}_{\Simp\BB^{\leq 1}}(\cosk_1\I{C})$ the full subcategory of $\Sub_{\Simp\BB^{\leq 1}}(\cosk_1\I{C})$ that is spanned by these subobjects.
\end{definition}
\begin{lemma}
	\label{lem:simplifiedSegalCoskeletal}
	Let $A\in\BB$ be an arbitrary object and let $S$ be a saturated set of maps in $\Simp\BB$ that contains the internal saturation of $\partial\Delta^2\into\Delta^2$ as well as the map $I^2\otimes A\into\Delta^2\otimes A$. Then $S$ contains $I^n\otimes A\into \Delta^n\otimes A$ for all $n\geq 2$.
\end{lemma}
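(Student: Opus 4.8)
The plan is to reduce the statement to the classical fact that the spine inclusions are inner anodyne, transported along the colimit-preserving tensoring functor. Write $-\otimes A\colon\Simp\SS\to\Simp\BB$ for the functor $K\mapsto\const_\BB(K)\times A$ (where $A$ denotes its image under $\BB\into\Simp\BB$). This is a composite of $\const_\BB$, which preserves colimits as a left adjoint, with $A\times(-)$, which preserves colimits because $\Simp\BB$ is an $\infty$-topos; hence $-\otimes A$ preserves colimits and carries the saturated class generated by any set of maps into the saturated class generated by its image. Since the spine inclusions $I^n\into\Delta^n$ lie in the saturated class generated by the inner horn inclusions $\Lambda^k_i\into\Delta^k$ (for $0<i<k$), and since $S$ is saturated, it suffices to prove that $\Lambda^k_i\otimes A\into\Delta^k\otimes A\in S$ for all $0<i<k$; the maps $I^n\otimes A\into\Delta^n\otimes A$ then lie in $S$ automatically.

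First I would observe that the hypothesis on $\partial\Delta^2\into\Delta^2$ already delivers all higher boundary inclusions. By definition the internal saturation of $\partial\Delta^2\into\Delta^2$ is the saturated class generated by the maps $\partial\Delta^2\otimes D\into\Delta^2\otimes D$ for $D\in\Simp\BB$, which is precisely the generating set~(3) of Lemma~\ref{lem:generatorsSkeleta} in the case $k=1$. That lemma identifies this class with the one generated by set~(2), so it contains every boundary inclusion $\partial\Delta^m\otimes A\into\Delta^m\otimes A$ with $m\geq 2$. In particular all of these maps lie in $S$.

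Next I would construct the inner horns from these boundary inclusions together with the single given spine inclusion. For $k=2$ the only inner horn is $\Lambda^2_1=I^2$, so $\Lambda^2_1\otimes A\into\Delta^2\otimes A$ is exactly the hypothesised map and lies in $S$. For $k\geq 3$ I would factor the inner horn inclusion as $\Lambda^k_i\into\partial\Delta^k\into\Delta^k$. The second map is a boundary inclusion and hence lies in $S$ after tensoring with $A$. The first map fills in the single missing face $d_i$ along its boundary, exhibiting $\partial\Delta^k$ as the pushout $\Lambda^k_i\cup_{\partial\Delta^{k-1}}\Delta^{k-1}$; since $-\otimes A$ preserves pushouts, $\Lambda^k_i\otimes A\into\partial\Delta^k\otimes A$ is a pushout of the boundary inclusion $\partial\Delta^{k-1}\otimes A\into\Delta^{k-1}\otimes A$, which lies in $S$ because $k-1\geq 2$. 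Composing the two maps shows $\Lambda^k_i\otimes A\into\Delta^k\otimes A\in S$, completing the reduction.

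The only non-formal ingredient is the classical inner-anodyneness of the spine inclusions; everything else is a transport of colimit constructions along $-\otimes A$ together with Lemma~\ref{lem:generatorsSkeleta}. If one prefers to keep the argument internal to $\Simp\BB$, this input can be reproved by the standard induction on $n$, factoring $I^n\into\Delta^n$ through $\Delta^{\{0,1\}}\cup_{\Delta^{\{1\}}}\Delta^{\{1,\dots,n\}}$, where the first inclusion is a pushout of $I^{n-1}\into\Delta^{n-1}$ and the second is built from inner horns. I expect the main thing to get right to be the combinatorial bookkeeping identifying the relevant face intersections as the pushout corners $\partial\Delta^{k-1}$; the remainder of the argument is formal.
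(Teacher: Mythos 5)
Your proof is correct and follows essentially the same route as the paper's: reduce to the inner horn inclusions via the classical inner-anodyneness of the spine inclusions (transported along the colimit-preserving functor $-\otimes A$), then factor $\Lambda^n_i\otimes A\into\Delta^n\otimes A$ through $\partial\Delta^n\otimes A$, using lemma~\ref{lem:generatorsSkeleta} to place all boundary inclusions $\partial\Delta^m\otimes A\into\Delta^m\otimes A$ with $m\geq 2$ in $S$ and the pushout description of $\Lambda^n_i\into\partial\Delta^n$ along $\partial\Delta^{n-1}\into\Delta^{n-1}$ to handle the first factor. The paper's proof is just a terser version of the same argument, leaving these verifications implicit.
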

\begin{proof}
	We may assume $n>2$. By~\cite[Proposition~2.13]{joyal2008}, it suffices to show that for all $0<i<n$ the inclusion $\Lambda^n_{i}\otimes A\into\Delta^n\otimes A$ is contained in $S$. On account of the factorisation $\Lambda^n_{i}\into\partial\Delta^n\into\Delta^n$ in which the first map is obtained as a pushout along $\partial\Delta^{n-1}\into\Delta^n$, this is immediate.
\end{proof}

\begin{proposition}
	\label{prop:characterisationComposableSubgraph}
	Let $D\into \cosk_1\I{C}$ be a subobject in $\Simp\BB^{\leq 1}$. Then $\eta^\ast D$ is a $\BB$-category if and only if $D$ is closed under composition. In particular, $\eta^\ast$ defines an equivalence $\Sub^{\mathrm{comp}}_{\Simp\BB^{\leq 1}}(\cosk_1\I{C})\simeq \Sub_{\Cat(\BB)}(\I{C})$.
\end{proposition}
\begin{proof}
	If $\eta^\ast D$ is a $\BB$-category, the fact that applying $\cosk_1$ to the inclusion $\eta^\ast D\into\I{C}$ recovers the subobject $D\into\cosk_1\I{C}$ implies that $D$ is closed under composition. Conversely, suppose that $D$ is closed under composition. Since $E^1\to 1$ is a cover in $\Simp\BB$ (where $E^1$ is the walking equivalence, see \S~\ref{sec:BCategories}), every monomorphism of simplicial objects in $\BB$ is internally right orthogonal to $E^1\to 1$. Therefore $\eta^\ast D$ is univalent. We still need to show that $\eta^\ast D$ satisfies the Segal conditions. Since $\eta^\ast D\into \I{C}$ is $1$-coskeletal, Lemma~\ref{lem:simplifiedSegalCoskeletal} implies that we only need to show that $(\eta^\ast D)_2\to C_1\times_{D_0}D_1$ is an equivalence. As this map is a monomorphism, it furthermore suffices to show that it is a cover in $\BB$. Note that since the natural map $(-)^{\Delta^2}\to (-)^{\partial\Delta^2}$ induces an equivalence on $1$-coskeletal objects, the identification $\partial\Delta^2\simeq I^2\sqcup_{\Delta^0\sqcup\Delta^0}\Delta^1$ gives rise to a commutative square
	\begin{equation*}
		\begin{tikzcd}[column sep={3.5em,between origins}, row sep={2.5em,between origins}]
			& \I{C}_1\times_{\I{C}_0}\I{C}_1\arrow[rr]\arrow[ddrr, bend right, "d_1"'] && (\cosk_1\I{C})_2\arrow[rr]\arrow[dd] && \I{C}_1\times_{\I{C}_0}\I{C}_1\arrow[dd, near start]\\
			D_1\times_{D_0}D_1\arrow[rr, crossing over, dashed]\arrow[ur, hookrightarrow]\arrow[ddrr, bend right, "d_1"', dashed] && D_2\arrow[rr, crossing over]\arrow[dd, crossing over]\arrow[ur, hookrightarrow] && D_1\times_{D_0}D_1\arrow[ur, hookrightarrow] &\\
			& && \I{C}_1\arrow[rr] && \I{C}_0\times\I{C}_0 \\
			&& D_1\arrow[rr]\arrow[ur,hookrightarrow] && D_0\times D_0\arrow[from=uu, crossing over]\arrow[ur,hookrightarrow]
		\end{tikzcd}
	\end{equation*}
	in which the two squares in the front and in the back of the cube are pullbacks and where the dashed arrows exist as $D$ is closed under composition. By combining this diagram with the pullback square
	\begin{equation*}
		\begin{tikzcd}
			(\eta^\ast D)_2\arrow[r]\arrow[d, hookrightarrow] & D_2\arrow[d, hookrightarrow]\\
			\I{C}_2\arrow[r] & (\cosk_1\I{C})_2,
		\end{tikzcd}
	\end{equation*}
	one concludes that the map $(\eta^\ast D)_2\to D_1\times_{D_0}D_1$ admits a section and is therefore a cover, as desired. Lastly, the claim that that $\eta^\ast$ induces an equivalence $\SubObj^{\mathrm{comp}}(\cosk_1\I{C})\simeq \Sub(\I{C})$ now follows easily with Corollary~\ref{cor:monosCoskeletalSubobjects}.
\end{proof}

\begin{proof}[{Proof of Proposition~\ref{prop:classificationSubcategories}}]
	It is clear that any subobject $S\into\I{C}_1$ that arises as the object of morphisms of a subcategory of $\I{C}$ must necessarily satisfy the two conditions, so it suffices to prove the converse. Let $D_0\into \I{C}$ be the image of $(d_1,d_0)\colon S\sqcup S\to \I{C}_0$. As $S$ is closed under equivalences in $\I{C}$, the restriction of $s_0\colon \I{C}_0\to\I{C}_1$ to $D_0$ factors through $S\into\I{C}_1$. By setting $D_1=S$, we thus obtain a subobject $D\into \cosk_1\I{C}$ in $\Simp\BB^{\leq 1}$. By assumption, this subobject is closed under composition in the sense of Definition~\ref{def:subobjectTruncatedComposition}, hence Proposition~\ref{prop:characterisationComposableSubgraph} implies that $\eta^\ast D$ is a subcategory of $\I{C}$. Hence $S=D_1$ arises as the object of morphisms of $\eta^\ast D$ and is therefore contained in the essential image of $(-)_1\colon \Sub_{\Cat(\BB)}(\I{C})\into\Sub_{\BB}(\I{C}_1)$.
\end{proof}

\section{Localisations of $\BB$-categories}
\label{sec:localisation}
\numberwithin{equation}{section}
Recall that a functor between $\BB$-categories is said to be \emph{conservative} if it is internally right orthogonal to the map $\Delta^1\to\Delta^0$ (cf.~\cite[Definition~4.1.10]{martini2021}). Hereafter we discuss the left complement of the associated factorisation system, i.e.\ the saturated class that is internally generated by $\Delta^1\to\Delta^0$.

\begin{definition}
	\label{defn:conservativeIteratedlocalisation}
	A functor between $\BB$-categories is an \emph{iterated localisation} if it is left orthogonal to every conservative functor.
\end{definition}
The saturated class of iterated localisations in $\Cat(\BB)$ is internally generated by $\Delta^1\to\Delta^0$. Since this map is a strong epimorphism by Remark~\ref{rem:strongEpiLevelwise}, we deduce:
\begin{proposition}
	\label{prop:iteratedLocStrongEpi}
	Every iterated localisation in $\Cat(\BB)$ is a strong epimorphism and therefore in particular essentially surjective. Dually, every monomorphism is conservative.
	\qed
\end{proposition}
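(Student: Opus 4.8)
The plan is to prove both assertions by showing that the internal saturation of $\{\Delta^1\to\Delta^0\}$ is contained in the class of strong epimorphisms, and then deducing the dual statement by a purely formal orthogonality argument. By definition the iterated localisations form exactly this internal saturation, so the crux is to locate the single generator $\Delta^1\to\Delta^0$ among the strong epimorphisms and to know that the latter form an internally saturated class. The second point is available for free: a strong epimorphism is left orthogonal to every monomorphism, and since monomorphisms are stable under products with objects, every strong epimorphism is in fact \emph{internally} left orthogonal to the monomorphisms (cf.\ the reasoning in the proof of proposition~\ref{prop:mappingPropertySubcategory}). Consequently the strong epimorphisms form an internally saturated class, so to conclude that every iterated localisation is a strong epimorphism it suffices to verify that $\Delta^1\to\Delta^0$ itself is one.

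For this I would appeal to remark~\ref{rem:strongEpiLevelwise}, which guarantees that a functor is a strong epimorphism as soon as it induces covers on levels $0$ and $1$. Viewing $\Delta^1\to\Delta^0$ as a map of constant $\BB$-categories, its level-$n$ component is the map $\const((\Delta^1)_n)\to\const((\Delta^0)_n)=1$ obtained by applying the constant sheaf functor to the unique surjection of the non-empty finite set $(\Delta^1)_n$ onto a point. As $\const$ preserves covers, each such component is a cover in $\BB$; in particular levels $0$ and $1$ are covers, so $\Delta^1\to\Delta^0$ is a strong epimorphism. Since the strong epimorphisms are internally saturated and contain this generator, they contain its entire internal saturation, namely all iterated localisations. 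The supplementary clause on essential surjectivity then follows exactly as in the proof of proposition~\ref{prop:strongEsoGroupoids}: a strong epimorphism is left orthogonal to all monomorphisms, hence to all fully faithful functors and their products with objects, so it is internally left orthogonal to the fully faithful functors, i.e.\ essentially surjective.

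The dual statement is now formal. The factorisation used in definition~\ref{def:Image} exhibits $(\text{strong epimorphism},\text{monomorphism})$ as an orthogonal factorisation system, so every monomorphism is right orthogonal to every strong epimorphism. On the other hand, a conservative functor is by definition internally right orthogonal to $\Delta^1\to\Delta^0$, which by the adjunction between internal saturation and internal right orthogonality is the same as being right orthogonal to every map in the internal saturation, that is, to every iterated localisation. Given a monomorphism $m$, the first part shows that every iterated localisation is a strong epimorphism, whence $m$ is right orthogonal to every iterated localisation and is therefore conservative. The only genuinely substantive step is the passage from the single strong epimorphism $\Delta^1\to\Delta^0$ to its full internal saturation: everything hinges on the fact that strong epimorphisms are closed under products with an arbitrary $\BB$-category, so that the \emph{internal} (rather than merely ordinary) saturation remains within the strong epimorphisms.
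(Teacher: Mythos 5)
Your overall architecture matches the paper's: the generator $\Delta^1\to\Delta^0$ is a strong epimorphism by remark~\ref{rem:strongEpiLevelwise}, and both halves of the statement then follow by formal orthogonality. The dual clause and the essential-surjectivity clause are argued correctly. But there is a gap at precisely the step you yourself flag as the crux, namely that the strong epimorphisms form an internally saturated class. You justify this by saying that monomorphisms are stable under products with objects, so that a strong epimorphism $e$ is ``internally left orthogonal'' to the monomorphisms. That stability property only yields the orthogonality $e\perp(m\times\id_{\I{C}})$, which is vacuous for your purposes, since $m\times\id_{\I{C}}$ is itself a monomorphism and $e$ was already orthogonal to all monomorphisms. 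What you actually need is the orthogonality with the product on the \emph{other} side, $(e\times\id_{\I{C}})\perp m$, i.e.\ that $e\times\id_{\I{C}}$ is again a strong epimorphism, and product-stability of monomorphisms does not give this. The correct argument transposes the lifting problem through the cartesian closure of $\Cat(\BB)$: a square from $e\times\id_{\I{C}}$ to $m$ corresponds to a square from $e$ to the powering $\iFun{\I{C}}{m}$, so the fact you need is that $\iFun{\I{C}}{-}$ preserves monomorphisms --- which it does, being a right adjoint and hence pullback-preserving. With that substitution (powering, not products) your proof closes.

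Alternatively, and this is how the paper's one-sentence deduction should be read, you can sidestep any general claim about strong epimorphisms: the criterion of remark~\ref{rem:strongEpiLevelwise} applies verbatim to every generator $(\Delta^1\to\Delta^0)\times\id_{\I{C}}$ of the internal saturation, since these maps are still levelwise covers (covers in $\Simp\BB$ are stable under products), so each of them is a strong epimorphism; the internal saturation is by definition the ordinary saturation of this family, and the strong epimorphisms, being a left orthogonality class, are closed under ordinary saturation. This route never needs closure of strong epimorphisms under $-\times\id_{\I{C}}$ at all, whereas your route makes that closure the central lemma and then justifies it incorrectly.
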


\begin{definition}
	\label{def:localizationAtSomeS}
	Let $\I{C}$ be a $\BB$-category and let $\I{S}\to \I{C}$ be a functor. The \emph{localisation} of $\I{C}$ at $\I{S}$ is the $\BB$-category $\I{S}^{-1}\I{C}$ that fits into the pushout square
	\begin{equation*}
		\begin{tikzcd}
			\I{S}\arrow[d]\arrow[r] \arrow[dr, phantom, "\lrcorner", very near end]& \I{S}^{\gp}\arrow[d]\\
			\I{C}\arrow[r] & \I{S}^{-1}\I{C}.
		\end{tikzcd}
	\end{equation*}
	We refer to the map $\I{C}\to \I{S}^{-1}\I{C}$ as the \emph{localisation functor} that is associated with the map $\I{S}\to\I{C}$. More generally, a functor $\I{C}\to \I{D}$ between $\BB$-categories is said to be a localisation if there is a functor $\I{S}\to\I{C}$ and an equivalence $\I{D}\simeq \I{S}^{-1}\I{C}$ in $\Under{\Cat(\BB)}{\I{C}}$.
\end{definition}

\begin{remark}
    The above definition is a direct analogue of the construction of localisations of $\infty$-categories, see \cite[Proposition 7.1.3]{cisinski2019a}.
\end{remark}

By definition, the groupoidification functor $\I{S}\to\I{S}^{\gp}$ in Definition~\ref{def:localizationAtSomeS} is an iterated localisation. One therefore finds:
\begin{proposition}
	For any $\BB$-category $\I{C}$ and any functor $\I{S}\to \I{C}$, the localisation functor $\I{C}\to \I{S}^{-1}\I{C}$ is an iterated localisation.\qed
\end{proposition}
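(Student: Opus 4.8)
The plan is to exploit the closure properties of the class of iterated localisations and thereby reduce the statement to a purely formal observation. By definition~\ref{def:localizationAtSomeS}, the localisation functor $\I{C}\to\I{S}^{-1}\I{C}$ arises as the pushout of the groupoidification functor $\I{S}\to\I{S}^{\gp}$ along the given map $\I{S}\to\I{C}$. The whole proof therefore comes down to showing that the cobase change of an iterated localisation is again an iterated localisation.

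First I would recall that, as noted immediately above the statement, the groupoidification functor $\I{S}\to\I{S}^{\gp}$ is itself an iterated localisation. Next I would use that the iterated localisations form a saturated class: they are precisely the internal saturation of the map $\Delta^1\to\Delta^0$, equivalently the left complement of the factorisation system whose right class consists of the conservative functors. In particular, as with any saturated class, this collection is stable under pushouts (and more generally under colimits in the arrow category, and it contains all equivalences).

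Putting these two facts together finishes the argument: the map $\I{C}\to\I{S}^{-1}\I{C}$ is a pushout of the iterated localisation $\I{S}\to\I{S}^{\gp}$, and iterated localisations are closed under pushouts, so $\I{C}\to\I{S}^{-1}\I{C}$ is an iterated localisation. I do not expect any genuine obstacle here; the only point deserving mention is the stability of the class under pushouts, and this is immediate from its description as the saturation of a set of maps.
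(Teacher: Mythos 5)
Your proof is correct and is essentially the paper's own argument: the paper likewise observes that the groupoidification functor $\I{S}\to\I{S}^{\gp}$ is an iterated localisation and deduces the claim from the definition of $\I{S}^{-1}\I{C}$ as a pushout, using that the saturated class of iterated localisations (the internal saturation of $\Delta^1\to\Delta^0$) is stable under pushouts. Nothing is missing; your spelled-out version matches the paper's implicit reasoning.
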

\begin{lemma}
	\label{lem:strongEpimorphismDomainGroupoid}
	Let $\I{G}$ be a $\BB$-groupoid and let $\I{G}\to\I{C}$ be a strong epimorphism in $\Cat(\BB)$. Then $\I{C}$ is a $\BB$-groupoid as well.
\end{lemma}
\begin{proof}
	Since $\I{G}$ is a $\BB$-groupoid, Corollary~\ref{cor:functorIntoCore} implies that the functor $\I{G}\to\I{C}$ factors through the inclusion $\I{C}^{\core}\into\I{C}$. We may therefore construct a lifting problem
	\begin{equation*}
		\begin{tikzcd}
			\I{G}\arrow[r]\arrow[d] & \I{C}^\core\arrow[d, hookrightarrow]\\
			\I{C}\arrow[r, "\id"]\arrow[ur, dashed] & \I{C}
		\end{tikzcd}
	\end{equation*}
	which admits a unique solution. Hence the identity on $\I{C}$ factors through $\I{C}^\core\into\I{C}$, which evidently implies that $\I{C}^\core\into\I{C}$ is already an equivalence.
\end{proof}
\begin{lemma}
	\label{lem:pushoutStrongEpimorphism}
	For any strong epimorphism $f\colon\I{C}\to\I{D}$ in $\Cat(\BB)$, the commutative square
	\begin{equation*}
		\begin{tikzcd}
			\I{C}\arrow[r]\arrow[d, "f"] & \I{C}^\gp\arrow[d, "f^\gp"]\\
			\I{D}\arrow[r] & \I{D}^{\gp}
		\end{tikzcd}
	\end{equation*}
	is cocartesian.
\end{lemma}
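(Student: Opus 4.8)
The plan is to form the pushout $P=\I{D}\sqcup_{\I{C}}\I{C}^{\gp}$ explicitly and then to identify it with $\I{D}^{\gp}$. Note first that there is a canonical comparison functor $P\to\I{D}^{\gp}$: the two functors $\I{D}\to\I{D}^{\gp}$ and $f^{\gp}\colon\I{C}^{\gp}\to\I{D}^{\gp}$ restrict to the same functor on $\I{C}$, since by naturality of the groupoidification unit both are the composite $\I{C}\to\I{D}\to\I{D}^{\gp}$; hence they assemble into a cocone under the defining span and thereby induce a map out of the pushout. Showing the square is cocartesian amounts exactly to showing that this comparison functor is an equivalence.

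The crucial first step is to observe that $P$ is already a $\BB$-groupoid. In the pushout square, the functor $\I{C}^{\gp}\to P$ is the cobase change of $f\colon\I{C}\to\I{D}$ along the unit $\I{C}\to\I{C}^{\gp}$. Since $f$ is a strong epimorphism, and the strong epimorphisms --- being the left class of the (strong epimorphism, monomorphism) factorisation system --- are stable under pushout, the functor $\I{C}^{\gp}\to P$ is again a strong epimorphism. As its domain $\I{C}^{\gp}$ is a $\BB$-groupoid, lemma~\ref{lem:strongEpimorphismDomainGroupoid} forces $P$ to be a $\BB$-groupoid as well. This reduction is the only genuine subtlety; everything afterwards is a formal manipulation of universal properties, but the whole argument hinges on it, as it is precisely what makes the groupoidification of $\I{D}$ computable from that of $\I{C}$.

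With $P$ known to be a $\BB$-groupoid, I would finish by comparing corepresented functors. For any $\BB$-groupoid $\I{G}$, the universal property of the pushout gives
\[
	\map{\Cat(\BB)}(P,\I{G})\simeq\map{\Cat(\BB)}(\I{D},\I{G})\times_{\map{\Cat(\BB)}(\I{C},\I{G})}\map{\Cat(\BB)}(\I{C}^{\gp},\I{G}).
\]
Since $\I{G}$ is a groupoid, the adjunction defining $(-)^{\gp}$ shows that restriction along the unit identifies $\map{\Cat(\BB)}(\I{C}^{\gp},\I{G})\simeq\map{\Cat(\BB)}(\I{C},\I{G})$, so one leg of the fibre product is an equivalence and the whole expression collapses to $\map{\Cat(\BB)}(\I{D},\I{G})\simeq\map{\Cat(\BB)}(\I{D}^{\gp},\I{G})$. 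Tracing through the identifications, this natural chain of equivalences is precisely precomposition with the comparison functor $P\to\I{D}^{\gp}$. As both $P$ and $\I{D}^{\gp}$ are $\BB$-groupoids, the Yoneda lemma in $\Grpd(\BB)$ then forces the comparison functor to be an equivalence, which is exactly the assertion that the square is cocartesian.
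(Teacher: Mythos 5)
Your proof is correct, and its decisive step is exactly the paper's: both arguments hinge on observing that $\I{C}^{\gp}\to\I{P}$ (where $\I{P}=\I{D}\sqcup_{\I{C}}\I{C}^{\gp}$) is a strong epimorphism out of a $\BB$-groupoid, because strong epimorphisms form a left orthogonality class and are therefore stable under pushout, so that lemma~\ref{lem:strongEpimorphismDomainGroupoid} forces $\I{P}$ to be a groupoid. Where you diverge is in the wrap-up. The paper stays inside the (iterated localisation, conservative) factorisation system: since $\I{C}\to\I{C}^{\gp}$ is an iterated localisation and that class is stable under cobase change, $\I{D}\to\I{P}$ is one, hence by right cancellation so is the comparison functor $g\colon\I{P}\to\I{D}^{\gp}$; and since $\I{P}$ and $\I{D}^{\gp}$ are groupoids, $g$ is also conservative, hence an equivalence. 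You instead argue directly with universal properties: the pushout description of $\map{\Cat(\BB)}(\I{P},\I{G})$ together with the adjunction $(-)^{\gp}\dashv\iota$ collapses the fibre product and exhibits restriction along $\I{D}\to\I{P}$ as an equivalence $\map{\Cat(\BB)}(\I{P},\I{G})\simeq\map{\Cat(\BB)}(\I{D},\I{G})$ for every groupoid $\I{G}$, i.e.\ $\I{P}$ satisfies the universal property of $\I{D}^{\gp}$, and Yoneda in $\Grpd(\BB)$ then identifies $g$ as an equivalence. Your finish is more elementary and self-contained --- it needs nothing about iterated localisations, only the reflection adjunction and pushout-stability of strong epimorphisms --- while the paper's is shorter given the localisation machinery it has just built and exhibits the lemma as an instance of the general principle that a map lying in both classes of a factorisation system is an equivalence. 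One small point worth spelling out in your version: the identification of your chain of equivalences with precomposition by $g$ comes from the triangle in which $g^\ast$ followed by restriction along $\I{D}\to\I{P}$ equals restriction along the unit $\I{D}\to\I{D}^{\gp}$; since the latter two maps are equivalences, two-out-of-three gives that $g^\ast$ is one, and this is what licenses the appeal to Yoneda.
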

\begin{proof}
	If $\I{P}=\I{D}\sqcup_{\I{C}} {\I{C}^{\gp}}$ denotes the pushout, we need to show that the induced functor $g\colon\I{P}\to\I{D}^{\gp}$ is an equivalence. Since iterated localisations are stable under pushout, the map $\I{D}\to\I{P}$ is an iterated localisation, which (by the left cancellation property) implies that $g$ must be an iterated localisation as well. We therefore only need to show that $g$ is conservative. Since $\I{D}^{\gp}$ is a $\BB$-groupoid, this is equivalent to $\I{P}$ being a $\BB$-groupoid as well~\cite[Corollary~4.1.17]{martini2021}. But since strong epimorphisms are also preserved by pushouts, the map $\I{C}^{\gp}\to\I{P}$ is a strong epimorphism, hence Lemma~\ref{lem:strongEpimorphismDomainGroupoid} implies the claim.
\end{proof}
\begin{proposition}
	\label{prop:invarianceLocalisationStrongEpimorphism}
	Let $f\colon\I{S}\to\I{T}$ and $g\colon \I{T}\to\I{C}$ be functors in $\Cat(\BB)$, and suppose that $f$ is a strong epimorphism. Then the induced functor $\I{S}^{-1}\I{C}\to\I{T}^{-1}\I{C}$ is an equivalence.
\end{proposition}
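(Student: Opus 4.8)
The plan is to realise both localisations as iterated pushouts and then invoke the pasting law for pushout squares together with lemma~\ref{lem:pushoutStrongEpimorphism}. The key observation is that the defining map of $\I{S}^{-1}\I{C}$ factors as $\I{S}\xrightarrow{f}\I{T}\xrightarrow{g}\I{C}$, so the pushout computing $\I{S}^{-1}\I{C}$ can be split into two stages along this factorisation.

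Concretely, I would assemble the commutative diagram
\begin{equation*}
\begin{tikzcd}
\I{S}\arrow[r, "f"]\arrow[d] & \I{T}\arrow[r, "g"]\arrow[d] & \I{C}\arrow[d]\\
\I{S}^{\gp}\arrow[r, "f^{\gp}"] & \I{T}^{\gp}\arrow[r] & \I{T}^{-1}\I{C},
\end{tikzcd}
\end{equation*}
whose vertical maps are the groupoidification maps (together with the structure map $\I{C}\to\I{T}^{-1}\I{C}$ on the right), and whose squares commute by naturality of $(-)\to(-)^{\gp}$ and by the definition of $\I{T}^{-1}\I{C}$. The right-hand square is cocartesian by the very definition of $\I{T}^{-1}\I{C}$ as the pushout of $\I{T}^{\gp}\leftarrow\I{T}\xrightarrow{g}\I{C}$, while the left-hand square is cocartesian by lemma~\ref{lem:pushoutStrongEpimorphism}, using the hypothesis that $f$ is a strong epimorphism. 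By the pasting law for pushouts, the outer rectangle is therefore also cocartesian. But the outer rectangle exhibits $\I{T}^{-1}\I{C}$ as the pushout of $\I{S}^{\gp}\leftarrow\I{S}\xrightarrow{gf}\I{C}$, which is precisely the defining diagram for $\I{S}^{-1}\I{C}$. Hence $\I{S}^{-1}\I{C}\simeq\I{T}^{-1}\I{C}$.

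It remains to match this equivalence with the comparison functor asserted in the statement. I would argue that the induced functor $\I{S}^{-1}\I{C}\to\I{T}^{-1}\I{C}$ is, by construction, the unique map compatible with the structure maps out of $\I{C}$ and out of $\I{S}^{\gp}$ (the latter factoring through $f^{\gp}$), and that the equivalence produced by the pasting argument satisfies exactly the same compatibilities; the universal property of the pushout then identifies the two. I do not expect a serious obstacle here: the entire argument is a formal manipulation of pushout squares, and the only step requiring any input beyond generalities is the cocartesianness of the left-hand square, which is supplied verbatim by lemma~\ref{lem:pushoutStrongEpimorphism}. The one point demanding a little care is the final bookkeeping that identifies the canonical comparison functor with the equivalence obtained from the pasting law, but this follows immediately from the universal property of the pushout.
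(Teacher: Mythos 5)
Your proof is correct and is essentially the paper's own argument: the paper proves this proposition by exactly the same pasting of the square from lemma~\ref{lem:pushoutStrongEpimorphism} (applied to the strong epimorphism $f$) with the defining pushout square of $\I{T}^{-1}\I{C}$, so that the outer rectangle exhibits $\I{T}^{-1}\I{C}$ as the pushout defining $\I{S}^{-1}\I{C}$. Your additional bookkeeping identifying the resulting equivalence with the canonical comparison functor is a fine (and harmless) elaboration of what the paper leaves implicit.
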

\begin{proof}
	This is an immediate consequence of the pasting lemma for pushout squares and Lemma~\ref{lem:pushoutStrongEpimorphism}.
\end{proof}
\begin{remark}
	\label{rem:localisationsSubcategories}
	Proposition~\ref{prop:invarianceLocalisationStrongEpimorphism} implies that when considering localisations of a $\BB$-category $\I{C}$, we may restrict our attention to \emph{subcategories} $\I{S}\into\I{C}$ instead of general functors, as we can always factor a functor $\I{S}\to\I{C}$ into a strong epimorphism followed by a monomorphism. Alternatively, by making use of the strong epimorphism $\Delta^1\otimes \I{S}_0\to \I{S}$ from Lemma~\ref{lem:universalMapStrongEso}, we can always assume that $\I{S}$ is of the form $\Delta^1\otimes A$ for some $A\in\BB$. 
\end{remark}

Let $f\colon\I{C}\to\I{D}$ be a functor between $\BB$-categories. Let $f^\ast\I{D}^{\core}\into\I{C}$ be the subcategory that is defined by the pullback square
\begin{equation*}
	\begin{tikzcd}
		f^\ast\I{D}^{\core}\arrow[d, hookrightarrow]\arrow[r] \arrow[dr, phantom, "\ulcorner", very near start] & \I{D}^{\core}\arrow[d, hookrightarrow]\\
		\I{C}\arrow[r, "f"] & \I{D}.
	\end{tikzcd}
\end{equation*}
Since $\I{D}^{\core}$ is a $\BB$-groupoid, the map $f^\ast\I{D}^{\core}\to \I{D}^{\core}$ factors through $f^\ast\I{D}^{\core}\to (f^\ast\I{D}^{\core})^{\gp}$. Consequently, one obtains a factorisation of $f$ into the composition
\begin{equation*}
	\I{C}\to (f^\ast\I{D}^{\core})^{-1}\I{C}\xrightarrow{f_1} \I{D}.
\end{equation*}
Let us set $\I{C}_1=(f^\ast\I{D}^{\core})^{-1}\I{C}$.
By replacing $\I{C}$ by $\I{C}_1$ and $f$ by $f_1$ and iterating this procedure, we obtain an $\mathbb N$-indexed diagram
in $\Over{\Cat(\BB)}{\I{D}}$. Let $f_{\infty}\colon\I{E}\to\I{D}$ denote the colimit of this diagram. By construction, the map $f$ factors into the composition $\I{C}\to\I{E}\to\I{D}$ in which the first map is a countable composition of localisations and therefore an iterated localisation in the sense of Definition~\ref{defn:conservativeIteratedlocalisation}. We claim that the map $f_\infty$ is conservative. To see this,  consider the cartesian square
\begin{equation*}
	\begin{tikzcd}
		f_{\infty}^\ast\I{D}^{\core}\arrow[r]\arrow[d]\arrow[dr, phantom, "\ulcorner", very near start] & \I{D}^{\core}\arrow[d]\\
		\I{E}\arrow[r, "f_\infty"]& \I{D}.
	\end{tikzcd}
\end{equation*}
On account of filtered colimits being universal in $\Cat(\BB)$ (see Proposition~\ref{prop:CatBPresentable}), we obtain an equivalence $f_\infty^\ast\I{D}^{\core}\simeq \colim_n f_n^\ast\I{D}^{\core}$. By construction, the categories $f_n^\ast\I{D}^{\core}$ sit inside the $\mathbb N$-indexed diagram
\begin{equation*}
	\cdots \to f_{n-1}^\ast\I{D}^{\core}\to (f_{n-1}^\ast\I{D}^{\core})^{\gp}\to f_{n}^\ast\I{D}^{\core}\to (f_{n}^\ast\I{D}^{\core})^{\gp}\to f_{n+1}^\ast\I{D}^{\core}\to (f_{n+1}^\ast\I{D}^{\core})^{\gp}\to\cdots
\end{equation*}
such that the functor $\cdot 2\colon \mathbb N\to\mathbb N$ that is given by the inclusion of all even natural numbers recovers the $\mathbb N$-indexed diagram $n\mapsto f_{n}^\ast\I{D}^{\core}$ that is defined by the cartesian square above. As both the inclusion of all even natural numbers and that of all odd natural numbers define final functors $\mathbb N\to\mathbb N$, we conclude that $f_{\infty}^\ast\I{D}^{\core}$ is obtained as the colimit of the diagram $n\mapsto (f_{n}^\ast\I{D}^{\core})^{\gp}$ and is therefore a \emph{groupoid} in $\BB$. Applying~\cite[Corollary~4.1.16]{martini2021}, this shows that $f_{\infty}$ is conservative. Therefore the factorisation of $f$ into the composite $\I{C}\to\I{E}\to\I{D}$ as constructed above is the unique factorisation of $f$ into an iterated localisation and a conservative functor. Applying this construction when $f$ is already an iterated localisation, one in particular obtains:
\begin{proposition}
	Every iterated localisation between $\BB$-categories is obtained as a countable composition of localisation functors.\qed
\end{proposition}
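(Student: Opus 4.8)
The plan is to apply the explicit factorisation constructed in the preceding paragraphs to the iterated localisation $f\colon\I{C}\to\I{D}$ itself. That construction produces a factorisation
\[
	\I{C}\xrightarrow{g}\I{E}\xrightarrow{f_\infty}\I{D}
\]
in which $g$ is a countable composition of localisation functors, and is therefore an iterated localisation in the sense of Definition~\ref{defn:conservativeIteratedlocalisation}, while $f_\infty$ is conservative. The whole statement will then follow once we know that $f_\infty$ is an equivalence, since in that case $f\simeq g$ exhibits $f$ as the desired countable composition of localisations.

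To prove that $f_\infty$ is an equivalence, I would exploit the fact, established just above, that this factorisation is the \emph{unique} factorisation of $f$ into an iterated localisation followed by a conservative functor. Since $f$ is itself assumed to be an iterated localisation and since the identity $\id_{\I{D}}$ is conservative (being right orthogonal to every map), the composite $\I{C}\xrightarrow{f}\I{D}\xrightarrow{\id}\I{D}$ is a second factorisation of $f$ of exactly this form. Comparing the two via the uniqueness statement yields an equivalence of factorisations whose middle component is precisely $f_\infty\colon\I{E}\xrightarrow{\simeq}\I{D}$, as required.

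Alternatively, and more directly, one can argue purely from orthogonality without invoking uniqueness. Because $f$ is left orthogonal to the conservative functor $f_\infty$, the commutative square with $g$ on top, $f_\infty$ on the right, $f$ on the left, and $\id_{\I{D}}$ on the bottom admits a unique diagonal filler $s\colon\I{D}\to\I{E}$ satisfying $s f = g$ and $f_\infty s=\id_{\I{D}}$. A second appeal to uniqueness of lifts, this time for $g$ against $f_\infty$, shows that $s f_\infty$ and $\id_{\I{E}}$ both solve the lifting problem with top edge $g$ and right and bottom edges $f_\infty$, whence $s f_\infty=\id_{\I{E}}$; together with $f_\infty s=\id_{\I{D}}$ this exhibits $f_\infty$ as an equivalence.

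The genuinely substantial work, namely the construction of the $\mathbb N$-indexed tower and the verification, via the cofinality of the even and odd inclusions $\mathbb N\to\mathbb N$, that $f_\infty^\ast\I{D}^{\core}$ is a groupoid and hence that $f_\infty$ is conservative, has already been carried out in the discussion preceding the statement. Given that, the remaining step is the routine formal fact that in an orthogonal factorisation system a morphism lying in the left class has an equivalence for its right-class factor; the only point deserving care is the bookkeeping of the two lifting problems and the correct invocation of the defining left-orthogonality of iterated localisations against conservative functors. I therefore do not expect any real obstacle here.
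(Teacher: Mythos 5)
Your proposal is correct and takes essentially the same approach as the paper: the paper's (implicit) proof likewise applies the preceding tower construction to the iterated localisation $f$ itself and uses the uniqueness of the factorisation into an iterated localisation followed by a conservative functor to conclude that the conservative factor $f_\infty$ is an equivalence, so that $f$ agrees with the countable composition of localisations up to equivalence. Your alternative lifting-problem argument is just the standard verification of this factorisation-system fact, not a genuinely different route.
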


Our next goal is to prove the universal property of a localisation functor. To that end, given any two $\BB$-categories $\I{C}$ and $\I{D}$ and any functor $\I{S}\to \I{C}$, note that as the base change functor $\pi_A^\ast\colon \Cat(\BB)\to\Cat(\Over{\BB}{A})$ from Remark~\ref{rem:functorialityBCategories} preserves the internal hom $\iFun(-,-)$~\cite[Lemma~4.2.3]{martini2021}, an object of $\iFun(\I{C},\I{D})$ in context $A\in\BB$ is precisely given by a functor of $\Over{\BB}{A}$-categories $\pi_A^\ast\I{C}\to\pi_A^\ast\I{D}$. Therefore, the collection of functors $\pi_A^\ast\I{C}\to\pi_A^\ast\I{D}$ in arbitrary context $A\in\BB$ whose restriction along $\pi_A^\ast\I{S}\to\pi_A^\ast\I{C}$ factors through $\pi_A^\ast\I{D}^{\core}$ span a full subcategory of $\iFun(\I{C},\I{D})$ (see \S~\ref{sec:fullyFaithfulFunctors}) that we denote by $\iFun^{\I{S}}(\I{C},\I{D})$.

\begin{remark}[locality of $\iFun^{\I{S}}(\I{C},\I{D})$]
	\label{rem:invertingSubcategoryLocalCondition}
	Note that a functor $f\colon\pi_A^\ast \I{S}\to\pi_A^\ast\I{D}$ factors through $\pi_A^\ast\I{D}^\core$ if and only if the transposed map $A\times\I{S}\to\I{D}$ factors through $\I{D}^\core$. As the map $\I{D}^\core\into\I{D}$ is a monomorphism by Example~\ref{ex:coreGroupoidMonomorphism}, the same argument as in Example~\ref{ex:representabilityLocalCondition} shows that this condition is \emph{local}, in the sense that for every cover $(s_i)\colon\bigsqcup_i A_i\onto A$ in $\BB$, the functor $f$ factors through $\pi_A^\ast\I{D}^\core$ if and only if each of the functors $s_i^\ast(f)$ factors through $\pi_{A_i}^\ast\I{D}^\core$. As a consequence, \emph{every} object $A\to\iFun^{\I{S}}(\I{C},\I{D})$ encodes a functor $\pi_A^\ast\I{C}\to\pi_A^\ast\I{D}$ whose restriction along $\pi_A^\ast \I{S}\to\pi_A^\ast\I{C}$ factors through $\pi_A^\ast\I{C}^\core$. In conjunction with~\cite[Lemma~4.2.3]{martini2021}, this observation furthermore implies that there is a canonical equivalence $\pi_A^\ast\iFun^{\I{S}}(\I{C},\I{D})\simeq\iFun[\Over{\BB}{A}]^{\pi_A^\ast\I{S}}(\pi_A^\ast\I{C},\pi_A^\ast\I{D})$ for every $A\in\BB$, cf.~Remark~\ref{rem:localityPrincipleBaseChangeProposition}.
\end{remark}

\begin{remark}
	By Corollary~\ref{cor:functorIntoCore} and Remark~\ref{rem:invertingSubcategoryLocalCondition}, a functor $\pi_A^\ast\I{C}\to\pi_A^\ast\I{D}$ defines an object in $\iFun^{\I{S}}(\I{C},\I{D})$ precisely if its restriction along $\pi_A^\ast\I{S}\to\pi_A^\ast\I{C}$ sends every map in $\pi_A^\ast\I{S}$ to an equivalence in $\pi_A^\ast\I{C}$.
\end{remark}

\begin{proposition}
	\label{prop:universalPropertyLocalisation}
	Let $\I{C}$ be a $\BB$-category and let $\I{S}\to\I{C}$ be a functor. Then precomposition with the localisation functor $L\colon\I{C}\to \I{S}^{-1}\I{C}$ induces an equivalence
	\begin{equation*}
		L^\ast\colon\iFun(\I{S}^{-1}\I{C},\I{D})\simeq \iFun^{\I{S}}(\I{C},\I{D})
	\end{equation*}
	for any $\BB$-category $\I{D}$.
\end{proposition}
\begin{proof}
	By applying the functor $\iFun(-,\I{D})$ to the pushout square that defines the localisation of $\I{C}$ at $\I{S}$, one obtains a pullback square
	\begin{equation*}
		\begin{tikzcd}
			\iFun(\I{S}^{-1}\I{C},\I{D})\arrow[d]\arrow[r] & \iFun(\I{C},\I{D})\arrow[d]\\
			\iFun(\I{S}^{\gp},\I{D})\arrow[r] & \iFun(\I{S},\I{D}).
		\end{tikzcd}
	\end{equation*}
	We claim that the two horizontal functors are fully faithful. To see this, it suffices to consider the lower horizontal map. This is a fully faithful functor precisely if it is internally right orthogonal to the map $\Delta^0\sqcup\Delta^0\to \Delta^1$, and by making use of the adjunction between tensoring and powering in $\Cat(\BB)$, one sees that this is equivalent to the induced functor $\I{D}^{\Delta^1}\to \I{D}\times \I{D}$ being internally right orthogonal to the map $\I{S}\to\I{S}^{\gp}$. Hence it suffices to show that the functor $\I{D}^{\Delta^1}\to \I{D}\times \I{D}$ is conservative, i.e.\ internally right orthogonal to $\Delta^1\to\Delta^0$. Making use of the adjunction between tensoring and powering in $\Cat(\BB)$ once more, this is seen to be equivalent to $\I{D}$ being internally local with respect to the map $K\to \Delta^1$ that is defined by the commutative diagram
	\begin{equation*}
		\begin{tikzcd}[column sep=huge]
			\Delta^1\sqcup\Delta^1\arrow[r, "{(d_1\times\id,d_0\times\id)}"] \arrow[d, "s_0\sqcup s_0"']\arrow[dr, phantom, "\lrcorner", very near end]& \Delta^1\times\Delta^1\arrow[d]\arrow[ddr, bend left, "\pr_1"]& \\
			\Delta^0\sqcup\Delta^0\arrow[r]\arrow[drr, bend right, "{(d_1, d_0)}"] & K \arrow[dr] & \\
			& & \Delta^1 
		\end{tikzcd}
	\end{equation*}
	in which $\pr_1$ denotes the projection onto the second factor. By the same reasoning as in the proof of~\cite[Lemma~3.8.8]{martini2021}, the map $K\to \Delta^1$ is an equivalence in $\Cat(\BB)$, hence the claim follows.
	
	Since for any $A\in\BB$ a functor $\pi_A^\ast\I{S}\to\pi_A^\ast\I{D}$ factors through $\pi_A^\ast\I{D}^{\core}$ if and only if it factors through the map $\pi_A^\ast\I{S}\to \pi_A^\ast\I{S}^{\gp}$, one obtains a commutative square
	\begin{equation*}
		\begin{tikzcd}
			\iFun^{\I{S}}(\I{C},\I{D})\arrow[d]\arrow[r, hookrightarrow] & \iFun(\I{C},\I{D})\arrow[d]\\
			\iFun(\I{S}^{\gp},\I{D})\arrow[r, hookrightarrow] & \iFun(\I{S},\I{D}).
		\end{tikzcd}
	\end{equation*}
	and therefore a map $\iFun^{\I{S}}(\I{C},\I{D})\into \iFun(\I{S}^{-1}\I{C},\I{D})$. Since every object $A\to \iFun(\I{S}^{-1}\I{C},\I{D})$ by definition gives rise to an object in $\iFun^{\I{S}}(\I{C},\I{D})$, this map must also be essentially surjective and is thus an equivalence.
\end{proof}

\bibliographystyle{halpha}
\bibliography{references.bib}
\end{document}